\documentclass{amsart}
\usepackage{breqn}
\usepackage{multirow}
\usepackage{schock}
\usepackage{placeins}
\let\Oldsection\section
\renewcommand{\section}{\FloatBarrier\Oldsection}

\let\Oldsubsection\subsection
\renewcommand{\subsection}{\FloatBarrier\Oldsubsection}

\let\Oldsubsubsection\subsubsection
\renewcommand{\subsubsection}{\FloatBarrier\Oldsubsubsection}
\setcounter{tocdepth}{1}

\title{Moduli of weighted stable marked cubic surfaces}

\author{Nolan Schock}
\email{nschoc2@uic.edu}
\address{Department of Mathematics, University of Illinois at Chicago, Chicago
IL, 60607, USA}
\keywords{moduli space, stable pair, wall crossing, del Pezzo surface, cubic surface, root system}
\subjclass{Primary 14J10; Secondary 14J26, 14J45 14D22}


\begin{document}

\begin{abstract}
    Let $Y(E_n)$ denote the moduli space of pairs $(S,B)$ where $S$ is a del Pezzo surface of degree $9-n$ and $B$ is
    the labeled (marked) sum of its finitely many lines. When $n=6$, $Y(E_6)$ is the classical moduli space of marked
    cubic surfaces dating back to the nineteenth century. We describe the compactifications of $Y(E_5)$ and $Y(E_6)$ by
    Koll\'ar--Shepherd-Barron--Alexeev (KSBA) weighted stable pairs $(S,cB)$. There is a finite wall-and-chamber
    decomposition of the weight domain $\left(\frac{9-n}{N},1\right]$, and we explicitly identify this decomposition, as
    well as describe in detail the weighted stable pairs parameterized by the moduli spaces in each chamber. This
    generalizes the work of Hacking, Keel, and Tevelev \cite{hackingStablePairTropical2009} constructing the moduli
    space and its universal family in the weight 1 case, and in particular yields a complete description of the fibers
    of this family.
\end{abstract}
\maketitle
\tableofcontents

\section{Introduction}

A marking of a del Pezzo surface $S$ of degree $9-n$ induces a labeling $B_1,\ldots,B_N$ of its (finitely many) lines,
leading one to consider the moduli space $Y(E_n)$ of marked del Pezzo surfaces of degree $9-n$ as the moduli space of
pairs $(S,B = B_1 + \cdots + B_N)$. The space $Y(E_n)$ is one of the most classical moduli spaces in algebraic,
essentially originating in the case of cubic surfaces (i.e., del Pezzo surfaces of degree 3) in the nineteenth century
work of Cayley and Salmon \cite{cayleyTripleTangentPlanes2009}. There is a natural action of the Weyl group $W(E_n)$ of
the root system $E_n$ on $Y(E_n)$, given by permuting the marking, and the geometry of del Pezzo surfaces of degree $9-n$
and of the moduli space $Y(E_n)$ and its compactifications is intimately connected with the combinatorics of the $E_n$
root system, see \cite{demazureSeminaireSingularitesSurfaces1980, maninCubicFormsAlgebra1986}. Compactifications of
$Y(E_n)$ and related spaces have seen significant interest from numerous perspectives, see
\cite{allcockComplexHyperbolicGeometry2002, casalaina-martinNonisomorphicSmoothCompactifications2022,
    colomboPezzoModuliRoot2009, gallardoGeometricInterpretationToroidal2021, hassettModuliDegreePezzo2014,
    hackingStablePairTropical2009, heckmanModuliSpaceRational2000, narukiCrossRatioVariety1982,
    narukiModificationCayleyFamily1980, odakaCompactModuliSpaces2016, renTropicalizationPezzoSurfaces2016,
renTropicalizationClassicalModuli2014, schockE_6InvariantBirational2024, sekiguchiCrossRatioVarieties1994,
sekiguchiCrossRatioVarieties2000, zhaoCompactificationsModuliPezzo2023} for a (non-exhaustive) list of examples.

In this paper we study the most natural compactifications of $Y(E_n)$ from the perspective of modern moduli theory,
namely, compactifications via moduli of Koll\'ar--Shepherd-Barron--Alexeev (KSBA) (weighted) stable pairs
\cite{kollarFamiliesVarietiesGeneral2023}, generalizing the moduli space of (weighted) stable curves
\cite{deligneIrreducibilitySpaceCurves1969, hassettModuliSpacesWeighted2003}. In particular, recent work of Ascher,
Bejleri, Inchiostro, and Patakfalvi \cite{ascherWallCrossingModuli2023}, as well as Meng and Zhuang
\cite{mengMMPLocallyStable2023} shows that, in suitably nice situations, moduli of weighted stable pairs admit a finite
rational polyhedral wall-and-chamber decomposition of their weight domains, such that the moduli spaces for weights in
the same chamber are isomorphic, and there are birational wall-crossing maps between the moduli spaces in different
chambers. This generalizes Hassett's analogous results in the case of curves \cite{hassettModuliSpacesWeighted2003}.
Some examples of wall and chamber decompositions for KSBA stable pairs include
\cite{alexeevModuliWeightedHyperplane2015, ascherCompactModuliDegree2022}. In this paper we contribute two additional
examples, by completely describing the wall and chamber decompositions for del Pezzo surfaces of degrees 3 and 4 with
constant weight vector.

More precisely, assume $5 \leq n \leq 8$, so that $Y(E_n)$ is a nontrivial moduli space, and let $c \in
\left(\frac{9-n}{N},1\right]$. Then $K_S+cB$ is ample for a smooth marked del Pezzo surface $(S,B)$, and the moduli
space $Y^n_c \subset Y(E_n)$ parameterizing smooth marked del Pezzo surfaces $(S,cB)$ with log canonical singularities
admits a natural compactification $\oY^n_c$ by taking the (normalization of the) closure of $Y^n_c$ in the
aforementioned moduli space of KSBA weighted stable pairs. There are finitely many walls $t_i \in
\left(\frac{9-n}{N},1\right]$, dividing this interval into locally closed chambers $(t_{i-1},t_i]$ such that the moduli
space and its universal family is the same for any weight in a given chamber (cf. \cite{ascherWallCrossingModuli2023,
mengMMPLocallyStable2023}). We therefore write $\oY^n_{(t_{i-1},t_i]}$ for the moduli space of $c$-weighted stable
marked del Pezzo surfaces of degree $9-n$, for any $c \in (t_{i-1},t_i]$. The main result of this paper is the following
theorem, completely describing the wall and chamber decomposition of the weight domain $\left(\frac{9-n}{N},1\right]$,
in the case of cubic surfaces ($n=6$).

\begin{theorem}[\cref{thm:cubics_main}] \label{thm:cubics_intro}
    The weight domain $\left(\frac{1}{9},1\right]$ for the moduli space of weighted stable marked cubic surfaces
    $(S,cB)$ admits precisely five walls, at $c=1/6$, $1/4$, $1/3$, $1/2$, and $2/3$, inducing birational morphisms of
    moduli spaces
    \[
        \oY^6_{(1/9,1/6]} \xleftarrow{\sim} \oY^6_{(1/6,1/4]} \leftarrow  \oY^6_{(1/4,1/3]} \xleftarrow{\sim}
        \oY^6_{(1/3,1/2]} \xleftarrow{\sim} \oY^6_{(1/2,2/3]} \xleftarrow{\sim} \oY^6_{(2/3,1]},
    \]
    where each morphism is an isomorphism except for $\oY^6_{(1/6,1/4]} \leftarrow \oY^6_{(1/4,1/3]}$, which is an
    explicit birational morphism described in \cref{prop:flattening}. The weighted stable marked cubic surfaces
    parameterized by each moduli space are described in detail in \cref{sec:weighted_cubics}.
\end{theorem}

In an appendix (\cref{sec:deg4}), we describe the analogous result for the case of weighted stable marked del Pezzo
surfaces of degree 4 ($n=5$).

\begin{theorem}[\cref{thm:deg4_main}] \label{thm:deg4_intro}
    The weight domain $\left(\frac{1}{4},1\right]$ for the moduli space weighted stable marked del Pezzo surfaces of
    degree 4, $(S,cB)$, admits precisely one wall, at $c=1/2$, inducing an isomorphism of moduli spaces
    \[
        \oY_{(1/4,1/2]}^5 \xleftarrow{\sim} \oY_{(1/2,1]}^5.
    \]
    The weighted stable marked del Pezzo surfaces described by each moduli space are described in detail in
    \cref{sec:deg4}.
\end{theorem}

As we show, the weighted stable surfaces for higher weights can be extremely complicated---one of the simplest singular
surfaces parameterized by the moduli space of weight 1 stable marked cubic surfaces consists of 8 irreducible components
(see \cref{fig:a_12-23,tab:a}); the most complicated surface parameterized by this space has a total 78 irreducible
components (see \cref{fig:a3b_degens_12-23,tab:aa2a3b}). The most significant contribution of this paper is an explicit
description of these weighted stable surfaces for any admissible weights, which in particular explains how these very
complicated surfaces for higher weights arise as stable replacements of much more reasonable surfaces for smaller
weights.

In particular, we note that the moduli spaces $\oY^n_c$, $n=5,6$, have previously been considered in the maximal weight
case $c=1$, as well as (in the case of cubic surfaces) the minimal weight case $c=1/9+\epsilon$
\cite{hackingStablePairTropical2009, gallardoGeometricInterpretationToroidal2021}.
\begin{itemize}
    \item In \cite{hackingStablePairTropical2009}, Hacking, Keel, and Tevelev explicitly construct the stable pair
        compactification $\oY^n_1$ of $Y^n_1$ for $n=5,6$, as well as its universal family, using tropical geometry and
        combinatorics of the $E_n$ root systems; however, they do not go so far as to actually describe the fibers of
        this universal family. Actually computing these fibers in the case of cubic surfaces ($n=6$) is the most
        technically challenging part of the present article, see \cref{sec:fibers,sec:eckardts,sec:root_combs}. The
        fibers for the case of del Pezzo surfaces of degree 4 ($n=5$) were previously computed by Hassett, Kresch, and
        Tschinkel \cite[Section 7]{hassettModuliDegreePezzo2014}; we reproduce their results in \cref{sec:fibers_deg4}.
    \item In \cite[Theorem 1.5]{gallardoGeometricInterpretationToroidal2021}, Gallardo, Kerr, and Schaffler describe the
        stable pair compactification $\oY^6_{1/9+\epsilon}$ of $Y^n_{1/9+\epsilon}$, including an explicit description
        of the surfaces it parameterizes. In contrast to the case of weight 1, these surfaces are simple to
        describe---they are either cubic surfaces with $A_1$ singularities, or transversal unions of three planes (with
        certain configurations of 27 not necessarily distinct lines) \cite[Table
        1]{gallardoGeometricInterpretationToroidal2021}. Our explicit description of the wall crossings as one varies $c
        \in \left(\frac{1}{9},1\right]$ make it clear how one obtains the more complicated surfaces appearing for higher
        weights as stable replacements of these surfaces for the minimal weight.
\end{itemize}
In fact, the minimal weight moduli space and its universal family were already constructed very explicitly by Naruki in
the 1980s \cite{narukiCrossRatioVariety1982}, building on ideas dating back to Cayley
\cite{cayleyTripleTangentPlanes2009} and Coble \cite{cobleAlgebraicGeometryTheta1968}. Naruki's work predates the notion
of KSBA weighted stable pairs. Naruki's compactification and its generalizations to del Pezzo surfaces of other degrees
(cf. \cite{sekiguchiCrossRatioVarieties2000}) also play a fundamental role in the work of Hacking, Keel, and Tevelev
studying the weight 1 case \cite{hackingStablePairTropical2009}.

\begin{remark}
    Naruki's compactification (i.e., $\oY^6_{1/9+\epsilon}$) is closely related to the familar Geometric Invariant
    Theory (GIT) compactification of the moduli space of cubic surfaces. Namely, let $\oM(E_6)$ denote the natural
    $W(E_6)$-cover of this GIT moduli space obtained by fixing a marking on the cubic surfaces; then $\oM(E_6)$ has 40
    singular points parameterizing different markings of the unique GIT polystable cubic surface $\{w^3=xyz\} \subset
    \bP^3$ with three $A_2$ singularities, and Naruki's compactification is obtained by blowing up these singular
    points, and replacing these GIT polystable marked cubic surfaces with the transversal unions of three planes as
    mentioned above \cite{narukiCrossRatioVariety1982, gallardoGeometricInterpretationToroidal2021}. (See also
    \cite{allcockComplexHyperbolicGeometry2002, casalaina-martinNonisomorphicSmoothCompactifications2022}, where
    Hodge-theoretic interpretations of these compactifications are considered.)
\end{remark}

\begin{remark}
    Wall crossings for moduli of weighted marked del Pezzo surfaces have also been considered for the analogous
    \emph{K-moduli space}, obtained when one takes $c \in \left(0,\frac{9-n}{N}\right)$, so that the pairs $(S,cB)$ are
    log Fano \cite{zhaoCompactificationsModuliPezzo2023}. Namely, in \cite{zhaoCompactificationsModuliPezzo2023}, Zhao
    shows that there are no walls on the K-moduli side for $n=5,6,7$; furthermore, the moduli spaces on this side are
    explicitly described in \cite{odakaCompactModuliSpaces2016}. In particular, for the case of cubic surfaces, the
    K-moduli space agrees with the GIT moduli space described in the previous remark.
\end{remark}

\subsection{Outline}

Since the current paper is quite lengthy, we outline its structure as well as the proofs of the main theorems. Broadly
speaking, the proofs of the main theorems proceed by the following three steps.
\begin{enumerate}
    \item Explicitly construct the weight 1 stable pair compactification $\oY^n_1$ of $Y^n_1$ ($n=5,6$), as well as its
        universal family. (This step was done by Hacking, Keel, and Tevelev \cite{hackingStablePairTropical2009}.)
    \item Explicitly compute the fibers of the universal family of stable pairs over $\oY^n_1$ ($n=5,6$).
    \item Run the log minimal model program on the fibers $(S,cB)$ of the universal family over $\oY^n_1$ as one
        decreases $c$ from 1 towards the minimal weight $\frac{9-n}{N}+\epsilon$, in order to obtain the wall and
        chamber decomposition of the weight domain as well as the corresponding descriptions of the stable models
        parameterized by the moduli spaces in each chamber.
\end{enumerate}

We briefly explain the details of the above three steps.

\subsubsection{Explicit construction of the weight 1 moduli space and its universal family}

We carry out step (1) in \cref{sec:comps,sec:morphisms,sec:eckardts}. Briefly, the construction of $\oY^n_1$ and its
universal family as described in \cite{hackingStablePairTropical2009} proceeds as follows.

For $n=5,6,7$, there is a natural $W(E_n)$-equivariant smooth projective compactification $\oY(E_n)$ of $Y(E_n)$, the
\emph{log canonical compactification}. For $n=5$, $\oY(E_5) \cong \oM_{0,5}$ is the moduli space of stable $5$-pointed
genus zero curves. For $n=6$, $\oY(E_6)$ is Naruki's compactification mentioned above, and for $n=7$, $\oY(E_7)$ is
Sekiguchi's compactification constructed in \cite{sekiguchiCrossRatioVarieties2000} as a generalization of Naruki's
results. We describe all of these compactifications in \cref{sec:comps}. By realizing these compactifications as
\emph{tropical compactifications}, Hacking, Keel, and Tevelev show that the natural morphism $Y(E_{n+1}) \to Y(E_n)$
obtained by contracting a $(-1)$-curve on a del Pezzo surface of degree $9-(n+1)$ (also induced by the inclusion of root
systems $E_n \subset E_{n+1}$) extends to a morphism of compactifications $\pi : \oY(E_{n+1}) \to \oY(E_n)$
\cite{hackingStablePairTropical2009}, which is explicitly described by combinatorics of the root systems $E_n$ and
$E_{n+1}$.

When $n=5$, the morphism $\pi : \oY(E_6) \to \oY(E_5)$ is flat, and realizes $\oY(E_5)$ as the moduli space $\oY^5_1$ of
weight 1 stable marked del Pezzo surfaces of degree 4, with universal family $\oY(E_6)$ \cite[Theorem
10.19]{hackingStablePairTropical2009}, see \cref{sec:deg4}.

When $n=6$, the morphism $\pi : \oY(E_7) \to \oY(E_6)$ is not flat, but there is a canonical flattening procedure
yielding a flat morphism $\wt\pi : \wY(E_7) \to \wY(E_6)$, where $\wY(E_6)$ and $\wY(E_7)$ are explicit birational
modifications of $\oY(E_6)$ and $\oY(E_7)$ \cite[Definition 10.21]{hackingStablePairTropical2009}, see
\cref{prop:flattening}. This morphism is still not the universal family of weight 1 stable marked cubic surfaces, due to
the presence of Eckardt points on the fibers. (Our results show that, because of these Eckardt points, $\wt\pi$ is
instead the universal family of weight $c=2/3$ stable marked cubic surfaces.) The blowup $\ddot{Y}(E_7) \to \wY(E_7)$ of
the locus cutting out these Eckardt points induces by the composition with $\wt\pi$ a flat morphism $\ddot{\pi} :
\ddot{Y}(E_7) \to \wY(E_6)$, which finally realizes $\wY(E_6) = \oY^6_1$ as the moduli space of weight 1 stable marked
cubic surfaces, with universal family $\ddot{\pi} : \ddot{Y}(E_7) \to \wY(E_6)$ \cite[Theorem
10.31]{hackingStablePairTropical2009}.  We describe the morphism $\pi : \oY(E_7) \to \oY(E_6)$ and its flattening in
\cref{sec:morphisms}, and we describe the morphism $\ddot{\pi} : \ddot{Y}(E_7) \to \wY(E_6)$ in \cref{sec:eckardts}.
Our results in \cref{sec:eckardts} include an explicit description of the possible configurations of Eckardt points on
fibers of $\wt\pi : \wY(E_7) \to \wY(E_7)$, thus giving a more complete and explicit description of $\ddot{Y}(E_7)$ than
in \cite{hackingStablePairTropical2009}. The results of \cref{sec:eckardts} are also of independent interest, as they
describe the possible configurations of Eckardt points on singular cubic surfaces.

\subsubsection{Computation of the fibers of the weight 1 universal family}

We carry out step (2), computing the fibers of the weight 1 universal family, in
\cref{sec:fibers,sec:eckardts,sec:root_combs} for the case of cubic surfaces. We carry out step (2) for the case of del
Pezzo surfaces of degree 4 in \cref{sec:deg4}.

In the case of del Pezzo surfaces of degree 4, the fibers of the universal family $\pi : \oY(E_6) \to \oY(E_5)$ are
computed as follows. The spaces $\oY(E_6)$ and $\oY(E_5)$ are tropical compactifications of their interiors $Y(E_6)$ and
$Y(E_5)$, obtained by taking their closures in respective toric varieties $X(\cF(E_6))$ and $X(\cF(E_5))$. The morphism
$\pi$ is induced by a morphism of explicitly constructed toric varieties $X(\cF(E_6)) \to X(\cF(E_5))$ (cf.
\cite{hackingStablePairTropical2009} and \cref{sec:flattening}), and the fibers of $\pi$ are the pullbacks of the fibers
of this morphism of toric varieties. There is a combinatorial procedure for computing the fibers of a morphism of toric
varieties in terms of the corresponding map of fans \cite{huToricMorphismsFibrations2004}, and pulling this back to our
morphism $\pi : \oY(E_6) \to \oY(E_5)$ amounts to computing the fibers as follows. The fibers are pairs $(S,B)$
consisting of a reducible surface $S$ and a sum $B$ of finitely many curves ($16$, in this case, since there are 16
lines on a del Pezzo surface of degree 4).  Given a codimension $k$ boundary stratum $Z$ of $\oY(E_5)$, the fiber
$(S,B)$ of $\pi$ over a general point of $Z$ is obtained by gluing together the general fibers of the restrictions
$\pi\vert_W : W \to Z$ for each codimension $k$ boundary stratum $W$ of $\oY(E_6)$ whose image in $\oY(E_5)$ is $Z$,
where the gluing is determined by the intersections of the corresponding boundary strata of $W$. This gluing gives the
surface $S$. The curves $B$ on $S$ are determined by the intersections of the strata $W$ with the 16 \emph{horizontal}
divisors, surjecting onto the space $\oY(E_5)$. All of this is explicitly described---the combinatorics of root
subsystems of $E_6$ and $E_5$ governs the irreducible components of $(S,B)$, and the restrictions of $\pi$ to the
boundary strata of $\oY(E_6)$ are given explicitly, allowing one to compute their general fibers. All of these
computations are carried out in detail in \cref{sec:deg4}.

The case of cubic surfaces follows the exact same strategy as in the previous paragraph, but with the following
modifications. Analogously to the previous paragraph, the varieties $\ddot{Y}(E_7)$ and $\wY(E_6)$ are tropical
compactifications, in respective toric varieties $X(\ddot{\cF}(E_7))$ and $X(\cwF(E_6))$, and the morphism $\ddot{\pi} :
\ddot{Y}(E_7) \to \wY(E_6)$ is pulled back from a morphism of toric varieties $X(\ddot{\cF}(E_7)) \to X(\cwF(E_6))$.
However the fan $\cwF(E_6)$ is not fine enough to distinguish cubic surfaces with Eckardt points. Thus we instead first
compute the fibers of the morphism $\wt\pi : \wY(E_7) \to \wY(E_6)$, pulled back from a morphism of toric varieties
$X(\cwF(E_7)) \to X(\cwF(E_6))$, ignoring Eckardt points (see \cref{sec:flattening}). After computing the fibers of
$\wt\pi$, we then explicitly describe the posssible configurations of Eckardt points on these fibers in order to
describe the corresponding fibers for $\ddot{\pi}$ (see \cref{sec:eckardts}).

In practice, it is also not so straightforward to compute the fibers of $\wt\pi : \wY(E_7) \to \wY(E_6)$, because the
space $\wY(E_7)$ is not smooth, nor is it easy to explicitly describe its strata. Instead, we modify our approach
further, as follows. The morphism $\wt\pi$ is a canonical flattening of $\pi : \oY(E_7) \to \oY(E_6)$, and away from the
locus where $\pi$ is not flat, $\wt\pi$ and $\pi$ have the same fibers. Thus we actually carry out the strategy
explained for degree 4 del Pezzo surfaces above, applied to the morphism $\pi : \oY(E_7) \to \oY(E_6)$, taking extra
care that when a stratum $W$ of $\oY(E_7)$ intersects the non-flat locus of $\pi$, we must instead consider the
flattening of $\wt\pi$ on the strict transform of $W$. This whole procedure is explained in detail in \cref{sec:fibers},
with the details near the non-flat locus explained in \cref{sec:nonflat_A32}. Analogous to the degree 4 case, the fibers
of $\wt\pi$ and $\pi$ are described in terms of combinatorics of root systems of $E_7$ and $E_6$. The combinatorics in
this case are extremely involved, and are described in \cref{sec:root_combs}.

\begin{remark}
    The maps of fans $\cF(E_6) \to \cF(E_5)$ and $\ddot{\cF}(E_7) \to \cwF(E_6)$ describe the universal families of
    \emph{tropical} del Pezzo surfaces of degrees 4 and 3, respectively, see \cite{renTropicalizationPezzoSurfaces2016,
    cuetoAnticanonicalTropicalCubic2019}. Tropical del Pezzo surfaces are the dual complexes to the (weight 1) stable
    marked del Pezzo surfaces constructed in this paper. Away from the Eckardt locus, these tropical surfaces are
    studied in \cite{renTropicalizationPezzoSurfaces2016, cuetoAnticanonicalTropicalCubic2019} by methods different
    from those of the present article, and our results agree with the results of \textit{loc. cit}.
\end{remark}

\subsubsection{Wall crossings and the log minimal model program on fibers}

Once the fibers of the universal family for weight 1 stable marked del Pezzo surfaces are described, it is
straightforward to run the log minimal model program to describe the weighted stable models of the fibers as one varies
$c$ in the weight domain $\left(\frac{9-n}{N},1\right]$. Namely, suppose $(S,cB)$ is a weighted stable marked del Pezzo
surface for some weight $c \in \left(\frac{9-n}{N},1\right]$. Fix $c' \in \left(\frac{9-n}{N},1\right]$ with $c' \leq
c$. The restriction of $K_S+c'B$ to an irreducible component $S'$ of $S$ is given by
\[
    K_{S'} + \Delta + c'B',
\]
where $\Delta$ is the gluing locus of $S'$ to the other irreducible components of $S$, and $B'$ is the restriction of
$B$ to $S'$. With this, it is easy to compute the log minimal model of $(S',\Delta+c'B')$, and as we will see, the log
minimal models for the different irreducible components glue to give the stable model of $(S,cB)$ for the smaller weight
$c'$. We carry this out in detail in \cref{sec:proof_main}.

\begin{remark}
    It is also possible to work in the opposite direction, beginning with the minimal weight $c=\frac{9-n}{N}+\epsilon$
    and computing the stable models as one increases $c$. This has the benefit of better clarifying the weighted stable
    surfaces appearing for larger weights, but the drawback that it is more difficult to compute stable models for
    increasing weights. We discuss some examples of this ``bottom-up'' approach in \cref{sec:bottom_up}.
\end{remark}

\subsection*{Acknowledgements}

I would like to thank Patricio Gallardo and Luca Schaffler for encouraging me that this project was a worthwhile
endeavor. I also thank Izzet Coskun, Maria Angelica Cueto, Patricio Gallardo, and Donggun Lee for very helpful
conversations and feedback.

\section{Compactifications of the moduli space of marked del Pezzo surfaces}
\label{sec:comps}

In this section we summarize the results of Hacking, Keel, and Tevelev constructing explicit compactifications of the
moduli spaces $Y(E_n)$ of marked del Pezzo surfaces of degree $9-n$, $n=5,6,7$, and using these compactifications to
construct the universal families of (weight 1) stable marked del Pezzo surfaces of degrees $9-n$, $n=5,6$
\cite{hackingStablePairTropical2009}.

\subsection{The log canonical compactification of the moduli space of marked del Pezzo surfaces}

\begin{definition}
    For $n=4,5,6,7,8$, let $\Lambda_{1,n}$ be the lattice $\bZ^{n+1}$ with basis $h,e_1,\ldots,e_n$ and pairing $h^2 =
    1$, $e_i^2=-1$, $h \cdot e_i = 0$, $e_i \cdot e_j = 0$, and let $k = -3h+e_1 + \cdots + e_n$. The $E_n$ lattice is
    the lattice
    \[
        \Lambda(E_n) = k^{\perp} = \{\alpha \in \Lambda_{1,n} \mid \alpha \cdot k = 0\}.
    \]
    The $E_n$ root system is the set
    \[
        E_n = \{\alpha \in \Lambda(E_n) \mid \alpha^2 = -2\} = \{\alpha \in \Lambda_{1,n} \mid \alpha \cdot k = 0,
        \alpha^2 = -2\}.
    \]
    The Weyl group $W(E_n)$ of $E_n$ is the group generated by reflections through the hyperplanes orthogonal to the
    roots. Equivalently, it is the group of automorphisms of $\Lambda_{1,n}$ preserving $k$ and the pairing
    \cite[Theorem 23.9]{maninCubicFormsAlgebra1986}.
\end{definition}

\begin{definition}
    A \emph{marking} of a smooth del Pezzo surface $S$ of degree $9-n$ is an isometry $\Pic S \to \Lambda_{1,n}$ sending
    $K_S$ to $k$. Let $Y(E_n)$ denote the moduli space of marked del Pezzo surfaces of degree $9-n$.
\end{definition}

It is well-known that a marking of a del Pezzo surface $S$ of degree $9-n$ is equivalent to a construction of $S$ as the
blowup of $n$ ordered points in general position on $\bP^2$ \cite{demazureSeminaireSingularitesSurfaces1980}. This
induces a labeling of the finitely many lines on the del Pezzo surface $S$, and vice-versa, thus we usually refer to a
marked del Pezzo surface as a pair $(S,B=B_1 + \cdots + B_n)$, where $B$ is the labeled sum of the finitely many lines
on $S$. For more details on markings of del Pezzo surfaces and their relationship to the $E_n$ lattice, we refer to the
classical sources \cite{demazureSeminaireSingularitesSurfaces1980, maninCubicFormsAlgebra1986}.

By choosing appropriate coordinates on $\bP^2$ so that a smooth del Pezzo surface $S$ of degree $9-n$ is given by the
blowup of $\bP^2$ at the $n$ points $(1:0:0)$, $(0:1:0)$, $(0:0:1)$, $(1:1:1)$, $(1:x_1:y_1)$,$\ldots$,
$(1:x_{n-4}:y_{n-4})$, one sees that $Y(E_n)$ is a smooth, rational variety of dimension $2n-8$, namely, the complement
in $\bA^{n-4}_{x_i} \times \bA^{n-4}_{y_i}$ of finitely many hypersurfaces describing the loci where the $n$ points on
$\bP^2$ specified above are not in general position (cf. \cite[Section 6.3]{hackingStablePairTropical2009},
\cite[Section 7.2]{schockQuasilinearTropicalCompactifications2023}).

Note that the Weyl group $W(E_n)$ acts on $Y(E_n)$ by permuting the markings of a del Pezzo surface of degree $9-n$. We
summarize the descriptions of some geometrically interesting $W(E_n)$-equivariant compactifications of $Y(E_n)$ for
$n=5,6,7$ previously constructed by Naruki \cite{narukiCrossRatioVariety1982}, Sekiguchi
\cite{sekiguchiCrossRatioVarieties2000}, and Hacking-Keel-Tevelev \cite{hackingStablePairTropical2009}.

\begin{definition}[{\cite[1.14]{hackingStablePairTropical2009}}] \label{def:bdry_complex}
    For $n=5,6,7$, let $\cR(E_n)$ be the abstract simplicial complex defined as follows. Vertices of $\cR(E_n)$ are in
    bijection with root subsystems of $E_n$ (where $E_5=D_5$) of the form $D_2 = A_1 \times A_1$ ($n=5$), $A_1$ and $A_2
    \times A_2 \times A_2$ ($n=6$), or $A_1$, $A_2$, $A_3 \times A_3$, and $A_7$ ($n=7$). A collection of vertices forms
    a simplex if the corresponding root subsystems are either pairwise orthogonal or disjoint. In addition, for $E_7$,
    remove the 7-simplices formed by 7-tuples of pairwise orthogonal $A_1$ root subsystems.
\end{definition}

\begin{theorem}[{\cite{hackingStablePairTropical2009}}] \label{thm:comps}
    Assume $n \leq 7$. Then there is a $W(E_n)$-equivariant smooth projective compactification $\oY(E_n)$ of $Y(E_n)$
    with simple normal crossings boundary $B$ and such that $K_{\oY(E_n)}+B$ is ample. The boundary complex of
    $\oY(E_n)$ is the simplicial complex $\cR(E_n)$. One has the following descriptions.
    \begin{enumerate}
        \item $\oY(E_5) \cong \oM_{0,5}$. There are 10 boundary divisors all isomorphic to $\oM_{0,4}$, corresponding to
            $D_2=A_1 \times A_1$ root subsystems of $E_5$.
        \item $\oY(E_6)$ is Naruki's cross-ratio variety \cite{narukiCrossRatioVariety1982}. It has 36 boundary divisors
            $\cong \oM_{0,6}$ corresponding to $A_1$ subsystems, and $40$ boundary divisors $\cong (\oM_{0,4})^3$
            corresponding to $A_2^3$ subsystems.
        \item $\oY(E_7)$ is Sekiguchi's cross-ratio variety \cite{sekiguchiCrossRatioVarieties2000}. It has a total of
            1065 boundary divisors.
            \begin{enumerate}
                \item 63 boundary divisors corresponding to $A_1$ subsystems, described in \cref{sec:A1divs} below.
                \item 336 boundary divisors $\cong \oM_{0,4} \times \oM_{0,7}$, corresponding to $A_2$ subsystems.
                \item 630 boundary divisors $\cong \oM_{0,5} \times \oM_{0,5} \times \oM_{0,4}$, corresponding to $A_3
                    \times A_3$ subsystems.
                \item 36 boundary divisors $\cong \oM_{0,8}$, corresponding to $A_7$ subsystems.
            \end{enumerate}
    \end{enumerate}
\end{theorem}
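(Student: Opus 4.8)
The plan is to construct $\oY(E_n)$ as a tropical (equivalently, wonderful) compactification of $Y(E_n)$ and to extract all of its stated properties from the combinatorics of the root system $E_n$. First I would fix the very affine structure on $Y(E_n)$: using the realization of $S$ as the blowup of $\bP^2$ at $n$ points \cite{demazureSeminaireSingularitesSurfaces1980}, together with the identification of the roots of $E_n$ with the $(-1)$- and $(-2)$-classes on $S$, the cross-ratio functions attached to the roots (each a ratio of products of the linear forms cutting out the configuration) embed $Y(E_n)$ into an algebraic torus $T$ whose character lattice is spanned by the roots. The complement $T \setminus Y(E_n)$ is cut out by the locus where the $n$ points degenerate out of general position, i.e.\ the hypersurfaces of $\bA^{2(n-4)}$ mentioned above, so $Y(E_n)$ is a hyperplane-arrangement-type complement, hence very affine. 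Because the residual action on markings acts on $T$ through the $W(E_n)$-action on the root lattice, everything in sight is $W(E_n)$-equivariant.

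Next I would compute the tropicalization $\mathrm{Trop}\,Y(E_n) \subset N_\bR$ and show that $Y(E_n)$ is \emph{sch\"on} in the sense of Tevelev. The expectation---and this is the technical heart of the construction---is that $\mathrm{Trop}\,Y(E_n)$ is a rational polyhedral fan whose cones are indexed exactly by the root subsystems appearing in \cite[1.14]{hackingStablePairTropical2009}: the rays correspond to the minimal (``irreducible'') degeneration types, namely the vertex subsystems of $\cR(E_n)$ ($D_2$ for $n=5$; $A_1$ and $A_2^3$ for $n=6$; $A_1, A_2, A_3^2, A_7$ for $n=7$), while higher cones correspond to collections that are pairwise orthogonal or disjoint, precisely the simplex condition of $\cR(E_n)$. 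I would then choose a smooth projective $W(E_n)$-invariant toric variety $X_\Sigma$ whose fan $\Sigma$ refines a completion of this tropical fan, and \emph{define} $\oY(E_n)$ as the closure of $Y(E_n)$ in $X_\Sigma$. Tevelev's criterion for sch\"on very affine varieties then yields at once that $\oY(E_n)$ is smooth and projective, that $B = \oY(E_n) \setminus Y(E_n)$ is simple normal crossings, and that the boundary strata are the intersections $\oY(E_n) \cap O_\sigma$ with torus orbits, indexed by the cones of $\mathrm{Trop}\,Y(E_n)$; passing to links then identifies the dual complex of $B$ with $\cR(E_n)$. The $E_7$ exception---deleting the $7$-simplices on pairwise orthogonal $A_1$'s---must be matched with the fact that the corresponding cone fails to meet the closure, equivalently that seven orthogonal nodal curves cannot degenerate simultaneously on a del Pezzo surface of degree $2$.

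For the ampleness of $K_{\oY(E_n)} + B$ I would identify $\oY(E_n)$ as the log canonical model of the pair. Since $Y(E_n)$ carries no nonconstant invertible functions, $K_{\oY(E_n)}+B$ is effective and supported on the boundary, and the cleanest route to ampleness is to express it as the restriction of an explicitly positive toric divisor on $X_\Sigma$, i.e.\ to exhibit a strictly convex piecewise-linear support function on $\mathrm{Trop}\,Y(E_n)$ representing $K+B$. A sanity check already pins down the answer for $n=5$: there $\oY(E_5)$ is a degree-$5$ del Pezzo surface and a direct computation with the ten $(-1)$-curves gives $K+B = -K_{\oY(E_5)}$, which is ample. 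I would then verify positivity in general stratum-by-stratum, using the product descriptions of the boundary divisors established below together with induction on $n$.

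Finally I would nail down the case-by-case descriptions. For $n=5$, degree-$4$ del Pezzo surfaces are Gale dual to configurations of $5$ points on $\bP^1$, giving a $W(E_5)$-equivariant isomorphism $Y(E_5) \cong M_{0,5}$ extending to $\oY(E_5) \cong \oM_{0,5}$, under which the $10$ boundary divisors $\oM_{0,4}$ match the $10$ subsystems $D_2 = A_1 \times A_1$. For $n=6,7$ I would match boundary divisors with subsystems by a direct count in the root system ($36$ positive roots and $40$ copies of $A_2^3$ in $E_6$; the $63,336,630,36$ subsystems of types $A_1,A_2,A_3^2,A_7$ in $E_7$), and read off the product structure of each boundary divisor from the induced degeneration: a stratum associated to a subsystem $R \subset E_n$ parametrizes the gluing data of the lower-degree (or disconnected) del Pezzo moduli determined by $R$ and its orthogonal complement, producing the stated products of $\oM_{0,k}$'s and recovering Naruki's \cite{narukiCrossRatioVariety1982} and Sekiguchi's \cite{sekiguchiCrossRatioVarieties2000} varieties. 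The main obstacle throughout is the pair consisting of the tropical computation and the ampleness: identifying $\mathrm{Trop}\,Y(E_n)$ with the root-subsystem fan (and explaining the $E_7$ exception) and proving strict convexity of the log canonical support function are where the real work lies, the smoothness, normal crossings, and boundary-complex statements being formal consequences of sch\"onness.
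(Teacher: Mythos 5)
This theorem is quoted wholesale from \cite{hackingStablePairTropical2009} and the paper offers no proof of its own, so the relevant comparison is with HKT's argument, of which your outline is essentially a faithful reconstruction: very affine structure via root cross-ratios, identification of the tropicalization with the fan of root subsystems, sch\"onness, the resulting smooth SNC tropical compactification with boundary complex $\cR(E_n)$ (including the correct explanation of the excised orthogonal $A_1$-tuples in $E_7$), and the case-by-case matching with $\oM_{0,5}$ and the Naruki and Sekiguchi varieties. The one point where your proposed route diverges from what actually works is ampleness of $K_{\oY(E_n)}+B$: for a sch\"on compactification $\oY(E_n)\subset X_\Sigma$ this divisor is not the restriction of a toric divisor, so it cannot be certified by a strictly convex support function on $\mathrm{Trop}\,Y(E_n)$; HKT instead establish it by the stratum-by-stratum, inductive analysis of the boundary fibrations that you list only as a fallback.
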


In the above theorem (and everywhere else in this article), $\oM_{0,n}$ refers to the moduli space of stable $n$-pointed
curves of genus zero. Recall that the boundary divisors of $\oM_{0,n}$ are of the form $D_{I,J} \cong \oM_{0,\lvert I
\rvert + 1} \times \oM_{0, \lvert J \rvert + 1}$, where $I,J \subset [n] = \{1,\ldots,n\}$ form a partition of $[n]$,
with $2 \lvert I \rvert, \lvert J \rvert \geq n-2$. We write $D_{\lvert I \rvert, \lvert J \rvert}$ to refer to a
divisor of the form $D_{I,J}$ without specifying the particular sets $I$ and $J$.

\begin{remark}
    The statement of the above theorem implies that $\oY(E_n)$ is the log canonical compactification of $Y(E_n)$, i.e.,
    $(\oY(E_n),B)$ has log canonical singularities and $K_{\oY(E_n)} + B$ is ample. This is one of the major results of
    \cite{hackingStablePairTropical2009}. The explicit descriptions of the boundary divisors and the count of boundary
    divisors of each type are given in \cite{narukiCrossRatioVariety1982, sekiguchiCrossRatioVarieties2000} (see also
    \cite[Sections 7, 9]{hackingStablePairTropical2009}).
\end{remark}

\begin{notation} \label{not:roots}
    For a vertex $\Theta$ of $\cR(E_n)$, we write $D(\Theta)$ for the corresponding boundary divisor of $\oY(E_n)$.

    We use the realization of $E_n$ induced by the marking on a del Pezzo surface of degree $9-n$, i.e., $E_n = \{\alpha
    \in \Pic S \mid \alpha \cdot K_S = 0, \alpha^2=-2\}$. For $n=4,5,6,7$, we write the positive roots of $E_n$ as $ij =
    e_i-e_j$, $ijk = h-e_i-e_j-e_k$ for $i,j,k \in [n]=\{1,\ldots,n\}$, and, for $n=6,7$, $i=2h-\sum_{j \neq i} e_j$,
    where $i=7$ for $n=6$ and $i \in [7]$ for $n=7$ (cf. \cite{hackingStablePairTropical2009}). We refer to root
    subsystems of $E_n$ by their (unordered) tuples of positive roots---for instance,
    \[
        (123,456,7) \times (12,13,23) \times (45,46,56)
    \]
    is the $A_2^3$ root subsystem of $E_6$ whose three $A_2$ components have the positive roots indicated.
\end{notation}

The following proposition describes the intersections of boundary divisors on $\oY(E_n)$.

\begin{proposition}[{\cite{narukiCrossRatioVariety1982}, \cite{sekiguchiCrossRatioVarieties2000}, \cite[Proposition
    9.17]{hackingStablePairTropical2009}}] \label{prop:bdry_strata}
    \begin{enumerate}
        \item On $\oY(E_5)$, two boundary divisors intersect if and only if the corresponding $D_2$ subsystems of $E_5$
            are orthogonal. The intersection in this case is a point.
        \item On $\oY(E_6)$, one has the following types of intersections among boundary divisors.
            \begin{enumerate}
                \item $D(A_1)$ and $D(A_1')$ intersect if and only if $A_1 \perp A_1'$. In this case, the intersection
                    is isomorphic to $\oM_{0,6} \cong D_{2,4}$ on $D(A_1) \cong \oM_{0,6}$.
                \item $D(A_1)$ and $D(A_2^3)$ intersect if and only if $A_1 \subset A_2^3$. In this case, the
                    intersection is isomorphic to $\oM_{0,4} \times \oM_{0,4} \cong D_{3,3}$ on $D(A_1) \cong
                    \oM_{0,6}$. This is a divisor of the form $p \times \bP^1 \times \bP^1$, $\bP^1 \times p \times \bP^1$,
                    or $\bP^1 \times \bP^1 \times p$, $p=0$, $1$, or $\infty$, on $D(A_2^3) \cong (\bP^1)^3$.
                \item No two $D(A_2^3)$ divisors intersect.
            \end{enumerate}
        \item On $\oY(E_7)$, one has the following types of intersections among boundary divisors.
            \begin{enumerate}
                \item $D(A_1)$ and $D(A_1')$ intersect if and only if $A_1 \perp A_1'$. In this case, the intersection
                    is isomorphic to a smooth projective fourfold called $\oZ(D_4)$, described in \cref{sec:A1divs}.
                \item $D(A_1)$ and $D(A_2)$ intersect if and only if either $A_1 \subset A_2$, or $A_1 \perp A_2$. In
                    the former case, the intersection is isomorphic to $p \times \oM_{0,7}$ on $D(A_2) \cong \oM_{0,4}
                    \times \oM_{0,7}$ (where $p=0,1,\infty$ is a boundary divisor of $\oM_{0,4} \cong \bP^1$). In the
                    latter case, the intersection is isomorphic to $\oM_{0,4} \times \oM_{0,6} \cong \oM_{0,4} \times
                    D_{2,5} \subset D(A_2)$.
                \item $D(A_1)$ and $D(A_3^2)$ intersect if and only if $A_1 \subset A_3^2$ or $A_1 \perp A_3^2$. In the
                    former case, the intersection is isomorphic to $\oM_{0,4} \times \oM_{0,5} \times \oM_{0,4} \cong
                    D_{2,3} \times \oM_{0,5} \times \oM_{0,4}$ or $\oM_{0,5} \times \oM_{0,4} \times \oM_{0,4} \cong
                    \oM_{0,5} \times D_{2,3} \times \oM_{0,4}$ as a divisor on $D(A_3^2) \cong \oM_{0,5} \times
                    \oM_{0,5} \times \oM_{0,4}$. In the latter case, the intersection is isomorphic to $\oM_{0,5} \times
                    \oM_{0,5} \times p$ on $D(A_3^2)$, where $p=0,1,\infty$ is a boundary divisor of $\oM_{0,4} \cong
                    \bP^1$.
                \item $D(A_1)$ and $D(A_7)$ intersect if and only if $A_1 \subset A_7$. In this case, the intersection
                    is isomorphic to $\oM_{0,7} \cong D_{2,6}$ on $D(A_7) \cong \oM_{0,8}$.
                \item $D(A_2)$ and $D(A_2')$ intersect if and only if $A_2 \perp A_2'$. In this case, the intersection
                    is isomorphic to $\oM_{0,4} \times \oM_{0,4} \times \oM_{0,5} \cong \oM_{0,4} \times D_{3,4}$ on
                    $D(A_2) \cong \oM_{0,4} \times \oM_{0,7}$.
                \item $D(A_2)$ and $D(A_3^2)$ intersect if and only if $A_2 \subset A_3^2$. In this case, the
                    intersection is isomorphic to $\oM_{0,4} \times \oM_{0,4} \times \oM_{0,5} \cong \oM_{0,4} \times
                    D_{3,4}$ on $D(A_2) \cong \oM_{0,4} \times \oM_{0,7}$, and to $\oM_{0,4} \times \oM_{0,5} \times
                    \oM_{0,4} \cong D_{2,3} \times \oM_{0,5} \times \oM_{0,4}$ or $\oM_{0,5} \times \oM_{0,4} \times
                    \oM_{0,4} \cong \oM_{0,5} \times D_{2,3} \times \oM_{0,4}$ on $D(A_3^2) \cong \oM_{0,5} \times
                    \oM_{0,5} \times \oM_{0,4}$.
                \item $D(A_2)$ and $D(A_7)$ intersect if and only if $A_2 \subset A_7$. In this case, the intersection
                    is isomorphic to $\oM_{0,4} \times \oM_{0,6} \cong \oM_{0,4} \times D_{2,5}$ on $D(A_2) \cong
                    \oM_{0,4} \times \oM_{0,7}$, and $D_{3,5}$ on $D(A_7) \cong \oM_{0,8}$.
                \item No two $D(A_3^2)$ divisors intersect.
                \item $D(A_3^2)$ and $D(A_7)$ intersect if and only if $A_3^2 \subset A_7$. In this case, the
                    intersection is isomorphic to $\oM_{0,5} \times \oM_{0,5} \times p$ as a divisor on $D(A_3^2) =
                    \oM_{0,5} \times \oM_{0,5} \times \oM_{0,4}$ (where $p=0,1,\infty$ is a boundary divisor of
                    $\oM_{0,4}$), and to $D_{4,4}$ on $D(A_7) \cong \oM_{0,8}$.
                \item No two $D(A_7)$ divisors intersect.
            \end{enumerate}
    \end{enumerate}
\end{proposition}

\begin{remark} \label{rmk:bdry_strata}
    Note that any divisor $D$ on $\oY(E_n)$ not of type $A_1$ on $\oY(E_7)$ is isomorphic to a product of copies of
    $\oM_{0,n_i}$'s, for appropriate $n_i$'s. The intersections of $D$ with the other boundary divisors on $\oY(E_n)$
    are in bijection with the natural boundary divisors of $D$ as a product of $\oM_{0,n_i}$'s. In particular, the
    intersections of any number of boundary divisors of $\oY(E_n)$ recover the natural boundary strata of the products
    of $\oM_{0,n_i}$'s. For the analogous results concerning $A_1$ divisors on $\oY(E_7)$, see
    \cref{prop:A1divs,prop:A1strata}.
\end{remark}

\subsection{$A_1$ divisors on $\oY(E_7)$} \label{sec:A1divs}

In order to more precisely describe the $A_1$ divisors on $\oY(E_7)$, we recall Naruki's explicit construction of
$\oY(E_6)$ as a sequence of blowups and blowdowns of a toric variety \cite{narukiCrossRatioVariety1982}.

Let $Z(D_4)$ be the complement in $(\bC^*)^4 \cong \Hom(\Lambda(D_4),\bC^*)$ of the hypertori $\{e^{\alpha}=1\}$ defined
by the positive roots $\alpha$ of the $D_4$ root system. The $k$-dimensional strata of this arrangement of hypertori
(i.e., the connected components of the intersections of the hypertori) are in bijection with root subsystems of $D_4$
obtained by deleting $k+1$ vertices from the extended Dynkin diagram for $D_4$ (see, for instance \cite[Theorem
7.17]{alexeevADESurfacesTheir2020}). Let $\oZ(D_4)$ be the minimal wonderful compactification of $Z(D_4)$, obtained as
follows. First let $X(\Sigma)$ be the complete toric variety associated to the Coxeter fan $\Sigma$ whose cones are the
$D_4$ Weyl chambers. Then $\oZ(D_4)$ is the blowup of $X(\Sigma)$ at the identity $e \in (\bC^*)^4$ (corresponding to
the whole $D_4$ root system), the closures of the 12 curves in $(\bC^*)^4$ corresponding to $A_3$ root subsystems, and
the closures of the 16 surfaces in $(\bC^*)^4$ corresponding to $A_2$ root subsystems. The divisors over the $A_3$
curves are isomorphic to $\oM_{0,4} \times \oM_{0,5}$, and $\oY(E_6)$ is obtained from $\oZ(D_4)$ by contracting each
such divisor via the projection $\oM_{0,4} \times \oM_{0,5} \to \oM_{0,5}$.

Note that the strict transform of the exceptional divisor over the identity $e \in (\bC^*)^4$ is isomorphic to the
blowup of $\bP^3$ at 12 points corresponding to $A_3$ root subsystems of $D_4$, and 16 lines corresponding to $A_2$ root
subsystems of $D_4$. This is the minimal wonderful compactification $\oX(D_4)$ of the complement $X(D_4)$ of the
hyperplanes in $\bP^3$ defined by the positive roots of $D_4$. This is mapped isomorphically to a divisor in $\oY(E_6)$
parameterizing marked cubic surfaces such that three specified lines (determined by the choice of $D_4$ subsystem) meet
in an Eckardt point. The $W(E_6)$-orbit of this divisor in $\oY(E_6)$ consists of 45 irreducible components, each
isomorphic to $\oX(D_4)$, making up the Eckardt locus of $\oY(E_6)$. We refer to \cite{narukiCrossRatioVariety1982} for
more details. (These divisors are called tritangent divisors in \cite{narukiCrossRatioVariety1982}.)

\begin{remark}
    The space $Z(D_4)$ is isomorphic to the moduli space of smooth marked cubic surfaces together with an anticanonical
    cycle consisting of three lines meeting in a triangle, and the morphism $\oZ(D_4) \to \oY(E_6)$ described above is
    induced by the morphism $Z(D_4) \to Y(E_6)$ forgetting the anticanonical cycle \cite[Appendix by E.
    Looijenga]{narukiCrossRatioVariety1982}. Hacking, Keel, and Tevelev similarly construct $\oY(E_n)$ for $5 \leq n
    \leq 7$ using the moduli space of smooth marked del Pezzo surfaces of degree $9-n$ together with an anticanonical
    cuspidal curve---this space is identified with the complement in $\bP^{n-1}$ of the $E_n$ hyperplane arrangement
    \cite[Section 6]{hackingStablePairTropical2009}.
\end{remark}

The spaces $\oZ(D_4)$ and $\oX(D_4)$ described above appear in the boundary of $\oY(E_7)$. The following
results are contained in \cite{sekiguchiCrossRatioVarieties2000}.

\begin{proposition}[{\cite{sekiguchiCrossRatioVarieties2000}}] \label{prop:A1divs}
    For $A_1 \subset E_7$, let $D^{\circ}(A_1)$ be the moduli space of arrangements of 7 points $p_1,\ldots,p_7$ in
    $\bP^2$ which are in general position except for the following.
    \begin{enumerate}
        \item $A_1=ij$: points $p_i$ and $p_j$ coincide.
        \item $A_1=ijk$: points $p_i,p_j,p_k$ lie on a line.
        \item $A_1=i$: points $p_j$ for $j \neq i$ lie on a conic.
    \end{enumerate}
    Then $D(A_1) \subset \oY(E_7)$ is a smooth, projective, simple normal crossings compactification of
    $D^{\circ}(A_1)$, whose boundary consists of one irreducible component for each intersection of $D(A_1)$ with
    another boundary divisor of $\oY(E_7)$. These intersections are described in \cref{prop:bdry_strata}, 3(a)--(d).
\end{proposition}

\begin{proposition}[{\cite{sekiguchiCrossRatioVarieties2000}}] \label{prop:A1strata}
    \begin{enumerate}
        \item Any nonempty intersection of two $D(A_1)$ divisors in $\oY(E_7)$ is isomorphic to $\oZ(D_4)$.
        \item There are two types of nonempty intersections of three $D(A_1)$ divisors in $\oY(E_7)$.
            \begin{enumerate}
                \item One type isomorphic to the variety $\oX(D_4)$ described above, appearing in $D(A_1) \cap D(A_1)
                    \cong \oZ(D_4)$ of the strict transform of the exceptional divisor over $e \in (\bC^*)^4$. This type
                    occurs when the orthogonal complement of the three $A_1$'s is a $D_4 \subset E_7$.
                \item One type appearing in $D(A_1) \cap D(A_1) \cong \oZ(D_4)$ as the strict transform of the closure of
                    a $D_4$ root hypertorus. This type occurs when the orthogonal complement of the three $A_1$'s is a $3A_1
                    \subset E_7$.
            \end{enumerate}
        \item Any nonempty intersection of four $D(A_1)$ divisors in $\oY(E_7)$ is isomorphic to the blowup of $\bP^2$
            at six points such that any three points lie on a line. This is the minimal resolution of a cubic surface
            with four $A_1$ singularities.
        \item Any nonempty intersection of five $D(A_1)$ divisors in $\oY(E_7)$ is a curve $\cong \bP^1$.
        \item Any nonempty intersection of six $D(A_1)$ divisors in $\oY(E_7)$ is a point.
    \end{enumerate}
\end{proposition}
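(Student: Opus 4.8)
The plan is to realize each intersection of $D(A_1)$ divisors as a boundary stratum of $\oY(E_7)$ governed by a configuration of mutually orthogonal roots, and then to identify that stratum using the explicit construction of $\oY(E_7)$ together with the isomorphism $D(A_1) \cap D(A_1') \cong \oZ(D_4)$ supplied by \cref{prop:A1divs}. Since the boundary complex of $\oY(E_7)$ is $\cR(E_7)$, in which a simplex spanned by $A_1$-vertices is exactly a set of pairwise orthogonal roots (the maximal $7A_1$-simplices having been removed), a nonempty intersection of $j$ divisors $D(A_1)$ is a transverse stratum of codimension $j$ in the $6$-dimensional space $\oY(E_7)$, hence of dimension $6-j$, indexed by $j$ pairwise orthogonal roots of $E_7$. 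The first step is therefore the root-theoretic classification of the $W(E_7)$-orbits of such $j$-tuples for $j = 2, \dots, 6$, recording in each case the residual root system that governs the local arrangement.

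The key structural input is that, once $D(A_1) \cap D(A_1') \cong \oZ(D_4)$ is known, every intersection of $j \geq 3$ divisors $D(A_1)$ is obtained by cutting this $\oZ(D_4)$ with $j-2$ further divisors $D(A_1)$, and so is a boundary stratum of $\oZ(D_4)$ of dimension $6-j$. I would analyze these using Naruki's realization of $\oZ(D_4)$ as the blowup of the Coxeter toric variety $X(\Sigma)$ at the identity, the twelve $A_3$-curves, and the sixteen $A_2$-surfaces, together with the dictionary between strata of the $D_4$ hypertorus arrangement and the root subsystems of $D_4$ obtained by deleting vertices of $\widetilde{D_4}$. Tracking how a third orthogonal root restricts to $\oZ(D_4)$ then separates the two cases of part (2): the residual direction either exhausts the $D_4$, giving the exceptional divisor over the identity, which is $\oX(D_4)$ (orthogonal complement $D_4$), or it is a single root of $D_4$, giving the strict transform of a root hypertorus (residual complement $3A_1$). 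The cases $j = 4,5,6$ are then deeper strata of $\oZ(D_4)$, whose dimensions $2,1,0$ match a surface, a curve, and a point.

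The final and hardest step is to pin down the isomorphism types, the main obstacle being part (3): identifying the surface stratum ($j=4$) with the blowup of $\bP^2$ at six points in the stated special position. For this I would carry out a local computation in the blowup construction of $\oZ(D_4)$, starting from the toric surface cut out in $X(\Sigma)$ and tracking the effect of each blowup center (the identity and the $A_3$-curves) meeting it, so as to exhibit the six $(-1)$-curves and their incidences; alternatively I would argue moduli-theoretically, using \cref{prop:A1divs} to present the open stratum as the configuration space of the residual points in $\bP^2$ subject to the four orthogonal incidence conditions, and then compute its natural compactification. Once the surface is identified, the curve case ($j=5$, a one-dimensional boundary stratum of this rational surface, necessarily $\cong \bP^1$) and the point case ($j=6$) follow by imposing one and two further orthogonal degenerations, completing the verification of all five statements.
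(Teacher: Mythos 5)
The paper never proves this proposition: it is imported from Sekiguchi, the entire justification being the bracketed attribution and the preceding sentence ``The following results are essentially contained in \cite{sekiguchiCrossRatioVarieties2000}.'' So there is no internal argument to compare against, and your proposal is necessarily a reconstruction rather than a retracing. In outline it is the right reconstruction, and it is consistent with how the paper uses this material elsewhere: part (1) is literally \cref{prop:A1divs}(1); for $j \geq 3$ the correct move is, as you say, to identify what each further orthogonal $D(A_1)$ cuts out on $D(A_1) \cap D(A_1') \cong \oZ(D_4)$, and your dichotomy (the exceptional divisor over the identity, i.e.\ $\oX(D_4)$, when the third root is the $A_1$ factor of the rank-$5$ complement $D_4 \times A_1$; the strict transform of the hypertorus $\{e^{\alpha_3}=1\}$ when $\alpha_3$ lies in the $D_4$ factor) is exactly the content of part (2). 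Classifying $W(E_7)$-orbits of orthogonal $j$-tuples first is also the right organizing principle, since the proposition quantifies over \emph{all} nonempty intersections.

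That said, the sketch still owes several computations, and one of them hides a genuine subtlety. (i) For $j=4$ your own first step produces \emph{two} $W(E_7)$-orbits of $4$-tuples of pairwise orthogonal roots, distinguished by whether the sum of the four roots is twice a root (equivalently, whether the tuple contains a triple of your type (a)): compare $\{12,34,56,7\}$, with sum $2\cdot(246)$, against $\{12,34,56,567\}$. Both give nonempty strata, yet your final step speaks of ``the surface stratum'' in the singular. The identification must be run separately in the two cases, and the computations are different: the first stratum sits in $\oZ(D_4)$ as the strict transform of the closure of the subtorus $\{e^{56}=e^{7}=1\}$ (the toric computation you describe), while the second sits inside $\oX(D_4)$ as the strict transform of a root hyperplane $\bP^2 \subset \bP^3$, where one checks that the six $A_3$-points of $D_4$ lying on that hyperplane are the vertices of a complete quadrilateral whose four lines are restricted root hyperplanes. (ii) Relatedly, watch your complement bookkeeping: for a type (b) triple such as $\{12,34,56\}$ the full orthogonal complement in $E_7$ is $4A_1$ (namely $\{127,347,7,567\}$), with $3A_1$ being only its part inside the residual $D_4$; your parenthetical ``residual complement $3A_1$'' and the proposition's wording should be reconciled to one convention before the orbit count is trusted. (iii) ``Necessarily $\cong \bP^1$'' for $j=5$ is not a proof: a one-dimensional boundary stratum of a rational surface is not rational for free. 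You must identify each such curve as an exceptional curve or the strict transform of one of the seven arrangement lines on the blown-up $\bP^2$ from part (3). (iv) For $j=6$, transversality gives only dimension zero; that the intersection is \emph{a} point, rather than several, still requires a connectedness/identification argument one step further down. None of these defeat the approach, but as written they are gaps, not details.
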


\begin{remark}
    Note in particular that the intersection $Z$ of several $A_1$ divisors is a smooth, projective, simple normal
    crossings compactification of the moduli space $Z^{\circ}$ of arrangements of 7 points $p_1,\ldots,p_7$ in $\bP^2$
    which are in general position except for the degeneracies described by the $A_1$'s. For instance, for $D(7) \cap
    D(56)$, the points $p_1,\ldots,p_5$ lie on a conic, and $p_6$ lies on the line tangent to the conic at $p_5$.
\end{remark}

\begin{remark}
    As we will see in \cref{sec:eckardts}, the first type of intersection of three $A_1$ divisors in $\oY(E_7)$
    (occuring when the orthogonal complement of the three $A_1$'s is a $D_4 \subset E_7$) corresponds geometrically to
    Eckardt points in the fibers of the universal family of weighted stable marked cubic surfaces.
\end{remark}

\section{The morphism $\pi : \oY(E_7) \to \oY(E_6)$ and its flattening} \label{sec:morphisms}

In this section, we summarize the results of Hacking, Keel, and Tevelev, showing that the natural morphism $Y(E_7) \to
Y(E_6)$ obtained by contracting a $(-1)$-curve on a del Pezzo surface of degree 2 extends to an explicitly described
morphism of compactifications $\pi : \oY(E_7) \to \oY(E_6)$ \cite{hackingStablePairTropical2009} (\cref{thm:pi}).
Furthermore, this morphism is flat except along a certain union of boundary divisors of $\oY(E_7)$, and there is a
canonical flattening procedure yielding a flat morphism $\wt\pi : \wY(E_7) \to \wY(E_6)$, where $\wY(E_6)$ and
$\wY(E_7)$ are explicit birational modifications of $\oY(E_6)$ and $\oY(E_7)$ \cite[Proposition
10.23]{hackingStablePairTropical2009} (\cref{sec:flattening}). As we will see in
\cref{sec:fibers,sec:eckardts,sec:weighted_cubics}, $\wt\pi$ gives the universal family of \emph{weighted} stable marked
cubic surfaces, for weights $1/2 < c \leq 2/3$, and a suitable blowup $\ddot{Y}(E_7)$ of $\wY(E_7)$ induces the
universal family of weight 1 stable marked cubic surfaces \cite[Theorem 10.31]{hackingStablePairTropical2009}
(\cref{prop:resolve_eckardt}). The failure of $\wt\pi : \wY(E_7) \to \wY(E_6)$ itself to be the weight 1 universal family
is due to the presence of Eckardt points on the fibers, see \cref{sec:eckardts}.

\begin{remark} \label{rmk:deg4_map}
    Analogously to the results described in this section, Hacking, Keel, and Tevelev also show that the natural morphism
    $Y(E_6) \to Y(E_5)$ extends to a morphism of compactifications $\oY(E_6) \to \oY(E_5)$, and in this case the
    morphism is flat and gives the universal family of stable marked del Pezzo surfaces of degree 4 \cite[Theorem
    10.19]{hackingStablePairTropical2009}, see \cref{sec:deg4}.
\end{remark}

\begin{theorem}[{\cite[Proposition 10.9]{hackingStablePairTropical2009}}] \label{thm:pi}
    The morphism $Y(E_7) \to Y(E_6)$ obtained by contracting a $(-1)$-curve on a del Pezzo surface of degree $2$ extends
    to a morphism of compactifications $\pi : \oY(E_7) \to \oY(E_6)$. The restriction of $\pi$ to each boundary divisor
    of $\oY(E_7)$ is described in \cref{tab:pi}, where each morphism $\pi_I : \oM_{0,n} \to \oM_{0,m}$ denotes the
    canonical fibration dropping points not labeled by $I$.
    \begin{table}[htpb]
        \centering
        \caption{The morphism $\pi : \oY(E_{7}) \to \oY(E_6)$ (see \cite[Proposition
        10.9]{hackingStablePairTropical2009} for more details).}
        \label{tab:pi}
        \begin{tabular}{| c | c | c | c |}
            \hline
            $D(\Theta) \subset \oY(E_7)$ & $\pi(D(\Theta)) \subset \oY(E_6)$ & Condition & $\pi\vert_{D(\Theta)}$ \\
            \hline
            \hline
            $D(A_1)$ & $D(A_1)$ & $A_1 \subset E_6$ & \cref{lem:A1_fiber} \\
            $D(A_1)$ & $\oY(E_6)$ & $A_1 \not\subset E_6$ & \\
            \hline
            $D(A_2)$ & $D(A_1)$ & $A_2 \cap E_6 = A_1$ & $\oM_{0,4} \times \oM_{0,7} \to pt \times \oM_{0,6}$ \\
            \multirow{2}{*}{$D(A_2)$} & \multirow{2}{*}{$D(A_2^3)$} & $A_2 \subset E_6 = A_1$,  & \multirow{2}{*}{$\oM_{0,4} \times
            \oM_{0,7} \to \oM_{0,4} \times (\oM_{0,4} \times \oM_{0,4})$} \\
                     & & $A_2^3 = A_2 \times A_2^{\perp}$ & \\
            \hline
            \multirow{2}{*}{$D(A_3^2)$} & \multirow{2}{*}{$D(A_2^3)$} & $A_3^2 \cap E_6 = A_2^2$,  & \multirow{2}{*}{$\oM_{0,5}
            \times \oM_{0,5} \times \oM_{0,4} \to \oM_{0,4} \times \oM_{0,4} \times \oM_{0,4}$} \\
                       & & $A_2^3 = A_2^2 \times (A_2^2)^{\perp}$ & \\
            $D(A_3^2)$ & $D(A_1) \cap D(A_1)$ & $A_3^2 \cap E_6 = A_3 \times A_1 \times A_1$ & $\oM_{0,5} \times
            \oM_{0,5} \times \oM_{0,4} \to \oM_{0,5}$ \\
            \hline
            $D(A_7)$ & $D(A_1)$ & $A_7 \cap E_6 = A_1 \times A_5$ & $\oM_{0,8} \to \oM_{0,6}$ \\
            \hline
        \end{tabular}
    \end{table}
\end{theorem}

\subsection{Flattening of $\pi : \oY(E_7) \to \oY(E_6)$} \label{sec:flattening}

Note from \cref{thm:pi} that the morphism $\pi : \oY(E_7) \to \oY(E_6)$ is not flat along the union of the $D(A_3^2)
\mapsto D(A_1) \cap D(A_1)$ divisors (henceforth \emph{non-flat} $A_3^2$ divisors \cite[Definition
10.10]{hackingStablePairTropical2009}). There is a canonical flattening of the morphism $\pi$, described as follows.

The variety $\oY(E_n)$ is a \emph{tropical compactification}, obtained as the closure of $Y(E_n)$ in a toric variety
$X(\cF(E_n))$ associated to a fan $\cF(E_n)$ whose underlying simplicial complex is the complex $\cR(E_n)$ of
\cref{def:bdry_complex} (\cite[Theorem 1.16]{hackingStablePairTropical2009}). The dominant morphism $Y(E_{n+1}) \to
Y(E_n)$ contracting a $(-1)$-curve induces a surjection of fans $\cF(E_{n+1}) \to \cF(E_n)$ by \cite[Theorem
1.17]{hackingStablePairTropical2009}, and the corresponding morphism of toric varieties pulls back to the morphism $\pi
: \oY(E_{n+1}) \to \oY(E_n)$ \cite[Theorem 1.18]{hackingStablePairTropical2009}. In particular, the morphism $\pi :
\oY(E_{n+1}) \to \oY(E_n)$ is entirely determined by the morphism of toric varieties $X(\cF(E_{n+1})) \to X(\cF(E_n))$.
There is a canonical combinatorial procedure for flattening a morphism of toric varieties (or more generally, toroidal
embeddings) \cite{abramovichWeakSemistableReduction2000, hackingStablePairTropical2009}, which in the present situation
is described as follows.

\begin{proposition}[{\cite[Section 10]{hackingStablePairTropical2009}}] \label{prop:flattening}
    Let $p : \cF(E_7) \to \cF(E_6)$ be the morphism of fans induced by $\pi : \oY(E_7) \to \oY(E_6)$. Let $\cwF(E_6)$ be
    the refinement of $\cF(E_6)$ obtained by taking the barycentric subdivision of cones formed by 4-tuples of pairwise
    orthogonal $A_1$ subsystems, as well as the corresponding minimal subdivisions of the cones formed by 3 $A_1$
    subsystems and an $A_2^3$ subsystem. Let $\cwF(E_7)$ be the corresponding refinement of $\cF(E_7)$,
    \[
        \cwF(E_7) = \{p^{-1}(\gamma) \cap \sigma \mid \gamma \in \cwF(E_6), \sigma \in \cF(E_7)\}.
    \]
    Let $\wY(E_n)$ be the toroidal modification of $\oY(E_n)$ corresponding to the refinement $\cwF(E_n)$ of $\cF(E_n)$.
    Then the induced morphism $\wt\pi : \wY(E_7) \to \wY(E_6)$ is flat with reduced fibers.
\end{proposition}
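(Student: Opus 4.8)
The plan is to reduce the statement to the combinatorial criterion of Abramovich and Karu \cite{abramovichWeakSemistableReduction2000} that characterizes when a toroidal morphism is flat with reduced fibers, and then to verify that criterion for the refined map of fans $\cwF(E_7) \to \cwF(E_6)$. Recall the criterion in the form I would use: a dominant toroidal morphism of toroidal embeddings without self-intersection, with \emph{smooth} target, is flat with reduced fibers exactly when the induced map of cone complexes $\Delta_{\wt\pi}$ is (i) \emph{combinatorially equidimensional}, i.e.\ every cone of the source is carried \emph{onto} a cone of the target, and (ii) \emph{lattice-surjective}, i.e.\ for every cone $\sigma$ of the source with image $\tau$, the induced map of saturated sublattices $N_\sigma \to N_\tau$ (where $N_\sigma = N(E_7) \cap \langle\sigma\rangle$ is the intersection of $N(E_7)$ with the linear span of $\sigma$, and similarly for $N_\tau$) is surjective. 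Condition (i) gives equidimensional fibers, whence flatness by miracle flatness once the total space is Cohen--Macaulay (automatic for toroidal embeddings) and the base is smooth; condition (ii) is precisely the vanishing of all fiber multiplicities, i.e.\ reducedness.

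First I would check the two structural inputs needed to apply this criterion. One is that $\cwF(E_7)$, as defined by the fiber-product formula $\cwF(E_7)=\{p^{-1}(\gamma)\cap\sigma\}$, is a genuine rational polyhedral refinement of $\cF(E_7)$ compatible with the given toroidal structure, so that the toroidal modification $\wY(E_7)$ exists; this is routine but must be recorded. The other is that $\wY(E_6)$ remains smooth, which I would deduce from the observation that barycentric subdivision of a unimodular simplicial cone is again unimodular: a maximal cone of the subdivision of $\langle v_1,\dots,v_r\rangle$ is spanned by partial sums $v_{i_1},\,v_{i_1}+v_{i_2},\dots$ whose transition matrix is unipotent. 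Since the cones of $\cF(E_6)$ being subdivided (the $4A_1$ cones and the associated $3A_1+A_2^3$ cones) are smooth in the smooth variety $\oY(E_6)$, the explicit subdivision $\cwF(E_6)$ keeps the base smooth, and miracle flatness becomes available.

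The substance lies in conditions (i) and (ii) along the locus where $\pi$ already fails to be flat. By \cref{thm:pi}, $\pi\colon\oY(E_7)\to\oY(E_6)$ is flat with reduced fibers away from the non-flat $A_3^2$ divisors $D(A_3^2)\mapsto D(A_1)\cap D(A_1)$, so on cones avoiding these the original $p$ satisfies (i) and (ii) and the subdivision changes nothing. On the offending cones, equidimensionality (i) is designed into the pullback construction: each cone of $\cwF(E_7)$ is of the form $p^{-1}(\gamma)\cap\sigma$, hence maps \emph{into} $\gamma$, and I would check that the barycentric subdivision is fine enough that these preimages actually surject onto cones of $\cwF(E_6)$ — the point being that before subdivision the ray spanned by an $A_3^2$ vertex mapped into the \emph{interior} of the $2A_1$ cone $\langle A_1,A_1\rangle$, so its image was not a face of $\cF(E_6)$, and inserting that image as a new ray (equivalently, the barycentric rays, the primitive sums $v_i+v_j+\cdots$ of the constituent $A_1$ generators) is exactly what repairs equidimensionality. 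The last and hardest step is lattice-surjectivity (ii): using the explicit realization of $E_7$ and $E_6$ by the roots $ij$, $ijk$, $i$, I would compute the primitive generators of all relevant rays, including the new barycentric rays, and verify that $p$ carries the saturated sublattice $N_\sigma$ of each new cone onto that of its image $\tau$.

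I expect step (ii) to be the main obstacle. It amounts to a careful lattice-index computation at the cones lying over the non-flat $A_3^2$ divisors, precisely where the vertical $A_3\subset A_3^2$ supplies the extra fiber direction responsible for the original non-flatness. The delicate content is that the barycentric subdivision must be simultaneously coarse enough to be the canonical (minimal) one dictated by the fiber product and fine enough to kill every fiber multiplicity; establishing that the primitive sums generate the full saturated image lattice $N_\tau$ — and not merely a finite-index sublattice — is exactly the statement that no non-reduced fibers survive, and is where the explicit root-system bookkeeping of \cite{hackingStablePairTropical2009} does the real work.
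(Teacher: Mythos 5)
The paper does not prove this proposition at all: it is imported verbatim from \cite[Section 10]{hackingStablePairTropical2009}, and the machinery invoked there is exactly the Abramovich--Karu flattening criterion you describe. So your route coincides with the source's: reduce to (i) combinatorial equidimensionality (every cone of $\cwF(E_7)$ maps onto a cone of $\cwF(E_6)$) and (ii) surjectivity of the induced maps of saturated sublattices $N_\sigma \to N_\tau$, over a smooth base; your observation that barycentric subdivision of a unimodular cone is unimodular (unipotent transition matrix on the partial sums) correctly accounts for the smoothness of $\wY(E_6)$ that miracle flatness requires, and matches the remark following the proposition in the paper. Your diagnosis of where the original $\pi$ fails---the $A_3^2$ ray landing in the interior of the $2A_1$ cone, repaired by inserting the barycentric rays---is also the right picture.

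The caveat is that what you have written is a proof plan, not a proof: both substantive verifications are announced rather than carried out. Concretely, you still owe (a) the check that for every cone $\sigma$ of $\cF(E_7)$ meeting the non-flat $A_3^2$ locus and every $\gamma$ in the subdivided $4A_1$ and $3A_1 + A_2^3$ cones, the image $p(\sigma)\cap\gamma$ is a \emph{face} of $\gamma$ (equidimensionality is not automatic from the fiber-product formula---that formula only guarantees the image lands \emph{in} $\gamma$), and (b) the lattice-index computation showing the primitive generators of each new cone, including the non-simplicial ones of $\cwF(E_7)$, surject onto $N_\tau$ rather than a finite-index sublattice. These two checks are the entire content of the proposition; deferring them to ``the explicit root-system bookkeeping of \cite{hackingStablePairTropical2009}'' is, in effect, citing the result you are trying to prove. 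Since the paper itself also only cites the source, your proposal is a faithful account of the intended argument, but it would not stand alone as a proof without executing step (b) on at least one representative $A_3^2$ cone and appealing to $W(E_7)$-equivariance for the rest.
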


\begin{remark}
    Note that $\wY(E_6)$ is the blowup of $\oY(E_6)$, in increasing order of dimension, of the boundary strata formed by
    intersections of $D(A_1)$ divisors (cf. \cref{thm:comps}). Thus $\wY(E_6)$ is a smooth projective variety. On the
    other hand, the fan $\cwF(E_7)$ is no longer even simplicial, and correspondingly $\wY(E_7)$ is no longer smooth.
\end{remark}

\section{Fibers of $\wt\pi : \wY(E_7) \to \wY(E_6)$} \label{sec:fibers}

The goal of this section is to compute the fibers of the flat family $\wt\pi : \wY(E_7) \to \wY(E_6)$ constructed in
\cref{prop:flattening}. This is the most technically challenging part of the present article. For a simpler example of
these computations, we refer the reader to \cref{sec:fibers_deg4}, where we analogously compute the fibers of the
flat family $\oY(E_6) \to \oY(E_5)$.

Recall that $\wY(E_6)$ is the blowup of $\oY(E_6)$ along all intersections of $A_1$ divisors, in increasing order of
dimension. We introduce the following notation, cf. \cite{renTropicalizationPezzoSurfaces2016}.

\begin{notation} \label{not:strata}
    The boundary divisors of type $a_i$, $i=1,2,3,4$ on $\wY(E_6)$ are the strict transforms of the exceptional divisors
    over the intersections of $i$ $A_1$ divisors in $\oY(E_6)$. For $i=1$, we write $a=a_1$. The boundary divisors of
    type $b$ are the strict transforms of the $A_2^3$ divisors on $\oY(E_6)$. Higher codimension boundary strata are
    labeled by juxtaposition---thus for instance the curves of type $aa_2a_3$ on $\wY(E_6)$ are the curves obtained as
    intersections of a boundary divisor of type $a$, a boundary divisor of type $a_2$, and a boundary divisor of type
    $a_3$.

    Let $T$ be a boundary stratum of $\wY(E_6)$, labeled as above. We say a surface $(S,B)$ obtained as the fiber of
    $\wt\pi : \wY(E_7) \to \wY(E_6)$ over a general point of $T$ is of type $T$.
\end{notation}

\begin{remark}
    It is straightforward to count the number of strata of $\wY(E_6)$ of each type---see \cite[Table
    1]{renTropicalizationPezzoSurfaces2016}. However, we note a mistake in the aforementioned table, which states that
    there are 1620 strata of type $a_4$, when in fact there are only 135.
\end{remark}

\begin{notation}[{\cite[Definition 10.18]{hackingStablePairTropical2009}}] \label{not:hor_divs}
    The \emph{horizontal} $A_1$ divisors on $\wY(E_7)$ are the strict transforms of the $A_1$ divisors of $\oY(E_7)$
    which surject onto $\oY(E_6)$ under the morphism $\pi : \oY(E_7) \to \oY(E_6)$. We denote by $B_{\wt\pi}$ the sum of
    the horizontal $A_1$ divisors on $\wY(E_7)$. The \emph{lines} on a fiber of $\wt\pi : \wY(E_7) \to \wY(E_6)$ are
    given by the intersections of this fiber with the horizontal $A_1$ divisors.
\end{notation}

Using the natural embedding of $E_6$ in $E_7$ induced by the realizations of these root systems described in
\cref{not:roots}, the horizontal $A_1$ divisors on $\wY(E_7)$ correspond to the 27 $A_1$ root subsystems of $E_7$ given
by $A_1 = (ij7)$, $ij \subset [6]$, $(i)$, $i=1,\ldots,6$, and $(i7)$, $i=1,\ldots,6$.

In the below proposition, we write $\wt\pi : (\wY(E_7),B_{\wt\pi}) \to \wY(E_6)$ in order to indicate the fact that we
keep track of the lines on the fibers of $\wt\pi$ cut out by the horizontal $A_1$ divisors.

\begin{theorem} \label{thm:fibers}
    The fiber of the morphism $\wt\pi : (\wY(E_7), B_{\wt\pi}) \to \wY(E_6)$ over a general point of $\wY(E_6)$ (i.e., a
    point of $Y(E_6)$) is a smooth marked cubic surface. For a given boundary stratum $Z$ of $\wY(E_6)$, the fiber of
    $\wt\pi$ over a general point of $Z$ is a reducible surface as described in
    \cref{fig:a_12-23,fig:a2_12-23,fig:aa2_12-23,fig:a3_12-23,fig:a3_12-23_degens,fig:a4_12-23,fig:a4_12-23_degens,fig:b_13-23,fig:ab_12-23,fig:a2b_12-23,fig:aa2b_12-23,fig:a3b_12-23,fig:a3b_degens_12-23}
    and \cref{sec:root_combs}.
\end{theorem}

\begin{proof}
    The general strategy behind our computation of the fibers of $\wt\pi : \wY(E_7) \to \wY(E_6)$ is the following. Let
    $Z$ be a codimension $k$ stratum of $\wY(E_6)$, and let $(S,B)$ be the fiber of $\wt\pi$ over a general point of
    $Z$. For each codimension $k$ stratum $W$ of $\wY(E_7)$ dominating $Z$, the general fiber of $\wt\pi\vert_{W} : W
    \to Z$ is a surface $S'$ yielding an irreducible component of the fiber $(S,B)$. The lines $B'$ on $S'$ are obtained
    by the intersections of $S'$ with the horizontal $A_1$ divisors on $\wY(E_7)$ (equivalently, the general fibers of
    the intersections $W \cap D(A_1)$ for $D(A_1)$ a horizontal $A_1$ divisor). The fiber $(S,B)$ is the union of the
    irreducible components $(S',B')$ for each codimension $k$ stratum $W$ of $\wY(E_7)$ dominating $Z$, where the
    intersections of these irreducible components are described by the general fibers of the restrictions of $\wt\pi$ to
    the intersections of the corresponding strata of $\wY(E_7)$. In order to compute the general fibers of the
    restrictions of $\wt\pi$, we use \cref{tab:pi} together with the lemmas of \cref{sec:restricted_fibers} below.

    In practice, the strategy of the previous paragraph requires a minor modification. Namely, recall that $\wt\pi :
    \wY(E_7) \to \wY(E_6)$ is the canonical flattening of the morphism $\pi : \oY(E_7) \to \oY(E_6)$, and $\pi$ is flat
    outside of the union of the non-flat $A_3^2$ divisors. Therefore, outside of the non-flat $A_3^2$ divisors, $\wt\pi$ is
    the pullback of $\pi$, and on this locus we may compute the fibers of $\pi$ instead (see \cite[Proof of Proposition
    10.23]{hackingStablePairTropical2009}.) To handle the fibers of $\wt\pi$ near the non-flat $A_3^2$ divisors, we describe
    the restriction of $\wt\pi$ to the strict transforms of these divisors, see \cref{lem:nonflat_A32}, \cite[Proof of
    Proposition 10.27]{hackingStablePairTropical2009}.

    We apply the strategy summarized above to a chosen $W(E_6)$-representative of each type boundary stratum of
    $\wY(E_6)$; this is sufficient as the boundary strata of a given type are transitively permuted by $W(E_6)$. The
    results are described by combinatorics of root subsystems $E_6$ and $E_7$ described in detail in
    \cref{sec:root_combs}. The tables in \textit{loc. cit.} list, for each chosen boundary stratum $Z$ of $\wY(E_6)$,
    the collections of root subsystems of $E_7$ yielding the corresponding boundary strata of $\wY(E_7)$ dominating $Z$.
    From these root subsystems it is easy to describe the lines on the corresponding irreducible components by
    intersecting with the horizontal $D(A_1)$ divisors for $A_1$ contained in or orthogonal to the given collection of
    root subsystems. Likewise we describe the intersections of these irreducible components by examining when the
    corresponding collections of root subsystems are compatible and so form a higher codimension boundary stratum of
    $\wY(E_7)$. As a result of these computations, we obtain the surfaces described in the relevant figures of the
    statement of the proposition.
\end{proof}

\begin{remark}
    The validity of the above strategy is justified by the fact that the morphism $\wt\pi : \wY(E_7) \to \wY(E_6)$ is
    obtained as the pullback of a morphism of toric varieties $X(\cwF(E_7)) \to X(\cwF(E_6))$ (\cref{prop:flattening}).
    The strategy outlined above is just the pullback of the usual strategy for computing the fibers of a morphism of
    toric varieties, see for instance \cite[Section 2]{huToricMorphismsFibrations2004}. Partial results concerning the
    fibers of $X(\cwF(E_7)) \to X(\cwF(E_6))$ (or rather, of the corresponding map of fans $\cwF(E_7) \to \cwF(E_6)$)
    have been described in \cite[Table 1]{renTropicalizationPezzoSurfaces2016}, and our results agree with \textit{loc.
    cit}.
\end{remark}

\subsection{Fibers of restrictions of $\wt\pi$} \label{sec:restricted_fibers}

In this subsection we collect some lemmas that describe the general fibers of the restrictions of $\wt\pi : \wY(E_7) \to
\wY(E_6)$. Recall from the proof \cref{thm:fibers} that these yield irreducible components of the fibers of $\wt\pi :
\wY(E_7) \to \wY(E_6)$. As discussed in the proof of \textit{loc. cit.}, outside of the union of the non-flat $A_3^2$
divisors, we can instead compute the fibers of $\pi : \oY(E_7) \to \oY(E_6)$.

\begin{lemma} \label{lem:fiber_m0n}
    The general fiber of the natural forgetful map $\oM_{0,n+2} \to \oM_{0,n}$, forgetting the last two marked points,
    is the blowup $Bl_n\bF_0$ of $\bF_0 \cong \bP^1 \times \bP^1$ at $n$ points on the diagonal.
\end{lemma}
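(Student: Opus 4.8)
The plan is to realize the map $\oM_{0,n+2} \to \oM_{0,n}$ as the composition of the two forgetful morphisms $\oM_{0,n+2} \to \oM_{0,n+1} \to \oM_{0,n}$, each of which is the universal curve over its target, and to compute the fiber $F$ over a general point $[C] = (\bP^1; p_1, \ldots, p_n) \in \oM_{0,n}$. First I would identify the fiber $B \subset \oM_{0,n+1}$ of the first projection with the curve $C \cong \bP^1$ itself (the universal-curve interpretation), so that $F$ is the restriction of the universal curve $\oM_{0,n+2} \to \oM_{0,n+1}$ to $B$. Since this is a flat family of genus-$0$ curves over the smooth base $B$ with smooth total space, $F$ is a smooth projective surface ruled over $\bP^1$, carrying the $n+1$ disjoint sections $\sigma_1, \ldots, \sigma_{n+1}$ cut out by the marked points. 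The next step is to locate the reducible fibers: the fiber of $F \to B$ over $b \in B$ is the stable curve represented by $b$, which is an irreducible $\bP^1$ except at the $n$ points $b_1, \ldots, b_n$ where $p_{n+1}$ collides with some $p_i$, producing a reducible fiber $C' \cup E_i$ of two components ($E_i$ the bubble carrying $p_i, p_{n+1}$).

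The cleanest way to identify the minimal model is to use both forgetful directions at once. On $F$ there are two ``position'' morphisms to $\bP^1 \cong C$, namely the map $F \to B$ tracking $p_{n+1}$, and the analogous map tracking $p_{n+2}$ obtained from the other ordering of the two forgetful steps. Together they give a morphism $F \to \bP^1 \times \bP^1 = \bF_0$. On the open locus where all points are distinct and $p_{n+1}, p_{n+2} \notin \{p_i\}$ this is an isomorphism onto the complement of the diagonal and the coordinate lines, so the morphism is birational; being a birational morphism of smooth projective surfaces, it is a composition of point blowups. I would then check that the only curves it contracts are the bubble components $E_1, \ldots, E_n$: on $E_i$ the point $p_{n+1}$ sits at the node direction so its $x$-position is the constant $p_i$, and since $E_i$ is attached to $C'$ at $p_i$ the stabilized $y$-position of $p_{n+2}$ is also the constant $p_i$, whence $E_i$ collapses to $(p_i, p_i)$. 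The complementary component $C'_i$ maps isomorphically onto the ruling $\{x = p_i\}$ and is not contracted.

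It then follows that $F$ is the blowup of $\bF_0$ at the $n$ distinct points $(p_1, p_1), \ldots, (p_n, p_n)$, each of which lies on the diagonal $\Delta = \{x = y\}$ (the image of the section $\sigma_{n+1}$), giving $F \cong Bl_n \bF_0$ with the $n$ centers on the diagonal, as claimed. The main obstacle I expect is the bookkeeping at the reducible fibers: one must verify that each $b_i$ contributes exactly one two-component fiber (so that the contraction is a simple blowup at each center, not an iterated one), that no reducible fibers arise over general $b$ despite the diagonal collision $p_{n+1} = p_{n+2}$ (this collision is absorbed into the disjoint section $\sigma_{n+1}$ of the smooth universal curve rather than bubbling), and that among the $n+1$ sections the only coincidences in $\bF_0$ are the $n$ points $\sigma_i \cap \Delta$, so that exactly these lie on the diagonal and no further blowups are forced. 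Checking that the minimal model is $\bF_0$ rather than a nontrivial Hirzebruch surface is immediate from this description, since the images of the horizontal sections $\{y = p_j\}$ are disjoint rulings of self-intersection $0$.
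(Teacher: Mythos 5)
Your proposal is correct and follows essentially the same route as the paper's proof: factor through $\oM_{0,n+1}$, use both orderings of the forgetful maps to obtain the two projections (rulings) to $\bP^1$, identify the $n$ reducible fibers, and conclude that the fiber is $Bl_n\bF_0$ with centers on the diagonal cut out by the section $D_{n+1,n+2}$. The only difference is that you make explicit the birational morphism to $\bF_0$ and verify which curves it contracts, details the paper leaves implicit in its final ``it follows'' step.
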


\begin{proof}
    Factor the map as $\oM_{0,n+2} \xrightarrow{f_{n+2}} \oM_{0,n+1} \xrightarrow{f_{n+1}} \oM_{0,n}$. Then the general
    fiber of $f_{n+1} : \oM_{0,n+1} \to \oM_{0,n}$ is a curve $C \cong \bP^1$ with $n$ marked points cut out by the
    intersections with the $n$ divisors $D_{i,n+1}$ for $i=1,\ldots,n$. The fiber of $f_{n+2} : \oM_{0,n+2} \to
    \oM_{0,n+1}$ over a general point of $C$ is a curve $C' \cong \bP^1$ with $n+1$ marked points cut out by the
    intersections with the $n+1$ divisors $D_{i,n+2}$ for $i=1,\ldots,n+1$. The fiber over a marked point of $C$ is a
    stable $(n+1)$-pointed curve with 2 components, with points $i,n+1$ on 1 component, and the remaining points on the
    other component. Thus $\pi^{-1}(C) \to C$ is a fibration over $\bP^1$ with general fiber $\bP^1$, and $n$ special
    fibers the described stable curves, marked points being cut out by intersections with the divisors $D_{i,n+2}$ for
    $i=1,\ldots,n+1$. Doing the same thing but swapping the order of the forgetful maps, one obtains an analogous
    description but now with marked points being cut out by intersections with the divisors $D_{i,n+1}$ for
    $i=1,\ldots,n,n+2$. It follows that the general fiber is the blowup of $\bF_0$ at $n$ points---the section
    $D_{n+1,n+2}$ gives the diagonal and the remaining sections $D_{i,n+2}$ and $D_{i,n+1}$ give horizontal and vertical
    rulings, respectively.
\end{proof}

\begin{lemma} \label{lem:A2_fiber}
    Consider the morphism $\pi : \oY(E_7) \to \oY(E_6)$.
    \begin{enumerate}
        \item The general fiber of $\pi\vert_{D(A_2)} : D(A_2) \to D(A_2^3)$ is the blowup of $\bP^2$ at 6 points, with
            3 on one line and 3 on another line. This is the minimal resolution of a cubic surface with an $A_2$
            singularity.
        \item The general fiber of $\pi\vert_{D(A_1) \cap D(A_2)} : D(A_1) \cap D(A_2) \to D(A_1) \cap D(A_2^3)$ for
            $A_1 \perp A_2$ is the blowup of $\bP^2$ at 5 points, with 2 on one line and 3 on another line.
    \end{enumerate}
\end{lemma}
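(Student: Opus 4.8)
The plan is to reduce both statements to fiber computations for products of forgetful maps between moduli of pointed rational curves, in the spirit of \cref{lem:fiber_m0n}, and then to recognize the resulting rational surface by tracking how the boundary divisors $D_{ij}$ of $\oM_{0,7}$ restrict to the fiber. For part (1), observe from \cref{tab:pi} that $\pi\vert_{D(A_2)}$ is the product map $\oM_{0,4} \times \oM_{0,7} \to \oM_{0,4} \times (\oM_{0,4})^2$ which is an isomorphism on the first factor; hence the general fiber coincides with the general fiber of $g = (f_1,f_2) : \oM_{0,7} \to \oM_{0,4} \times \oM_{0,4}$ onto the last two factors. Here $f_1,f_2$ are the canonical forgetful maps $\pi_{I_1},\pi_{I_2}$ remembering the two $4$-element index sets $I_1,I_2$ singled out by the two $A_2$ factors of $A_2^{\perp} = A_2 \times A_2$ in the decomposition $A_2^3 = A_2 \times A_2^{\perp}$. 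First I would make $I_1,I_2$ explicit from the root data, using the conventions $ij = e_i-e_j$, $ijk = h-e_i-e_j-e_k$, and $i = 2h - \sum_{j \neq i} e_j$.

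I would then factor $g$ through a tower of single forgetful maps $\oM_{0,7} \to \oM_{0,6} \to \cdots$ and analyze the fiber exactly as in the proof of \cref{lem:fiber_m0n}: over a general point the fiber is a $\bP^1$-fibration whose reducible fibers occur over the images of the boundary divisors $D_{ij}$, and reading off the sections exhibits the fiber as a blowup of $\bP^2$ at $6$ points (equivalently, a blowup of $\bF_0$). The key geometric point is then the incidences: the six sections become the six blown-up points, and a triple of them is forced to be collinear precisely when the corresponding three sections lie in a common fiber of one of the two rulings determined by $f_1$ and $f_2$. Matching the two index sets $I_1,I_2$ to the two rulings shows that exactly two such triples arise, giving the blowup of $\bP^2$ at $6$ points with $3$ on one line and $3$ on another. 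I would conclude with the classical observation that contracting the strict transforms of these two lines produces a cubic surface whose two chains of $(-2)$-curves meet in a single $A_2$ configuration, so that the fiber is its minimal resolution.

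For part (2), I would restrict the entire computation to the substratum $D(A_1) \cap D(A_2)$ with $A_1 \perp A_2$. Using \cref{prop:A1divs,prop:A1strata} I would express $D(A_1) \cap D(A_2)$ and its image $D(A_1) \cap D(A_2^3)$ as products of $\oM_{0,\bullet}$'s and identify the restricted map as another product of forgetful maps, one factor shorter than before. The additional orthogonal $A_1$ imposes one further incidence among the marked points; on the fiber this specializes (equivalently, deletes) one of the six blown-up points lying on one of the two lines, turning a collinear triple into a collinear pair. Repeating the fiber analysis then yields the blowup $Z$ of $\bP^2$ at $5$ points with $2$ on one line and $3$ on another.

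The main obstacle I anticipate is the combinatorial bookkeeping in both parts: correctly extracting the relevant $4$-element index sets from the root-subsystem data $A_2^3 = A_2 \times A_2^{\perp}$ (and, in part (2), from the condition $A_1 \perp A_2$), and then tracking precisely which boundary divisors $D_{ij}$ collapse into common ruling fibers. Once the incidences are pinned down the surface-theoretic conclusion is routine, but an error in identifying the forgetful maps would alter the collinearity pattern, and hence the singularity type of the contracted cubic.
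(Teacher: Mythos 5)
Your reduction is the same as the paper's: via \cref{tab:pi}, part (1) becomes the computation of the general fiber of $f = \pi_{1237} \times \pi_{4567} : \oM_{0,7} \to \oM_{0,4} \times \oM_{0,4}$, and your part (2) is essentially the paper's argument (restrict to $D(A_1) \cap D(A_2) \cong \oM_{0,4} \times \oM_{0,6}$, identify the restricted map as a forgetful map $\oM_{0,6} \to \oM_{0,4}$, compute its fiber; the paper simply quotes \cref{lem:fiber_m0n} to get $Bl_4\bF_0$ with four points on the diagonal and then converts models to the $2+3$ planar configuration). Where you genuinely diverge is the engine for part (1): the paper runs no fibration-and-sections analysis at all, but instead uses the Kapranov presentation of $\oM_{0,7}$ as an iterated blowup of $\bP^4$. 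In those coordinates $f$ is \emph{linear}, so the fiber closure is a plane $\bP^2 \subset \bP^4$; one checks by hand that the hyperplane arrangement restricts to $11$ lines with exactly $6$ non-normal-crossing points, three on the line $\{x_1=x_2=x_3\}$ and three on $\{x_4=x_5=0\}$, and Kapranov's blowup description identifies the fiber with the blowup of $\bP^2$ at those $6$ points. This sidesteps all of the bookkeeping your approach requires.

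The gap in your route is precisely the incidence claim you rely on, and it fails as stated. First, $f_1$ and $f_2$ are \emph{constant} on the fiber $F$, so they determine no rulings on it; the fibrations that do exist on $F$ come from remembering the position of one forgotten point relative to the opposite frame (e.g.\ position of point $4$ relative to $(1,2,3,7)$), and there are six of these, namely the pencils of lines through the six blown-up points. Second, the boundary divisors do not restrict the way your plan assumes: since $\pi_{1237}$ is frozen at a generic value on $F$, the divisors $D_{\{1,2\}}, D_{\{1,3\}}, D_{\{2,3\}}$ and all $D_{\{i,7\}}$ are \emph{disjoint} from $F$; the only boundary divisors meeting $F$ are the nine $D_{\{i,j\}}$ with $i \in \{1,2,3\}$, $j \in \{4,5,6\}$, the two divisors $D_{\{1,2,3\}}$, $D_{\{4,5,6\}}$ (which carry the two collinear triples), and the six divisors $D_{\{1,2,3,j\}}$, $D_{\{i,4,5,6\}}$ (which are the six exceptional curves). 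Third, with respect to any two chosen pencils (one per side), no fiber of either ruling contains three of the six exceptional curves: the correct statement is that for $i,j \in \{1,2,3\}$ the curves over the points associated to $i$ and $j$ are components of a single fiber of the pencil based at the \emph{third} point of $\{1,2,3\}$, and in the induced $\bF_0$-model only \emph{pairs} of blown-up points share a ruling fiber, the third point of each collinear triple materializing only after the elementary transformation back to $\bP^2$. So reading the collinearity pattern off "three sections in a common ruling fiber" would give the wrong incidences, which --- as you yourself note --- would corrupt the singularity type of the contracted cubic. The strategy is repairable (the collinear triples are exactly the three points associated to $\{1,2,3\}$ and the three associated to $\{4,5,6\}$), but pinning this down amounts to redoing the explicit computation the paper performs in coordinates.
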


\begin{proof}
    \begin{enumerate}
        \item The map $\pi\vert_{D(A_2)}$ is given by $\oM_{0,4} \times \oM_{0,7} \xrightarrow{id \times f} \oM_{0,4}
            \times (\oM_{0,4} \times \oM_{0,4})$, where up to reordering $f = \pi_{1237} \times \pi_{4567}$
            (\cref{tab:pi} and \cite[Proposition 10.9]{hackingStablePairTropical2009}). Thus it suffices to consider the
            general fiber of $f : \oM_{0,7} \to \oM_{0,4} \times \oM_{0,4}$. Fixing the last 2 points on $\bP^1$ at $0$
            and $\infty$ and scaling the equations of the remaining points gives coordinates on $M_{0,n}$ as the
            complement in $\bP^{n-3}$ of the hyperplanes and $x_i=0$, $x_i=x_j$. Under these coordinates, the map $f$ is
            given on the interiors by
            \[
                (x_1:x_2:x_3:x_4:x_5) \mapsto (x_1-x_3 : x_2-x_3) \times (x_4:x_5).
            \]
            The general fiber of this map is the complement in $\bP^2$ of 11 lines (given by the intersections with the
            hyperplanes $x_i=0$ and $x_i=x_j$) with 6 non-normal crossings points, 3 on one line and 3 on another. Since
            $\oM_{0,n}$ is obtained from $\bP^{n-3}$ by blowing up the loci where the hyperplanes $x_i=0$, $x_i=x_j$ are
            not normal crossings \cite{kapranovChowQuotientsGrassmannians1993}, it follows that the general fiber of $f
            : \oM_{0,7} \to \oM_{0,4} \times \oM_{0,4}$ is the blowup of $\bP^2$ at these 6 points.
        \item The condition that $A_1 \perp A_2$ implies that $D(A_1) \cap D(A_2) = \oM_{0,4} \times D_{2,5}$, where
            $D_{2,5}$ is a divisor on $\oM_{0,7}$ parameterizing a stable curve with 2 marked points on one component,
            and 5 marked points on the other component (see \cref{prop:bdry_strata}). We have $D_{2,5} \cong \oM_{0,6}$,
            and the induced map is given by $\oM_{0,6} \to \text{pt} \times \oM_{0,4}$. The general fiber is the blowup
            of $\bF_0$ at 4 points along the diagonal (\cref{lem:fiber_m0n}). This is the same as blowing up $\bP^2$ at
            4 points in general position, and then a 5th point on the line between 2 of the points.
    \end{enumerate}
\end{proof}

\begin{remark}
    Part (1) of the lemma is used when computing fibers of type $b$ (cf. the components of type 1 in
    \cref{fig:b_13-23}). Part (2) is used when computing fibers of type $ab$ (cf. the components of type 2 in
    \cref{fig:ab_12-23}). Note in part (2) of the lemma that if instead $A_1 \subset A_2$, then $D(A_1) \cap D(A_2) =
    \text{pt} \times \oM_{0,7}$ (\cref{prop:bdry_strata}), and the general fiber is the same as the general fiber of
    $\pi\vert_{D(A_2)}$. This says that the type $ab$ fiber also has a component as described in part (1) of the lemma;
    this is the component of type 1 in \cref{fig:ab_12-23}.
\end{remark}

\begin{remark}
    Note the map $\pi\vert_{D(A_2)} : D(A_2) \to D(A_2^3)$ is not everywhere flat, but the locus where it is not flat is
    the intersection with the non-flat $A_3^2$ locus \cite[Proof of Proposition 10.22]{hackingStablePairTropical2009},
    and therefore is dealt with by \cref{sec:nonflat_A32} below, cf. \cref{ex:nonflat_A2}.
\end{remark}

\begin{lemma} \label{lem:A1_fiber}
    Let $k=1,\ldots,4$ and consider a collection of $k$ pairwise orthogonal $A_1 \subset E_6$'s. Let $Z$ be the
    intersection of the corresponding divisors in $\oY(E_7)$, and $W$ the intersection of the corresponding divisors in
    $\oY(E_6)$. Then $\pi$ restricts to a morphism $Z \to W$ whose general fiber is the minimal resolution of a singular
    cubic surface with $k$ $A_1$ singularities.
\end{lemma}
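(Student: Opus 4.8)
The plan is to identify the general fiber of $\pi\vert_Z \colon Z \to W$ with the blowup of $\bP^2$ at six points in a prescribed degenerate position, and then to recognize this blowup as the minimal resolution of a $k$-nodal cubic surface. Each $A_1 \subset E_6$ gives a non-horizontal divisor mapping to $D(A_1) \subset \oY(E_6)$, and the generic point of $Z$ lies in the flat locus of $\pi$, away from the non-flat $A_3^2$ divisors (cf.\ \cref{thm:pi} and the discussion of \cref{sec:prelims}). Hence the general fiber of $\pi\vert_Z$ is a genuine flat limit that may be computed over the interior of $W$. Using the $W(E_6)$-equivariance of $\oY(E_7)$ and $\oY(E_6)$, I would first reduce to convenient representatives for the $k$ pairwise orthogonal $A_1$'s.

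Next I would use the point-configuration description of the boundary (as in \cref{prop:A1divs}) to describe $W$ and $Z$ explicitly. A general point of $W$ is a marked configuration of six points $p_1,\dots,p_6 \in \bP^2$ realizing exactly the $k$ prescribed degeneracies---two points coinciding with a remembered tangent direction for $A_1 = ij$, three points collinear for $A_1 = ijk$, or all six on a conic for $A_1 = 2h - \sum_i e_i$---while $Z$ parameterizes the seven-point configurations with the same degeneracies. Since $\pi\vert_Z$ forgets the seventh point, its fiber over a fixed six-point configuration records the position of $p_7$. Because the Hacking--Keel--Tevelev universal family is constructed by blowing up the sections of the trivial family $Y(E_6) \times \bP^2$, this fiber is the blowup $Bl_6 \bP^2$ of $\bP^2$ at the degenerate six-point configuration, the position of $p_7$ being recorded as a point of this surface.

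It then remains to recognize $Bl_6 \bP^2$, for this degenerate configuration, as the minimal resolution of a cubic with $k$ nodes. Each degeneracy forces one of the classes $e_i - e_j$, $h - e_i - e_j - e_k$, or $2h - \sum_i e_i$ to be represented by a $(-2)$-curve, and pairwise orthogonality of the $A_1$'s guarantees that the resulting $k$ such $(-2)$-curves are mutually disjoint. Thus $Bl_6 \bP^2$ is a weak del Pezzo surface of degree $3$ whose anticanonical morphism contracts precisely these $k$ disjoint $(-2)$-curves to $k$ ordinary double points, exhibiting it as the minimal resolution of a cubic surface with $k$ $A_1$ singularities, as claimed.

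The main obstacle will be the identification of the compactified fiber in the second step: one must verify, using the toroidal description of $\oY(E_7)$ over $W$, that no extra components are introduced and nothing is contracted, so that the general fiber is irreducible and equals the smooth weak del Pezzo rather than its singular anticanonical model (the latter appearing only after the wall-crossings that decrease $c$). I expect to check this uniformly for all $k \le 4$ and across the orbits of orthogonal $A_1$-collections of possibly mixed types by inducting on $k$, restricting the family for $k$ orthogonal $A_1$'s to the divisor cutting out the $(k+1)$-st and using orthogonality to see that the degenerations remain independent.
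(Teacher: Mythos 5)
Your proposal follows essentially the same route as the paper's proof: identify $Z^{\circ}$ and $W^{\circ}$ as moduli of degenerate $7$- and $6$-point configurations (the paper does this via Sekiguchi's explicit descriptions together with \cite[Lemma 10.8]{hackingStablePairTropical2009}, which also settles your "main obstacle" about the compactified fiber), so that the general fiber is the blowup of $\bP^2$ at the degenerate six points, i.e., a weak del Pezzo of degree $3$ with $k$ disjoint $(-2)$-curves. This matches the paper's argument, with your final weak-del-Pezzo/anticanonical-contraction step being the same recognition the paper leaves implicit.
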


\begin{proof}
    It follows from \cite[Lemma 10.8]{hackingStablePairTropical2009} and Sekiguchi's explicit descriptions of the strata
    $Z \subset \oY(E_7)$ \cite{sekiguchiCrossRatioVarieties2000} (cf. \cref{sec:A1divs}) that $\pi\vert_Z : Z \to W$ is
    determined by the pullback of boundary divisors of $W$. The interior $Z^{\circ}$ of $Z$ (i.e., the complement in $Z$
    of the intersections with the boundary divisors) is a moduli space of 7 points in $\bP^2$ lying in a certain
    degenerate configuration, and the general fiber of $\pi\vert_Z$ is the blowup of $\bP^2$ at the first 6 points. For
    instance:
    \begin{enumerate}
        \item $D(7)$ gives the blowup of 6 points on a conic.
        \item $D(7) \cap D(56)$ gives the blowup of 6 points $p_1,\ldots,p_6$ such that $p_6$ lies on the line tangent
            to the conic through $p_1,\ldots,p_5$.
        \item $D(123) \cap D(345) \cap D(156)$ gives the blowup of 6 points $p_1,\ldots,p_6$ such that $p_1,p_2,p_3$ lie
            on a line, $p_3,p_4,p_5$ lie on a line, and $p_1,p_5,p_6$ lie on a line.
        \item $D(123) \cap D(345) \cap D(146) \cap D(256)$ gives the blowup of 6 points $p_1,\ldots,p_6$ such that any 3
            points lie on a line (see also \cref{prop:A1strata}).
    \end{enumerate}
    In any case, the general fiber is the minimal resolution of a cubic surface with $k$ $A_1$ singularities.
\end{proof}

\begin{remark}
    Recall from \cref{prop:A1strata} that there are 2 types of intersections of 3 $D(A_1)$ divisors on $\oY(E_7)$,
    depending on whether the orthogonal complement of the 3 $A_1$'s in $E_7$ is a $D_4$ or a $3A_1$. The condition that
    all 3 $A_1$'s lie in $E_6$ ensures that only the second type occurs in the above lemma. Intersections of the first
    type occur, for instance, as the intersection of 3 horizontal $A_1$ divisors (and in particular, correspond to
    Eckardt points on the fibers of $\wt\pi : \wY(E_7) \to \wY(E_6)$, see \cref{sec:eckardts}).
\end{remark}

\subsubsection{Fibers near non-flat $A_3^2$ divisors} \label{sec:nonflat_A32}

Recall from \cref{tab:pi} and \cite[Proposition 10.9]{hackingStablePairTropical2009} that the restriction of $\pi :
\oY(E_7) \to \oY(E_6)$ to a non-flat $A_3^2$ divisor is the first projection
\[
    \oM_{0,5} \times \oM_{0,5} \times \oM_{0,4} \to \oM_{0,5}.
\]
In the below lemma we identify $\oM_{0,4}$ with $\bP^1$ and $\oM_{0,5}$ with the blowup of $\bP^2$ at 4 points in
general position \cite{kapranovChowQuotientsGrassmannians1993}.

\begin{lemma}[{\cite[Lemma 10.24, Proof of Lemma 10.27]{hackingStablePairTropical2009}}] \label{lem:nonflat_A32}
    Let $p_1,\ldots,p_4$ be 4 points in general position in $\bP^2$, and let $\ell_{ij}$ be the line between points
    $p_i$ and $p_j$. Then the restriction of $\wt\pi : \wY(E_7) \to \wY(E_6)$ to the strict transform of a non-flat
    $A_3^2$ divisor is the morphism
    \[
        \oM_{0,5} \times \oM_{0,5} \times \oM_{0,4} \xrightarrow{id \times f} \oM_{0,5} \times \oM_{0,4},
    \]
    where $id : \oM_{0,5} \to \oM_{0,5}$ is the identity map on the first component, and $f : \oM_{0,5} \times \oM_{0,4}
    \to \oM_{0,4}$ is the strict transform of the linear system on $\bP^2 \times \bP^1$ spanned by the two divisors
     $(\ell_{12} \times \bP^1) \cup (\bP^2 \times \{0\})$ and $(\ell_{34} \times \bP^1) \cup (\bP^2 \times \{1\})$.

    In particular, the general fiber of $\wt\pi\vert_{D(A_3^2)}$ is the blowup of $\bP^2$ at 5 points---the 4 points
    $p_1,\ldots,p_4$ in general position, and the point $p_5 = \ell_{12} \cap \ell_{34}$.
\end{lemma}

Denote by $X$ the general fiber of $\wt\pi\vert_{D(A_3^2)}$ as described in the above lemma. Any surface obtained as a
fiber of $\wt\pi : \wY(E_7) \to \wY(E_6)$ whose type involves $a_i$ for $i=2$, $3$, or $4$ contains at least one
component that is isomorphic to $X$, or to a special fiber of $\wt\pi\vert_{D(A_3^2)}$ yielding a degeneration of $X$
into four irreducible components as described for type $aa_2$ in \cref{ex:aa2_fiber}. (For other surfaces having a
special fiber of $\wt\pi\vert_{D(A_3^2)}$ as a component, the degeneration of $X$ is isomorphic to that described in the
$aa_2$ case, see
\cref{tab:aa2,tab:aa3,tab:aa4,tab:a2a3,tab:a2a4,tab:a3a4,tab:aa2a4,tab:aa3a4,tab:a2a3a4,tab:aa2a3a4,tab:aa2b,tab:aa3b,tab:a2a3b,tab:aa2a3b}
for the analogous computations.) For detailed pictures of surfaces involving $X$ or a degeneration of $X$, see
\cref{fig:a2_12-23,fig:aa2_12-23,fig:a3_12-23,fig:a3_12-23_degens,fig:a4_12-23,fig:a4_12-23_degens,fig:a2b_12-23,fig:aa2b_12-23,fig:a3b_12-23,fig:a3b_degens_12-23}.

\subsection{Examples}

In order to clarify the structure of the proof of \cref{thm:fibers}, we describe several examples in detail.

\begin{example} \label{ex:a_fiber}
    In this example we compute the fiber of $\wt\pi : \wY(E_7) \to \wY(E_6)$ over a general point of the type $a$
    boundary divisor $D(7)$. Since a general point of $D(7)$ does not lie in the image of the non-flat locus of
    $\wY(E_7)$, it is enough to compute the fiber of $\pi : \oY(E_7) \to \oY(E_6)$ instead. There are 8 boundary
    divisors of $\oY(E_7)$ surjecting onto $D(7)$.
    \begin{enumerate}
        \item One boundary divisor $D(A_1) = D(7)$. By \cref{lem:A1_fiber}, the general fiber of $\pi\vert_{D(A_1)}$ is
            the minimal resolution of a cubic surface with one $A_1$ singularity. It intersects the boundary divisor
            $D(A_7)=D(X_0)$ below in the exceptional fiber over the singular point, and it intersects the six boundary
            divisors $D(A_2)=D(i,7,i7)$ below in the strict transforms of the six lines passing through the singular
            point. In addition, $D(A_1)$ intersects the 15 horizontal $A_1$ divisors $D(ij7)$, for $ij \subset [6]$, in
            15 lines which do not pass through the singular point.
        \item One boundary divisor $D(A_7) = D(X_0)$, where $X_0$ is the $A_7$ subsystem of $E_7$ whose positive roots
            are listed in \cref{tab:A7s}. We have $D(A_7) \cong \oM_{0,8}$, and the restriction of $\pi$ to $D(A_7)$ is
            the morphism $\oM_{0,8} \to \oM_{0,6}$ dropping two marked points. Thus by \cref{lem:fiber_m0n}, the general
            fiber of $\pi\vert_{D(A_7}$ is isomorphic to the blowup $Bl_6\bF_0$ of $\bF_0 \cong \bP^1 \times \bP^1$ at
            six points on its diagonal.  Analyzing the proof of \cref{lem:fiber_m0n} in comparison with the description
            of the intersections of boundary divisors in \cref{prop:bdry_strata}, we see that the six exceptional
            divisors on $Bl_6\bF_0$ are cut out by the intersections with the six boundary divisors $D(A_2)=D(i,7,i7)$
            described below, and the strict transform of the diagonal is cut out by the intersection with the boundary
            divisor $D(A_1)=D(7)$. Furthermore, there are 12 horizontal $A_1$ divisors which intersect $D(X_0)$, these
            are given by $A_1=(i)$ and $A_1 = (i7)$, for $i=1,\ldots,6$. The divisors $D(i)$ cut out the strict
            transforms of lines in one ruling of $\bP^1 \times \bP^1$, and the divisors $D(i7)$ cut out the strict
            transforms of lines in another ruling of $\bP^1 \times \bP^1$.
        \item Six boundary divisors $D(A_2) = D(i,7,i7)$, for $i=1,\ldots,6$. We have $D(A_2) \cong \oM_{0,4} \times
            \oM_{0,7}$, and the restriction of $\pi$ to $D(A_2)$ is the morphism $\oM_{0,4} \times \oM_{0,7} \to
            \oM_{0,3} \times \oM_{0,6}$ given by dropping a marked point in each coordinate. The general fiber of
            $\pi\vert_{D(A_2)}$ is therefore isomorphic to $\bP^1 \times \bP^1$, with three distinguished rulings $pt
            \times \bP^1$ (coming from the three marked points on $\oM_{0,3}$), and six distinguished rulings $\bP^1
            \times pt$ (coming from the six marked points on $\oM_{0,6}$).

            Since $(7) \subset (i,7,i7)$, we see by \cref{prop:bdry_strata} that $D(7) \cap D(i,7,i7)$ is equal to $p
            \times \oM_{0,7} \subset D(i,7,i7)$, where $p$ is a boundary divisor of $\oM_{0,4}$. It follows that $D(7)$
            cuts out one of the three distinguished rulings $pt \times \bP^1$ on the general fiber of
            $\pi\vert_{D(A_2)}$. Similarly, the two other distinguished rulings $pt \times \bP^1$ come from the
            restrictions to $D(i,7,i7)$ of the horizontal $A_1$ divisors $D(i)$ and $D(i7)$.

            Since $(i,7,i7) \subset X_0$, we see by \cref{prop:bdry_strata} that $D(i,7,i7) \cap D(X_0)$ is equal to
            $\oM_{0,4} \times D_{2,5} \subset D(i,7,i7)$, where $D_{2,5} \cong \oM_{0,6}$ is one of the horizontal
            divisors (sections) of $\oM_{0,7} \to \oM_{0,6}$. It follows that $D(X_0)$ cuts out one of the six
            distinguished rulings $\bP^1 \times pt$ on the general fiber of $\pi\vert_{D(A_2)}$. Similarly, the five
            other distinguished rulings $\bP^1 \times pt$ come from restrictions to $D(i,7,i7)$ of horizontal $A_1$
            divisors for $A_1 \perp (i,7,i7)$. Explicitly, if for instance $i=1$, then the five relevant horizontal
            $A_1$s are given by $A_1=(127), (137), (147), (157), (167)$.
    \end{enumerate}
    Gluing the above descriptions of the irreducible components together, we obtain the surface of type $a$ with eight
    irreducible components pictured in \cref{fig:a_12-23}.
\end{example}

\begin{example} \label{ex:b_fiber}
    In this example we compute the fiber of $\wt\pi : \wY(E_7) \to \wY(E_6)$ over a general point of the type $b$
    boundary divisor $D(A_2^3) = D((123,456,7) \times (12,13,23) \times (45,46,56))$. Since a general point of this
    divisor does not lie in the image of the non-flat locus of $\wY(E_7)$, it is enough to compute the fiber of $\pi :
    \oY(E_7) \to \oY(E_6)$ instead. There are 12 boundary divisors of $\oY(E_7)$ surjecting onto $D(A_2^3)$.
    \begin{enumerate}
        \item Three divisors $D(A_2)$ for $A_2=(123,456,7)$, $(12,13,23)$, and $(45,46,56)$. We have $D(A_2) \cong
            \oM_{0,4} \times \oM_{0,7}$, and $\pi\vert_{D(A_2)}$ is given by $\oM_{0,4} \times \oM_{0,7} \to \oM_{0,4}
            \times (\oM_{0,4} \times \oM_{0,4})$, where the map is the identity on the first component, and a product of
            forgetful morphisms on the second component, as in \cref{tab:pi,lem:A2_fiber}. By \cref{lem:A2_fiber}, the
            general fiber of $\pi\vert_{D(A_2)}$ is isomorphic to the blowup of $\bP^2$ at six points, with three on one
            line and three on another line. The intersections of $D(A_2)$ with the other two $D(A_2)$s give the strict
            transforms of these two lines. A given $D(A_2)$ intersects six of the nine $D(A_3^2)$ divisors below, in the
            six exceptional divisors. Finally, a given $D(A_2)$ intersects 9 horizontal $A_1$ divisors, in the strict
            transforms of lines connecting one of the blown up points on one line, with one of the blown up points on
            the other line.
        \item Nine divisors $D(A_3^2)$, for $A_3^2 \cap E_6 = A_2^2$, where $A_2^2 \subset A_2^3$ is the subsystem
            consisting of two of the three $A_2$'s making up $A_2^3$. For explicit descriptions of these nine $A_3^2$
            subsystems of $E_7$, see \cref{tab:b}. We have $D(A_3^2) \cong \oM_{0,5} \times \oM_{0,5} \times \oM_{0,4}$,
            and $\pi\vert_{D(A_3^2)}$ is a morphism $\oM_{0,5} \times \oM_{0,5} \times \oM_{0,4} \to \oM_{0,4} \times
            \oM_{0,4} \times \oM_{0,4}$, given by a forgetful map in the first two coordinates, and the identity in the
            last coordinate. It follows that the general fiber of $\pi\vert_{D(A_3^2)}$ is isomorphic to $\bP^1 \times
            \bP^1$, with four distinguished rulings $pt \times \bP^1$ (coming from the four marked points on the first
            copy of $\oM_{0,4}$), and four distinguished rulings $\bP^1 \times pt$ (coming from the four marked points
            on th second copy of $\oM_{0,4}$). With $A_3^2 \cap E_6 = A_2^2$, we see that $D(A_2) \cap D(A_3^2)$ for the two
            $A_2$s making up $A_2^2$ cut out one line in each respective ruling of $\bP^1 \times \bP^1$. The remaining
            three lines in each ruling are given by intersections with horizontal $A_1$ divisors.
    \end{enumerate}
    Gluing the above descriptions of the irreducible components together, we obtain the surface of type $b$ with 12
    irreducible components pictured in \cref{fig:b_13-23}.
\end{example}

\begin{example} \label{ex:a2_fiber}
    In this example we compute the fiber of $\wt\pi : \wY(E_7) \to \wY(E_6)$ over a general point of the type $a_2$
    boundary divisor $D(A_1 \perp A_1') = D(7 \perp 56)$. A general point of this divisor lies in the image of the
    non-flat $A_3^2$ divisor $D(A_3^2(7,56))$, where $A_3^2(7,56)$ is the $A_3^2$ root subsystem of $E_7$ described in
    \cref{tab:nonflat_A32}. Thus in order to compute the fiber of $\wt\pi$ over a general point of $D(A_1 \perp A_1')$,
    we are required to actually work with $\wt\pi$ instead of $\pi$. On the other hand, away from the non-flat $A_3^2$
    divisors, $\wt\pi$ is the pullback of $\pi$, so we can still begin our investigation by first studying $\pi :
    \oY(E_7) \to \oY(E_6)$. The image of $D(7 \perp 56)$ in $\oY(E_6)$ is the codimension two boundary stratum $D(7)
    \cap D(56)$. There is a total of 20 boundary strata of $\oY(E_7)$ surjecting onto $D(7) \cap D(56)$: 19 codimension
    two boundary strata, plus the codimension one non-flat $A_3^2$ divisor mentioned above. Explicitly, we have the
    following description of the strata and the corresponding general fibers.
    \begin{enumerate}
        \item One codimension two stratum $D(7) \cap D(56) \subset \oY(E_7)$. By \cref{lem:A1_fiber}, the general fiber
            of $\pi\vert_{D(7) \cap D(56)} : D(7) \cap D(56) \to D(7) \cap D(56)$ is isomorphic to the minimal
            resolution of a cubic surface with two $A_1$ singularities.
        \item Two codimension two strata $D(A_1) \cap D(A_7)$, where $A_1 \subset A_7$ (see \cref{tab:a2} for explicit
            descriptions of these root systems). By \cref{prop:bdry_strata} intersection $D(A_1) \cap D(A_7)$ is
            isomorphic to $\oM_{0,7} \cong D_{2,6}$ on $D(A_7) \cong \oM_{0,8}$, and the restriction of $\pi$ to $D(A_1)
            \cap D(A_7)$ is a forgetful morphism $\oM_{0,7} \to \oM_{0,5}$. It follows by \cref{lem:fiber_m0n} that the
            general fiber of $\pi\vert_{D(A_1) \cap D(A_7)}$ is isomorphic to the blowup of $\bF_0 \cong \bP^1 \times
            \bP^1$ at 5 points on its diagonal.
        \item One boundary divisor $D(A_3^2(7,56))$, where
            \[
                A_3^2(7,56) = (5,6,7,56,57,67) \times (12,13,14,23,24,34).
            \]
            The morphism $\pi$ is not flat over $D(A_3^2(7,56))$; its flattening has general fiber isomorphic to the
            blowup of $\bP^2$ at five points, with two points on one line, two points on another line, and the fifth
            point the intersection of the two lines, as described in \cref{lem:nonflat_A32}.
        \item Four codimension two strata $D(A_2) \cap D(A_7)$, where $A_2 \subset A_7$ (see \cref{tab:a2} for explicit
            descriptions of these root systems). By \cref{prop:bdry_strata}, the intersection $D(A_2) \cap D(A_7)$ is
            isomorphic to $\oM_{0,4} \times \oM_{0,6}$. The restriction of $\pi$ to $D(A_2) \cap D(A_7)$ is a morphism
            $\oM_{0,4} \times \oM_{0,6} \to \oM_{0,3} \times \oM_{0,5}$, given as the product of a forgetful map in each
            coordinate. Thus the general fiber of $\pi\vert_{D(A_2) \cap D(A_7)}$ is isomorphic to $\bF_0 \cong \bP^1
            \times \bP^1$, with three distinguished rulings $p \times \bP^1$ and five distinguished rulings $\bP^1
            \times p$. One of the rulings $p \times \bP^1$ is given by the intersection with the component described in
            (3) above, and one of the rulings $\bP^1 \times p$ is given by the intersection with a component described
            in (2) above. The remaining rulings are cut out by horizontal $A_1$ divisors.
        \item
            \begin{enumerate}
                \item Eight codimension two strata $D(A_1) \cap D(A_2)$, where $A_1 \perp A_2$ (see \cref{tab:a2} for
                    explicit descriptions of these root systems). By \cref{prop:bdry_strata}, the intersection $D(A_1)
                    \cap D(A_2)$ is isomorphic to $\oM_{0,4} \times \oM_{0,6}$. As in (4) above, $\pi\vert_{D(A_1) \cap
                    D(A_2)}$ is given by a product of forgetful maps $\oM_{0,4} \times \oM_{0,6} \to \oM_{0,3} \times
                    \oM_{0,5}$, and its general fiber is isomorphic to $\bF_0 \cong \bP^1 \times \bP^1$, with three
                    distinguished rulings $p \times \bP^1$ and five distinguished rulings $\bP^1 \times p$. This time,
                    one of the rulings $p \times \bP^1$ is given by the intersection with the component described in (1)
                    above, and the remaining two rulings $p \times \bP^1$ are cut out by horizontal $A_1$ divisors.
                    Three of the rulings $\bP^1 \times p$ are cut out by horizontal $A_1$ divisors. The remaining two
                    rulings $\bP^1 \times p$ are given by the intersections with a component described in (2) above, and
                    a component described in 5(b) below.
                \item Four codimension two strata $D(A_2) \cap D(A_2')$, where $A_2 \perp A_2'$ (see \cref{tab:a2} for
                    explicit descriptions of these root systems). By \cref{prop:bdry_strata}, the intersection $D(A_2)
                    \cap D(A_2')$ is isomorphic to $\oM_{0,4} \times \oM_{0,4} \times \oM_{0,5}$. The restriction of
                    $\pi$ to $D(A_2) \cap D(A_2')$ is given by $\oM_{0,4} \times \oM_{0,4} \times \oM_{0,5} \to
                    \oM_{0,3} \times \oM_{0,3} \times \oM_{0,5}$, where the map is a forgetful morphism in the first and
                    second coordinates, and the identity in the third component. It follows that a general fiber of
                    $\pi\vert_{D(A_2) \cap D(A_2')}$ is isomorphic to $\bF_0 \cong \bP^1 \times \bP^1$, with three
                    distinguished rulings $p \times \bP^1$, and three distinguished rulings $\bP^1 \times p$. One of the
                    rulings $p \times \bP^1$ is given by the intersection with a component described in 5(a) above,
                    likewise for one of the rulings $\bP^1 \times p$, and the other four rulings are cut out by
                    horizontal $A_1$ divisors.
            \end{enumerate}
    \end{enumerate}
    Gluing the above descriptions of irreducible components together, we obtain the surface of type $a_2$ with 20
    irreducible components pictured in \cref{fig:a2_12-23}.
\end{example}

\begin{example} \label{ex:aa2_fiber}
    In this example we consider the fiber of $\wt\pi : \wY(E_7) \to \wY(E_6)$ over a general point of the type $aa_2$
    stratum $D(7) \cap D(7 \perp 56)$. The strata of $\wY(E_7)$ that surject onto $D(7) \cap D(7 \perp 56)$ are given by
    intersections of strata that surject onto $D(7)$ with strata that surject onto $D(7 \perp 56)$. We directly verify
    that if neither such stratum is (the exceptional divisor over the) non-flat $A_3^2$ divisor surjecting onto $D(7
    \perp 56)$, then the general fiber is isomorphic to the corresponding general fiber for the type $a_2$ case as
    described in \cref{ex:a2_fiber}. Thus the only place where the $aa_2$ fiber differs from the $a_2$ fiber is near the
    non-flat $A_3^2$ divisor. Near the non-flat $A_3^2$ divisor, there are four strata surjecting onto $D(7) \cap D(7
    \perp 56)$. We describe these strata on $D(A_3^2) \subset \oY(E_7)$; the corresponding general fibers for the
    restrictions of $\wt\pi$ to these strata are then explicitly computed using \cref{lem:nonflat_A32}.
    \begin{enumerate}
        \item One stratum $D(A_7) \cap D(A_3^2)$, where $A_7=X_0$ is the root subsystem appearing in
            \cref{ex:a_fiber,tab:A7s}. This stratum has the form $\oM_{0,5} \times \oM_{0,5} \times 0$ on $D(A_3^2) \cong
            \oM_{0,5} \times \oM_{0,5} \times \oM_{0,4}$, following the notation of \cref{lem:nonflat_A32}. The general
            fiber of $\wt\pi\vert_{D(A_7) \cap D(A_3^2)}$ is $\oM_{0,5} \cong Bl_4\bP^2$.
        \item One stratum $D(7) \cap D(A_3^2)$. This stratum has the form $\oM_{0,5} \times \ell_{12}
            \times \oM_{0,4}$ on $D(A_3^2) \cong \oM_{0,5} \times \oM_{0,5} \times \oM_{0,4}$, following the notation of
            \cref{lem:nonflat_A32}. The general fiber of $\wt\pi$ restricted to this stratum is $\bF_0$.
        \item Two strata $D(A_2) \cap D(A_3^2)$ for $A_2=(i,7,i7)$, $i=5,6$. These strata have the form $\oM_{0,5} \times
            e_j \times \oM_{0,4}$ on $D(A_3^2) \cong \oM_{0,5} \times \oM_{0,5} \times \oM_{0,4}$, following the
            notation of \cref{lem:nonflat_A32}, where $e_j$ is an exceptional divisor over one of the four points in the
            blowup $\oM_{0,5} \cong Bl_4\bP^2$. The general fiber of $\wt\pi\vert_{D(A_2) \cap D(A_3^2)}$ is $\bF_0$.
    \end{enumerate}
    The above four components glue to each other and to the remainder of the fiber over $\wt\pi$ over a general point of
    $D(7) \cap D(7 \perp 56)$ to give the surface of type $aa_2$ with 23 irreducible components pictured in
    \cref{fig:aa2_12-23}.
\end{example}

\begin{example} \label{ex:nonflat_A2}
    For brevity we avoid going into detail for further examples. We mention however that computations analogous to those
    in \cref{ex:a2_fiber} and \cref{ex:aa2_fiber} show that any surface whose type involves $a_i$ for $i=2,3,4$ has at
    least one component isomorphic to either the blowup $X$ of $\bP^2$ at five points in special position as described
    in \cref{lem:nonflat_A32}, or to the degeneration of this surface into four irreducible components as described in
    \cref{ex:aa2_fiber}. For instance, the intersection of a non-flat $D(A_3^2)$ and a $D(A_2) \mapsto D(A_2^3)$ is of
    the form $\oM_{0,5} \times \ell \times \oM_{0,4} \subset D(A_3^2)$, where $\ell$ is a boundary divisor of
    $\oM_{0,5}'$, so the general fiber of $\wt\pi\vert_{D(A_3^2) \cap D(A_2)}$ in this case is isomorphic to $X$. This
    gives a component of a type $a_2b$ surface.
\end{example}

To summarize, we prove \cref{thm:fibers} and describe all fibers of $\wt\pi : \wY(E_7) \to \wY(E_6)$ by carrying out
computations analogous to the above examples for $W(E_6)$-representatives of all strata of $\wY(E_6)$. The combinatorics
of the root subsystems describing irreducible components of the fibers are given in \cref{sec:root_combs}, and we glue
the corresponding irreducible components as in the above examples to obtain the pictures of the fibers in
\cref{sec:weighted_cubics}.

\section{Eckardt points} \label{sec:eckardts}

Recall that an Eckardt point of a smooth cubic surface is a point at which three of the lines on the surface intersect.
In this section we describe the possible configurations of Eckardt points on the fibers of the morphism $\wt\pi :
\wY(E_7) \to \wY(E_6)$ computed in the previous section, and explain how to blowup these Eckardt points to obtain the
universal family of weight 1 stable marked cubic surfaces, cf. \cite[Theorem 10.31]{hackingStablePairTropical2009}.

We first must be more precise about what we mean by an Eckardt point of a fiber of $\wt\pi$.

\begin{definition}[{\cite[Definition 10.21]{hackingStablePairTropical2009}}]
    The \emph{Eckardt locus} of $\wY(E_7)$ is the union of the intersections of three horizontal $A_1$ divisors of
    $\wt\pi : \wY(E_7) \to \wY(E_6)$. The \emph{Eckardt points} on a fiber $(S,B)$ of $\wt\pi : (\wY(E_7),B_{\wt\pi})$
    are the intersection points of $(S,B)$ with the Eckardt locus of $\wY(E_7)$.
\end{definition}

\begin{remark} \label{rmk:eckardt_comps}
    From the combinatorics of the $A_1$ subsystems of $E_7$ giving horizontal $A_1$ divisors of $\wt\pi$, one sees that
    the Eckardt locus of $\wY(E_7)$ is the disjoint union of 45 irreducible components. The images of these components
    in $\wY(E_6)$ give the Eckardt divisors of $\wY(E_6)$, also obtained as the strict transforms under $\wY(E_6) \to
    \oY(E_6)$ of Naruki's tritangent divisors on $\oY(E_6)$ (cf. \cref{sec:A1divs} and
    \cite{narukiCrossRatioVariety1982}). The intersections among Eckardt and boundary divisors of $\wY(E_6)$
    are described in detail in \cite[Section 4]{schockE_6InvariantBirational2024}.
\end{remark}

\begin{lemma}[{\cite[Proposition 10.23]{hackingStablePairTropical2009}}]
    Any Eckardt points on a fiber $(S,B)$ of $\wt\pi : (\wY(E_7),B_{\wt\pi}) \to \wY(E_6)$ lie in the smooth locus of
    $S$.
\end{lemma}

\begin{proposition}[{\cite[Theorem 10.31]{hackingStablePairTropical2009}}] \label{prop:resolve_eckardt}
    Let $\ddot{Y}(E_7)$ denote the blowup of $\wY(E_7)$ along the Eckardt locus, and let $\ddot{\pi} :
    (\ddot{Y}(E_7),B_{\ddot{\pi}}) \to \wY(E_6)$ be the induced morphism to $\wY(E_6)$, where $B_{\ddot{\pi}}$ is the
    strict transform of $B_{\wt\pi}$. Then $\ddot{\pi}$ is a flat family of stable pairs, giving the universal family of
    (unweighted) stable marked cubic surfaces over the moduli space $\wY(E_6)$. Each fiber $(\ddot{S},\ddot{B})$ of
    $\ddot{\pi}$ is obtained from a fiber $(S,B)$ of $\wt\pi$ by blowing up each Eckardt point on $(S,B)$, and attaching
    to the exceptional divisor a copy of $\bP^2$ glued along a general line. The pullbacks of the three lines passing
    through the Eckardt point on $(S,B)$ intersect the $\bP^2$ in three lines in general position, as pictured in
    \cref{fig:eckardt}.
    \begin{figure}[htpb]
        \centering
        \includegraphics[width=0.8\linewidth]{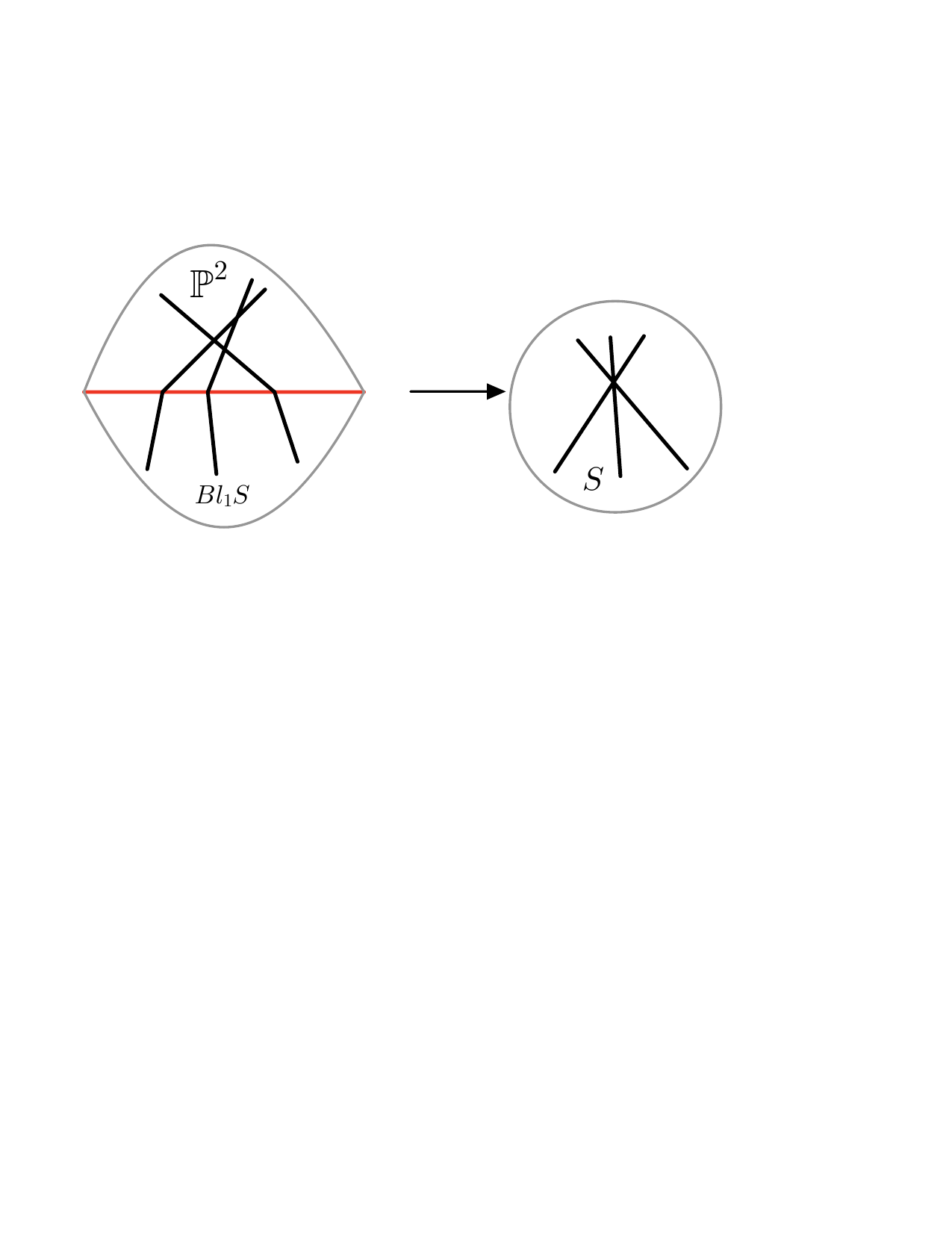}
        \caption{The resolution of an Eckardt point. On the right, three lines intersect in a point, and on the left
        this point is blown up and a copy of $\bP^2$ is attached to the exceptional line.}%
        \label{fig:eckardt}
    \end{figure}
\end{proposition}

\begin{remark}
    As the above lemma and proposition are proven in \cite{hackingStablePairTropical2009}, we do not include their
    proofs. However, we comment briefly on a concrete approach to proving these results in keeping with the philosophy
    of the current paper.

    The lemma can be proven by a direct computation of the fibers of $\wt\pi$ over points in the intersection of Eckardt
    and boundary strata of $\wY(E_6)$; as mentioned in \cref{rmk:eckardt_comps}, these intersections are described in
    \cite[Section 4]{schockE_6InvariantBirational2024}. More carefully, including the Eckardt divisors in the boundary
    of $\wY(E_6)$ induces a finer stratification of the boundary, and one may obtain the lemma by computing, for each
    stratum in this finer stratification, the fiber of $\wt\pi$ over a general point of the given stratum. One finds
    that the only Eckardt points on a fiber of $\wt\pi$ lie in the smooth locus of the fiber.

    In order to prove the proposition, one can again follow the same general strategy as for computing the fibers of
    $\wt\pi$. Namely, let $(S,B)$ be a fiber of $\wt\pi$ and suppose that it contains an Eckardt point. Assume that the
    Eckardt point does not lie on a component of $(S,B)$ obtained from a non-flat $A_3^2$ divisor of $\oY(E_7)$. Then
    the fiber $(S,B)$ near the Eckardt point is computed as the pullback of the corresponding fiber of $\pi : \oY(E_7)
    \to \oY(E_6)$. Observe that if three horizontal $A_1$ divisors intersect, then the orthogonal complement of the
    three $A_1$ root subsystems of $E_7$ is a $D_4 \subset E_7$, and the restriction of $\pi$ to the Eckardt component
    cutting out the Eckardt point of interest on $(S,B)$ is given by the identity morphism $\oX(D_4) \to \oX(D_4)$ (cf.
    \cref{sec:A1divs,prop:A1strata}). Blowing up the Eckardt component induces the projection map $\oX(D_4) \times \bP^2 \to
    \oX(D_4)$, whose general fiber is $\bP^2$, and the three Eckardt divisors give three lines in general position on
    this $\bP^2$. If instead the Eckardt point of $(S,B)$ lies on a component obtained from non-flat $A_3^2$ divisor of
    $\oY(E_7)$, then a direct computation using the description of the flattening of $\pi$ as in \cref{lem:nonflat_A32}
    shows the same description for the blowup of this Eckardt point.
\end{remark}

We now turn our attention to describing the potential configurations of Eckardt points on fibers of $\wt\pi$. For this,
first observe from our explicit descriptions of the fibers (\cref{thm:fibers}) that the only irreducible components of
a fiber which potentially contain Eckardt points are those isomorphic to $\wS_{kA_1}$, $k=0,\ldots,4$, $\wS_{A_2}$, or
$Bl_5\bP^2$, where $\wS_{iA_1}$ is the minimal resolution of a cubic surface with $k$ $A_1$ singularities, $\wS_{A_2}$
is the minimal resolution of a cubic surface with an $A_2$ singularity (obtained as the blowup of $\bP^2$ at three
points on one line and three points on another line, appearing for instance in fibers of type $b$), and $Bl_5\bP^2$ is
the blowup of $\bP^2$ at the 5 points not in general position described in \cref{lem:nonflat_A32}. It is also
straightforward to observe that any Eckardt point on a component isomorphic to $\wS_{kA_1}$ or $\wS_{A_2}$ arises as the
strict transform of an Eckardt point on the corresponding singular surface $S_{kA_1}$ or $S_{A_2}$. Thus we reduce to
describing the potential configurations of Eckardt points on cubic surfaces with at worst $A_1$ or one $A_2$
singularities, as well as on the special surface $Bl_5\bP^2$ of \cref{lem:nonflat_A32}.

\begin{remark}
    Surprisingly, Eckardt points on singular cubic surfaces do not appear to be well-studied. Some partial results
    appear in \cite{tuSemistableSingularCubic2005}, where Eckardt points are called star points. We note however that
    our notion of Eckardt points on singular cubic surfaces is more restrictive than in
    \cite{tuSemistableSingularCubic2005}, as we only consider points which occur as the intersection of three distinct
    lines of multiplicity one.
\end{remark}

We require the following result, which in the case of a smooth cubic surface is classical.

\begin{lemma} \label{lem:eckardt_constraints}
    Let $S$ be a cubic surface that is either smooth, has only $A_1$ singularities, or has exactly one singularity, of
    type $A_2$.
    \begin{enumerate}
        \item A given multiplicity one line of $S$ can contain at most two Eckardt points.
        \item Suppose $p$ and $p'$ are two Eckardt points of $S$ having no line in common. Then there is a unique third
            Eckardt point $p''$ also having no lines in common with $p$ or $p'$.
    \end{enumerate}
\end{lemma}

\begin{proof}
    \begin{enumerate}
        \item First suppose $S$ is smooth. One may choose a marking of $S$ as the blowup of 6 points $p_1,\ldots,p_6$ in
            $\bP^2$ so that the Eckardt points of interest lie on the line $c_6$ obtained as the strict transform of the
            conic through the first 5 points. The Eckardt points lying on $c_6$ must be obtained as triples
            $(e_i,c_6,\ell_{i6})$, where $e_i$ is the strict transform of a point $p_i$, $i=1,\ldots,5$ lying on the
            conic, and $\ell_{i6}$ is the strict transform of the line through $p_i$ and $p_6$. For this triple to give
            an Eckardt point, the line through $p_i$ and $p_6$ must be tangent to the conic at $p_i$. But there are only
            two lines tangent to a conic passing through a point not on the conic. In the singular case, the minimal
            resolution of $S$ is obtained as the blowup of a degeneration of the 6 points $p_1,\ldots,p_6$ so that they
            are no longer in general position. With the chosen marking, the only way to obtain an additional Eckardt
            point on $c_6$ is to allow some of the points $p_1,\ldots,p_5$ on the conic to be infinitely near. But then
            the line between the infinitely near point and $p_6$ has multiplicity greater than one, hence the tangent
            point cannot be an Eckardt point in our definition.
        \item In the smooth case, let $\ell$ be the line in $\bP^3$ passing through the points $p$ and $p'$. Then $\ell$
            must intersect $S$ in a third point $p''$, which one can show is an Eckardt point, see for instance
            \cite[Proposition 9.1.26]{dolgachevClassicalAlgebraicGeometry}. The same argument applies in the singular
            case, since in our definition of an Eckardt point on a singular cubic surface any Eckardt point lies in the
            smooth locus, and all the lines in question have multiplicity one.
    \end{enumerate}
\end{proof}

The below lemma is a special case of \cite[Proposition 5.3]{tuSemistableSingularCubic2005} and \cite[Lemma
4.8]{coolsSingularHypersurfacesPossessing2011}.

\begin{lemma}[{\cite[Proposition 5.3]{tuSemistableSingularCubic2005}, \cite[Lemma
    4.8]{coolsSingularHypersurfacesPossessing2011}}] \label{lem:eckardt_specialization}
    Let $S$ be a cubic surface that has only $A_1$ singularities, or has exactly one singularity, of type $A_2$. Then
    any Eckardt point of $S$ is the specialization of an Eckardt point on a smooth cubic surface.
\end{lemma}

\begin{proposition} \label{prop:eckardt_configs}
    Let $S$ be a cubic surface that is either smooth, has only $A_1$ singularities, or has exactly one singularity, of
    type $A_2$.
    \begin{enumerate}
        \item If $S$ is smooth, then $S$ has either 0, 1, 2, 3, 4, 6, 9, 10, or 18 Eckardt points.
        \item If $S$ has one $A_1$ singularity, then $S$ has either 0, 1, 2, 3, 4, or 6 Eckardt points.
        \item If $S$ has two $A_1$ singularities, then $S$ has either 0 or 1 Eckardt points.
        \item If $S$ has three $A_1$ singularities, then $S$ has either 0 or 1 Eckardt points.
        \item If $S$ has four $A_1$ singularities, then $S$ has no Eckardt points.
        \item If $S$ has one $A_2$ singularity, then $S$ has either 0, 1, 2, or 3 Eckardt points.
    \end{enumerate}
\end{proposition}

\begin{proof}
    \begin{enumerate}
        \item The result for smooth cubic surfaces is classical and well-known, see for instance
            \cite{eckardtUeberDiejenigenFlaechen1876, bettenEckardtPointConfiguration2022}. The possible configurations
            of Eckardt points are constrained by \cref{lem:eckardt_constraints}. We note for later use in this proof
            that there is a unique smooth cubic surface with 18 Eckardt points, namely, the Fermat cubic
            \[
                \{x_0^3+x_1^3+x_2^3+x_3^3=0\} \subset \bP^3.
            \]
            Likewise, there is a unique smooth cubic surface with 10 Eckardt points, namely, the Clebsch diagonal cubic
            \[
                \{x_0+x_1+x_2+x_3+x_4=0, x_0^3+x_1^3+x_2^3+x_3^3+x_4^3=0\} \subset \bP^4.
            \]
            Finally, any smooth cubic surface with 9 Eckardt points is a \emph{cyclic} cubic surface, defined by an
            equation of the form
            \[
                \{x_0^3=G(x_1,x_2,x_3)\} \subset \bP^3,
            \]
            where $G(x_1,x_2,x_3)$ is the equation of a smooth plane cubic curve (cf. \cite[Proposition
            10.6]{dolgachevAutomorphismsCubicSurfaces2019}). The 9 Eckardt points on a cyclic cubic surface arise from the
            9 inflection points of the smooth plane cubic curve $\{G(x_1,x_2,x_3)=0\} \subset \bP^2$.
        \item Choose a marking of $S$ so that the minimal resolution of $S$ is the blowup of $\bP^2$ at 6 points on a
            conic. Then the lines of multiplicity one on $S$ are the strict transforms $\ell_{ij}$, $ij \subset [6]$, of
            the 15 lines through 2 of the 6 points. From this, we see that $S$ can only have 0, 1, 2, 3, 4, or 6 Eckardt
            points. For instance, we have the following possible choices for configurations of triples of lines giving
            Eckardt points.
            \begin{align*}
                1 &: (\ell_{12},\ell_{34},\ell_{56}), \\
                2 &: (\ell_{12},\ell_{34},\ell_{56}), (\ell_{13},\ell_{24},\ell_{56}), \\
                3 &: (\ell_{12},\ell_{34},\ell_{56}), (\ell_{15},\ell_{24},\ell_{36}), (\ell_{13},\ell_{45},\ell_{26}),
                \\
                4 &: (\ell_{12},\ell_{34},\ell_{56}), (\ell_{13}, \ell_{26}, \ell_{45}),
                (\ell_{15},\ell_{24},\ell_{36}), (\ell_{12}, \ell_{36}, \ell_{45}), \\
                6 &: (\ell_{12},\ell_{34},\ell_{56}), (\ell_{15},\ell_{24},\ell_{36}), (\ell_{13},\ell_{45},\ell_{26}),
                     (\ell_{12},\ell_{45},\ell_{36}), (\ell_{15},\ell_{34},\ell_{26}), (\ell_{13},\ell_{24},\ell_{56}).
            \end{align*}
            By applying \cref{lem:eckardt_constraints}, we see that there are no possible configurations with a
            different number of Eckardt points. (See \cite{tuConfigurationSpacesNonsingular2009} for similar arguments.)

            Furthermore, it is straightforward to construct cubic surfaces with one $A_1$ singularity and the prescribed
            numbers of Eckardt points. For instance, the cubic surface defined by
            \[
                \{x_0+x_1+x_2+x_3+x_4=0, x_0^3 + x_1^3 + x_2^3 + x_3^3 + \frac{1}{16}x_4^3 = 0\} \subset \bP^4
            \]
            has one $A_1$ singularity and 6 Eckardt points, cf. \cite[Proposition
            4.2(1)]{dardanelliHessiansModuliSpace2004}. (The tritangent planes to the Eckardt points are defined by the
            intersections with the hyperplanes $x_i + x_j = 0$ for $i,j \in \{0,1,2,3\}$ distinct; from this one
            directly verifies that each Eckardt point is indeed an Eckardt point in our sense, i.e., is the intersection
            point of three lines of multiplicity one.)

            We remark that one may also show that there are at most 6 Eckardt points on $S$ using
            \cref{lem:eckardt_specialization}. Namely, any Eckardt point on $S$ is the specialization of an Eckardt
            point on a smooth cubic surface, but the smooth cubic surfaces with greater than 6 Eckardt points were
            described in the proof of part (1). None of these smooth cubic surfaces specialize to a cubic surface with
            an $A_1$ singularity, thus we cannot have more than 6 points on a cubic surface with an $A_1$ singularity.

        \item Choose a marking of $S$ so that the minimal resolution of $S$ is the blowup of $\bP^2$ at six points
            $p_1,\ldots,p_6$ on a conic, such that $p_5$ and $p_6$ are infinitely near points. Then the 7 lines of
            multiplicity one on $S$ are labeled by
            \[
                \ell_{12}, \ell_{13}, \ell_{14}, \ell_{23}, \ell_{24}, \ell_{34}, \ell_{56},
            \]
            where $\ell_{ij}$ is the strict transform of the line through $p_i$ and $p_j$. We therefore see that the
            potential triples of lines giving Eckardt points are the 3 triples
            \[
                (\ell_{12},\ell_{34},\ell_{56}), \;\; (\ell_{13},\ell_{24},\ell_{56}), \;\;
                (\ell_{14},\ell_{23},\ell_{56}).
            \]
            By \cref{lem:eckardt_constraints}, it is not possible for all 3 of these triples to give an Eckardt point,
            since then the line $\ell_{56}$ would contain 3 Eckardt points, contradicting
            \cref{lem:eckardt_constraints}. In fact, it is also not possible to have 2 Eckardt points on $S$. Indeed, by
            \cref{lem:eckardt_specialization}, if $S$ has 2 Eckardt points, then it is the specialization of a smooth
            cubic surface with 2 Eckardt points. But note that the line $\ell_{56}$ intersects the line (of multiplicity
            4) connecting the two $A_1$ singularities of $S$. By \cite[Lemma 5.1]{tuSemistableSingularCubic2005}, this
            intersection point is the specialization of an Eckardt point on a smooth cubic surface. Thus $S$ can have at
            most 1 Eckardt point. It is indeed possible to achieve this. For instance, the cubic surface
            \[
                \{x_0+x_1+x_2+x_3+x_4=0, x_0^3+4x_1^3+4x_2^3 + 9x_3^3 + 9x_4^3=0\} \subset \bP^4
            \]
            has two $A_1$ singularities and one Eckardt point.
        \item Choose a marking of $S$ so that the minimal resolution of $S$ is the blowup of $\bP^2$ at six points
            $p_1,\ldots,p_6$ on a conic, such that $p_5$ and $p_6$ are infinitely near points, and $p_3$ and $p_4$ are
            infinitely near points. Then the 3 lines of multiplicity one on $S$ are labeled by $\ell_{12}, \ell_{34},
            \ell_{56}$. Thus the only potential Eckardt point is given by the triple $(\ell_{12},\ell_{34},\ell_{56})$.
            It is indeed possible to achieve this. For instance, the cubic surface
            \[
                \{x_0+x_1+x_2+x_3+x_4=0, x_0^3 + x_1^3 + x_2^3 + 4x_3^3 + 4x_4^3 = 0\} \subset \bP^4
            \]
            has three $A_1$ singularities and one Eckardt point, cf. \cite[Proposition
            4.2(3)]{dardanelliHessiansModuliSpace2004}.  (It is stated in \textit{loc. cit.} that this cubic surface has
            4 Eckardt points, but we verify directly that only one of these occurs as the intersection of 3 lines of
            multiplicity one.)
        \item Choose a marking of $S$ so that the minimal resolution of $S$ is the blowup of $\bP^2$ at the six points
            of intersection of four lines in general position in $\bP^2$. Then the 3 lines of multiplicity one on $S$
            are the strict transforms of the 3 lines through a point on one of the first four lines and a point on
            another of the first four lines. If these 3 lines intersected in a point, then we would obtain a
            configuration in $\bP^2$ of 7 lines with 7 triple intersection points. This is the Fano configuration, which
            can only occur in characteristic 2. Thus there are no Eckardt points on $S$.
        \item Choose a marking of $S$ so that the minimal resolution of $S$ is the blowup of $\bP^2$ at six points
            $p_1,\ldots,p_6$, such that $p_1,p_2,p_3$ lie on one line $\ell_{123}$, and $p_4,p_5,p_6$ lie on another
            line $\ell_{456}$. The lines of multiplicity one on $S$ are the strict transforms of the 9 lines $\ell_{ij}$
            passing through $p_i$ and $p_j$, for $i\in\{1,2,3\}$ and $j\in\{4,5,6\}$. Choose coordinates on $\bP^2$ so
            that the 6 points are given by
            \begin{align*}
                p_1=(1:0:0), \;\; p_2=(1:a:0), \;\; p_3=(0:1:0),\\
                p_4=(0:0:1), \;\; p_5=(1:1:b), \;\; p_6=(1:1:1),
            \end{align*}
            where $a,b \not\in \{0,1\}$. From these points one can write explicit equations for each of the 9 lines,
            \begin{align*}
                \ell_{14}=(y=0), \;\;\; \ell_{15}=(z=by), \;\;\; \ell_{16}=(y=z),\\
                \ell_{24}=(y=ax), \;\;\; \ell_{25}=\left(-ax+y+\frac{a-1}{b}z=0\right), \;\;\; \ell_{26}=(-ax+y+(a-1)z=0),\\
                \ell_{34}=(x=0), \;\;\; \ell_{35}=(z=bx), \;\;\; \ell_{36}=(x=z).
            \end{align*}
            Three lines in $\bP^2$ meet in a point if the determinant of the matrix whose rows are the coefficients of
            the three lines is zero. Thus we can explicitly compute conditions on $a,b$ for three lines to form an
            Eckardt point. We list the conditions for some particular triples of lines to form an Eckardt point:
            \begin{align*}
                (\ell_{14},\ell_{25},\ell_{36}) : a = \frac{1}{1-b}, \;\;
                (\ell_{15},\ell_{26},\ell_{34}) : b = \frac{1}{1-a}, \\
                (\ell_{16},\ell_{24},\ell_{35}) : a=b, \;\;
                (\ell_{14},\ell_{26},\ell_{35}) : a = \frac{b}{b-1}.
            \end{align*}
            From this we see that if any two of the first three triples give Eckardt points, then the third one does as
            well (as expected from \cref{lem:eckardt_constraints}), namely, these three triples give three Eckardt
            points precisely when $a=b$ is a root of the polynomial $x^2-x+1$. On the other hand, the last two triples
            give two Eckardt points precisely when $a=b=2$. Note this is incompatible with the condition that the first
            three triples give three Eckardt points. We conclude that the number of Eckardt points on $S$ can only be 0,
            1, 2, or 3.
    \end{enumerate}
\end{proof}

\begin{remark}
    There is some interesting classical geometry related to line arrangements appearing in the case of an $A_2$
    singularity above. Namely, following the notation of the above proof, consider the three intersection points
    $\ell_{15} \cap \ell_{24}$, $\ell_{16} \cap \ell_{34}$, and $\ell_{26} \cap \ell_{35}$. The classical Pappus'
    Hexagon Theorem implies that there is a unique line in $\bP^2$ passing through these three intersection points. If
    we suppose that $\ell_{36}$, $\ell_{25}$, and $\ell_{14}$ pass through these respective intersection points as well,
    then we obtain in total a configuration of 12 lines and 9 points, such that any point is contained in 4 lines, and
    any line passes through 3 points. This is the classical Hesse configuration, which can also be constructed as the 9
    inflection points of a smooth plane cubic curve, and the 12 lines passing through 3 of the inflection points each.
\end{remark}

\begin{proposition} \label{prop:nonflat_eckardt}
    Let $S$ be the blowup of $\bP^2$ at five points $p_1,\ldots,p_5$, such that $p_1,\ldots,p_4$ are in general position
    and $p_5$ is the intersection point of the line through $p_1$ and $p_2$ with the line through $p_3$ and $p_4$.
    Consider the collection of lines on $S$ consisting of the four lines $\ell_{13}$, $\ell_{14}$, $\ell_{23}$,
    $\ell_{24}$, where $\ell_{ij}$ is the strict transform of the line through $p_i$ and $p_j$, and a line $\ell$, the
    strict transform of a line passing through $p_5$ but not through any of $p_1,\ldots,p_4$. Then $S$ has two potential
    Eckardt points---the triple intersection $\ell \cap \ell_{13} \cap \ell_{24}$, and the triple intersection $\ell
    \cap \ell_{23} \cap \ell_{14}$. It is not possible to have both of these Eckardt points. Thus, $S$ has either 0 or 1
    Eckardt points.
\end{proposition}

\begin{proof}
    It is clear that by moving the position of $\ell$ appropriately, we can obtain either Eckardt point. Thus it remains
    to show that we cannot have both. For this, suppose that $\ell$ passes through both points $p = \ell_{13} \cap
    \ell_{24}$ and $p' = \ell_{23} \cap \ell_{24}$. Consider the blowdown $S \to \bP^2$. Then we obtain a configuration of 7
    lines in $\bP^2$ with 7 triple points, namely, the 7 lines $\ell, \ell_{12}, \ell_{34}, \ell_{13}, \ell_{23},
    \ell_{14}, \ell_{24}$ and the 7 points $p_1,\ldots,p_5,p=p'$. (Here $\ell_{12}$ and $\ell_{34}$ denote respectively
    the line through $p_1$ and $p_2$, and the line through $p_3$ and $p_4$, and by abuse of notation $\ell,\ell_{ij}$
    denote the images of the corresponding lines on $S$.) This is the Fano configuration, which can only occur in
    characteristic 2.
\end{proof}

\begin{remark}
    The above propositions can also be proven using the geometry of the moduli space $\wY(E_6)$. Namely, in
    \cite[Section 4]{schockE_6InvariantBirational2024}, we explicitly describe each boundary divisor of $\wY(E_6)$ and
    its intersections with Eckardt divisors. From this, one can determine the potential configurations of Eckardt
    points on irreducible components of fibers of $\wt\pi : \wY(E_7) \to \wY(E_6)$.

    For example, a type $a_2$ divisor on $\wY(E_6)$ intersects a total of five Eckardt divisors. Two of these
    intersections (labeled 5(b) in \cite[Proposition 4.7]{schockE_6InvariantBirational2024}) give Eckardt points on the
    component described in \cref{prop:nonflat_eckardt}, and one sees from the description there that the restrictions of
    these Eckardt divisors to the $a_2$ divisor are disjoint, thus there can be at most one Eckardt point on this
    component. The remaining three intersections (labeled 5(a) in \cite[Proposition
    4.7]{schockE_6InvariantBirational2024}) give Eckardt points on the component isomorphic to the resolution of a cubic
    surface with two $A_1$ singularities. Since these intersections are nontrivial, we see that it is possible to have
    at least one Eckardt point. On the other hand, the restrictions of these three Eckardt divisors to this $a_2$
    divisor are disjoint, as follows from the explicit description of these restrictions in \cite[Proposition
    4.7]{schockE_6InvariantBirational2024}. Thus there cannot be two or more Eckardt points on this component.
\end{remark}

\begin{theorem} \label{thm:eckardts}
    The possible configurations of Eckardt points on irreducible components of fibers of $\wt\pi : \wY(E_7) \to
    \wY(E_6)$ of types $a$, $a_2$, $a_3$, $a_4$, $b$, $ab$, $a_2b$, and $a_3b$ are listed in
    \cref{tab:a_23-1,tab:a2_23-1,tab:a3_23-1,tab:a4_23-1,tab:b_23-1,tab:ab_23-1,tab:a2b_23-1,tab:a3b_23-1}. The
    remaining fibers of $\wt\pi : \wY(E_7) \to \wY(E_6)$ are obtained as degenerations of the listed fibers, where some
    of the components of the form $Bl_5\bP^2$ as described in \cref{lem:nonflat_A32} degenerate into four irreducible
    components, as described in \cref{ex:aa2_fiber}. No Eckardt points can occur on these degenerate components.
\end{theorem}

\begin{proof}
    This follows directly from \cref{prop:eckardt_configs,prop:nonflat_eckardt}, and our explicit descriptions of the
    fibers of $\wt\pi$ as in \cref{thm:fibers}.
\end{proof}

\section{Moduli of weighted stable marked cubic surfaces} \label{sec:weighted_cubics}

In this section we state our main theorem (\cref{thm:cubics_main}), describing in detail the wall crossings for the
moduli of weighted stable marked cubic surfaces, and the weighted stable pairs appearing in each moduli space.

We begin with some generalities on moduli of weighted stable pairs in the case of marked del Pezzo surfaces.

\begin{definition}
    Let $N$ be the number of lines on a smooth del Pezzo surface of degree $9-n$ (so $N=16,27,56,240$ for $n=5,6,7,8$).
    We define the \emph{weight domain} for moduli of marked del Pezzo surfaces of degree $9-n$ to be
    \[
        \cP_n = \left\{\vec{c}=(c_1,\ldots,c_N) \in (0,1]^N \mid \sum_{i=1}^N c_i > 9-n \right\}.
    \]
    For $\vec{c},\vec{d} \in \cP_n$, we write $\vec{d} \leq \vec{c}$ if $d_i \leq c_i$ for all $i=1,\ldots,N$.
\end{definition}

Note that for a smooth del Pezzo surface $S$ of degree $9-n$, $5 \leq n \leq 8$, with lines $B_1,\ldots,B_N$ one has
\[
    K_S + \sum c_iB_i = \left(\frac{\sum c_i}{9-n} - 1\right)(-K_S).
\]
It follows that the weight domain $\cP_n$ is exactly the set of vectors $\vec{c}=(c_1,\ldots,c_N) \in (0,1]^N$ such that
the $K_S+\vec{c}B$ is ample.

\begin{definition}
    For $\vec{c} \in \cP_n$, let $Y^n_{\vec{c}} \subset Y(E_n)$ be the open subvariety parameterizing smooth marked del
    Pezzo surfaces $(S,B)$ such that $(S,\vec{c}B)$ is log canonical.
\end{definition}

Concretely, a smooth weighted marked del Pezzo surface $(S,\vec{c}B)$ is log canonical if whenever lines
$B_{i_1},\ldots,B_{i_k}$ intersect at a point, we have $\sum_{j=1}^k c_{i_j} \leq 2$, cf. \cite[Lemma
5.10]{gallardoGeometricInterpretationToroidal2021}. The condition that $\vec{c} \in \cP_n$ and $(S,\vec{c}B)$ is log
canonical exactly says (since $S$ is smooth) that $(S,\vec{c}B)$ is a stable pair, so $Y^n_{\vec{c}}$ is the moduli
space of smooth weighted stable marked del Pezzo surfaces of degree $9-n$.

The following is a special case of the main results of \cite{ascherWallCrossingModuli2023, mengMMPLocallyStable2023}.

\begin{theorem} \label{thm:gen_walls}
    Assume $n = 5$ or $6$. Then for any $\vec{c} \in \cP_n$, there is a normal projective variety
    $\oY_{\vec{c}}=\oY_{\vec{c}}^n$ and a flat family $(\cS_{\vec{c}},\vec{c}\cB)$, compactifying the moduli space
    $Y_{\vec{c}}^n$ of smooth weighted stable marked del Pezzo surfaces of degree $9-n$.  Furthermore, there is a finite
    rational polyhedral wall-and-chamber decomposition of $\cP_n$ such that the following hold.
    \begin{enumerate}
        \item For $\vec{c},\vec{c'}$ contained in the same chamber, there are canonical isomorphisms
            \[
                \begin{tikzcd}
                    \cS_{\vec{c}} \ar[r, "\sim"] \ar[d] & \cS_{\vec{c'}} \ar[d] \\
                    \oY_{\vec{c}} \ar[r, "\sim"] & \oY_{\vec{c'}}.
                \end{tikzcd}
            \]
        \item For $\vec{c},\vec{d}$ in different chambers with $\vec{d} \leq \vec{c}$, there is a canonical functorial
            birational wall-crossing morphism $\rho_{\vec{d},\vec{c}} : \oY_{\vec{c}} \to \oY_{\vec{d}}$ induced by a
            canonical birational map $\cS_{\vec{c}} \dashrightarrow \cS_{\vec{d}}$ sending a stable pair $(S,\vec{c}B)$
            to the stable model of the pair $(S,\vec{d}B)$.
    \end{enumerate}
\end{theorem}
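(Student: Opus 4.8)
The plan is to deduce the statement from the general wall-crossing machinery of Ascher, Bejleri, Inchiostro, and Patakfalvi \cite{ascherWallCrossingModuli2023}, applied to the fixed family of marked del Pezzo surfaces, after first fixing the numerical data. Since $Y(E_n)$ is a fixed smooth rational variety carrying a universal family $(\cS,\cB) \to Y(E_n)$, whose fiber over $[S]$ is $S$ together with its $N$ labeled lines, and since for a smooth del Pezzo surface of degree $9-n$ one has $K_S + \sum c_i B_i = \left(\frac{\sum c_i}{9-n}-1\right)(-K_S)$, the dimension $2$ and the volume $(K_S+\vec{c}B)^2 = \left(\frac{\sum c_i}{9-n}-1\right)^2(9-n)$ are determined by $\vec{c}$. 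Thus for each $\vec{c} \in \cP_n$ there is a well-defined moduli problem of KSBA-stable pairs of dimension $2$ with coefficient vector $\vec{c}$ and this fixed volume, whose moduli space is a projective scheme by Kollár's theory \cite{kollarFamiliesVarietiesGeneralb}.

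Next I would construct $\oY^n_{\vec{c}}$ and its universal family. The open locus $Y^n_{\vec{c}} \subset Y(E_n)$ of smooth del Pezzo pairs with $(S,\vec{c}B)$ log canonical is precisely the locus where $(S,\vec{c}B)$ is a stable pair, since $S$ is smooth and $K_S+\vec{c}B$ is ample for $\vec{c} \in \cP_n$; hence it maps to the moduli of stable pairs. I would then define $\oY^n_{\vec{c}}$ to be the normalization of the closure of the image of $Y^n_{\vec{c}}$, and $(\cS_{\vec{c}},\vec{c}\cB)$ to be the pullback of the universal family of stable pairs. Properness of the latter gives that $\oY^n_{\vec{c}}$ is projective, normality holds by construction, flatness of the family is part of the KSBA package, and because $Y(E_n)$ is irreducible, $\oY^n_{\vec{c}}$ is irreducible with $Y^n_{\vec{c}}$ dense.

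The heart of the argument is the wall-and-chamber structure, which I would obtain by invoking the main theorem of \cite{ascherWallCrossingModuli2023} (cf. \cite[Theorem 1.9]{ascherWallCrossingModuli2023}). Their result applies to a multi-parameter family of weights on a fixed bounded family of pairs and produces a finite rational polyhedral subdivision of the weight domain together with, for $\vec{d} \leq \vec{c}$, a functorial reduction morphism sending a stable pair to the stable model of its lower-weight log pair; this yields the isomorphisms (1) within a chamber and the birational morphisms (2) across walls. What remains is to check that our situation meets their hypotheses: boundedness is immediate since all surfaces are del Pezzo of fixed degree parameterized by the fixed variety $Y(E_n)$, and rationality of the volume and coefficient data is clear from the formula above.

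The main subtlety, and the step I expect to require genuine care, is that the ABIP maps are defined on the full moduli space, so one must verify they carry the chosen irreducible closure $\oY^n_{\vec{c}}$ into $\oY^n_{\vec{d}}$. This follows from the observation that decreasing weights does not alter the underlying smooth surface: if $\vec{d} \leq \vec{c}$ then log canonicity of $(S,\vec{c}B)$ implies log canonicity of $(S,\vec{d}B)$, so $Y^n_{\vec{c}} \subseteq Y^n_{\vec{d}}$ as dense open subsets of $Y(E_n)$ and the stable model of a smooth pair $(S,\vec{d}B)$ is just $(S,\vec{d}B)$ itself. Hence the birational map $\cS_{\vec{c}} \dashrightarrow \cS_{\vec{d}}$ restricts to the identity on a common dense open set, so it preserves our component and induces the desired morphism $\rho_{\vec{d},\vec{c}} : \oY^n_{\vec{c}} \to \oY^n_{\vec{d}}$. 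The commutativity of the diagram in (1), functoriality, and the remaining compatibilities are then formal consequences of the universal property of the stable-model construction supplied by \cite{ascherWallCrossingModuli2023}.
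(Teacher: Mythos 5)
Your overall strategy---realize $\oY^n_{\vec c}$ as the normalization of the closure of a dense open locus of smooth pairs inside the KSBA moduli space and then invoke the wall-crossing machinery of \cite{ascherWallCrossingModuli2023}---is the same as the paper's, and most of your verifications (boundedness, rationality, the containment $Y^n_{\vec c}\subseteq Y^n_{\vec d}$ for $\vec d\leq\vec c$, and the preservation of the distinguished component, which is exactly what \cite[Theorem 5.1]{ascherWallCrossingModuli2023} is for) are on target. One cosmetic difference: the paper takes the closure of the \emph{fixed} normal-crossings locus $Y^n_{\times}$ rather than of the $\vec c$-dependent locus $Y^n_{\vec c}$; since both are dense opens in the irreducible $Y(E_n)$ the resulting spaces agree, but fixing the open set once and for all is what lets you compare closures across different weights without further argument.

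The genuine gap is your treatment of weight vectors with some $c_i=1$. The weight domain $\cP_n$ is a subset of $(0,1]^N$ and explicitly contains $\vec 1$, which is the single most important weight for the rest of the paper, and the wall-crossing results of \cite{ascherWallCrossingModuli2023} cannot simply be quoted there: the moduli functor and the reduction morphisms require the coefficients to be perturbable below the given values, and the behavior of stable pairs at coefficient exactly $1$ is precisely where the subtleties lie. Your proposal never isolates this case. The paper handles it by a separate argument: for $\vec c=\vec 1$ the space and family are \emph{not} produced by the general machinery but are the explicit constructions of Hacking--Keel--Tevelev ($\oY(E_5)$ with family $\oY(E_6)$, and $\wY(E_6)$ with family $\ddot Y(E_7)$, i.e.\ \cref{thm:deg4_1,thm:deg3_1}), and one then checks \emph{from the explicit fiber descriptions of \cref{sec:fibers}} that the same family is still a flat family of stable pairs for $c_i=1-\epsilon$. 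This both supplies the theorem at $\vec 1$ and reduces everything else to the case all $c_i<1$, where \cite[Theorems 1.1 and 5.1]{ascherWallCrossingModuli2023} apply. Without that step your argument does not cover all of $\cP_n$, and the "boundedness and rationality are clear" check is not the real hypothesis that needs verifying.
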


\begin{proof}
    If $\vec{c}=\vec{1}=(1,\ldots,1)$, then $\oY^n=\oY_{\vec{1}}^n$ and its universal family $(\cS,\cB)$ are constructed
    in \cite{hackingStablePairTropical2009}. Namely, for $n=5$, $\oY^5 = \oY(E_5)$, with universal family $(\cS,\cB)$
    given by $\oY(E_6)$ and the sum of the horizontal $A_1$ divisors in $\oY(E_6)$ (cf. \cref{thm:pi,sec:deg4}). For
    $n=6$, $\oY^6 = \wY(E_6)$, with universal family $(\ddot{Y}(E_7),B_{\ddot{\pi}})$ as described in
    \cref{sec:morphisms,sec:fibers,sec:eckardts}. It follows immediately from our explicit computation of the fibers of
    the universal family (see \cref{sec:fibers,sec:eckardts,sec:deg4} and below) that if $c_i=1-\epsilon$, $0 < \epsilon
    \ll 1$, then $(\cS_{\vec{c}},\vec{c}\cB) \to \oY^n$ still defines a flat family of stable pairs. Thus we can assume
    all $c_i < 1$. Let $Y_{\times}^n \subset Y(E_n)$ be the open subvariety parameterizing smooth marked del Pezzo
    surfaces $(S,B)$ such that $B$ has only normal crossings. (So $Y_{\times}^5 = Y(E_5)$ and $Y_{\times}^6$ is the
    locus of cubic surfaces with no Eckardt points.) Define $\oY_{\vec{c}}^n$ to be the normalization of the closure of
    $Y_{\times}^n$ in the moduli space of $\vec{c}$-weighted stable pairs. Then the result follows from \cite[Theorem
    1.1, Theorem 5.1]{ascherWallCrossingModuli2023}.
\end{proof}

\begin{remark}
    By \cite{mengMMPLocallyStable2023}, it is not necessary to reduce from weight $1$ to weight $1-\epsilon$ in the
    above proof. Our results show that there is not a wall at $\vec{c}=\vec{1}$.
\end{remark}

In what follows we will only be interested in the case where the weight vector is constant, i.e., $\vec{c} =
(c,\ldots,c)$ for some $c \in \left(\frac{9-n}{N},1\right]$. In this case we simply write $c$ instead of $\vec{c}$. Note
that the lines on a del Pezzo surface of degree 4 are always normal crossings, so $Y^5_c=Y^5$ for all $c$. Likewise, any
point on a cubic surface where the lines are not normal crossings is an Eckardt point where 3 lines intersect. It
follows by \cite[Lemma 5.10]{gallardoGeometricInterpretationToroidal2021} that $Y^6_c=Y^6$ for $c \leq 2/3$, and
$Y^6_c=Y^6_{\times} \subset Y^6$ is the locus of cubic surfaces with no Eckardt points for $2/3 < c \leq 1$. If
$t_1,\ldots,t_m \in \left(\frac{9-n}{N},1\right]$ are the walls for $\oY^n_c$, then $\oY^n_{t_i-\epsilon} \cong
\oY^n_{t_i}$ by \cite[Theorem 1.9]{ascherWallCrossingModuli2023} (also from our explicit descriptions of the stable
surfaces below), and we write $\oY^n_{(t_{i-1},t_i]} \cong \oY^n_c$ for any $c \in (t_{i-1},t_i]$.

The following is the main theorem of this article. The analogous result for degree 4 del Pezzo surfaces is described in
\cref{sec:deg4}; we recommend the reader consult the degree 4 case before moving on to the much more challenging case
degree 3 case below.

\begin{theorem} \label{thm:cubics_main}
    The weight domain $\left(\frac{1}{9},1\right]$ for the moduli space of weighted stable marked cubic surfaces
    $(S,cB)$ admits precisely five walls, at $c=1/6$, $1/4$, $1/3$, $1/2$, and $2/3$, inducing birational morphisms of
    moduli spaces
    \[
        \oY(E_6) \cong \oY^6_{(1/9,1/6]} \xleftarrow{\sim} \oY^6_{(1/6,1/4]} \leftarrow  \oY^6_{(1/4,1/3]} \xleftarrow{\sim}
        \oY^6_{(1/3,1/2]} \xleftarrow{\sim} \oY^6_{(1/2,2/3]} \xleftarrow{\sim} \oY^6_{(2/3,1]} \cong \wY(E_6),
    \]
    where each morphism is an isomorphism except for $\oY^6_{(1/6,1/4]} \leftarrow \oY^6_{(1/4,1/3]}$, which is the
    blowdown $\wY(E_6) \to \oY(E_6)$ constructed in \cref{prop:flattening}. The weighted stable marked cubic surfaces in
    each chamber are described in the remainder of this section.
\end{theorem}

The most significant aspect of the above theorem is the explicit description of the weighted stable marked cubic
surfaces appearing in each chamber, which takes up the remainder of this section. We present these results in a bottom
up fashion, starting with the minimal weight $c=1/9+\epsilon$ and describing the stable replacements as one increases
$c$ to $1$. This is the opposite of the top down approach we use to actually prove the theorem (see
\cref{sec:proof_main}), where we start from the maximal weight $c=1$ and decrease $c$ to $1/9+\epsilon$. (However, see
also \cref{sec:bottom_up} where we discuss how one can also prove the theorem with the bottom up approach.)

In order to classify the weighted stable marked cubic surfaces, we mildly extend the notation of \cref{not:strata} for
strata of $\wY(E_6)$ and fibers of $\wt\pi : (\wY(E_7),B_{\wt\pi}) \to \wY(E_6)$. Namely, as our explicit descriptions
below reveal, the morphism $\wt\pi : (\wY(E_7),B_{\wt\pi}) \to \wY(E_6)$ gives the universal family of weighted stable
marked cubic surfaces, for weights $1/2 < c \leq 2/3$. We will say a weighted stable marked cubic surface for \emph{any}
weight $1/9 < c \leq 1$ is of type $T$ if its stable replacement for weights $1/2 < c \leq 2/3$ is of type $T$ as
described in \cref{not:strata}.

In particular, we roughly split our discussion into surfaces labeled by types $a$, $a_2$, $a_3$, $a_4$, $b$, $ab$,
$a_2b$, and $a_3b$. As we will see, any other type of weighted stable marked cubic surface is easily described as a
particular degeneration of one of these types. We also label the irreducible components of weighted stable marked cubic
surfaces by different numbered types, roughly numbered in the order that the components appear as one increases the
weights.

Some of the pictures below (e.g., for the weight $1/9+\epsilon$ surfaces) are inspired by \cite[Table
1]{gallardoGeometricInterpretationToroidal2021}. Following the conventions there, we denote lines of multiplicities 1,
2, and 4 by solid, dashed, and dotted black lines, respectively. We denote the gluing curves of irreducible components
by red lines, which are typically solid, but we occasionally draw them as dashed red lines in order to avoid confusion
about the number of irreducible components.

\subsection{Smooth weighted stable marked cubic surfaces}

\begin{proposition} \label{prop:smooth}
    For smooth weighted stable marked cubic surfaces $(S,cB)$, there is one walls.
    \begin{enumerate}
        \item For weights $1/9 < c \leq 2/3$, any weighted marked cubic surface $(S,cB)$ with $S$ smooth is stable.
        \item For weights $2/3 < c \leq 1$, smooth weighted stable marked cubic surfaces are obtained from the weight
            $1/9 < c \leq 2/3$ smooth weighted stable marked cubic surfaces by resolving Eckardt points as described in
            \cref{prop:resolve_eckardt}.
    \end{enumerate}
\end{proposition}

\begin{remark}
    As discussed in \cref{sec:eckardts}, it is a classical fact that a smooth cubic surface has $s=0$, $1$, $2$, $3$,
    $4$, $6$, $9$, $10$, or $18$ Eckardt points. Thus weighted stable marked cubic surfaces $(S,cB)$ for weights $2/3 <
    c \leq 1$ arising from smooth cubic surfaces have $s+1$ irreducible components---one isomorphic to the blowup of the
    given smooth cubic surface at its $s$ Eckardt points, and $s$ copies of $\bP^2$ attached to the $s$ exceptional
    divisors. For instance, the Fermat cubic surface $x_0^3 + \cdots + x_3^3 = 0$ in $\bP^3$ is the unique cubic surface
    with 18 Eckardt points; its stable replacement has 19 irreducible components.
\end{remark}

\subsection{Type $a$}

\begin{proposition} \label{prop:a}
    For type $a$ weighted stable marked cubic surfaces $(S,cB)$, there are three walls.
    \begin{enumerate}
        \item For weights $1/9 < c \leq 1/6$, the type $a$ surfaces are described in \cref{fig:a_19-16}.
        \item For weights $1/6 < c \leq 1/2$, the type $a$ surfaces are described in \cref{fig:a_16-12}.
        \item For weights $1/2 < c \leq 2/3$, the type $a$ surfaces are described in \cref{fig:a_12-23}.
        \item For weights $2/3 < c \leq 1$, the type $a$ surfaces are obtained from the weight $1/2 < c \leq 2/3$ type
            $a$ surfaces by resolving Eckardt points as described in \cref{prop:resolve_eckardt}. The possible
            configurations of Eckardt points are summarized in \cref{tab:a_23-1}.
    \end{enumerate}
\end{proposition}


\begin{figure}[!htpb]
    \centering
    \includegraphics[width=0.15\linewidth]{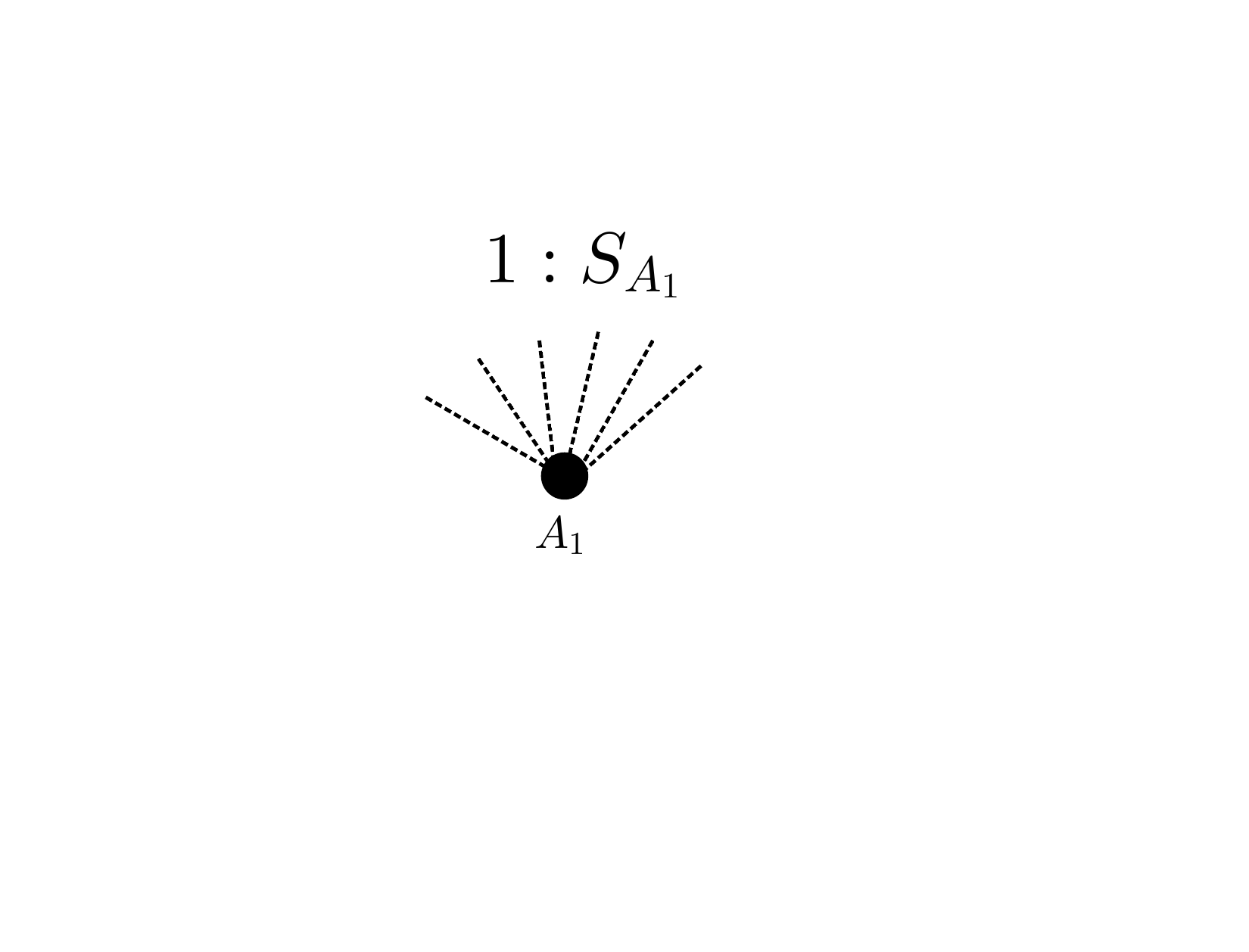}
    \caption{A type $a$ surface $(S_0,cB_0)$ for weights $1/9 < c \leq 1/6$. This is a cubic surface with an
    $A_1$ singularity. Recall that dashed lines have multiplicity 2. In addition to the six multiplicity 2 lines shown,
    there are 15 lines of multiplicity 1 on the surface, not shown.}%
    \label{fig:a_19-16}
\end{figure}


\begin{figure}[!htpb]
    \centering
    \includegraphics[width=0.5\linewidth]{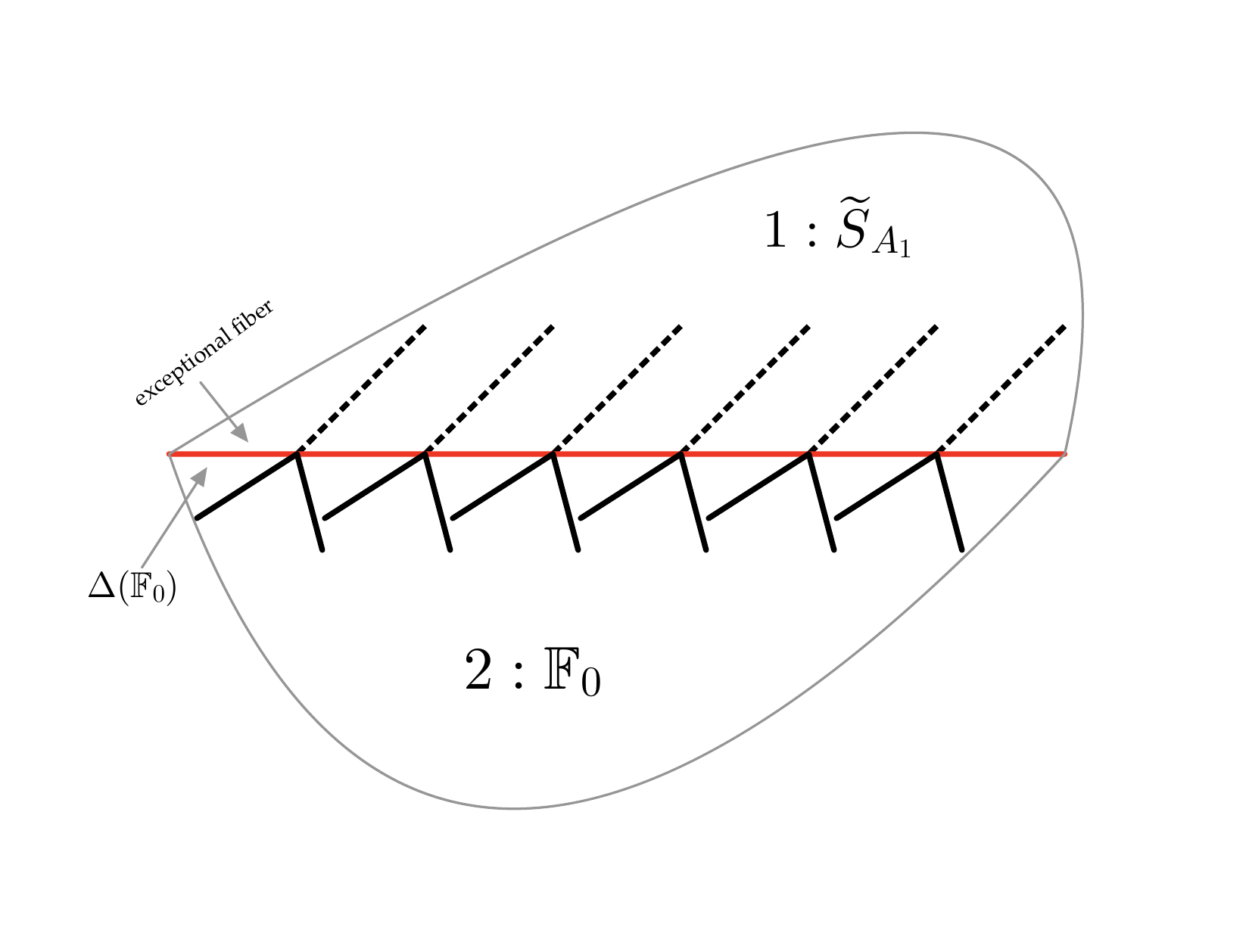}
    \caption{A type $a$ surface $(S_1,cB_1)$ for weights $1/6 < c \leq 1/2$, obtained as the stable replacement for
        these weights of the surface $(S_0,cB_0)$ of \cref{fig:a_19-16}. This is obtained by resolving the $A_1$
        singularity of $(S_0,cB_0)$, and attaching to the exceptional fiber a type 2 component isomorphic to $\bF_0
        \cong \bP^1 \times \bP^1$, glued along its diagonal. Each line of multiplicity 2 passing through the $A_1$
        singularity splits into two distinct lines on the $\bF_0$ component, one in each ruling.}%
    \label{fig:a_16-12}
\end{figure}


\begin{figure}[!htpb]
    \centering
    \includegraphics[width=0.5\linewidth]{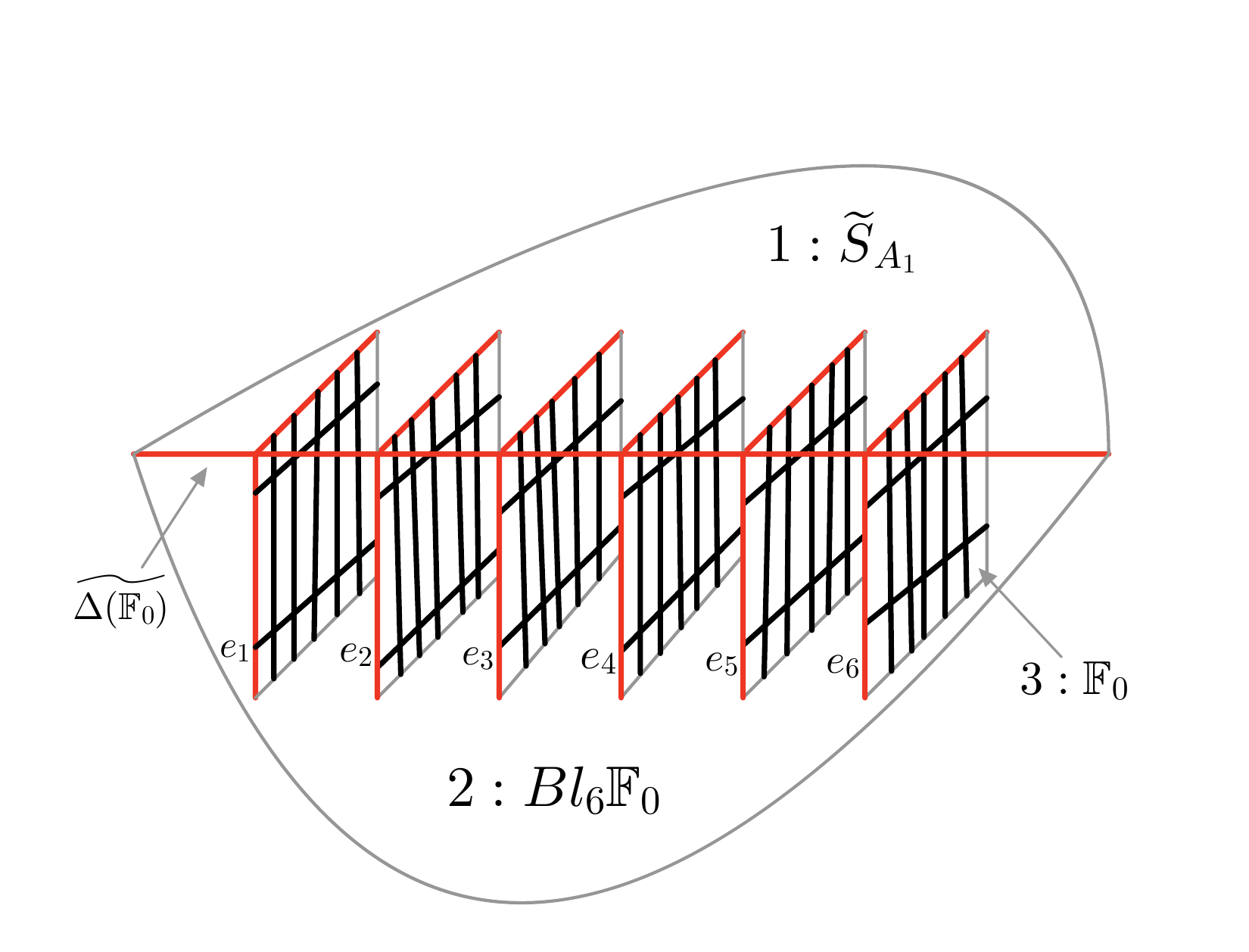}
    \caption{A type $a$ surface $(S_2,cB_2)$ for weights $1/2 < c \leq 2/3$, obtained as the stable replacement for
        these weights of the surface $(S_1,cB_1)$ of \cref{fig:a_16-12}. Over each multiplicity 2 line of $(S_1,cB_1)$,
        a type 3 component isomorphic $\bF_0$ is attached, where the two lines are distributed as distinct lines in the
        same ruling. Each of the 15 lines of multiplicity 1 on the type 1 component $\wS_{A_1}$ intersects exactly two
        type 3 components, making up one of the five lines in the other ruling on $\bF_0$.}%
    \label{fig:a_12-23}
\end{figure}

\begin{table}[!htpb]
    \centering
    \caption{The types of irreducible components of the weight $1/2 < c \leq 2/3$ surface of type $a$ pictured in
        \cref{fig:a_12-23}, and the possible numbers of Eckardt points on each component. For the component of type 1,
        $\wS_{A_1}$ refers to the minimal resolution of a cubic surface with one $A_1$ singularity. For the component of
        type 2, $Bl_6\bF_0$ refers to the blowup of $\bF_0 \cong \bP^1 \times \bP^1$ at 6 points on the diagonal.}
    \label{tab:a_23-1}
    \begin{tabular}{| c | c | c | c |}
        \hline
        Label & Surface & \# & Eckardt points \\
        \hline
        \hline
        1 & $\wS_{A_1}$ & 1 & 0, 1, 2, 3, 4, or 6 \\
        2 & $Bl_6\bF_0$ & 1 & 0 \\
        3 & $\bF_0$ & 6 & 0 \\
        \hline
    \end{tabular}
\end{table}

\subsection{Type $a_2$}

\begin{proposition} \label{prop:a2}
    For type $a_2$ weighted stable marked cubic surfaces $(S,cB)$, there are four walls.
    \begin{enumerate}
        \item For weights $1/9 < c \leq 1/6$, the type $a_2$ surfaces are described in \cref{fig:a2_19-16}.
        \item For weights $1/6 < c \leq 1/4$, the type $a_2$ surfaces are described in \cref{fig:a2_16-14}.
        \item For weights $1/4 < c \leq 1/2$, the type $a_2$ surfaces are described in \cref{fig:a2_14-12}.
            Additionally, crossing the wall $c=1/4$ introduces type $aa_2$ surfaces as degenerations of type $a_2$
            surfaces, described in \cref{fig:aa2_14-12}.
        \item For weights $1/2 < c \leq 2/3$, the type $a_2$ surfaces are described in \cref{fig:a2_12-23}. The type
            $aa_2$ surfaces are described in \cref{fig:aa2_12-23}.
        \item For weights $2/3 < c \leq 1$, the type $a_2$ surfaces are obtained from the weight $1/2 < c \leq 2/3$ type
            $a_2$ surfaces by resolving Eckardt points as described in \cref{prop:resolve_eckardt}. The possible
            configurations of Eckardt points are summarized in \cref{tab:a2_23-1}. The type $aa_2$ surfaces are obtained
            similarly.
    \end{enumerate}
\end{proposition}


\begin{figure}[!htpb]
    \centering
    \includegraphics[width=0.3\linewidth]{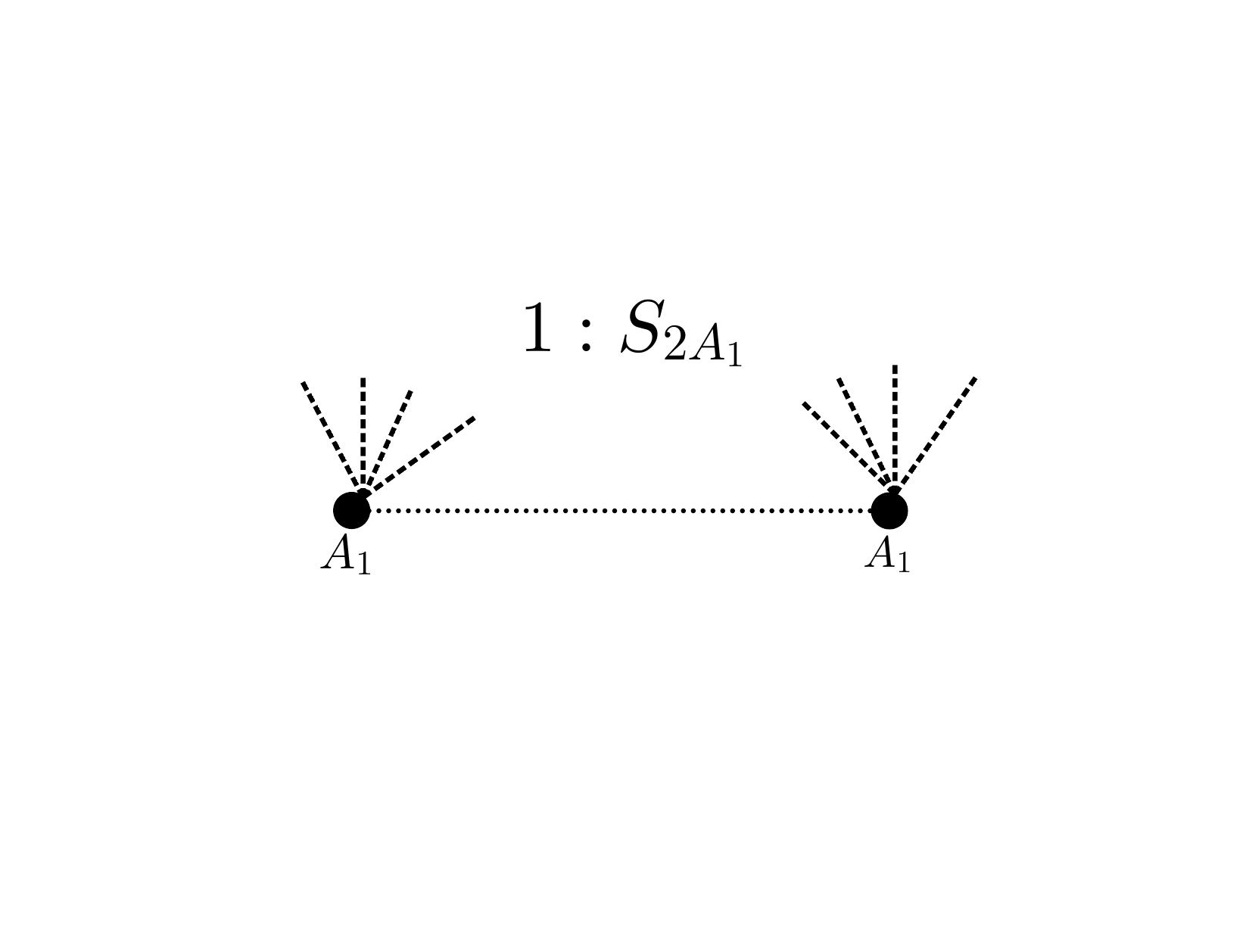}
    \caption{A type $a_2$ surface $(S_0,cB_0)$ for weights $1/9 < c \leq 1/6$. This is a cubic surface with two $A_1$
        singularities. Recall that dashed lines have multiplicity 2 and dotted lines have multiplicity 4. Each line of
        multiplicity 2 passing through one $A_1$ singularity intersects precisely one line of multiplicity 2 passing
        through the other $A_1$ singularity; these intersections are not drawn. In addition to the lines shown, there
        are 7 lines of multiplicity 1, not shown.}%
    \label{fig:a2_19-16}
\end{figure}


\begin{figure}[!htpb]
    \centering
    \includegraphics[width=0.6\linewidth]{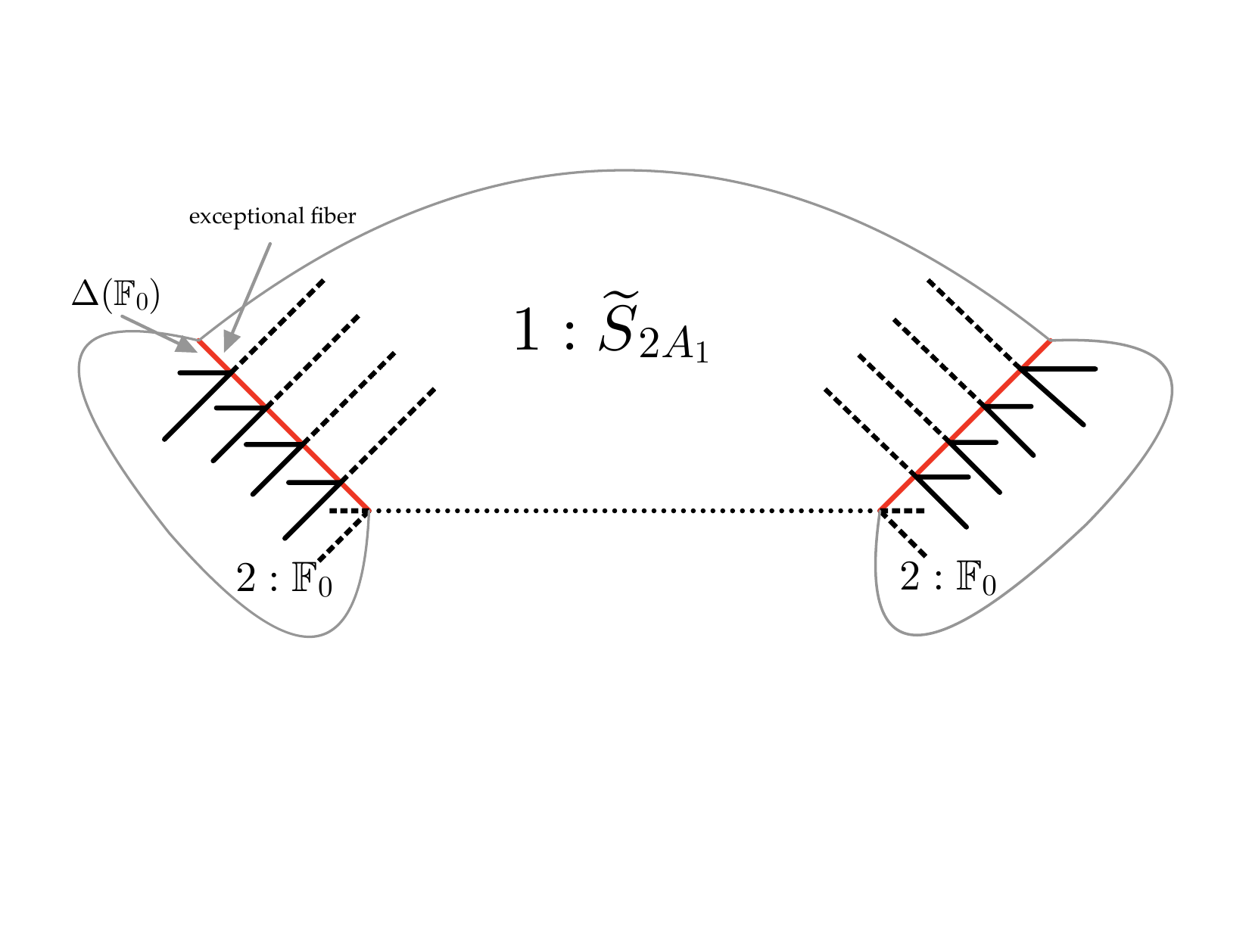}
    \caption{A type $a_2$ surface $(S_1,cB_1)$ for weights $1/6 < c \leq 1/4$, obtained as the stable replacement for
        these weights of the surface $(S_0,cB_0)$ of \cref{fig:a2_19-16}. This is obtained by resolving the two $A_1$
        singularities of $(S_0,cB_0)$, and attaching to each exceptional fiber a type 2 component isomorphic to $\bF_0
        \cong \bP^1 \times \bP^1$, glued along its diagonal. Each line of multiplicity 2 passing through an $A_1$
        singularity splits into two distinct lines on the corresponding $\bF_0$ component, one in each ruling. The line
        of multiplicity 4 splits into two distinct lines of multiplicity 2 on each $\bF_0$ component, one in each
        ruling. Additionally, as in \cref{fig:a2_19-16} each line of multiplicity 2 passing through one $A_1$
        singularity intersects precisely one line of multiplicity 2 passing through the other $A_1$ singularity; these
    intersections are not drawn.}%
    \label{fig:a2_16-14}
\end{figure}


\begin{figure}[!htpb]
    \centering
    \includegraphics[width=0.6\linewidth]{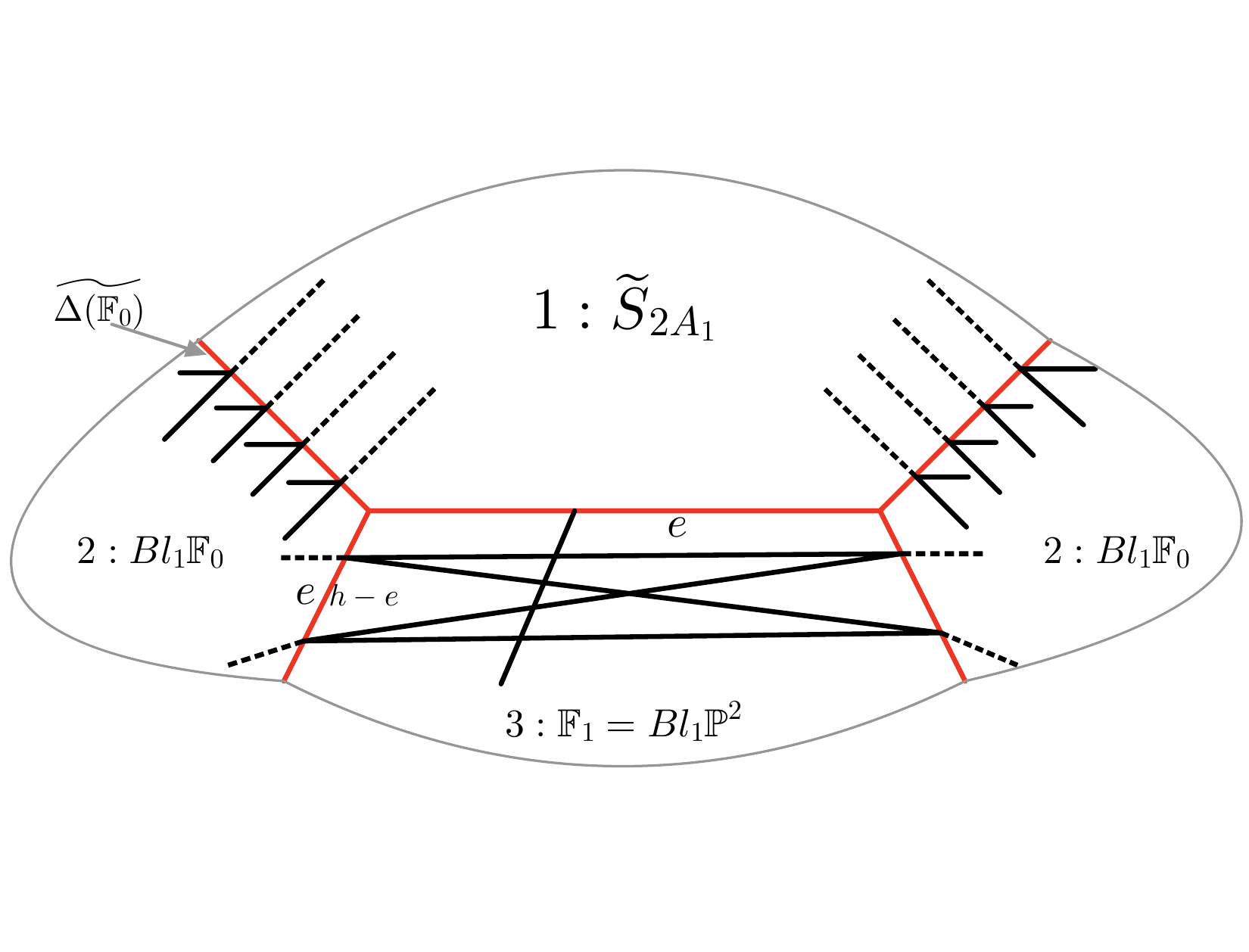}
    \caption{A type $a_2$ surface $(S_2,cB_2)$ for weights $1/4 < c \leq 1/2$, obtained as the stable replacement for
        these weights of the surface $(S_1,cB_1)$ of \cref{fig:a2_16-14}. This is obtained by blowing up the
        multiplicity 4 line in $(S_1,cB_1)$, and attaching to the exceptional divisor a type 3 component isomorphic to
        $\bF_1 \cong Bl_1\bP^2$, as shown. Four of the lines on the type 3 component come from the line of multiplicity
        4, the remaining line on this component comes from a line of multiplicity 1 on the type 1 component.
       Additionally, as in \cref{fig:a2_19-16,fig:a2_16-14} each line of multiplicity 2 passing through one $A_1$ singularity intersects
      precisely one line of multiplicity 2 passing through the other $A_1$ singularity; these intersections are not drawn.}%
    \label{fig:a2_14-12}
\end{figure}

\begin{figure}[!htpb]
    \centering
    \includegraphics[width=0.6\linewidth]{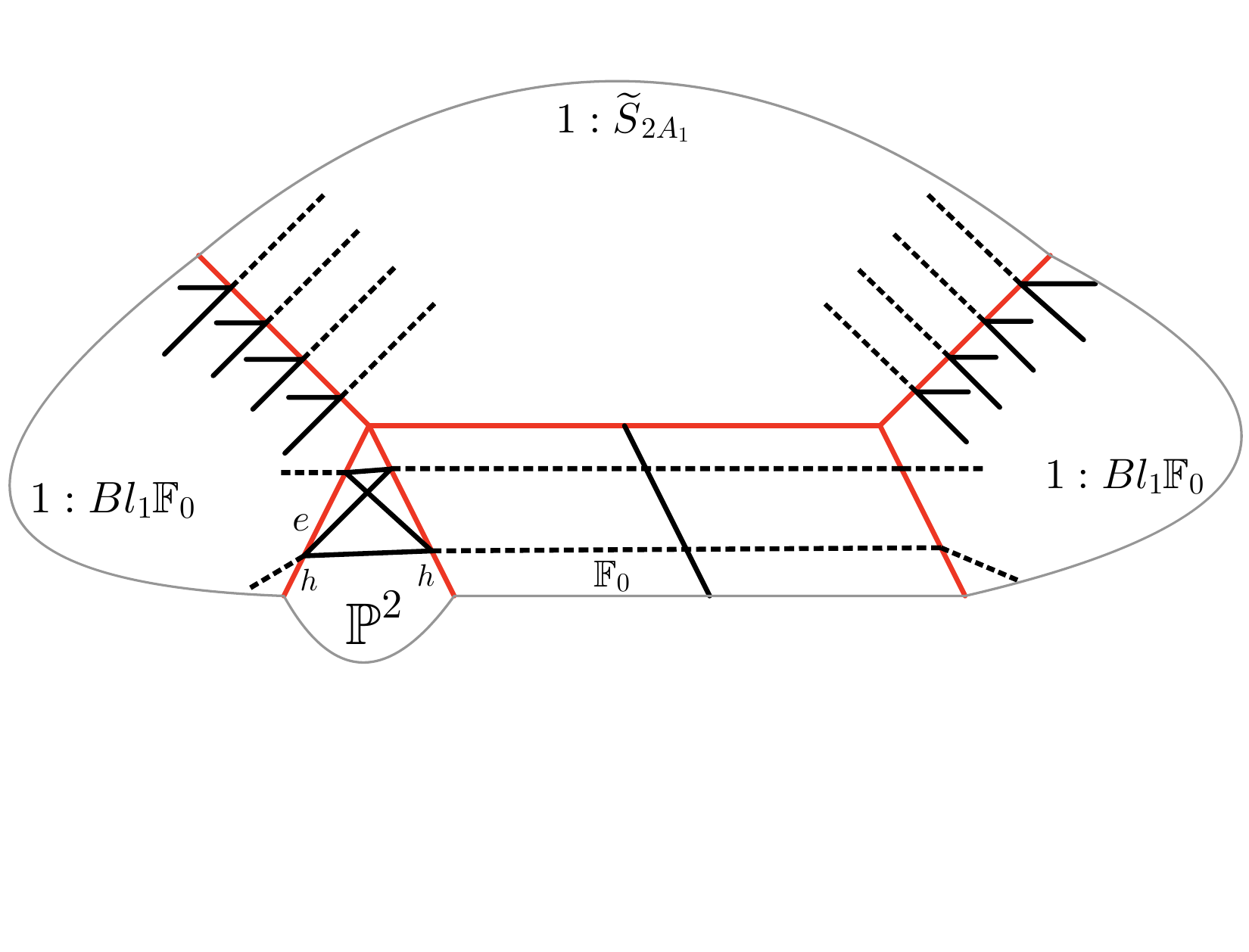}
    \caption{A type $aa_2$ surface for weights $1/4 < c \leq 1/2$, obtained as a degeneration of the surface of
    \cref{fig:a2_14-12}, where the type 3 component $\cong Bl_1\bP^2$ degenerates into two components, one isomorphic to
    $\bP^2$, and the other isomorphic to $\bF_0$, as shown. Here the horizontal lines on $\bF_0$ are rulings in one
    direction, and the slanted lines are rulings in the other direciton.}%
    \label{fig:aa2_14-12}
\end{figure}


\begin{figure}[!htpb]
    \centering
    \begin{subfigure}[t]{0.7\textwidth}
        \centering
        \includegraphics[width=0.9\linewidth]{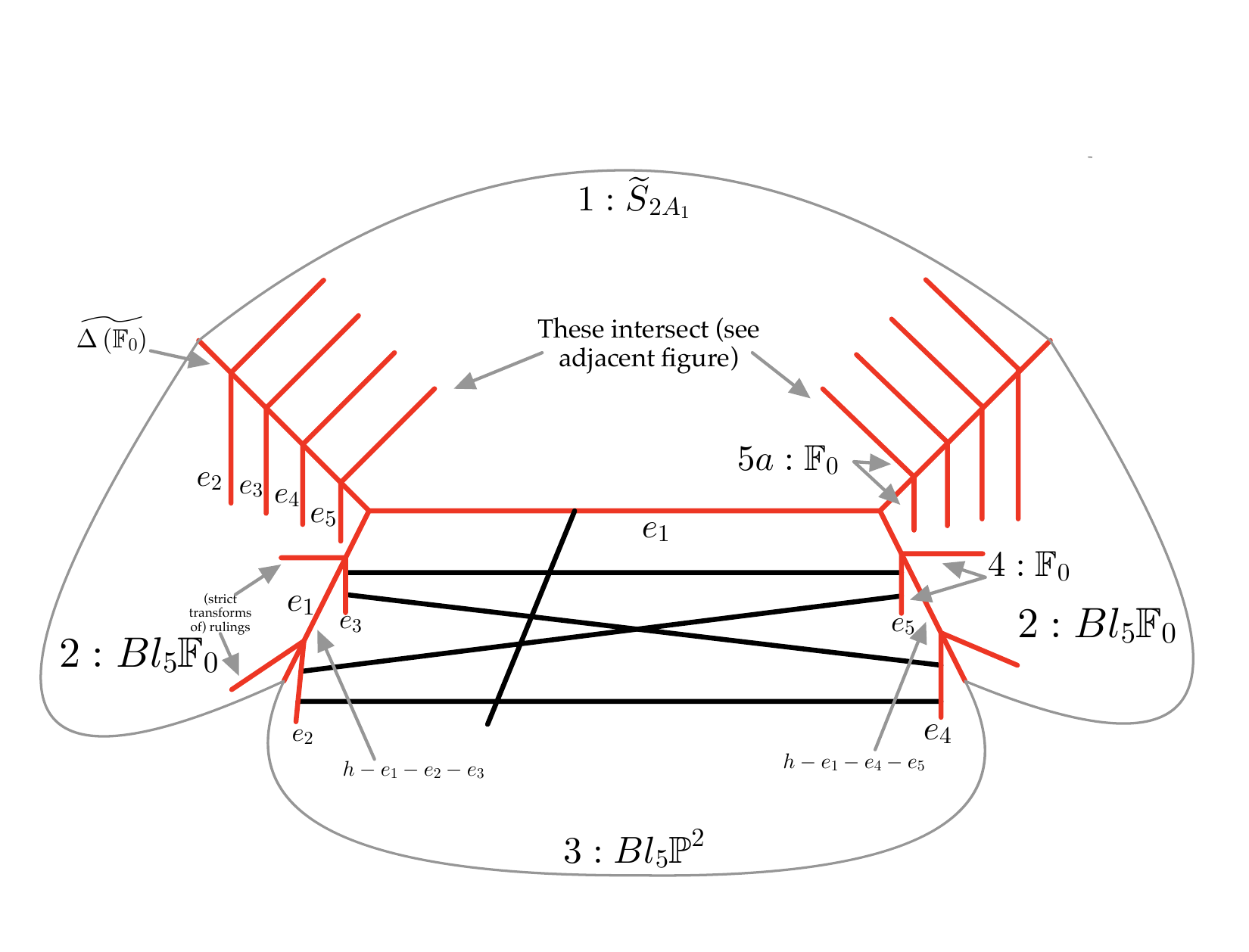}
        \caption{The type $a_2$ surface $(S_3,cB_3)$ for weights $1/2 < c \leq 2/3$.}
        \label{fig:a2_12-23_1}
    \end{subfigure}
    \begin{subfigure}[t]{0.5\textwidth}
        \centering
        \includegraphics[width=0.9\linewidth]{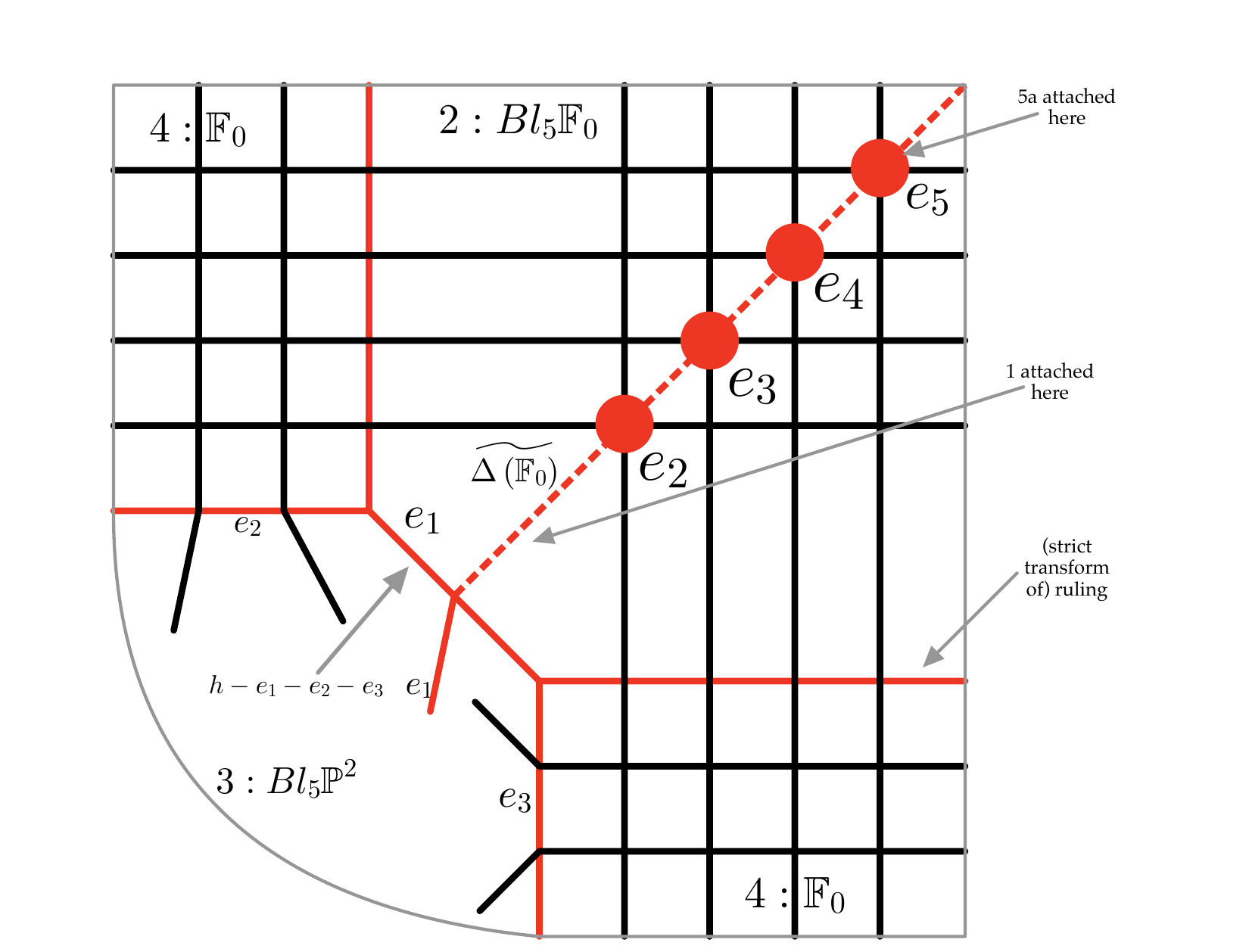}
        \caption{A view of the gluings of some type 4 components to type 2 and 3 components. The red dashed line
        indicates a gluing line, not a line of multiplicity 2. (This is done to avoid confusion about the number of
        irreducible components.)}
        \label{fig:a2_12-23_2}
    \end{subfigure}
    \begin{subfigure}[t]{0.4\textwidth}
        \centering
        \includegraphics[width=0.9\linewidth]{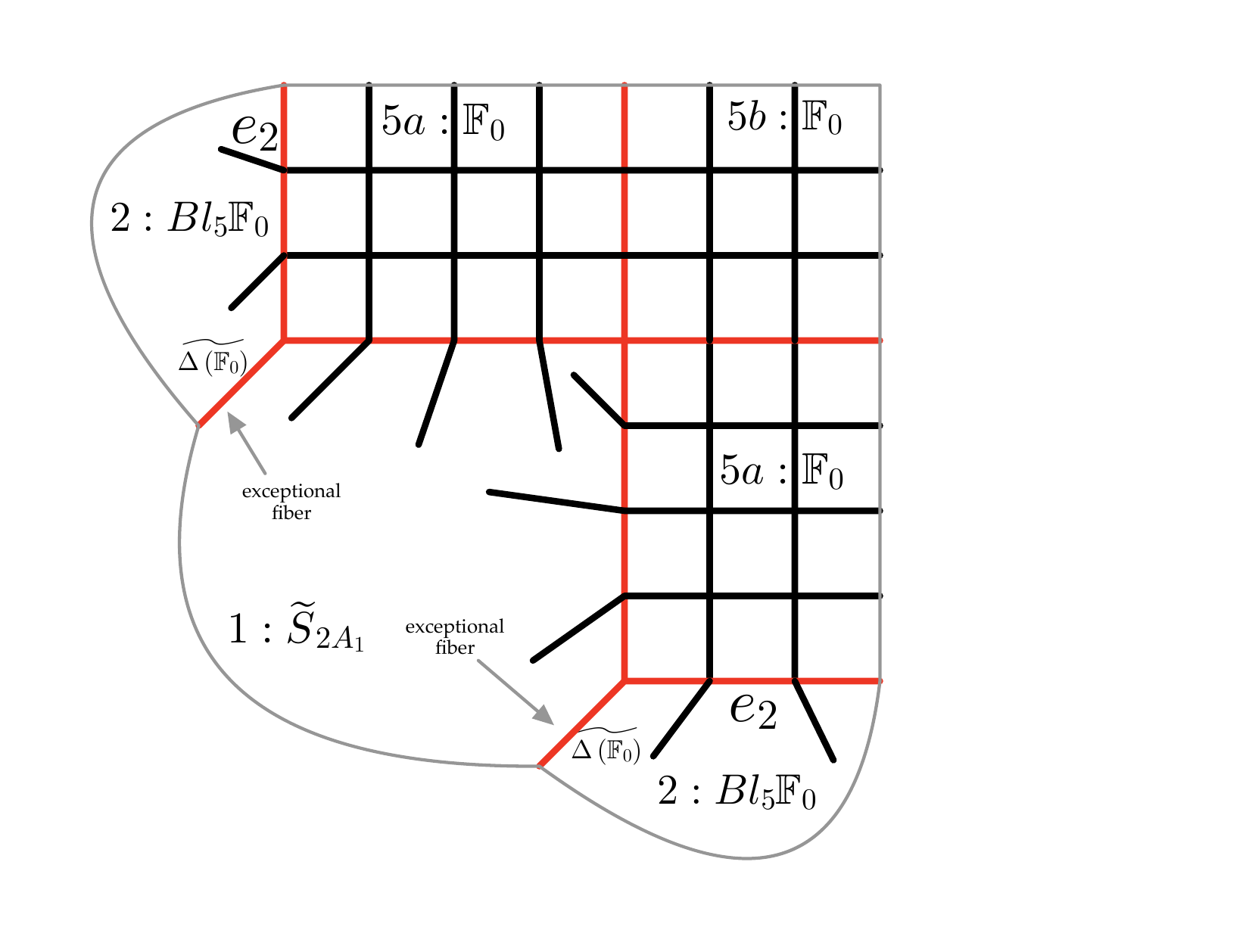}
        \caption{Type 5a components intersect in pairs, and at their intersection point another copy of $\bF_0$, labeled
        by type 5b, is attached.}
        \label{fig:a2_12-23_3}
    \end{subfigure}
    \caption{A type $a_2$ surface $(S_3,cB_3)$ for weights $1/2 < c \leq 2/3$, obtained as the stable replacement for
        these weights of the surface $(S_2,cB_2)$ of \cref{fig:a2_14-12}. This is roughly obtained by blowing up the
        lines of multiplicity 2 on $(S_2,cB_2)$, and attaching to the exceptional divisors copies of $\bF_0$. Some of
        these lines intersect (as mentioned \cref{fig:a2_19-16,fig:a2_16-14,fig:a2_14-12}), making the exact procedure
        to obtain this stable replacement more involved (see \cref{ex:bottom_up_a2}), and leading to additional
    components being attached, as pictured in \cref{fig:a2_12-23_3}.}%
    \label{fig:a2_12-23}
\end{figure}

\begin{figure}[!htpb]
    \centering
    \begin{subfigure}[t]{0.52\textwidth}
        \centering
        \includegraphics[width=0.9\linewidth]{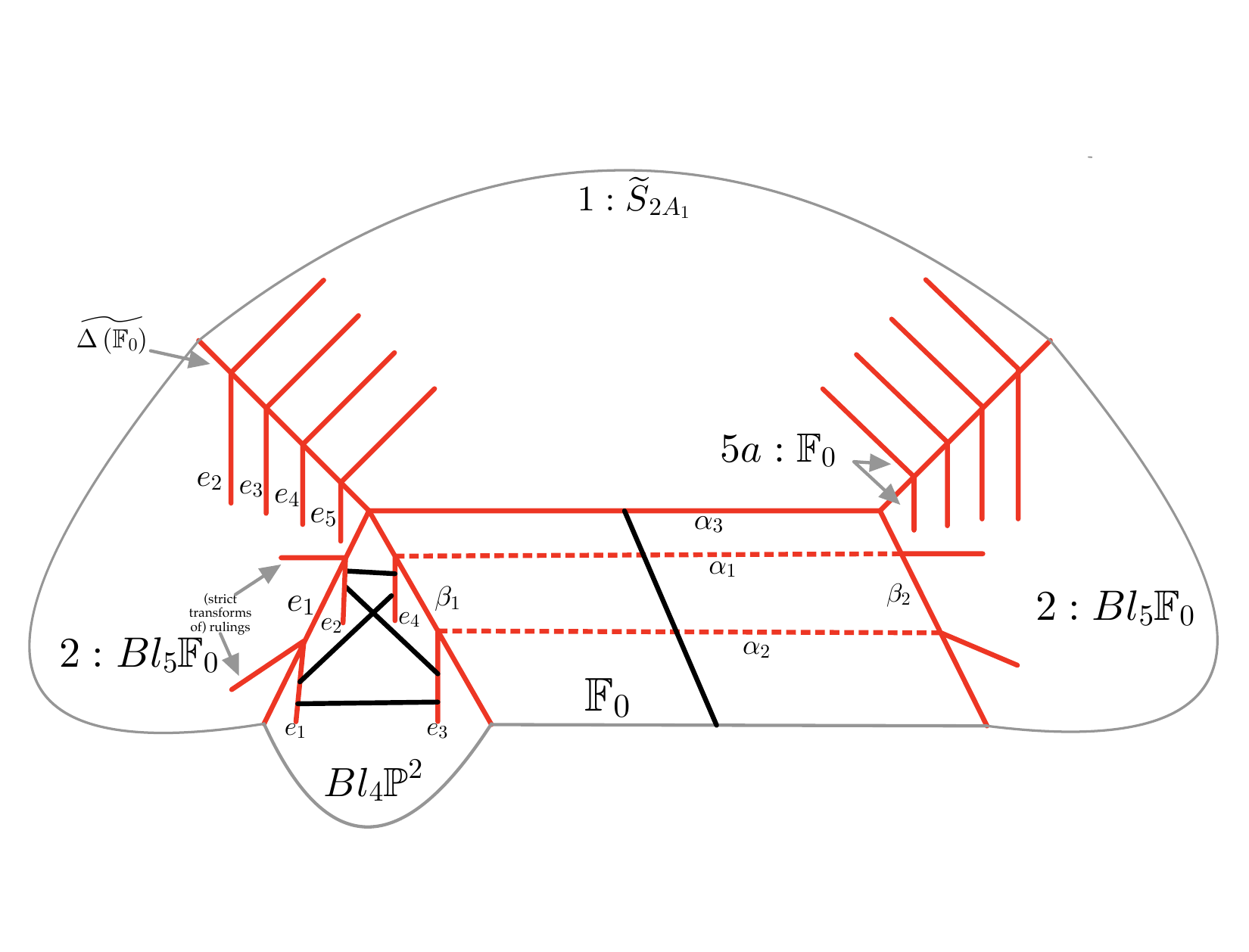}
        \caption{A type $aa_2$ surface for weights $1/2 < c \leq 2/3$. The red dashed lines indicate gluing lines, not
        lines of multiplicity 2. (This is done to avoid confusion about the number of irreducible components). On the
        new $\bF_0$ component, the lines labeled $\alpha_i$ and $\beta_i$ are lines in the two respective rulings on
        $\bF_0$.}
        \label{fig:aa2_12-23_1}
    \end{subfigure}
    \begin{subfigure}[t]{0.45\textwidth}
        \centering
        \includegraphics[width=0.9\linewidth]{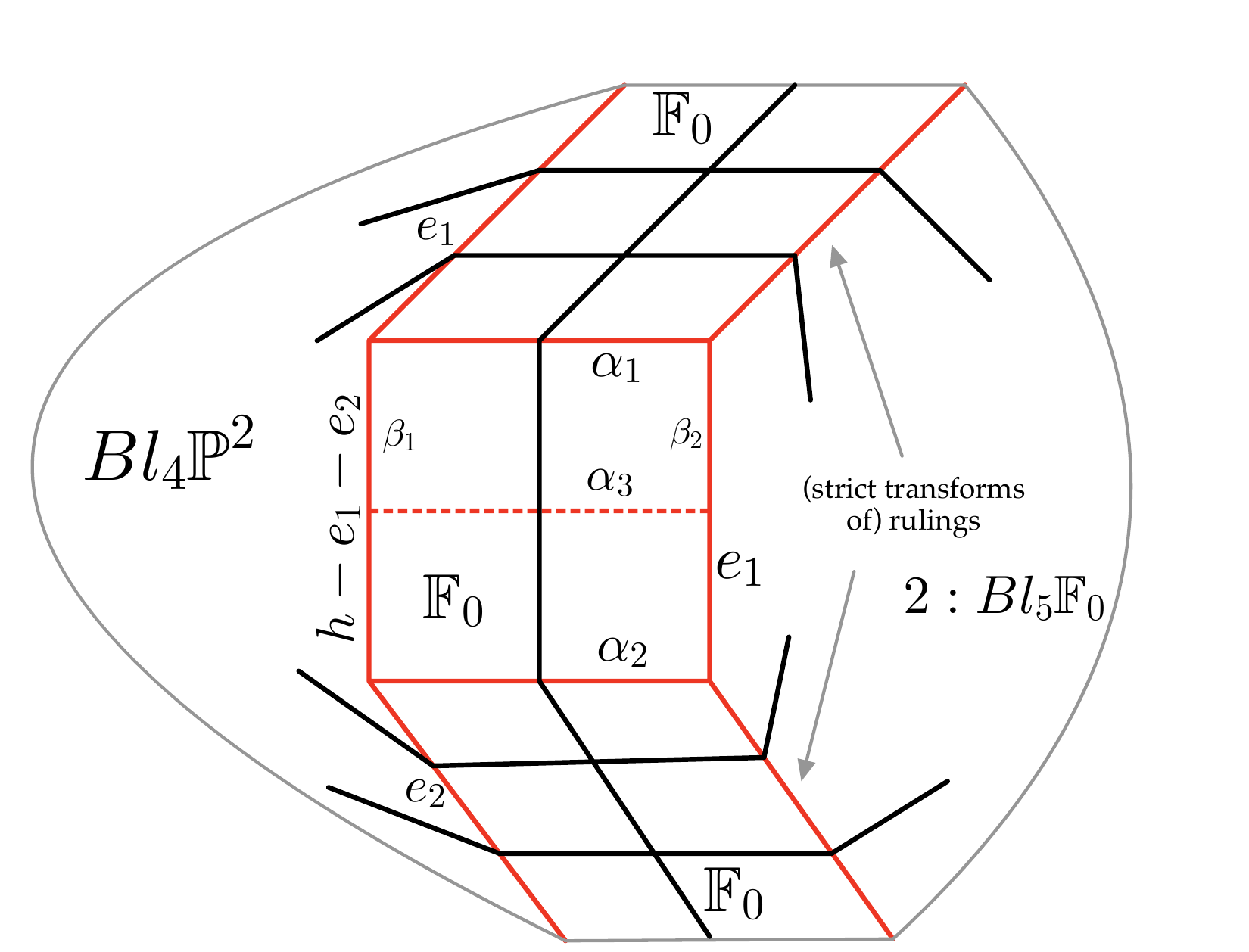}
        \caption{Another view of the four components on the type $aa_2$ surface obtained from the type 3 component on
        the type $a_2$ surface. This view allows one to see the two new $\bF_0$ components glued to the red dashed lines
        in \cref{fig:aa2_12-23_1}.}
        \label{fig:aa2_12-23_2}
    \end{subfigure}
    \caption{A type $aa_2$ surface for weights $1/2 < c \leq 2/3$, obtained as a degeneration of the surface of
    \cref{fig:a2_12-23}, and as the stable replacement for these weights of the surface of \cref{fig:aa2_14-12}. This
    type $aa_2$ surface is isomorphic to the surface $(S_3,cB_3)$ of \cref{fig:a2_12-23}, except for the type 3 component
    $\cong Bl_5\bP^2$, which degenerates into 4 irreducible components as shown. Red dashed lines indicate gluing lines
    (not lines of multiplicity 2).}%
    \label{fig:aa2_12-23}
\end{figure}

\begin{table}[htpb]
    \centering
    \caption{The types of irreducible components of the weight $1/2 < c \leq 2/3$ surface of type $a_2$ pictured in
        \cref{fig:a2_12-23}, and the possible numbers of Eckardt points on each component.  For the component of type 1,
        $\wS_{2A_1}$ refers to the minimal resolution of a cubic surface with two $A_1$ singularities. For the
        components of type 2, $Bl_5\bF_0$ refers to the blowup of $\bF_0 \cong \bP^1 \times \bP^1$ at 5 points on the
    diagonal. For the component of type 3, $Bl_5\bP^2$ refers to the special blowup of $\bP^2$ at 5 points as in
    \cref{lem:nonflat_A32}}
    \label{tab:a2_23-1}
    \begin{tabular}{| c | c | c | c |}
        \hline
        Label & Surface & \# & Eckardt points \\
        \hline
        \hline
        1 & $\wS_{2A_1}$ & 1 & 0 or 1 \\
        2 & $Bl_5\bF_0$ & 2 & 0 \\
        3 & $Bl_5\bP^2$ & 1 & 0 or 1 \\
        4 & $\bF_0$ & 4 & 0 \\
        5a & $\bF_0$ & 8 & 0 \\
        5b & $\bF_0$ & 4 & 0 \\
        \hline
    \end{tabular}
\end{table}

\subsection{Type $a_3$}

\begin{proposition} \label{prop:a3}
    For type $a_3$ weighted stable marked cubic surfaces $(S,cB)$, there are four walls.
    \begin{enumerate}
        \item For weights $1/9 < c \leq 1/6$, the type $a_3$ surfaces are described in \cref{fig:a3_19-16}.
        \item For weights $1/6 < c \leq 1/4$, the type $a_3$ surfaces are described in \cref{fig:a3_16-14}.
        \item For weights $1/4 < c \leq 1/2$, the type $a_3$ surfaces are described in \cref{fig:a3_14-12}.
            Additionally, crossing the wall $c=1/4$ introduces type $aa_3$, $a_2a_3$, and $aa_2a_3$ surfaces as degenerations of type $a_3$
            surfaces, described in \cref{fig:a3_14-12_degens}.
        \item For weights $1/2 < c \leq 2/3$, the type $a_3$ surfaces are described in \cref{fig:a3_12-23}. The type
            $aa_3$, $a_2a_3$, and $aa_2a_3$ surfaces are described in \cref{fig:a3_12-23_degens}.
        \item For weights $2/3 < c \leq 1$, the type $a_3$ surfaces are obtained from the weight $1/2 < c \leq 2/3$ type
            $a_3$ surfaces by resolving Eckardt points as described in \cref{prop:resolve_eckardt}. The possible
            configurations of Eckardt points are summarized in \cref{tab:a3_23-1}. The type $aa_3$, $a_2a_3$, and
            $aa_2a_3$ surfaces are obtained similarly.
    \end{enumerate}
\end{proposition}


\begin{figure}[!htpb]
    \centering
    \includegraphics[width=0.3\linewidth]{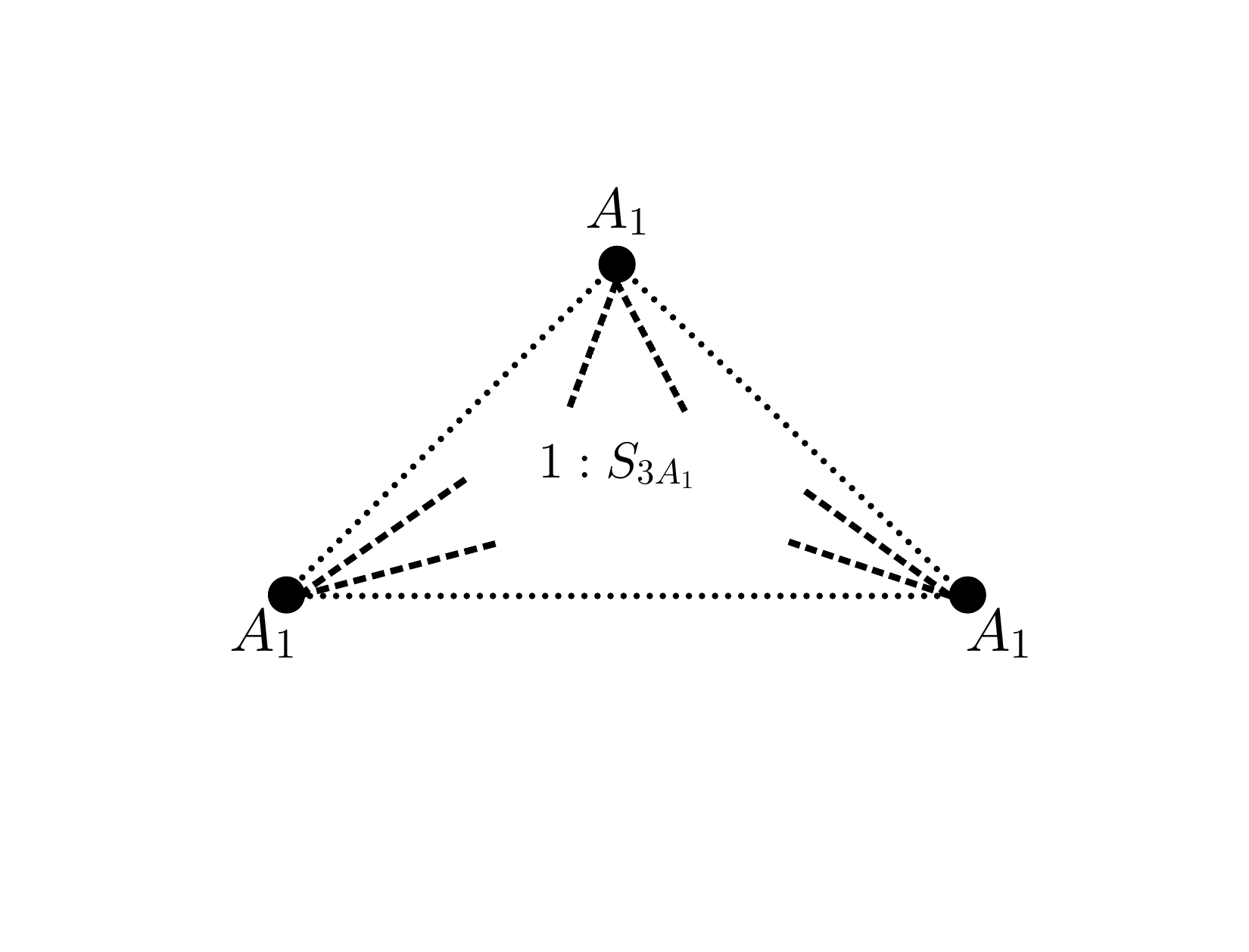}
    \caption{A type $a_3$ surface $(S_0,cB_0)$ for weights $1/9 < c \leq 1/6$. This is a cubic surface with three
    $A_1$ singularities. Recall that dashed lines have multiplicity 2 and dotted lines have multiplicity 4. In addition
    to the lines shown, there are 3 lines of multiplicity 1, not shown.}%
    \label{fig:a3_19-16}
\end{figure}


\begin{figure}[!htpb]
    \centering
    \includegraphics[width=0.5\linewidth]{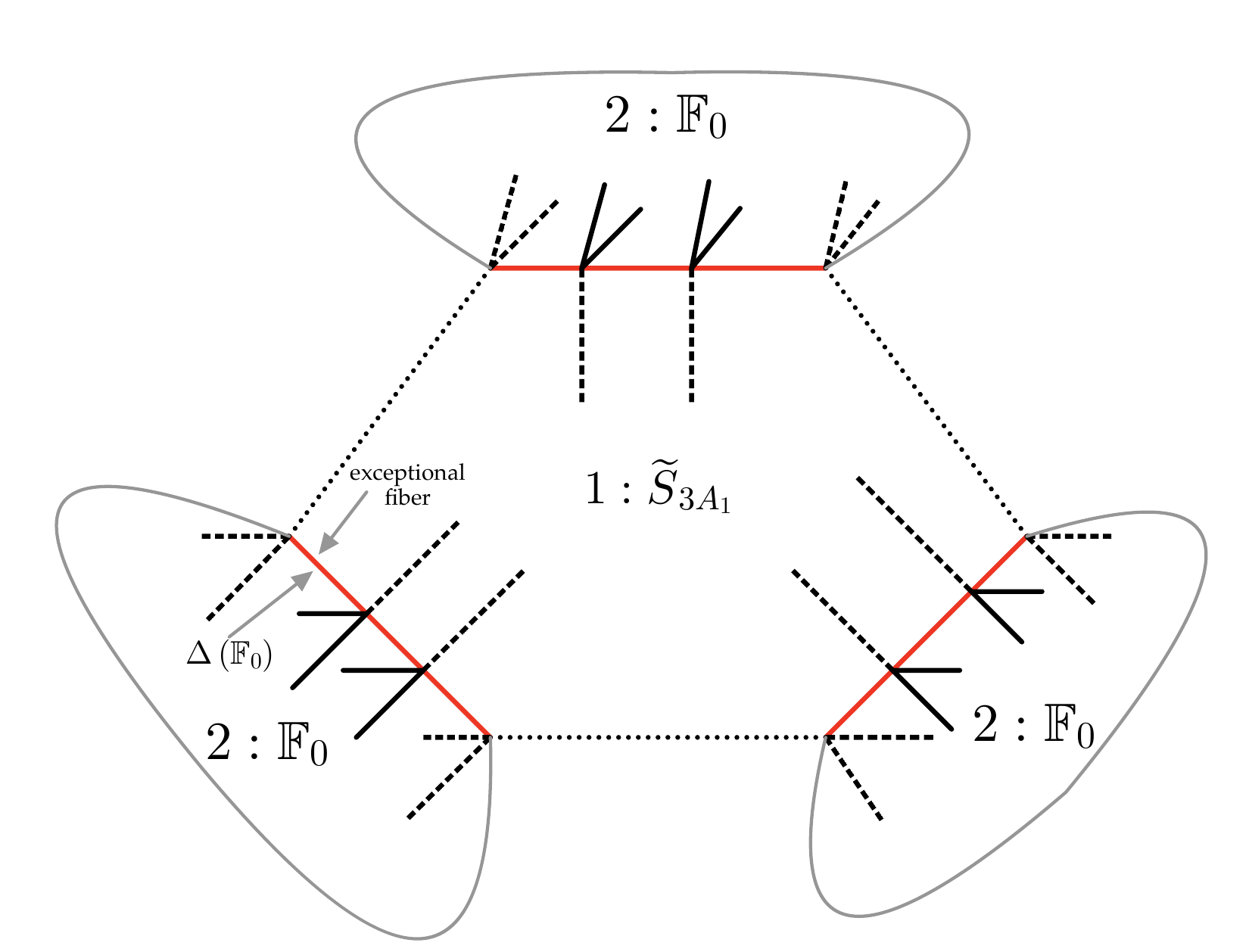}
    \caption{A type $a_3$ surface $(S_1,cB_1)$ for weights $1/6 < c \leq 1/4$, obtained as the stable replacement for
        these weights of the surface $(S_0,cB_0)$ of \cref{fig:a3_19-16}. This is obtained by resolving the three $A_1$
        singularities of $(S_0,cB_0)$, and attaching to each exceptional fiber a type 2 component isomorphic to $\bF_0
        \cong \bP^1 \times \bP^1$, glued along its diagonal. Each line of multiplicity 2 passing through an $A_1$
        singularity splits into two distinct lines on the corresponding $\bF_0$ component, one in each ruling. The line of
        multiplicity 4 splits into two distinct lines of multiplicity 2 on each corresponding $\bF_0$ component, one in each
        ruling.}%
    \label{fig:a3_16-14}
\end{figure}


\begin{figure}[!htpb]
    \centering
    \begin{subfigure}[t]{0.55\textwidth}
        \centering
        \includegraphics[width=0.9\linewidth]{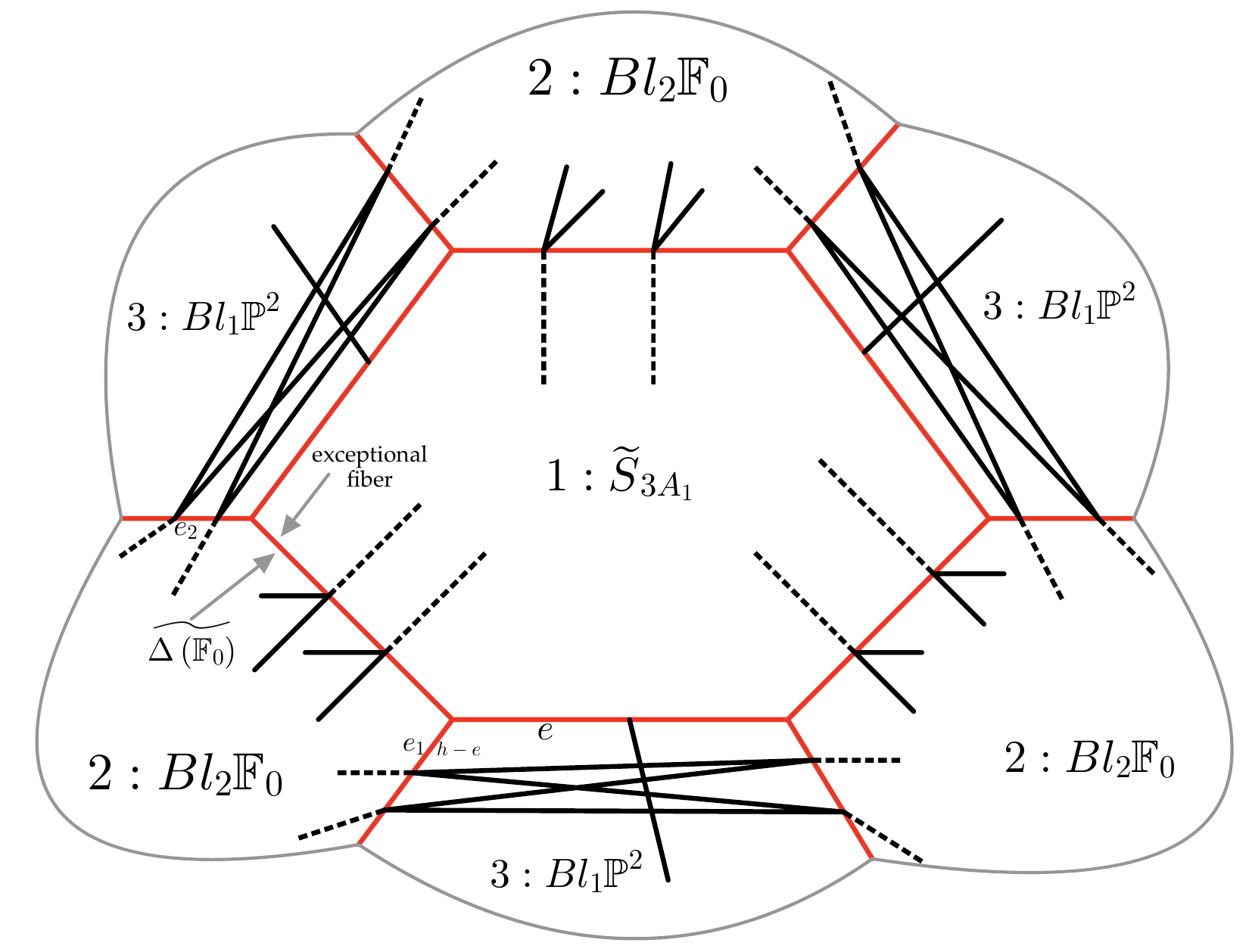}
        \caption{The type $a_3$ surface $(S_2,cB_2)$ for weights $1/4 < c \leq 1/2$.}
        \label{fig:a3_14-12_1}
    \end{subfigure}
    \begin{subfigure}[t]{0.4\textwidth}
        \centering
        \includegraphics[width=0.9\linewidth]{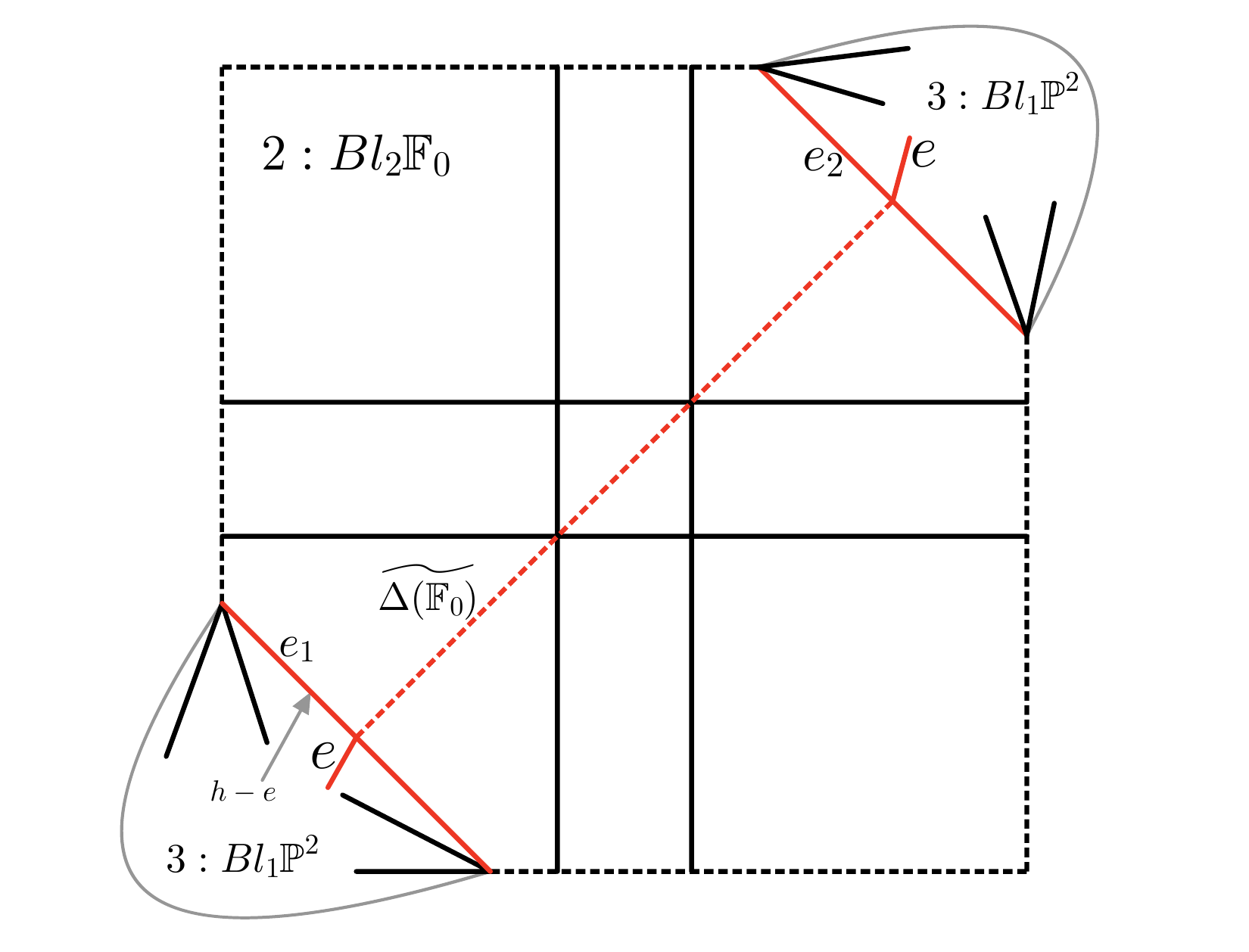}
        \caption{Another view of a type 2 component isomorphic to $Bl_2\bF_0$, showing more clearly the lines appearing
        on this component. The red dashed line indicates a gluing line, not a line of multiplicity 2.}
        \label{fig:a3_14-12_2}
    \end{subfigure}
    \caption{A type $a_3$ surface $(S_2,cB_2)$ for weights $1/4 < c \leq 1/2$, obtained as the stable replacement for
        these weights of the surface $(S_1,cB_1)$ of \cref{fig:a3_16-14}. This is obtained by blowing up the lines of
    multiplicity 4 on $(S_1,cB_1)$, and attaching to the exceptional divisors type 3 components isomorphic to $\bF_1
    \cong Bl_1\bP^2$ (cf. \cref{fig:a2_14-12}). As in \cref{fig:a2_14-12}, the five lines on a given type 3 component
    come from the blown up line of multiplicity 4, plus a line of multiplicity one intersecting the line of multiplicity
    4. Note there are three lines of multiplicity one on the surface $\wS_{3A_1}$, each such line intersects precisely one
    of the lines of multiplicity 4.}%
    \label{fig:a3_14-12}
\end{figure}

\begin{figure}[!htpb]
    \centering
    \begin{subfigure}[t]{0.6\textwidth}
        \centering
        \includegraphics[width=0.9\linewidth]{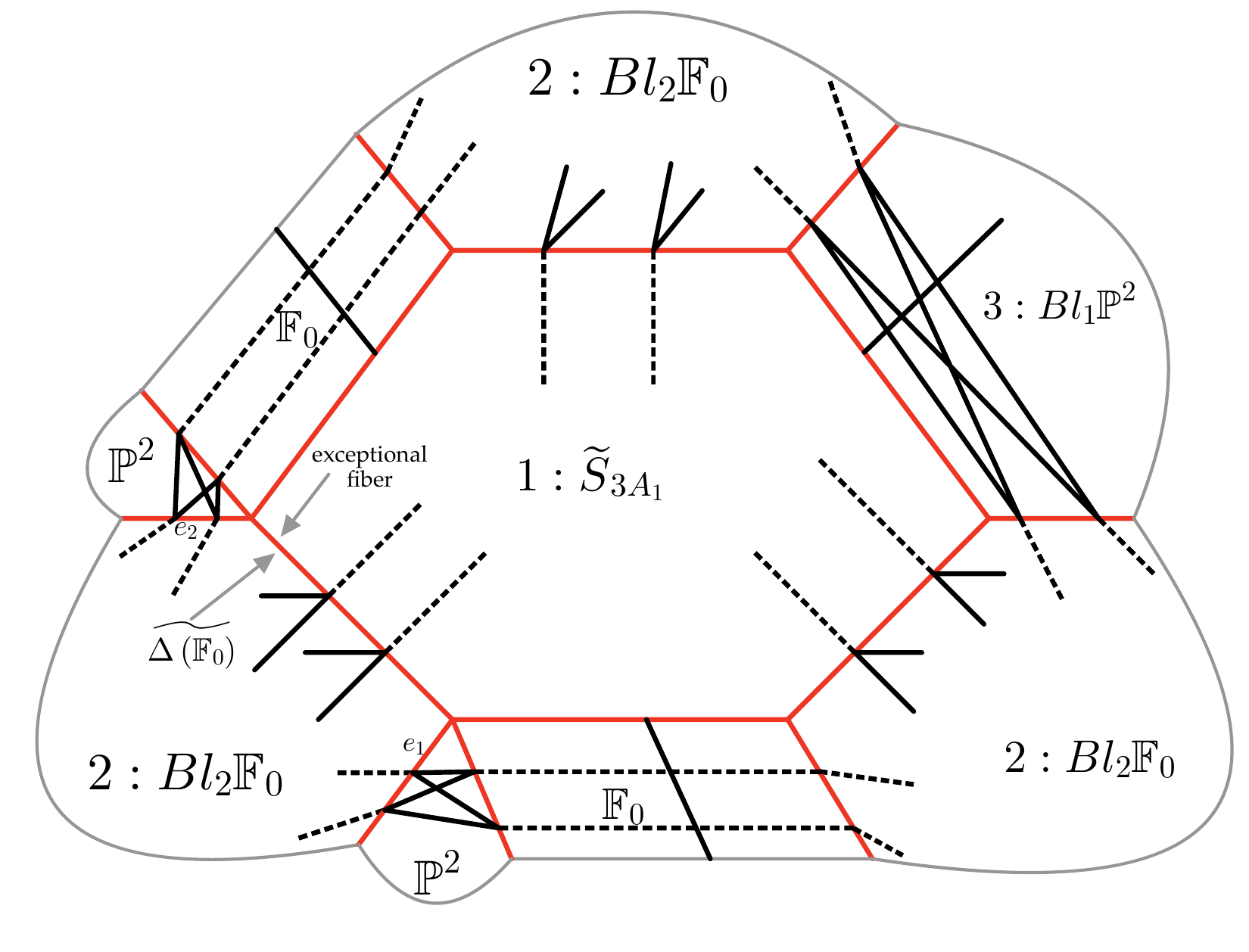}
        \caption{A type $aa_3$ surface for weights $1/4 < c \leq 1/2$.}
        \label{fig:aa3_14-12}
    \end{subfigure}
    \begin{subfigure}[t]{0.6\textwidth}
        \centering
        \includegraphics[width=0.9\linewidth]{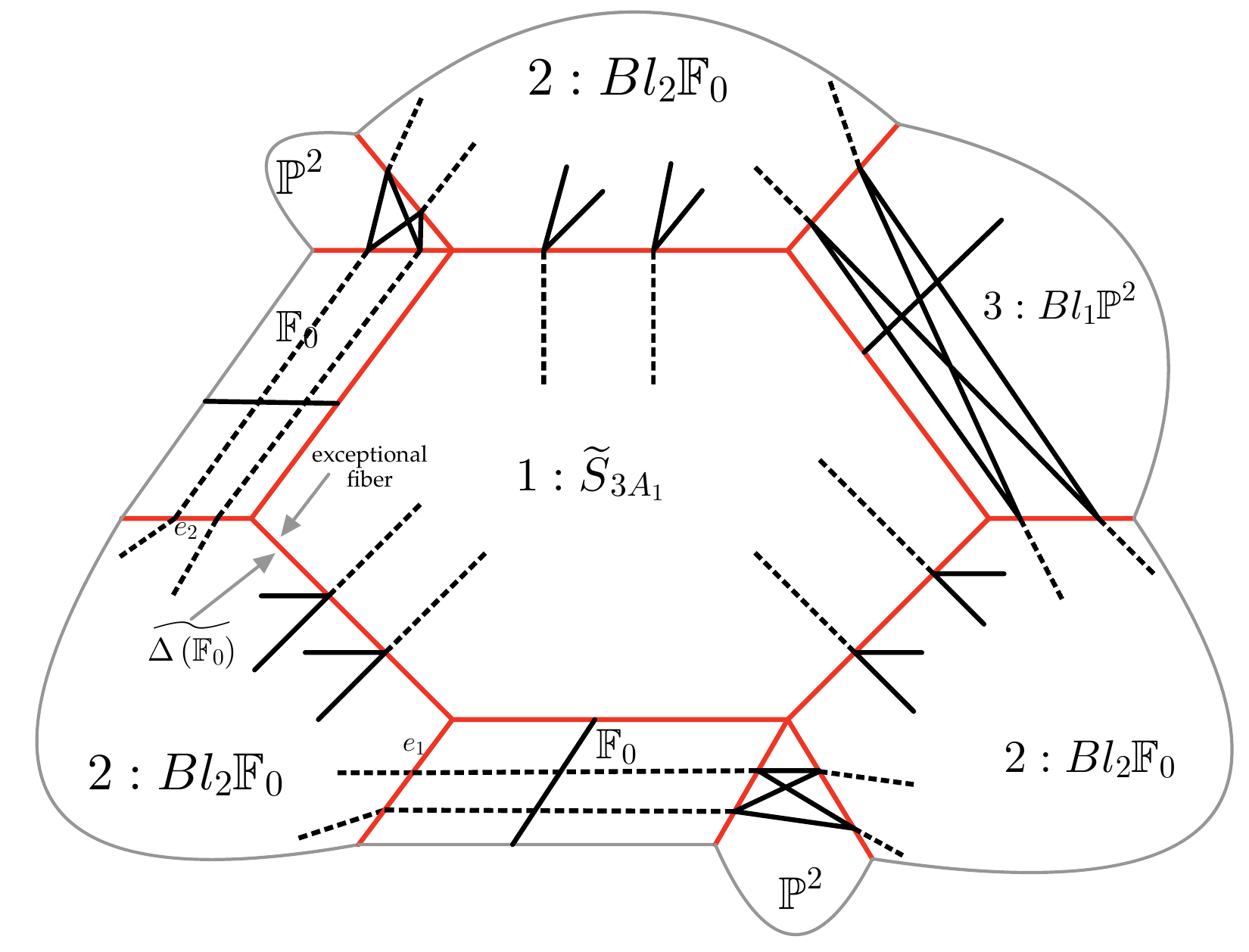}
        \caption{A type $a_2a_3$ surface for weights $1/4 < c \leq 1/2$.}
        \label{fig:a2a3_14-12}
    \end{subfigure}
    \begin{subfigure}[t]{0.6\textwidth}
        \centering
        \includegraphics[width=0.9\linewidth]{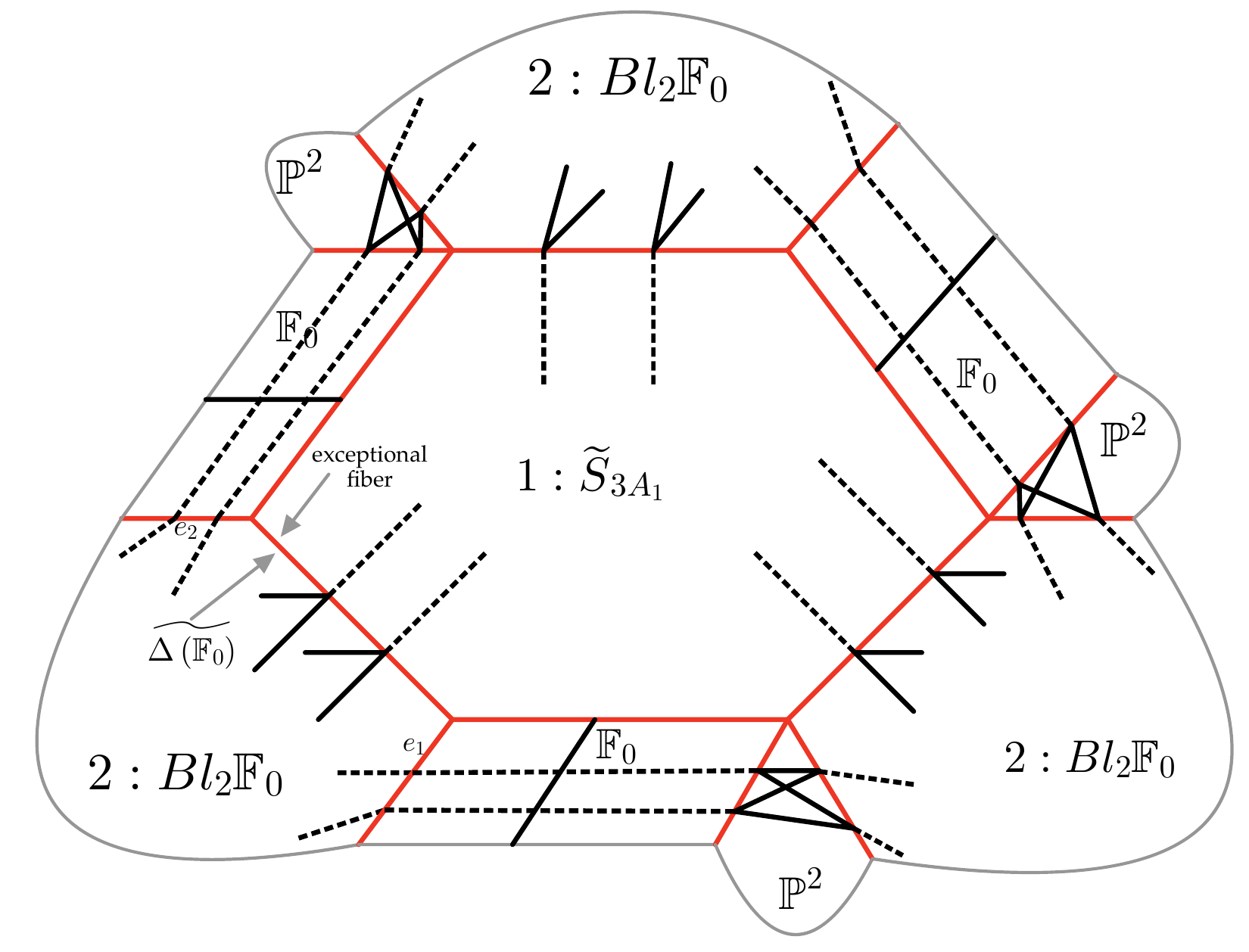}
        \caption{A type $aa_2a_3$ surface for weights $1/4 < c \leq 1/2$.}
        \label{fig:aa2a3_14-12}
    \end{subfigure}
    \caption{Degenerations of the type $a_3$ surface of \cref{fig:a3_14-12} into types $aa_3$, $a_2a_3$, and $aa_2a_3$
    surfaces, for weights $1/4 < c \leq 1/2$. Each degeneration is isomorphic to the original $a_3$ surface of
    \cref{fig:a3_14-12} away from the type 3 components shown, which degenerate into $\bP^2$ and $\bF_0$ components as
    pictured.}%
    \label{fig:a3_14-12_degens}
\end{figure}


\begin{figure}[!htpb]
    \centering
    \begin{subfigure}[t]{0.75\textwidth}
        \centering
        \includegraphics[width=0.9\linewidth]{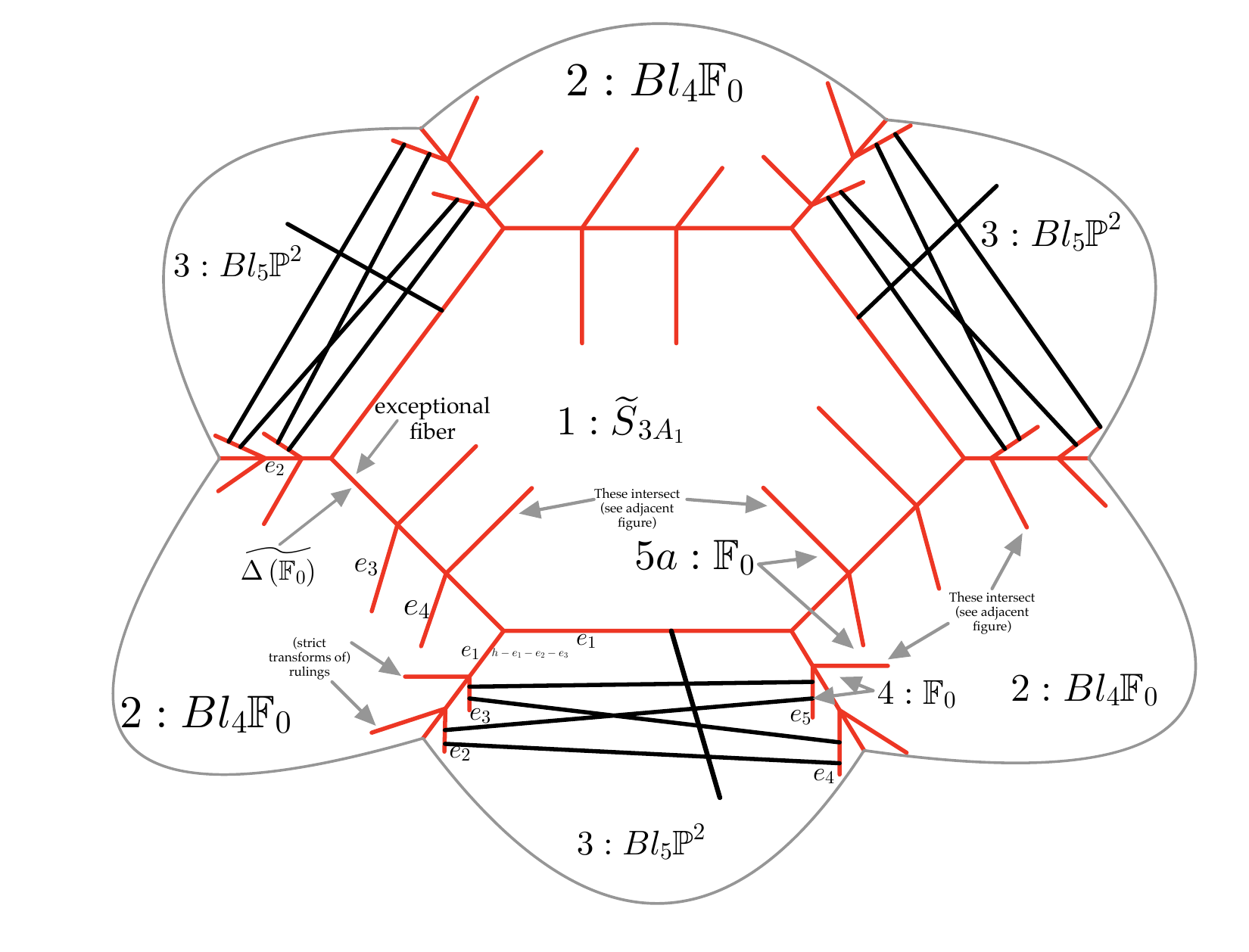}
        \caption{The type $a_3$ surface $(S_3,cB_3)$ for weights $1/2 < c \leq 2/3$.}
        \label{fig:a3_12-23_1}
    \end{subfigure}
    \begin{subfigure}[t]{0.45\textwidth}
        \centering
        \includegraphics[width=0.9\linewidth]{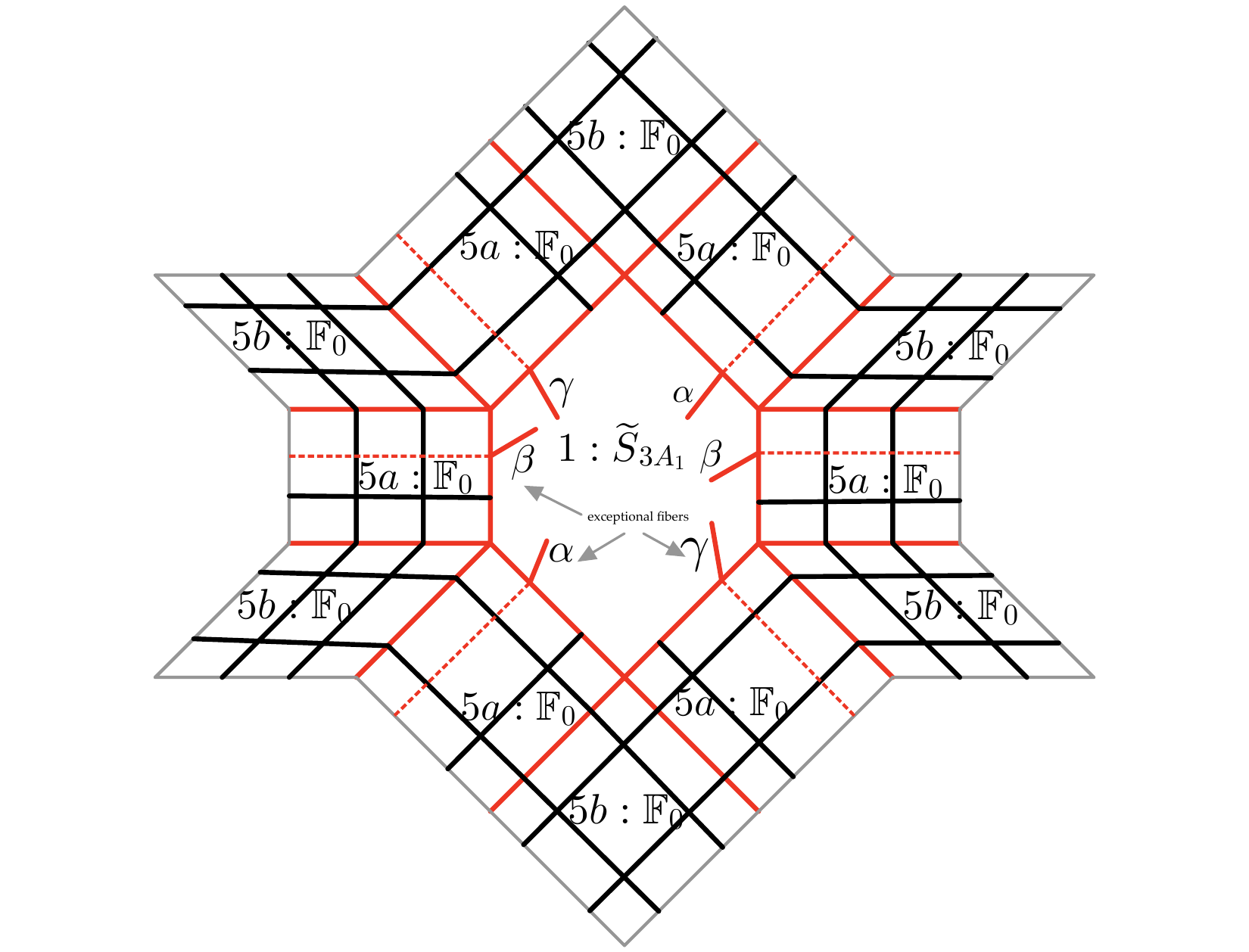}
        \caption{A view of the type 5a components and their intersections, at which type 5b components are attached.
        Here $\alpha$, $\beta$, and $\gamma$ indicate the exceptional fibers on the type 1 component where the type 2
        components are attached (matching pairs identify the same exceptional fibers). The red dashed lines indicate the
        gluings of the type 5a components to these same type 2 components.}
        \label{fig:a3_12-23_2}
    \end{subfigure}
    \begin{subfigure}[t]{0.5\textwidth}
        \centering
        \includegraphics[width=0.9\linewidth]{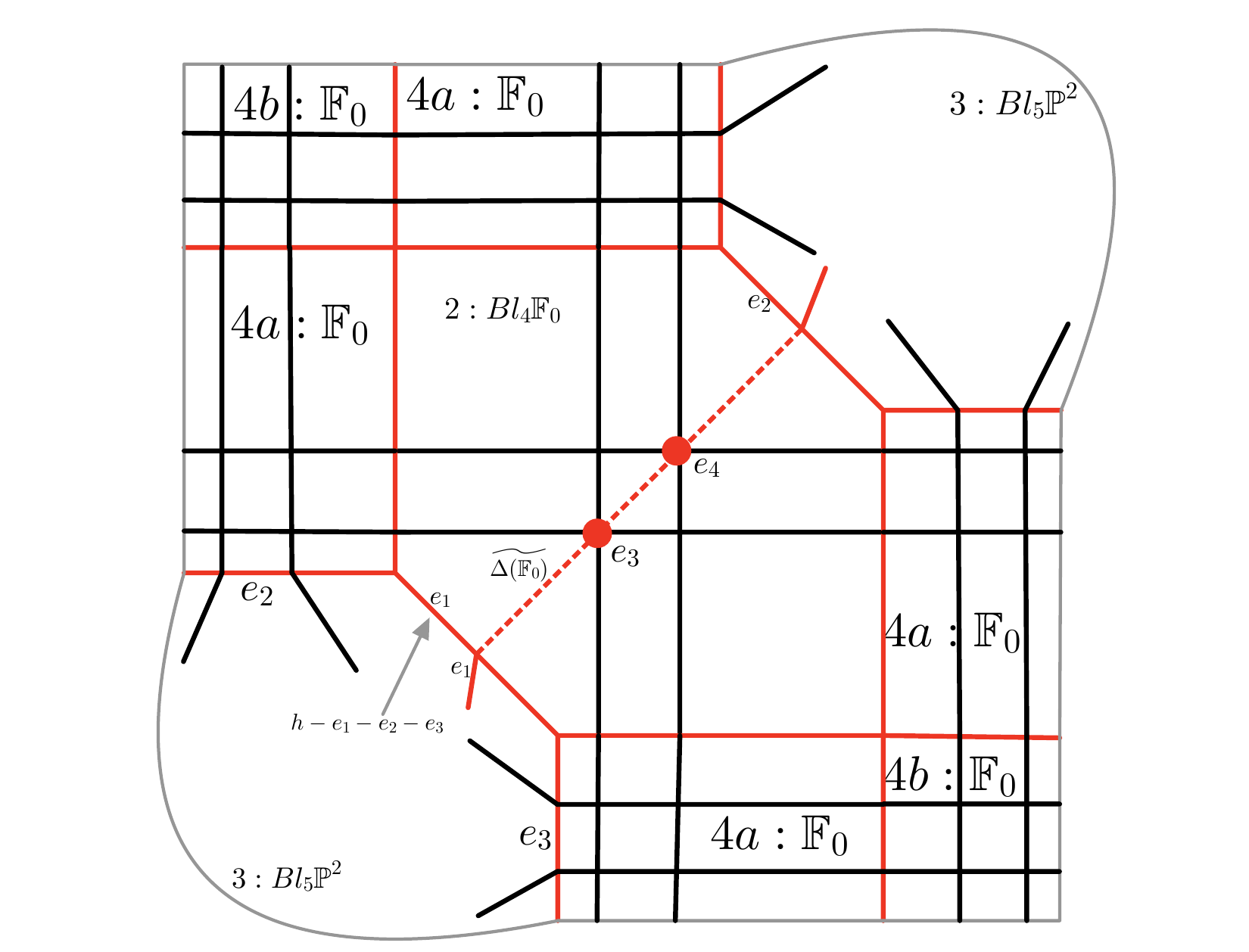}
        \caption{A view of some type 4a and 4b components and their intersections and gluings to type 2 and 3 components
        (cf. \cref{fig:a3_14-12_2}). Here the red dashed line again indicates a gluing line, not a line of multiplicity
        2.}
        \label{fig:a3_12-23_3}
    \end{subfigure}
    \caption{A type $a_3$ surface $(S_3,cB_3)$ for weights $1/2 < c \leq 2/3$, obtained as the stable replacement for
    these weights of the surface $(S_2,cB_2)$ of \cref{fig:a3_14-12}. This is obtained by resolving all lines of
    multiplicity 2 on $(S_2,cB_2)$ and their intersections, analogous to the $a_2$ case in \cref{fig:a2_12-23}. Some of the
    lines of multiplicity 2 intersect, leading to additional components being attached, as in \cref{fig:a3_12-23_2}.}%
    \label{fig:a3_12-23}
\end{figure}

\begin{table}[htpb]
    \centering
    \caption{The types of irreducible components of the weight $1/2 < c \leq 2/3$ surface of type $a_3$ pictured in
    \cref{fig:a3_12-23}, and the possible numbers of Eckardt points on each component.
    For the component of type 1, $\wS_{3A_1}$ refers to the minimal resolution of a cubic surface with three $A_1$
    singularities. For the components of type 2, $Bl_4\bF_0$ refers to the blowup of $\bF_0 \cong \bP^1 \times \bP^1$ at 4 points on
    the diagonal. For the components of type 3, $Bl_5\bP^2$ refers to the special blowup of $\bP^2$ at 5 points as in
\cref{lem:nonflat_A32}}
    \label{tab:a3_23-1}
    \begin{tabular}{| c | c | c | c |}
        \hline
        Label & Surface & \# & Eckardt points \\
        \hline
        \hline
        1 & $\wS_{3A_1}$ & 1 & 0 or 1 \\
        2 & $Bl_4\bF_0$ & 3 & 0 \\
        3 & $Bl_5\bP^2$ & 3 & 0 or 1 \\
        4a & $\bF_0$ & 12 & 0 \\
        4b & $\bF_0$ & 12 & 0 \\
        5a & $\bF_0$ & 6 & 0 \\
        5b & $\bF_0$ & 6 & 0 \\
        \hline
    \end{tabular}
\end{table}

\begin{figure}[!htpb]
    \centering
    \begin{subfigure}[t]{0.6\textwidth}
        \centering
        \includegraphics[width=0.9\linewidth]{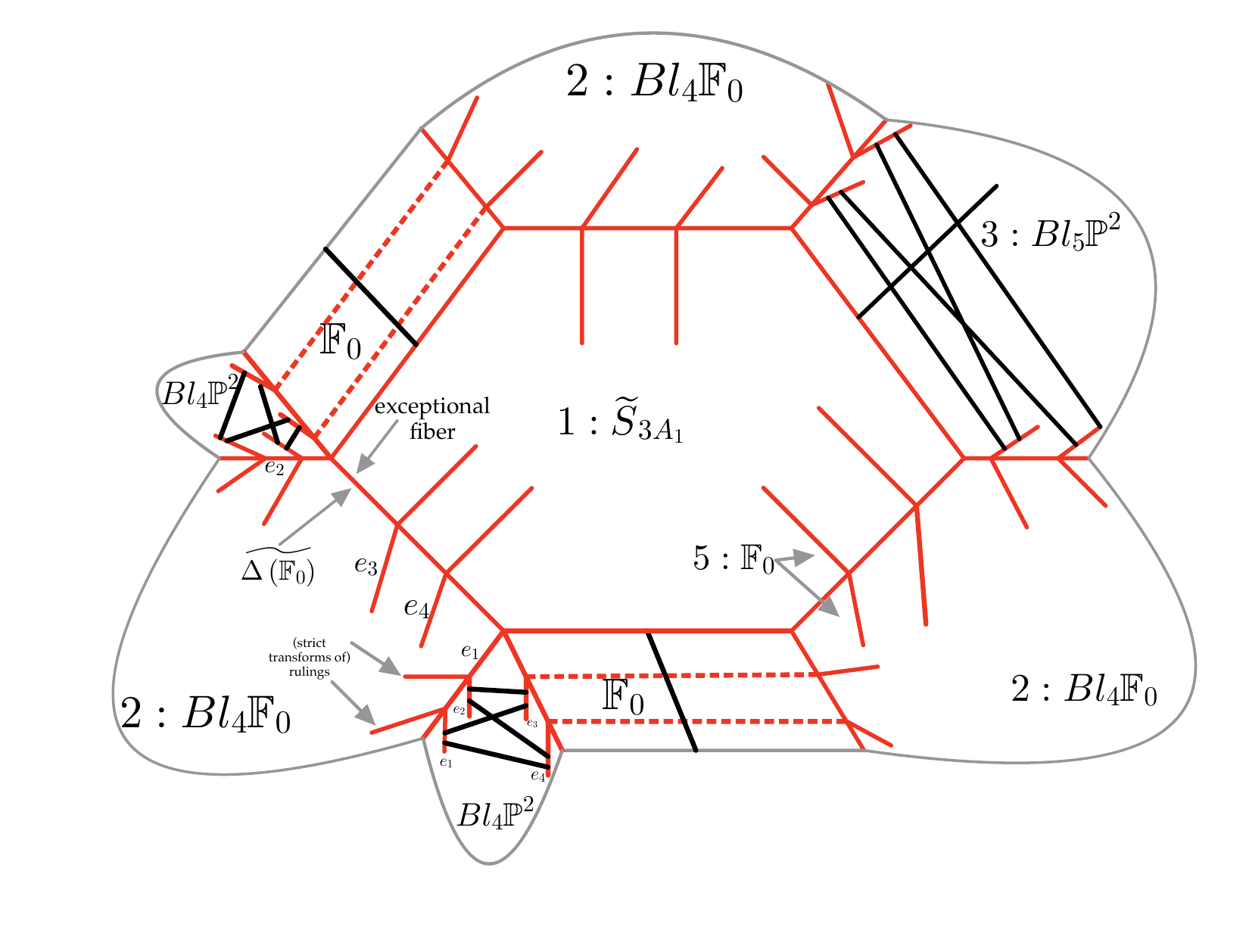}
        \caption{A type $aa_3$ surface for weights $1/2 < c \leq 2/3$.}
        \label{fig:aa3_12-23}
    \end{subfigure}
    \begin{subfigure}[t]{0.6\textwidth}
        \centering
        \includegraphics[width=0.9\linewidth]{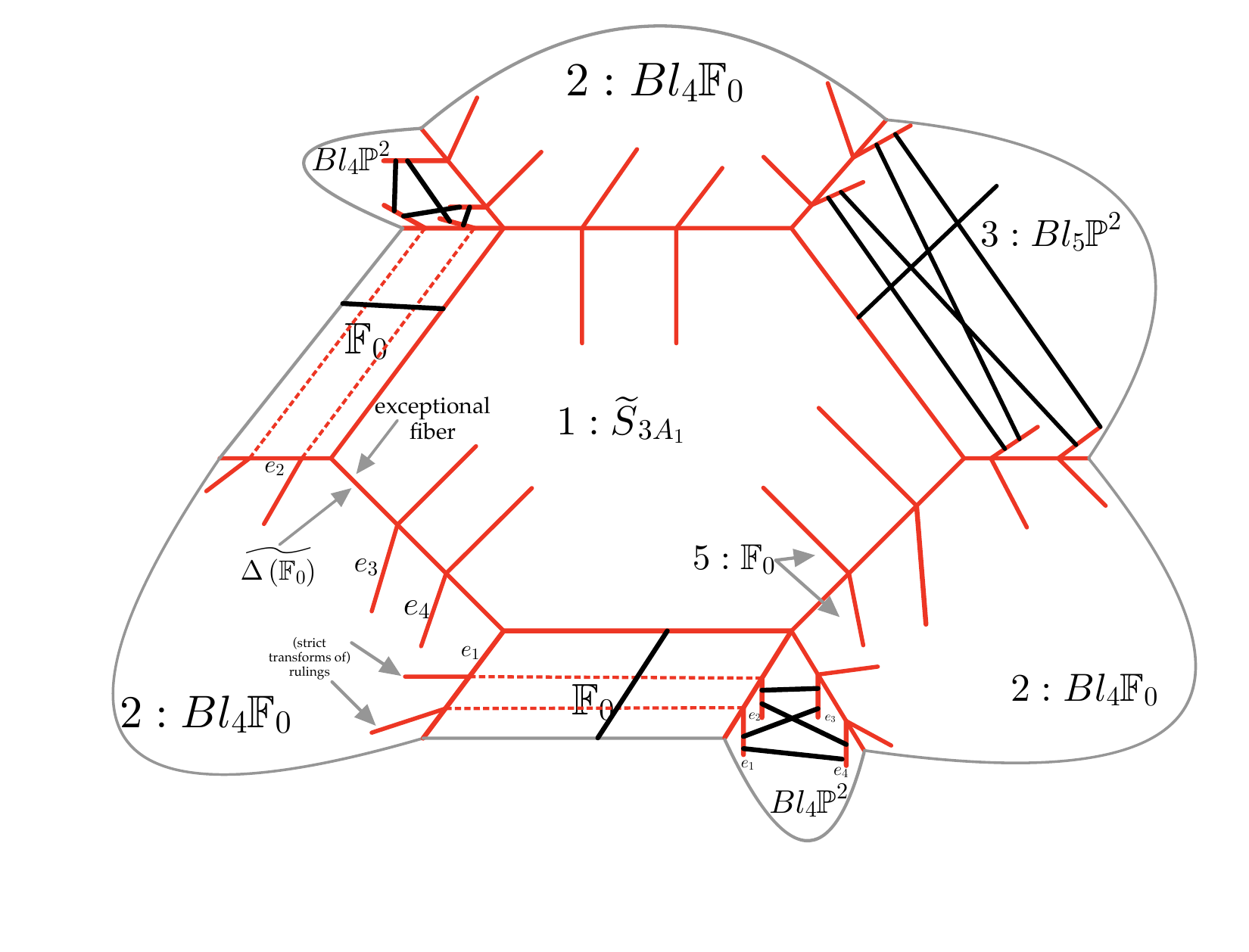}
        \caption{A type $a_2a_3$ surface for weights $1/2 < c \leq 2/3$.}
        \label{fig:a2a3_12-23}
    \end{subfigure}
    \begin{subfigure}[t]{0.6\textwidth}
        \centering
        \includegraphics[width=0.9\linewidth]{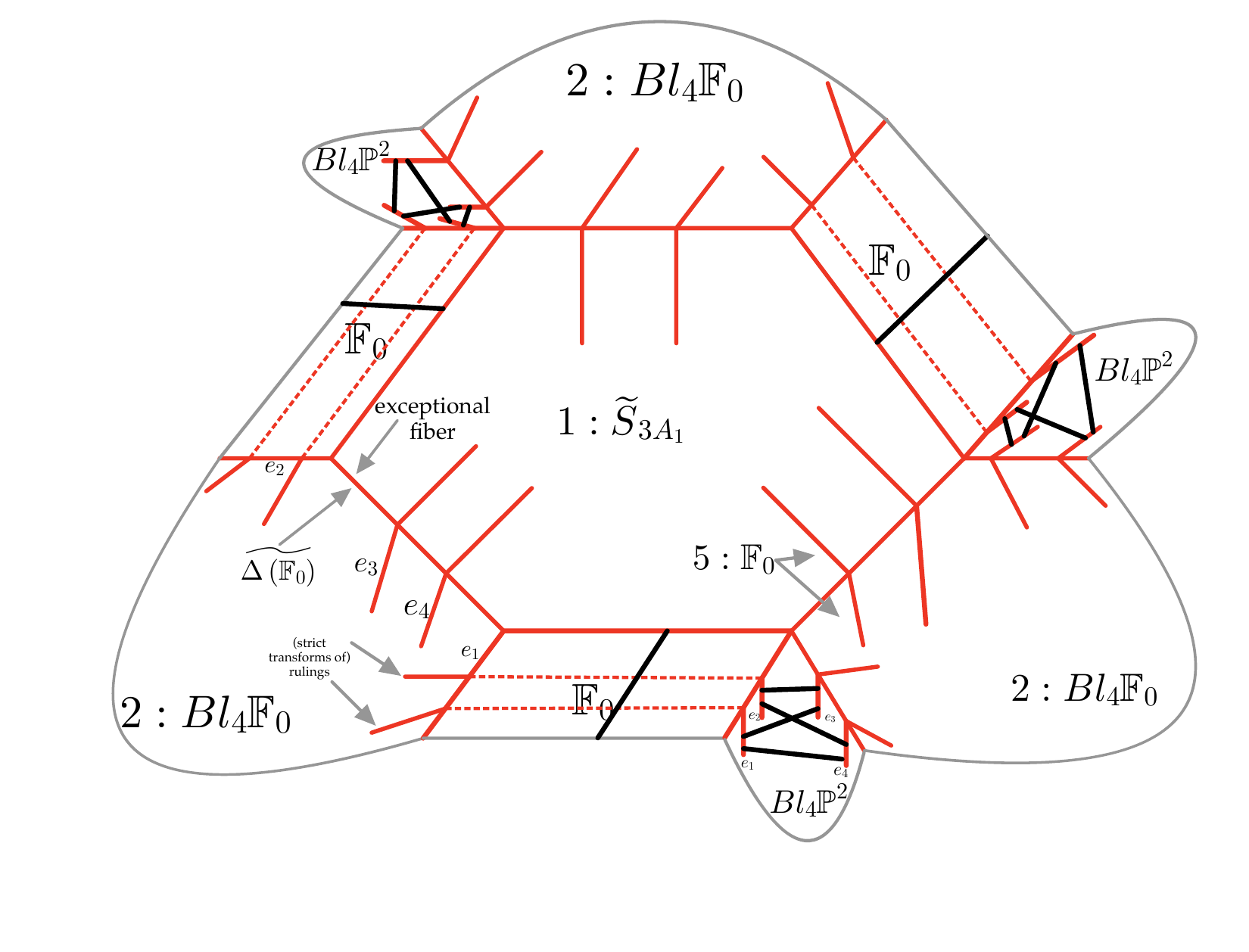}
        \caption{A type $aa_2a_3$ surface for weights $1/2 < c \leq 2/3$.}
        \label{fig:aa2a3_12-23}
    \end{subfigure}
    \caption{Degenerations of the type $a_3$ surface of \cref{fig:a3_14-12} into types $aa_3$, $a_2a_3$, and $aa_2a_3$
    surfaces, for weights $1/2 < c \leq 2/3$ (cf. \cref{fig:a3_14-12_degens}). Each degeneration is isomorphic to the original $a_3$ surface of
    \cref{fig:a3_12-23} away from the type 3 components shown, which degenerate into one $Bl_4\bP^2$ component and three
    $\bF_0$ components as pictured (cf. also \cref{fig:aa2_12-23_2} for another perspective on these components).}%
    \label{fig:a3_12-23_degens}
\end{figure}

\subsection{Type $a_4$}

\begin{proposition} \label{prop:a4}
    For type $a_4$ weighted stable marked cubic surfaces $(S,cB)$, there are four walls.
    \begin{enumerate}
        \item For weights $1/9 < c \leq 1/6$, the type $a_4$ surfaces are described in \cref{fig:a4_19-16}.
        \item For weights $1/6 < c \leq 1/4$, the type $a_4$ surfaces are described in \cref{fig:a4_16-14}.
        \item For weights $1/4 < c \leq 1/2$, the type $a_4$ surfaces are described in \cref{fig:a4_14-12}.
            Additionally, crossing the wall $c=1/4$ introduces type $aa_4$, $a_2a_4$, $a_3a_4$, $aa_2a_4$, $aa_3a_4$,
            $a_2a_3a_4$, and $aa_2a_3a_4$ surfaces as degenerations of type $a_4$ surfaces, described in
            \cref{fig:a4_14-12_degens}.
        \item For weights $1/2 < c \leq 2/3$, the type $a_4$ surfaces are described in \cref{fig:a4_12-23}. The type
            $aa_4$, $a_2a_4$, $a_3a_4$, $aa_2a_4$, $aa_3a_4$, $a_2a_3a_4$, and $aa_2a_3a_4$ surfaces are described in
            \cref{fig:a4_12-23_degens}.
        \item For weights $2/3 < c \leq 1$, the type $a_4$ surfaces are obtained from the weight $1/2 < c \leq 2/3$ type
            $a_4$ surfaces by resolving Eckardt points as described in \cref{prop:resolve_eckardt}. The possible
            configurations of Eckardt points are summarized in \cref{tab:a4_23-1}. The type
            $aa_4$, $a_2a_4$, $a_3a_4$, $aa_2a_4$, $aa_3a_4$, $a_2a_3a_4$, and $aa_2a_3a_4$ surfaces are described
            similarly.
    \end{enumerate}
\end{proposition}


\begin{figure}[!htpb]
    \centering
    \includegraphics[width=0.3\linewidth]{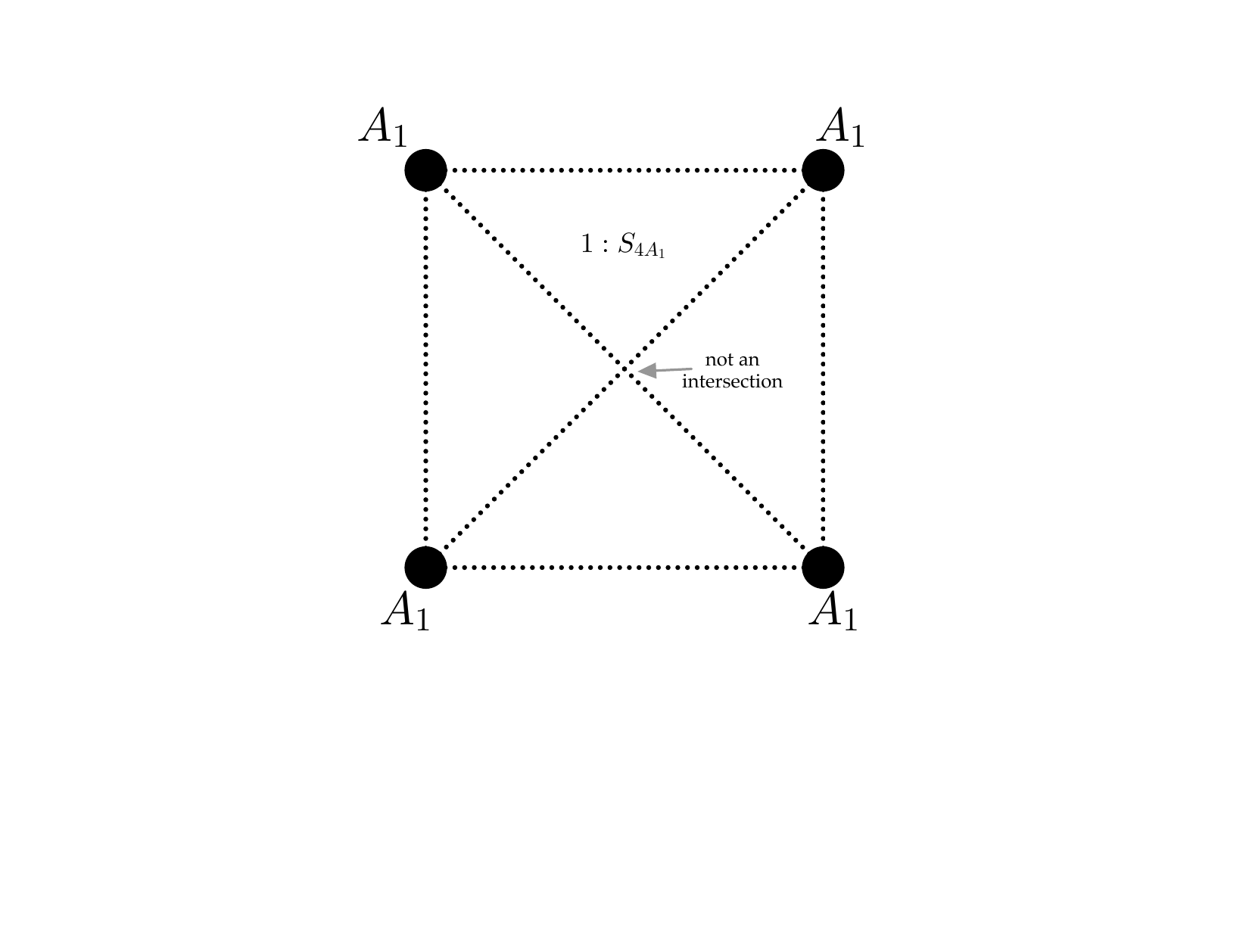}
    \caption{A type $a_4$ surface $(S_0,cB_0)$ for weights $1/9 < c \leq 1/6$. This is a cubic surface with four $A_1$
    singularities. Recall that dotted lines have multiplicity 4. In addition to the lines shown, there are 3 lines of
    multiplicity 1, not shown.}%
    \label{fig:a4_19-16}
\end{figure}


\begin{figure}[!htpb]
    \centering
    \includegraphics[width=0.5\linewidth]{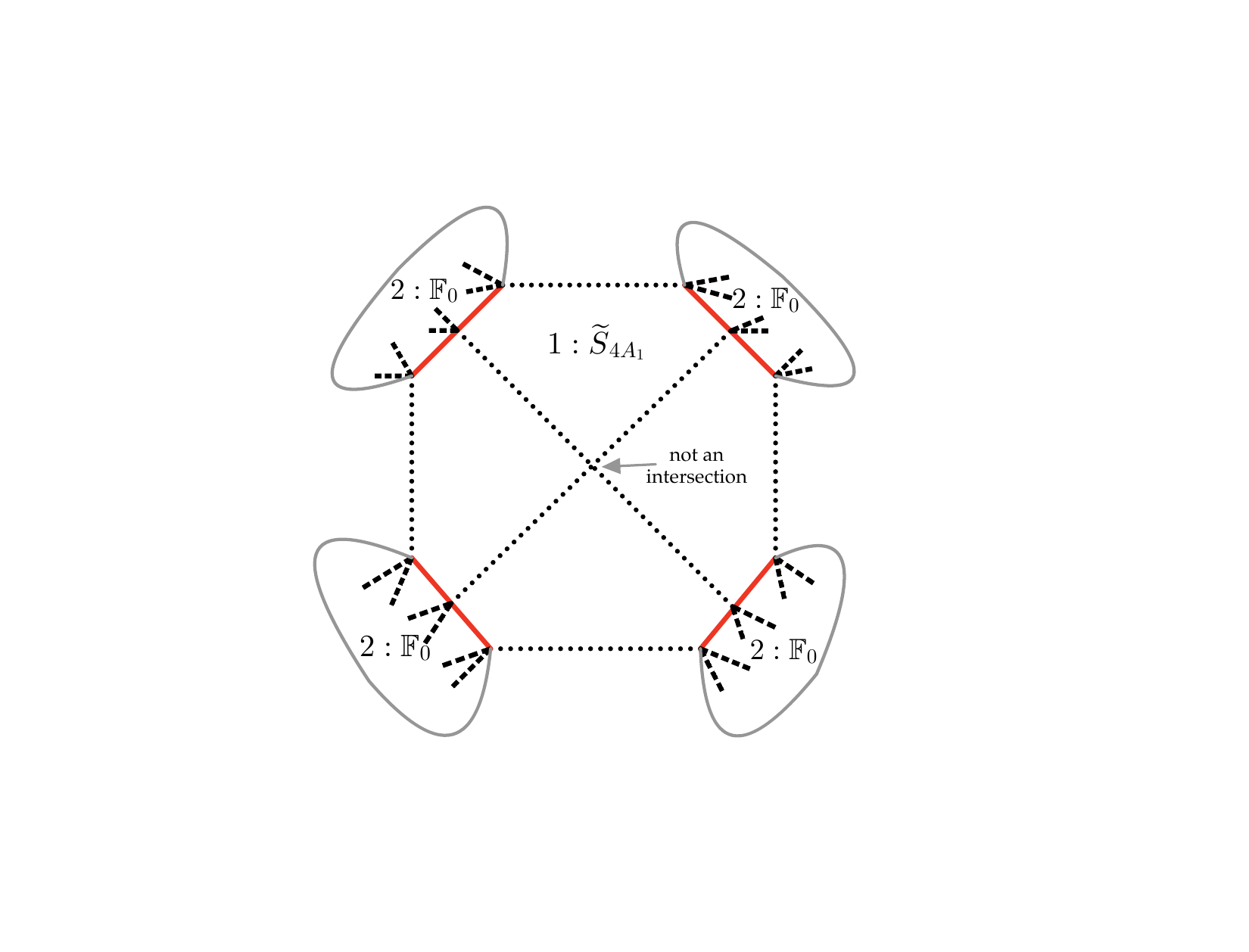}
    \caption{A type $a_4$ surface $(S_1,cB_1)$ for weights $1/6 < c \leq 1/4$, obtained as the stable replacement for
        these weights of the surface $(S_0,cB_0)$ of \cref{fig:a4_19-16}. This is obtained by resolving the four $A_1$
        singularities of $(S_0,cB_0)$, and attaching to each exceptional fiber a type 2 component isomorphic to $\bF_0
        \cong \bP^1 \times \bP^1$, glued along its diagonal. The lines of multiplicity 4 on the type 1 component split
        into two lines of multiplicity 2 on each corresponding $\bF_0$ component, one in each ruling.}%
    \label{fig:a4_16-14}
\end{figure}


\begin{figure}[!htpb]
    \centering
    \begin{subfigure}[t]{0.55\textwidth}
        \centering
        \includegraphics[width=0.9\linewidth]{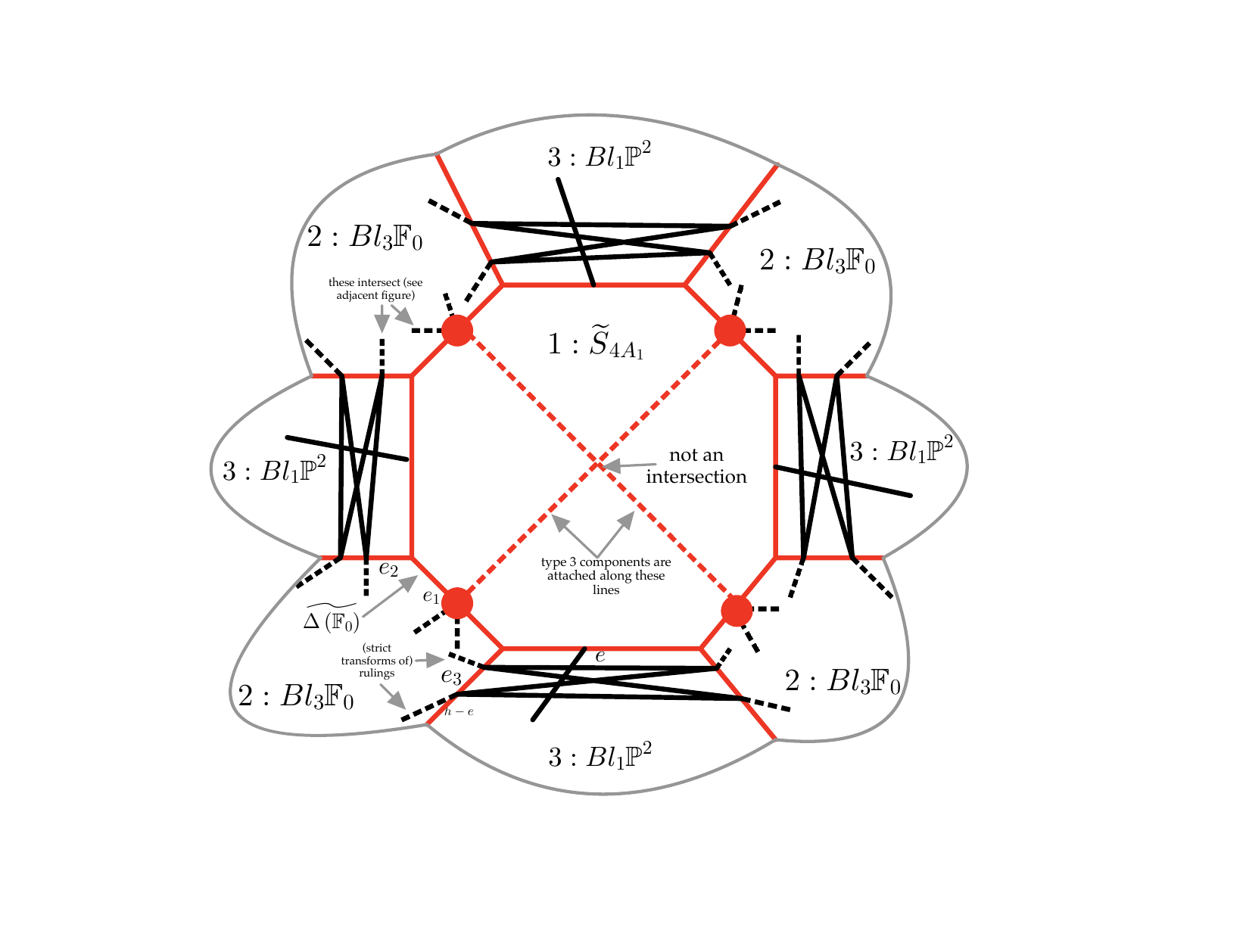}
        \caption{The type $a_4$ surface $(S_2,cB_2)$ for weights $1/4 < c \leq 1/2$.}
        \label{fig:a4_14-12_1}
    \end{subfigure}
    \begin{subfigure}[t]{0.4\textwidth}
        \centering
        \includegraphics[width=0.9\linewidth]{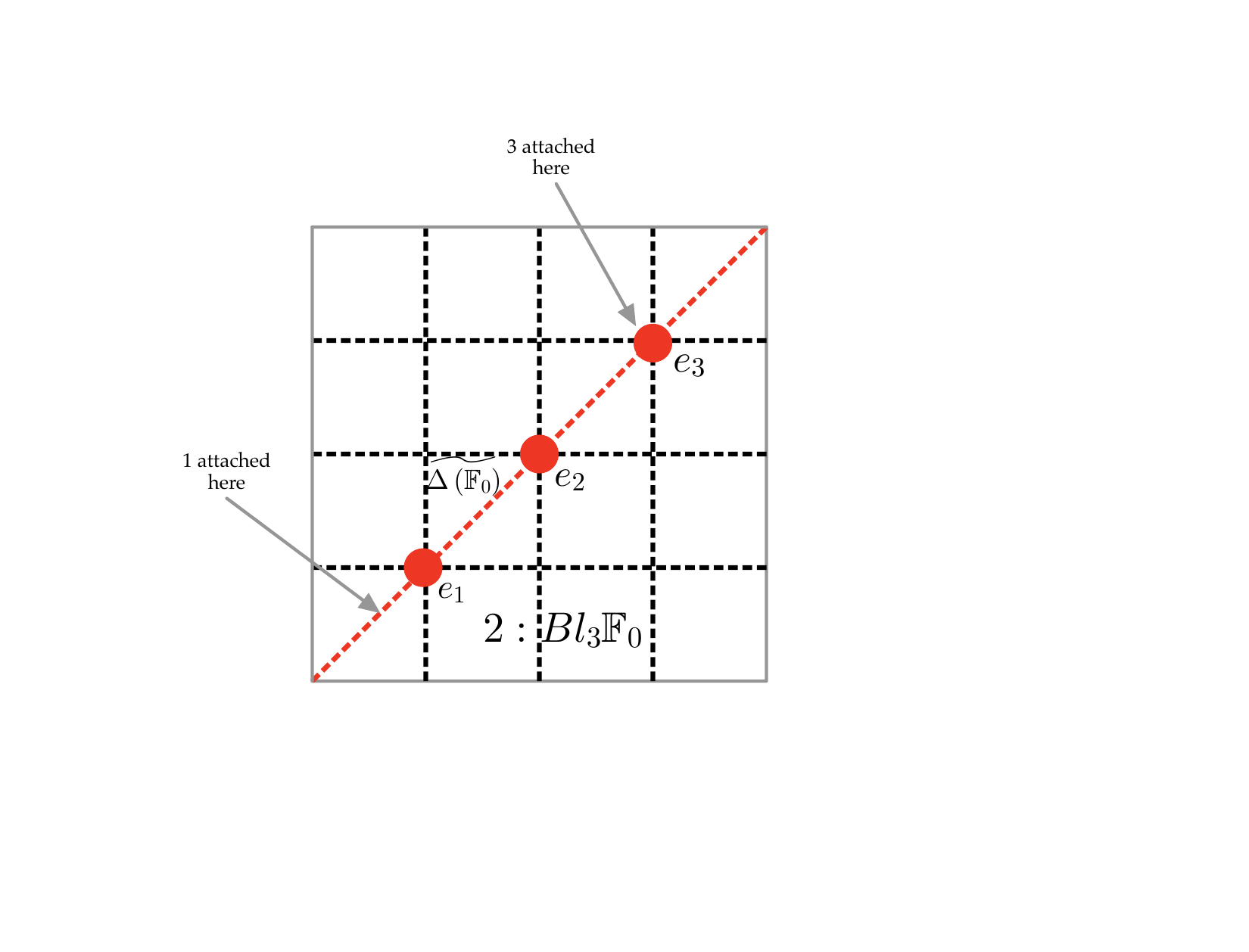}
        \caption{Another view of a type 2 component isomorphic to $Bl_3\bF_0$, showing more clearly the lines appearing
        on this component.}
        \label{fig:a4_14-12_2}
    \end{subfigure}
    \caption{A type $a_4$ surface $(S_2,cB_2)$ for weights $1/4 < c \leq 1/2$, obtained as the stable replacement for
        these weights of the surface $(S_1,cB_1)$ of \cref{fig:a4_16-14}. This is obtained by blowing up the lines of
        multiplicity 4 on $(S_1,cB_1)$, and attaching to the exceptional divisors type 3 components isomorphic to $\bF_1
        \cong Bl_1\bP^2$ (cf. \cref{fig:a2_14-12,fig:a3_14-12}). As in \cref{fig:a2_14-12,fig:a3_14-12}, the five lines
        on a given type 3 component come from the corresponding line of multiplicity 4, plus a line of multiplicity 1
        intersecting this line of multiplicity 4. There are three lines of multiplicity 1 on $\wS_{4A_1}$, and each
        intersects precisely two lines of multiplicity four. For instance, in \cref{fig:a4_14-12_1}, the fifth line on
        the top type 3 component comes from the same line on $\wS_{4A_1}$ as the fifth line on the bottom type 3
    component.}%
    \label{fig:a4_14-12}
\end{figure}

\begin{figure}[!htpb]
    \centering
    \begin{subfigure}[t]{0.45\textwidth}
        \centering
        \includegraphics[width=0.9\linewidth]{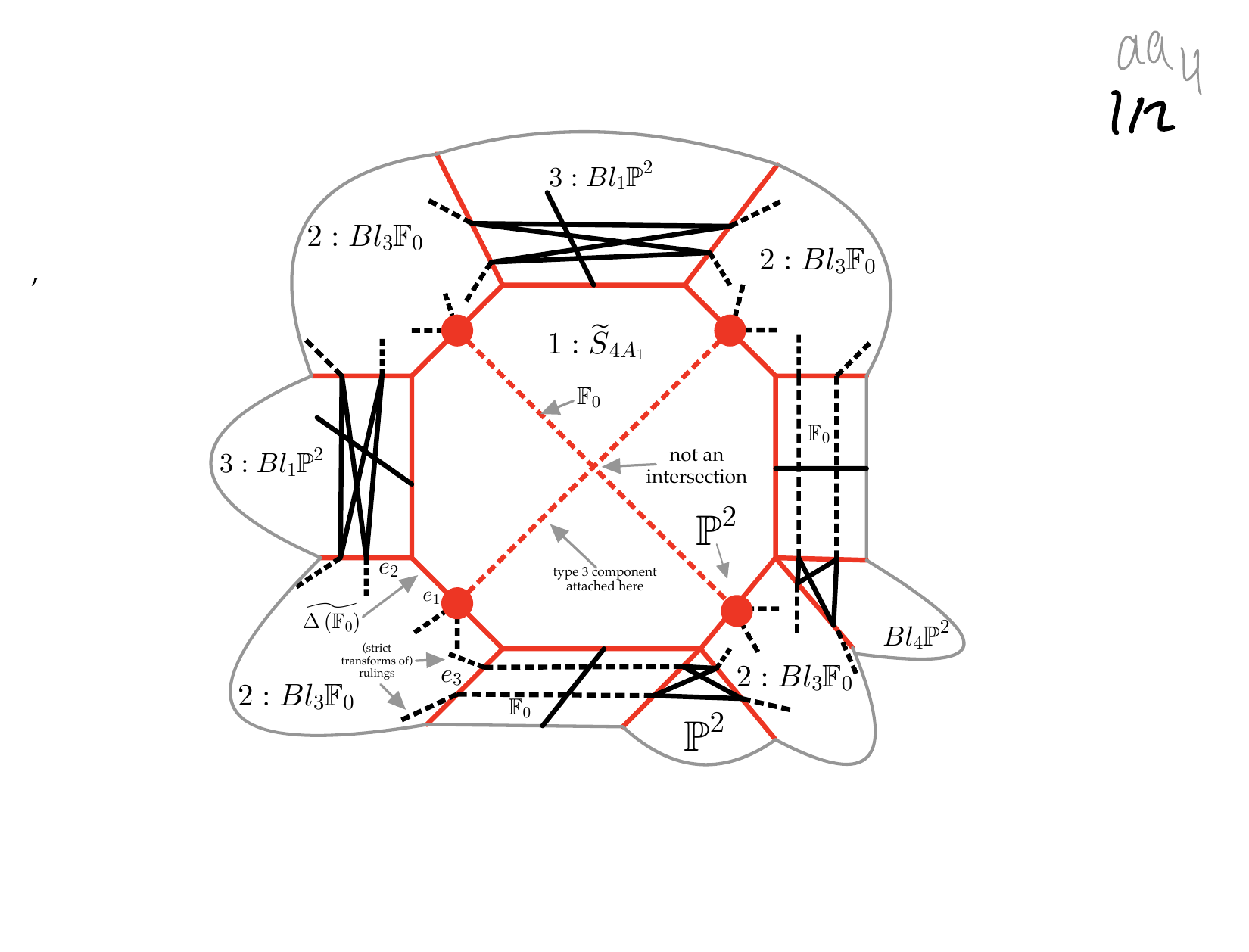}
        \caption{A type $aa_4$ surface for weights $1/4 < c \leq 1/2$.}
        \label{fig:aa4_14-12}
    \end{subfigure}
    \begin{subfigure}[t]{0.45\textwidth}
        \centering
        \includegraphics[width=0.9\linewidth]{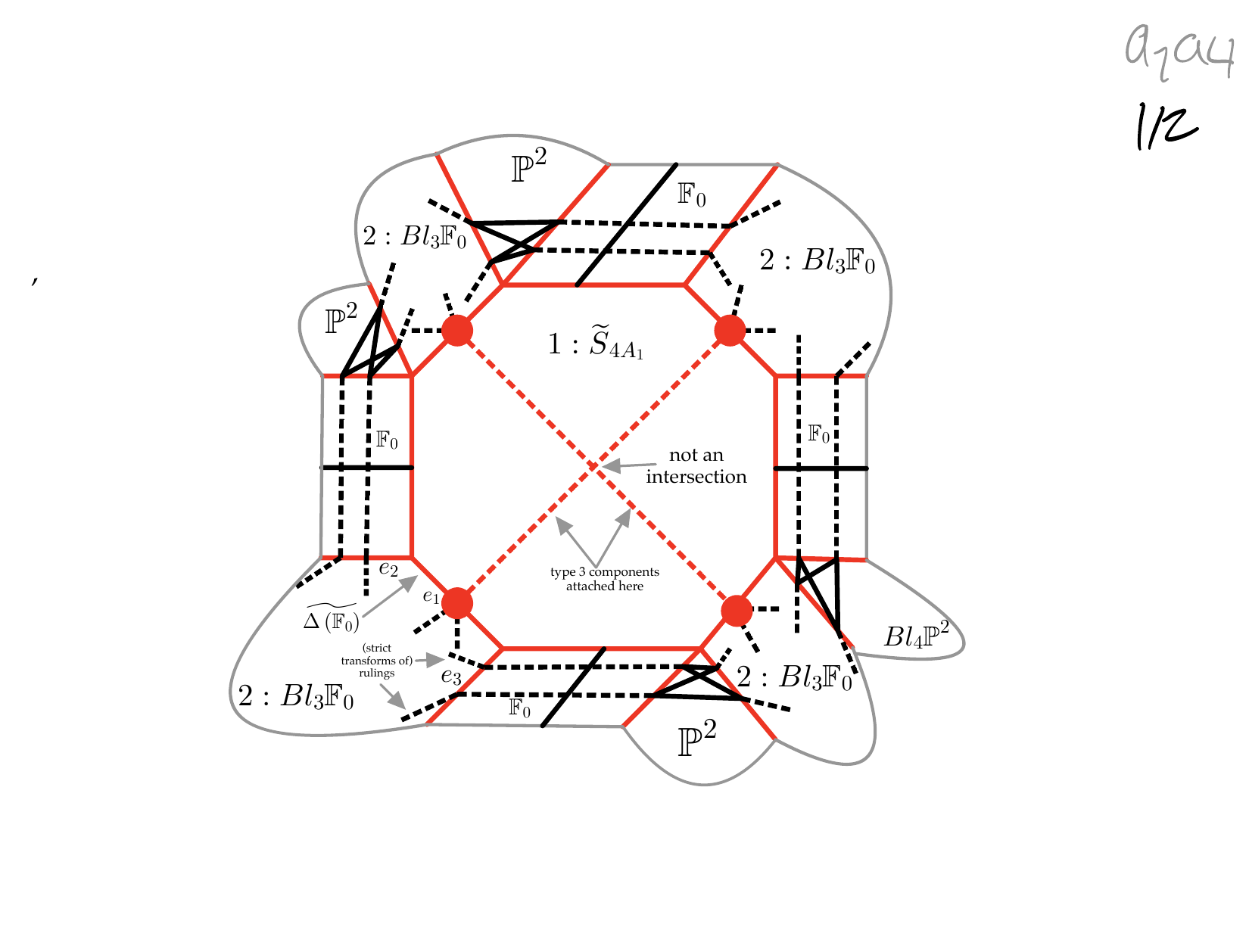}
        \caption{A type $a_2a_4$ surface for weights $1/4 < c \leq 1/2$.}
        \label{fig:a2a4_14-12}
    \end{subfigure}
    \begin{subfigure}[t]{0.45\textwidth}
        \centering
        \includegraphics[width=0.9\linewidth]{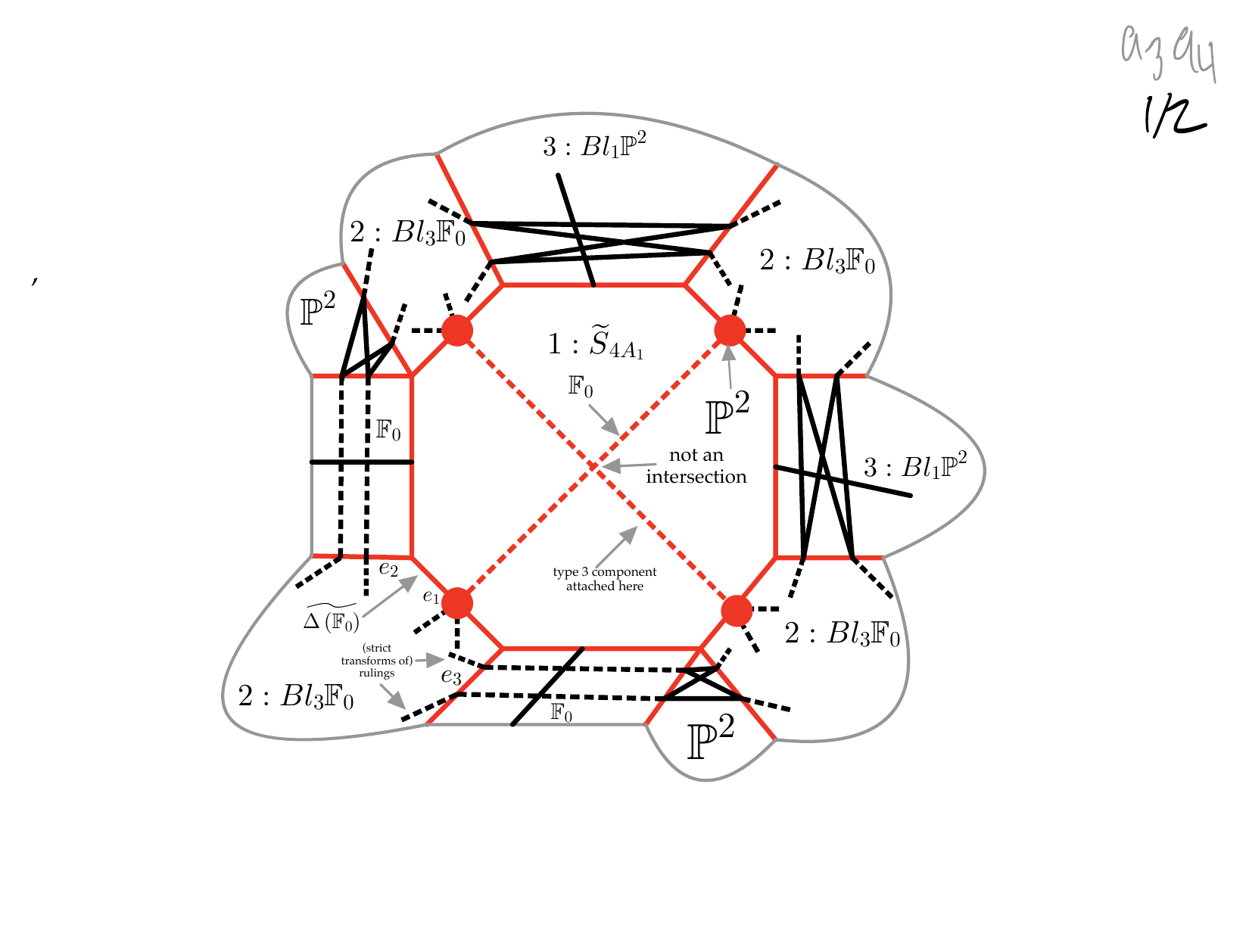}
        \caption{A type $a_3a_4$ surface for weights $1/4 < c \leq 1/2$.}
        \label{fig:a3a4_14-12}
    \end{subfigure}
    \begin{subfigure}[t]{0.45\textwidth}
        \centering
        \includegraphics[width=0.9\linewidth]{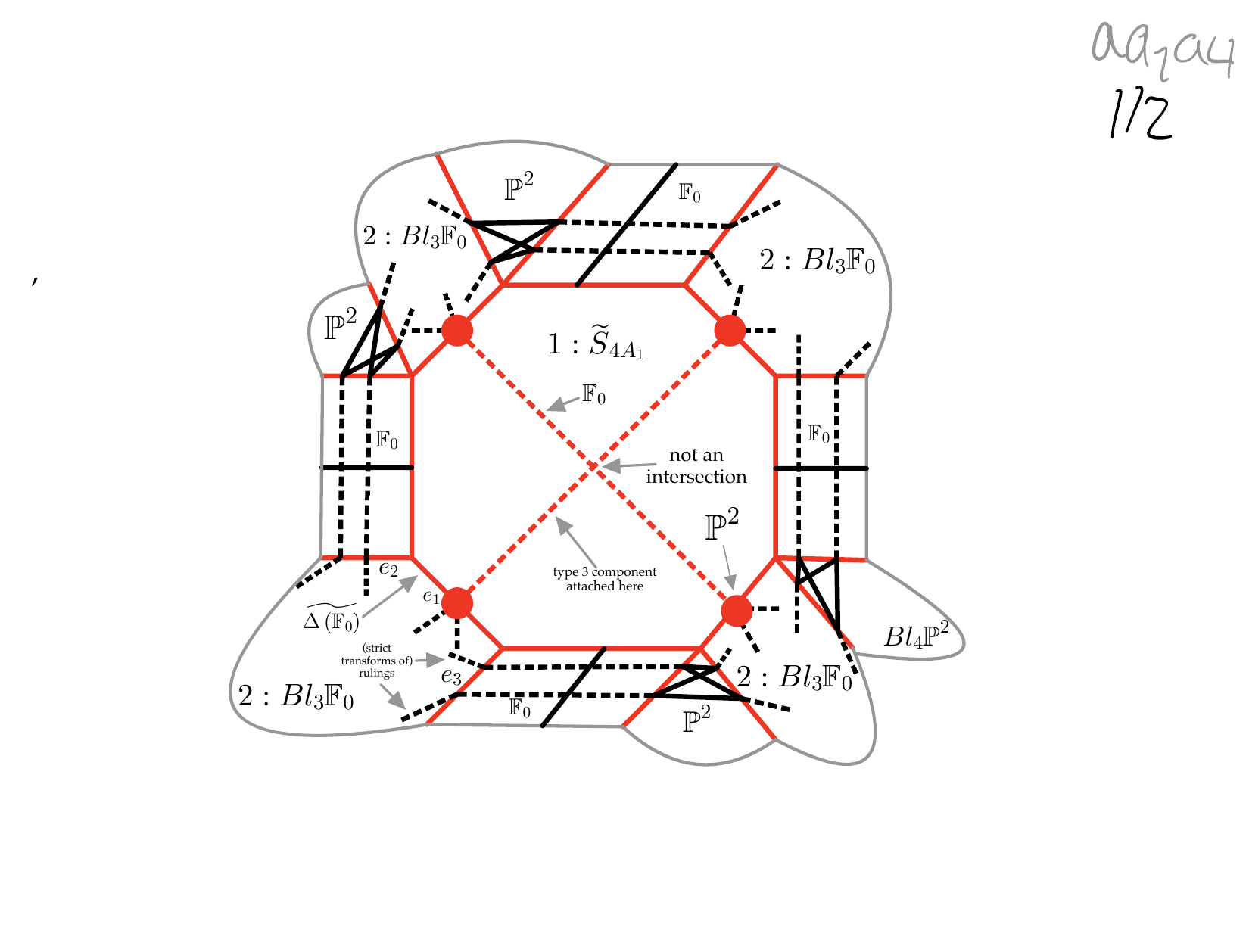}
        \caption{A type $aa_2a_4$ surface for weights $1/4 < c \leq 1/2$.}
        \label{fig:aa2a4_14-12}
    \end{subfigure}
    \caption{Degenerations of the type $a_4$ surface of \cref{fig:a4_14-12} into types $aa_4$, $a_2a_4$, $a_3a_4$,
        $aa_2a_4$, $aa_3a_4$, $a_2a_3a_4$, and $aa_2a_3a_4$ surfaces, for weights $1/4 < c \leq 1/2$. Each degeneration
        is isomorphic to the original $a_4$ surface of \cref{fig:a3_14-12}, away from the type 3 components which
        degenerate into $\bP^2$ and $\bF_0$ as indicated (cf. \cref{fig:aa2_14-12,fig:a3_14-12_degens}).}%
    \label{fig:a4_14-12_degens}
\end{figure}
\begin{figure}[!htpb]\ContinuedFloat
    \centering
    \begin{subfigure}[t]{0.45\textwidth}
        \centering
        \includegraphics[width=0.9\linewidth]{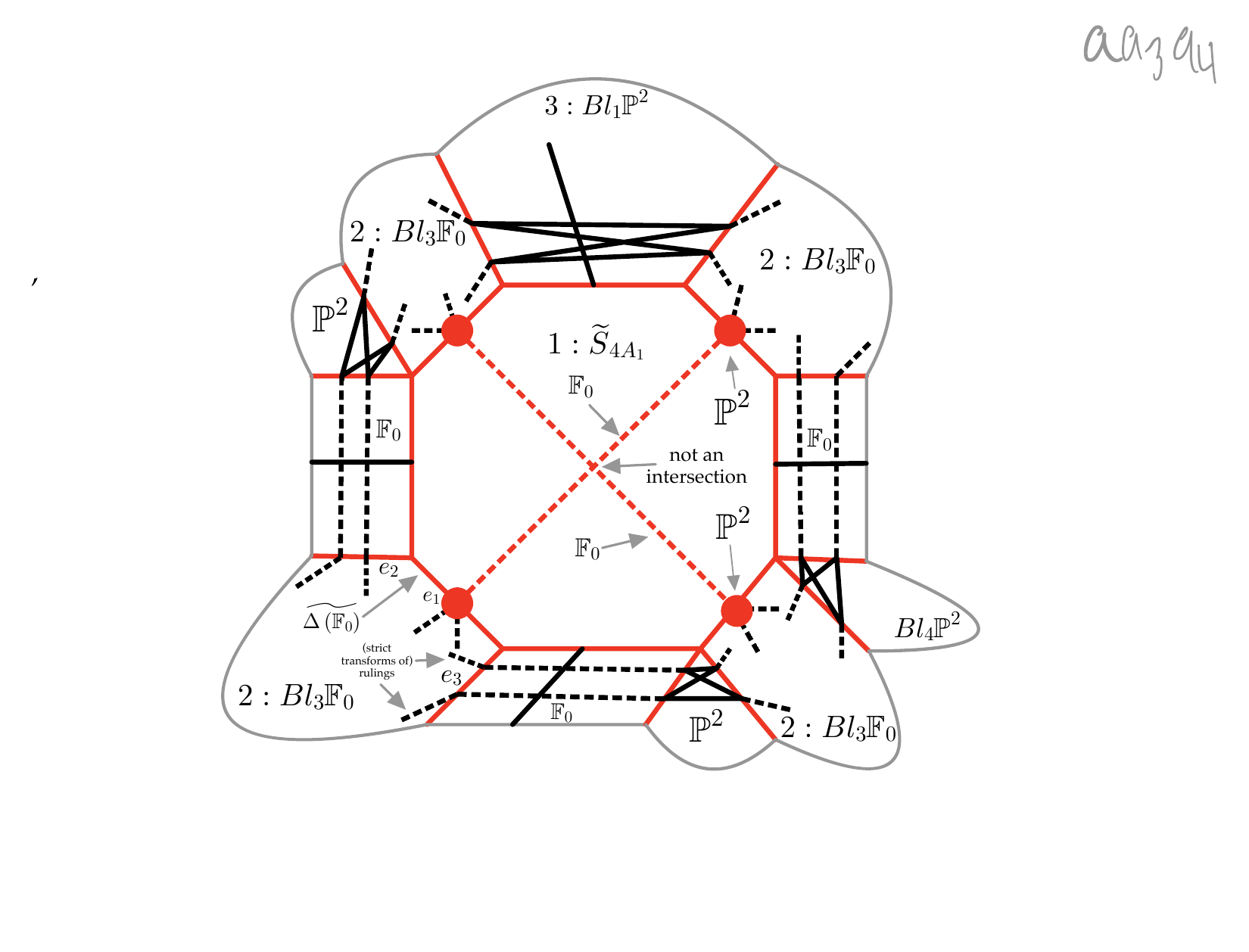}
        \caption{A type $aa_3a_4$ surface for weights $1/4 < c \leq 1/2$.}
        \label{fig:aa3a4_14-12}
    \end{subfigure}
    \begin{subfigure}[t]{0.45\textwidth}
        \centering
        \includegraphics[width=0.9\linewidth]{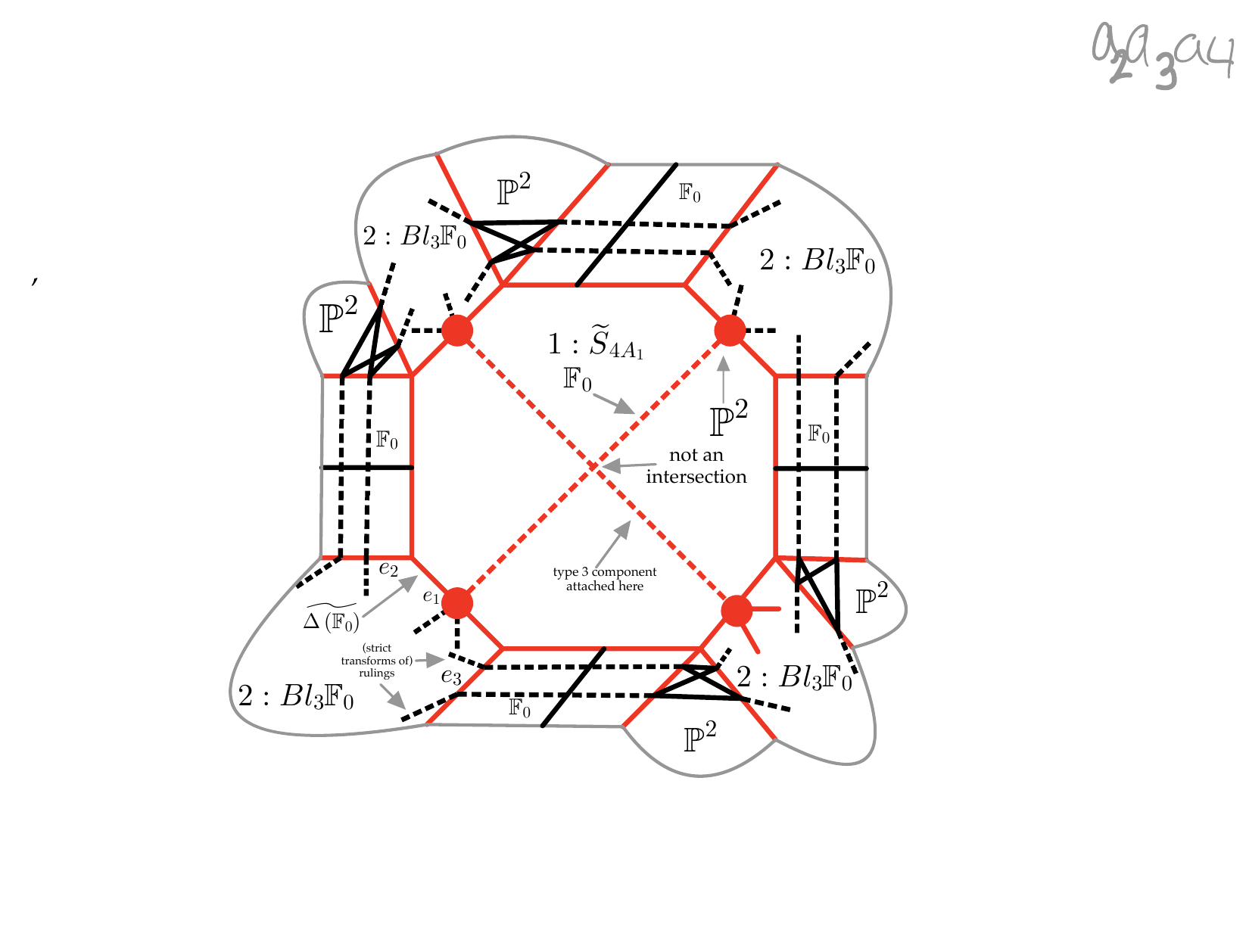}
        \caption{A type $a_2a_3a_4$ surface for weights $1/4 < c \leq 1/2$.}
        \label{fig:a2a3a4_14-12}
    \end{subfigure}
    \begin{subfigure}[t]{0.45\textwidth}
        \centering
        \includegraphics[width=0.9\linewidth]{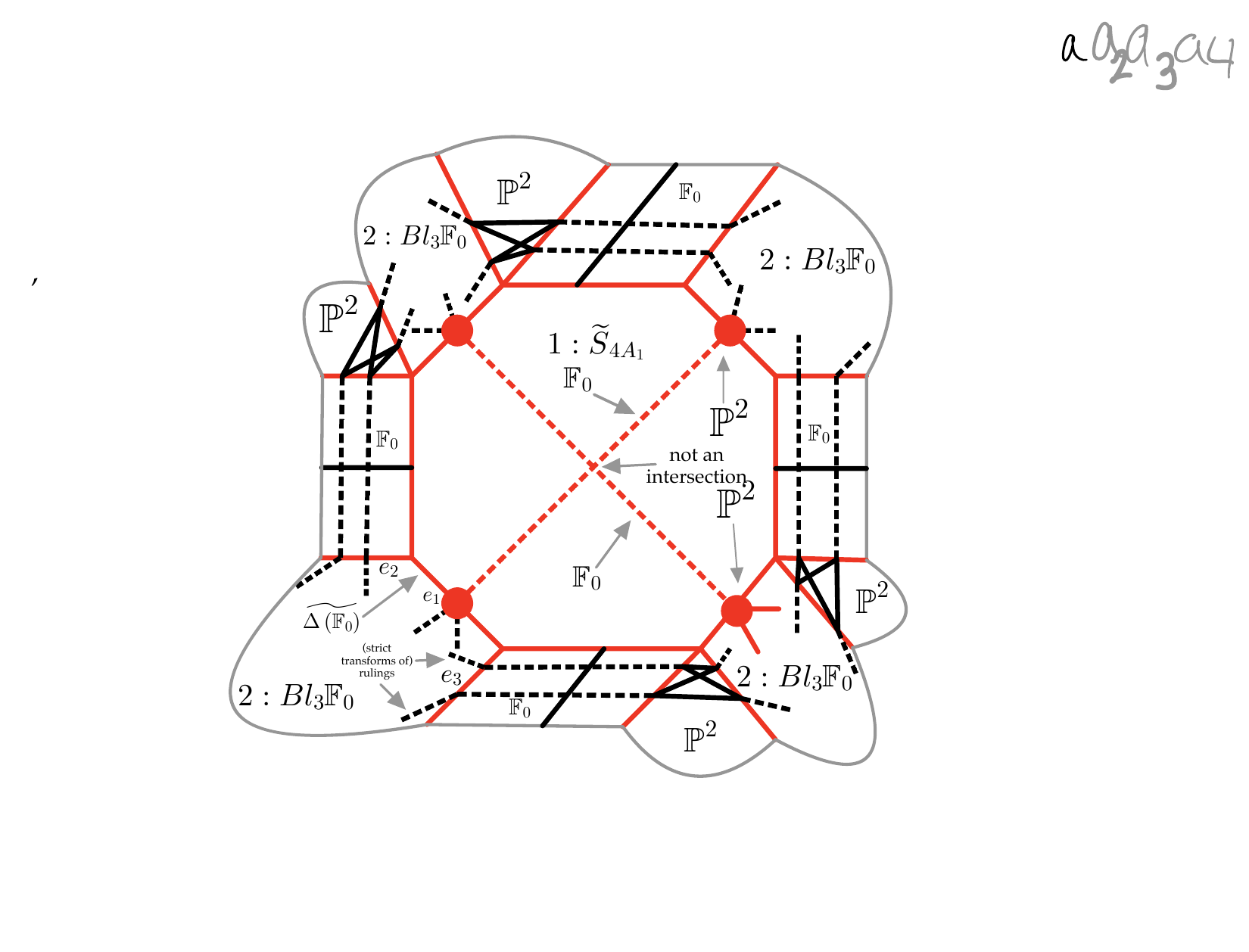}
        \caption{A type $aa_2a_3a_4$ surface for weights $1/4 < c \leq 1/2$.}
        \label{fig:aa2a3a4_14-12}
    \end{subfigure}
    \caption{(Continued) Degenerations of the type $a_4$ surface of \cref{fig:a4_14-12} into types $aa_4$, $a_2a_4$, $a_3a_4$,
        $aa_2a_4$, $aa_3a_4$, $a_2a_3a_4$, and $aa_2a_3a_4$ surfaces, for weights $1/4 < c \leq 1/2$. Each degeneration
        is isomorphic to the original $a_4$ surface of \cref{fig:a3_14-12}, away from the type 3 components which
        degenerate into $\bP^2$ and $\bF_0$ as indicated (cf. \cref{fig:aa2_14-12,fig:a3_14-12_degens}).}%
    \label{fig:a4_14-12_degens_cont}
\end{figure}


\begin{figure}[!htpb]
    \centering
    \begin{subfigure}[t]{0.7\textwidth}
        \centering
        \includegraphics[width=0.8\linewidth]{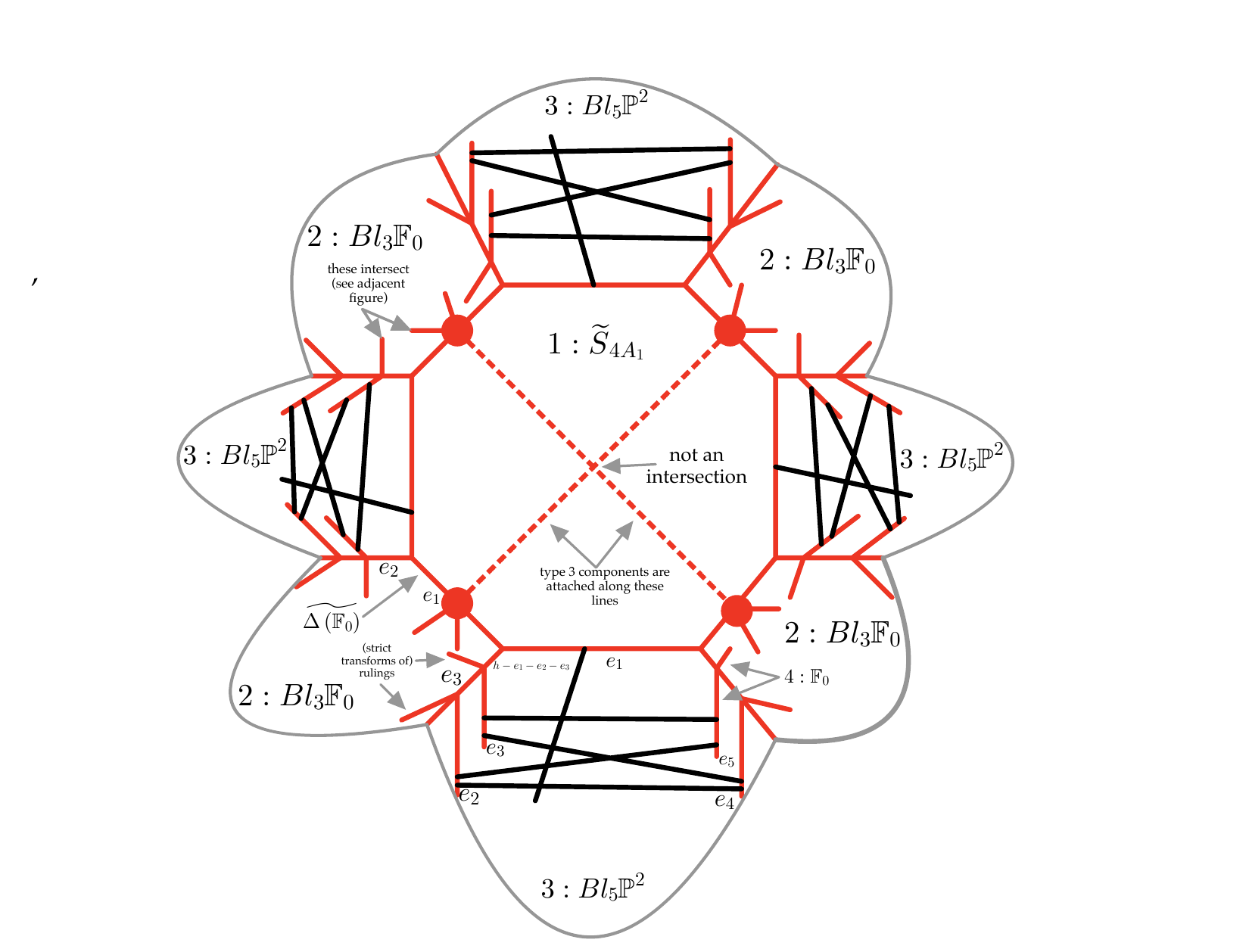}
        \caption{The type $a_4$ surface $(S_2,cB_2)$ for weights $1/2 < c \leq 2/3$.}
        \label{fig:a4_12-23_1}
    \end{subfigure}
    \begin{subfigure}[t]{0.4\textwidth}
        \centering
        \includegraphics[width=0.9\linewidth]{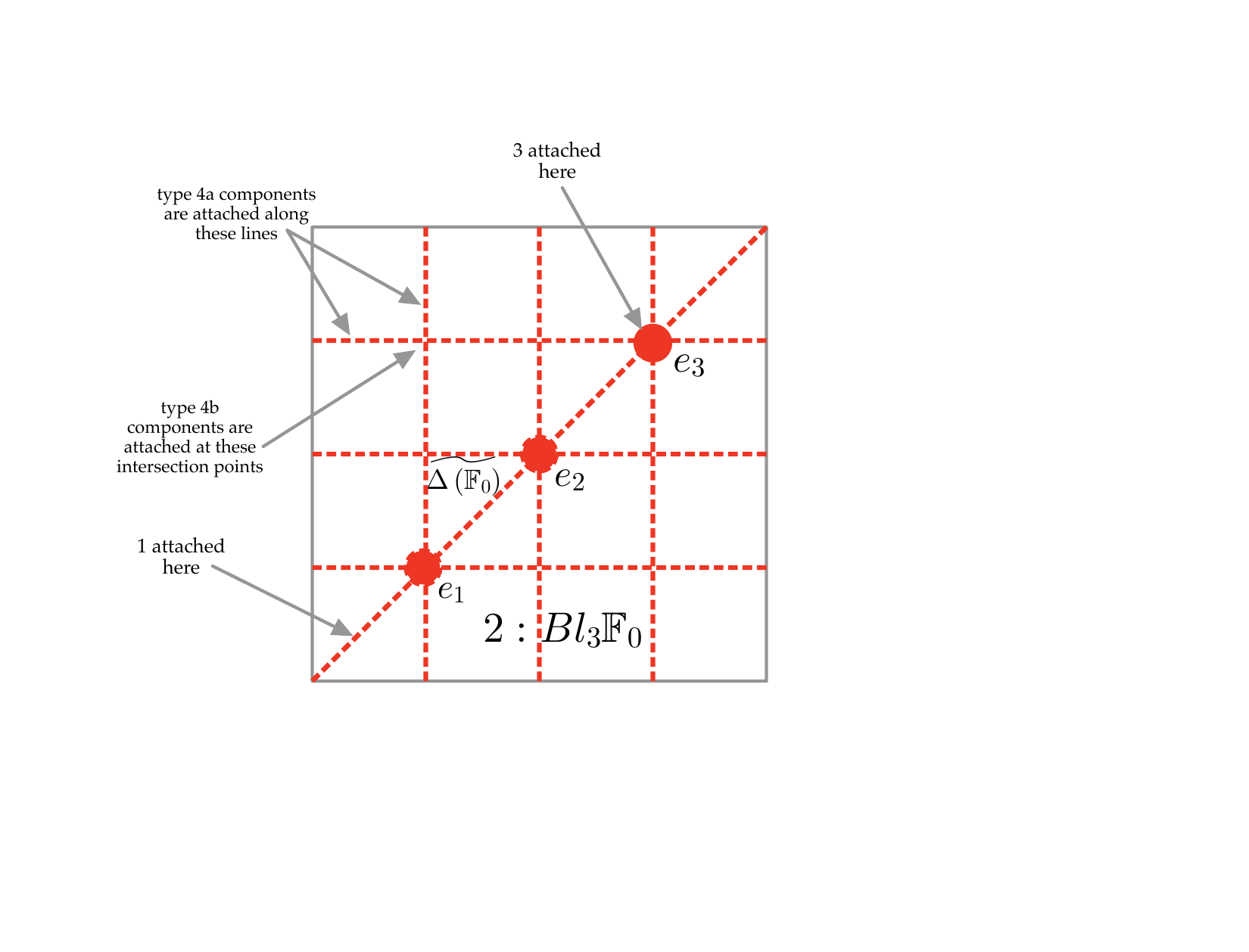}
        \caption{Another view of a type 2 component isomorphic to $Bl_3\bF_0$, showing more clearly the rulings on this
        component.}
        \label{fig:a4_12-23_2}
    \end{subfigure}
    \begin{subfigure}[t]{0.55\textwidth}
        \centering
        \includegraphics[width=0.9\linewidth]{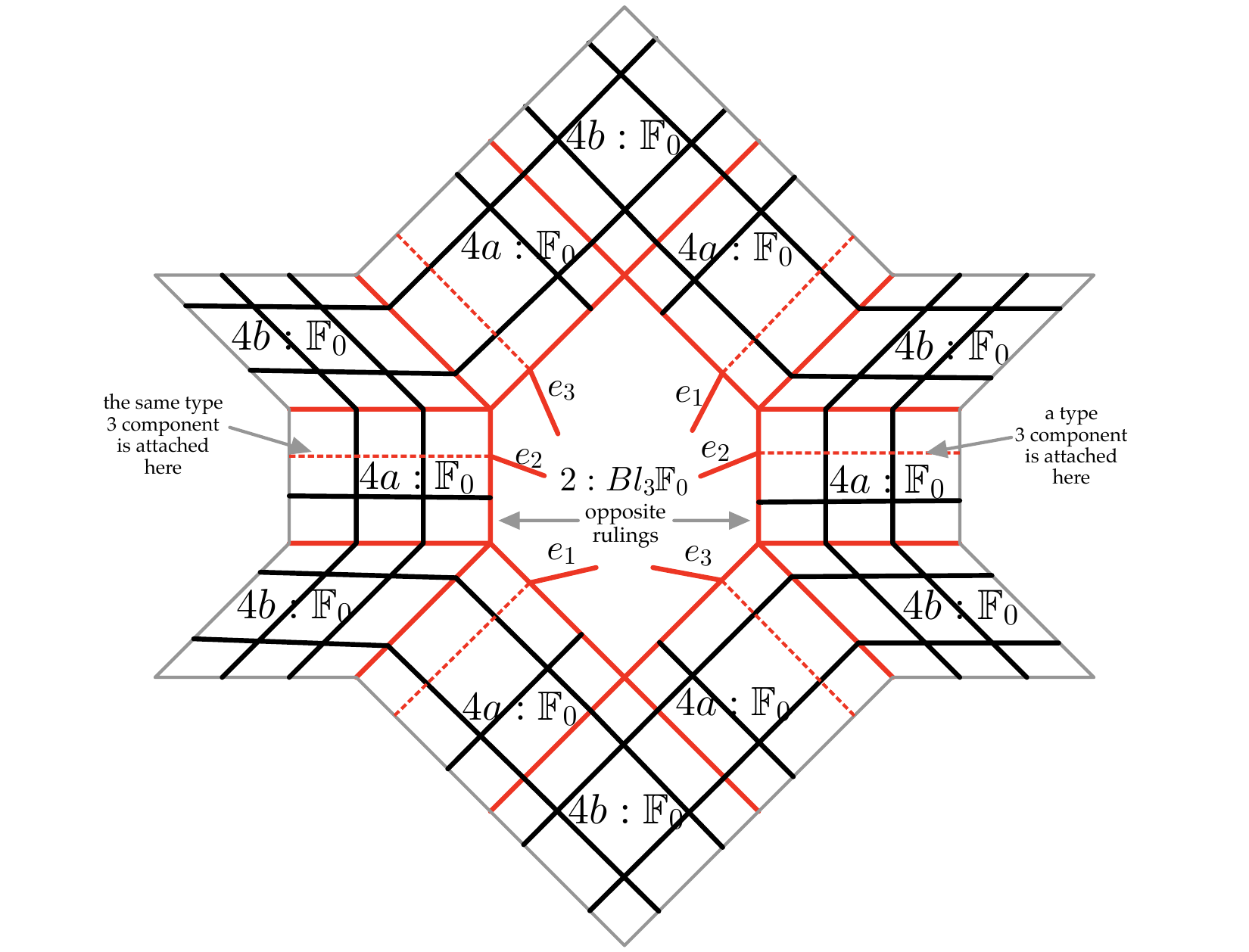}
        \caption{A view of the type 4a and 4b components intersecting a given type 2 component. Note the similarities to
        \cref{fig:a3_12-23_2}. This picture occurs 4 times on the $a_4$ surface $(S_2,cB_2)$, once for each type 2
        component.}
        \label{fig:a4_12-23_3}
    \end{subfigure}
    \caption{A type $a_4$ surface $(S_3,cB_3)$ for weights $1/2 < c \leq 2/3$, obtained as the stable replacement for
    these weights of the surface $(S_2,cB_2)$ of \cref{fig:a4_14-12}. This is obtained by blowing up the lines of
multiplicity 2 on $(S_2,cB_2)$, and attaching to the exceptional divisors copies of $\bF_0$. Some of these lines
intersect, leading to additional components being attached, as pictured in \cref{fig:a4_12-23_1}.}%
    \label{fig:a4_12-23}
\end{figure}

\begin{figure}[!htpb]
    \centering
    \begin{subfigure}[t]{0.45\textwidth}
        \centering
        \includegraphics[width=0.9\linewidth]{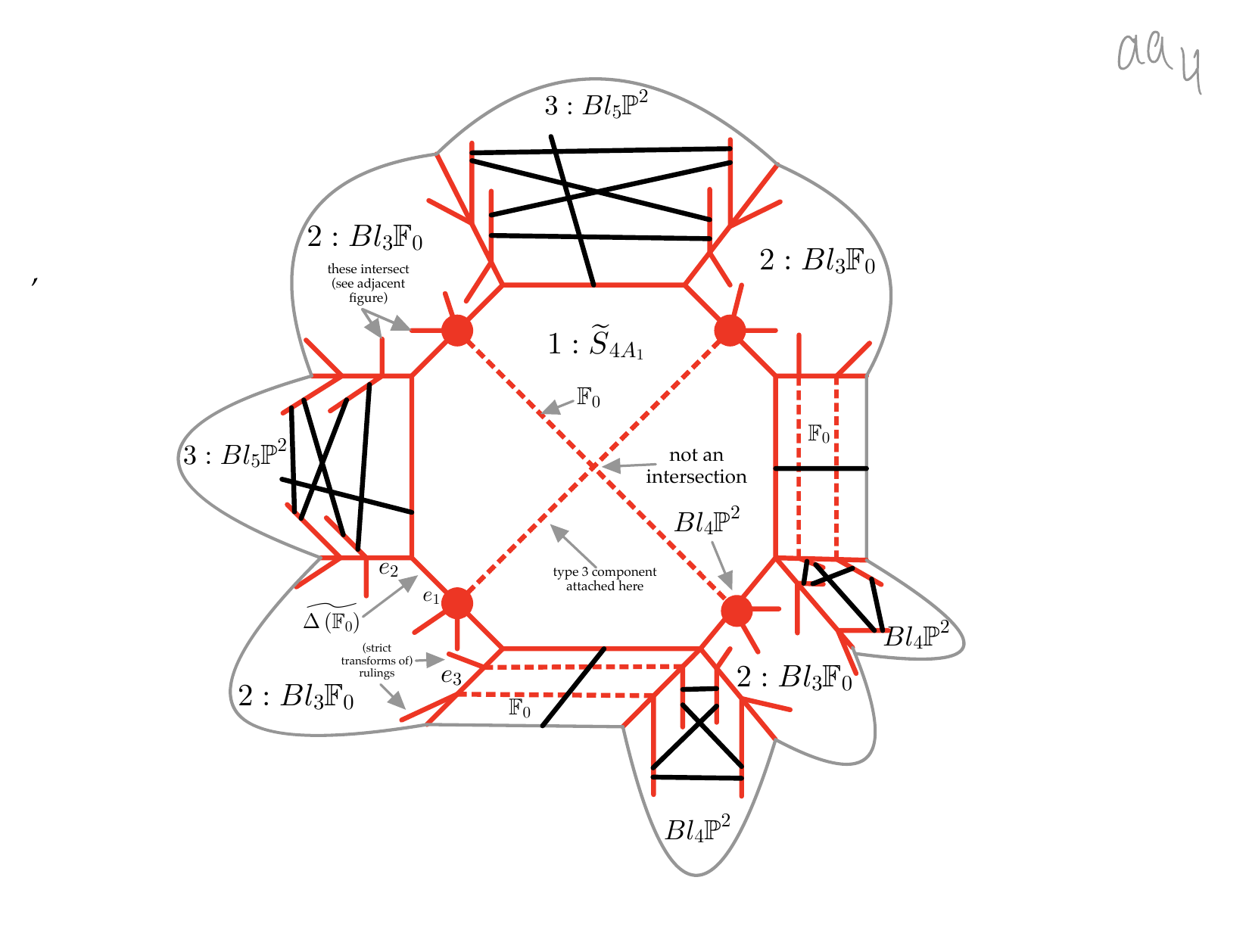}
        \caption{A type $aa_4$ surface for weights $1/2 < c \leq 2/3$.}
        \label{fig:aa4_12-23}
    \end{subfigure}
    \begin{subfigure}[t]{0.45\textwidth}
        \centering
        \includegraphics[width=0.9\linewidth]{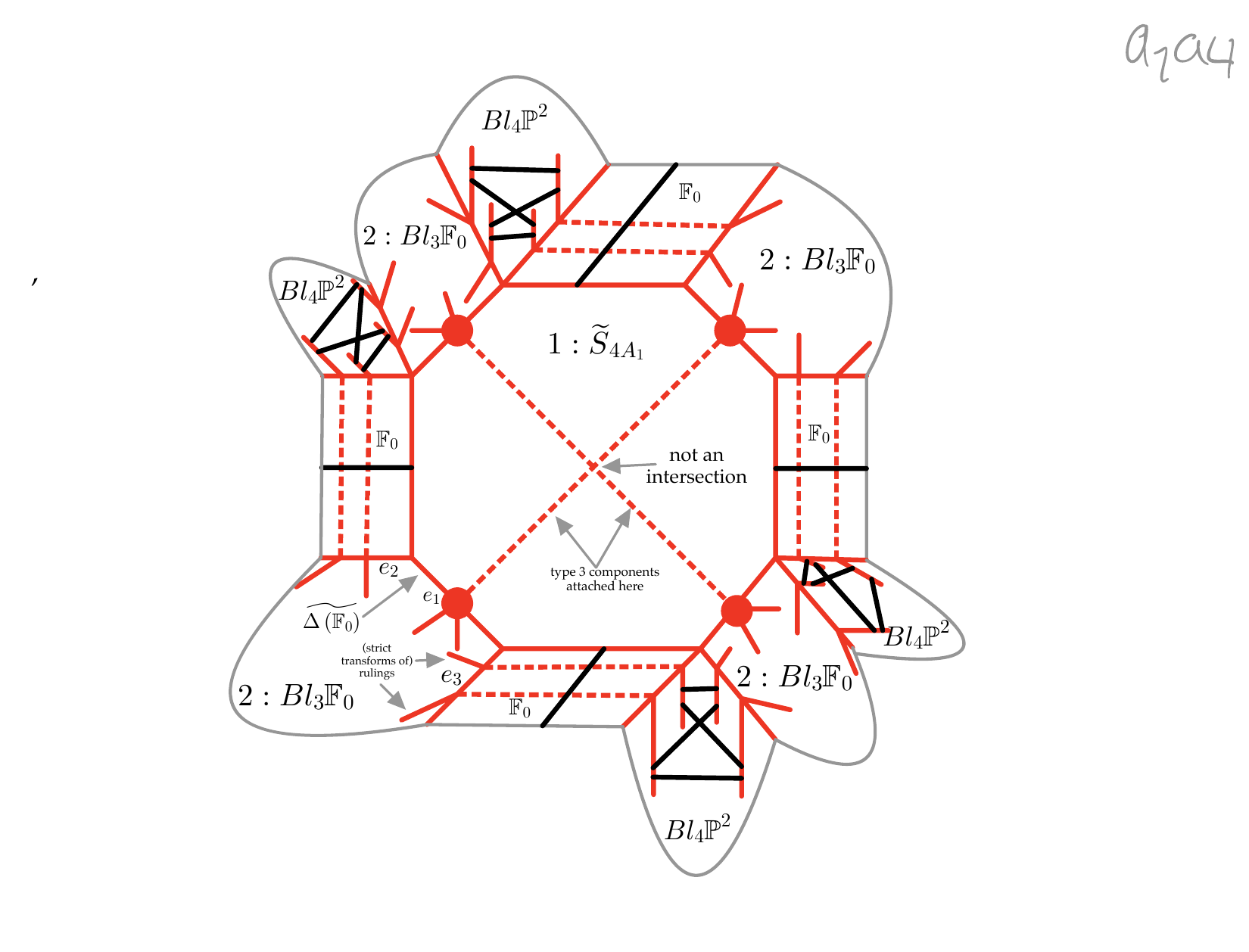}
        \caption{A type $a_2a_4$ surface for weights $1/2 < c \leq 2/3$.}
        \label{fig:a2a4_12-23}
    \end{subfigure}
    \begin{subfigure}[t]{0.45\textwidth}
        \centering
        \includegraphics[width=0.9\linewidth]{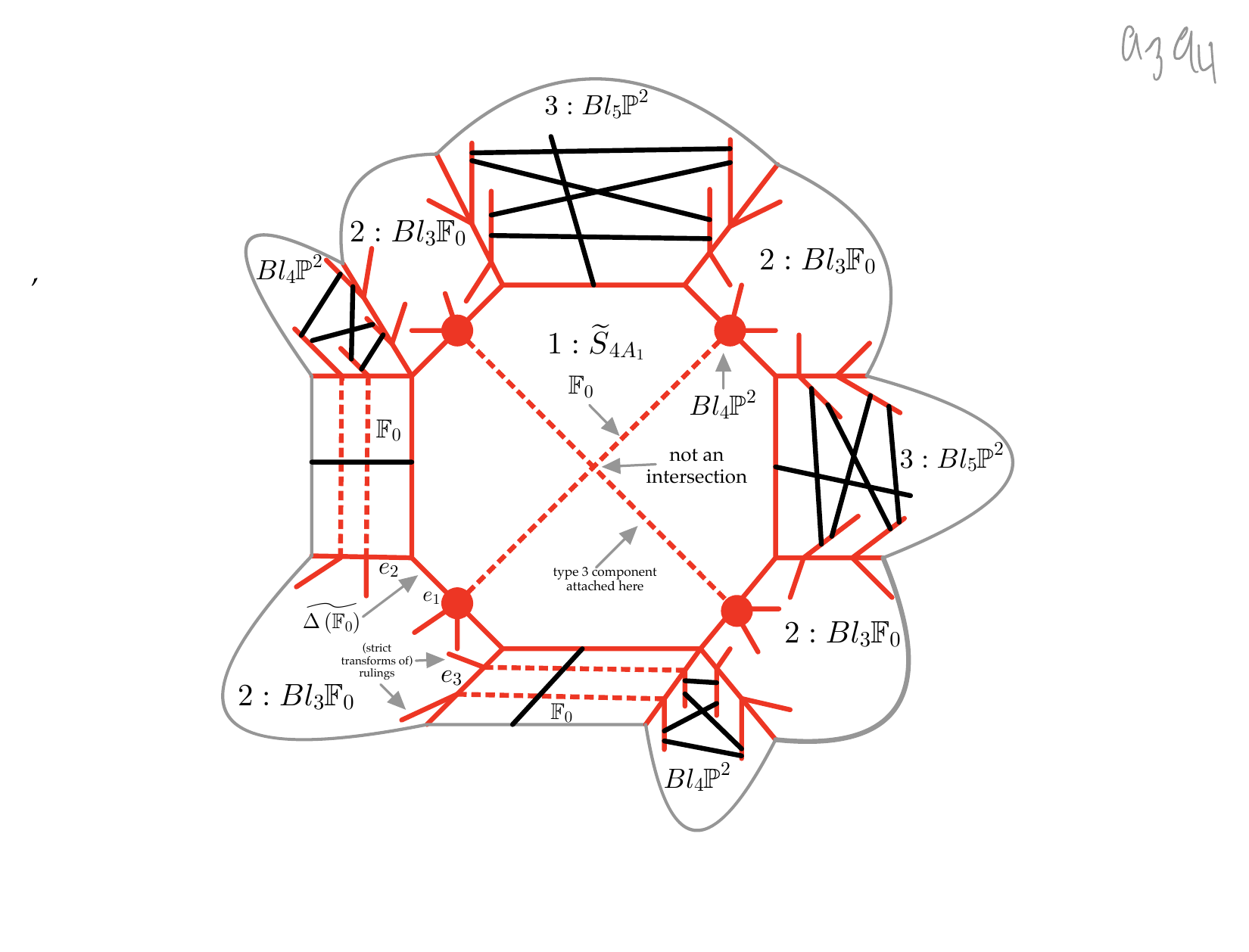}
        \caption{A type $a_3a_4$ surface for weights $1/2 < c \leq 2/3$.}
        \label{fig:a3a4_12-23}
    \end{subfigure}
    \begin{subfigure}[t]{0.45\textwidth}
        \centering
        \includegraphics[width=0.9\linewidth]{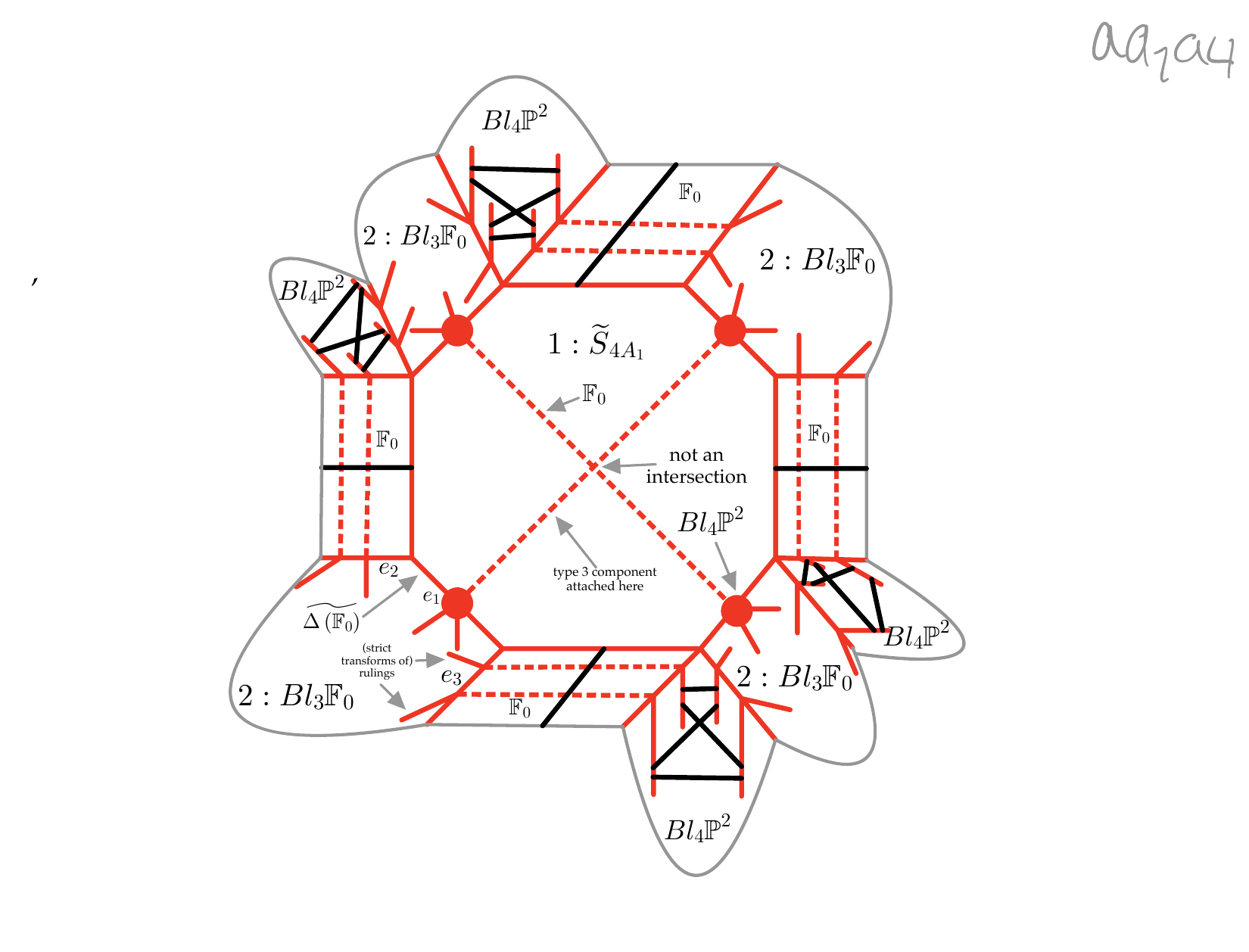}
        \caption{A type $aa_2a_4$ surface for weights $1/2 < c \leq 2/3$.}
        \label{fig:aa2a4_12-23}
    \end{subfigure}
    \caption{Degenerations of the type $a_4$ surface of \cref{fig:a4_12-23} into types $aa_4$, $a_2a_4$, $a_3a_4$,
    $aa_2a_4$, $aa_3a_4$, $a_2a_3a_4$, and $aa_2a_3a_4$ surfaces, for weights $1/2 < c \leq 2/3$. Each degeneration is
    isomorphic to the original $a_4$ surface of \cref{fig:a3_12-23}, away from the type 3 components which degenerate into
    one copy of $Bl_4\bP^2$ and three copies of $\bF_0$ as indicated (cf. \cref{fig:aa2_12-23,fig:a3_12-23_degens}).}%
    \label{fig:a4_12-23_degens}
\end{figure}
\begin{figure}[!htpb]\ContinuedFloat
    \centering
    \begin{subfigure}[t]{0.45\textwidth}
        \centering
        \includegraphics[width=0.9\linewidth]{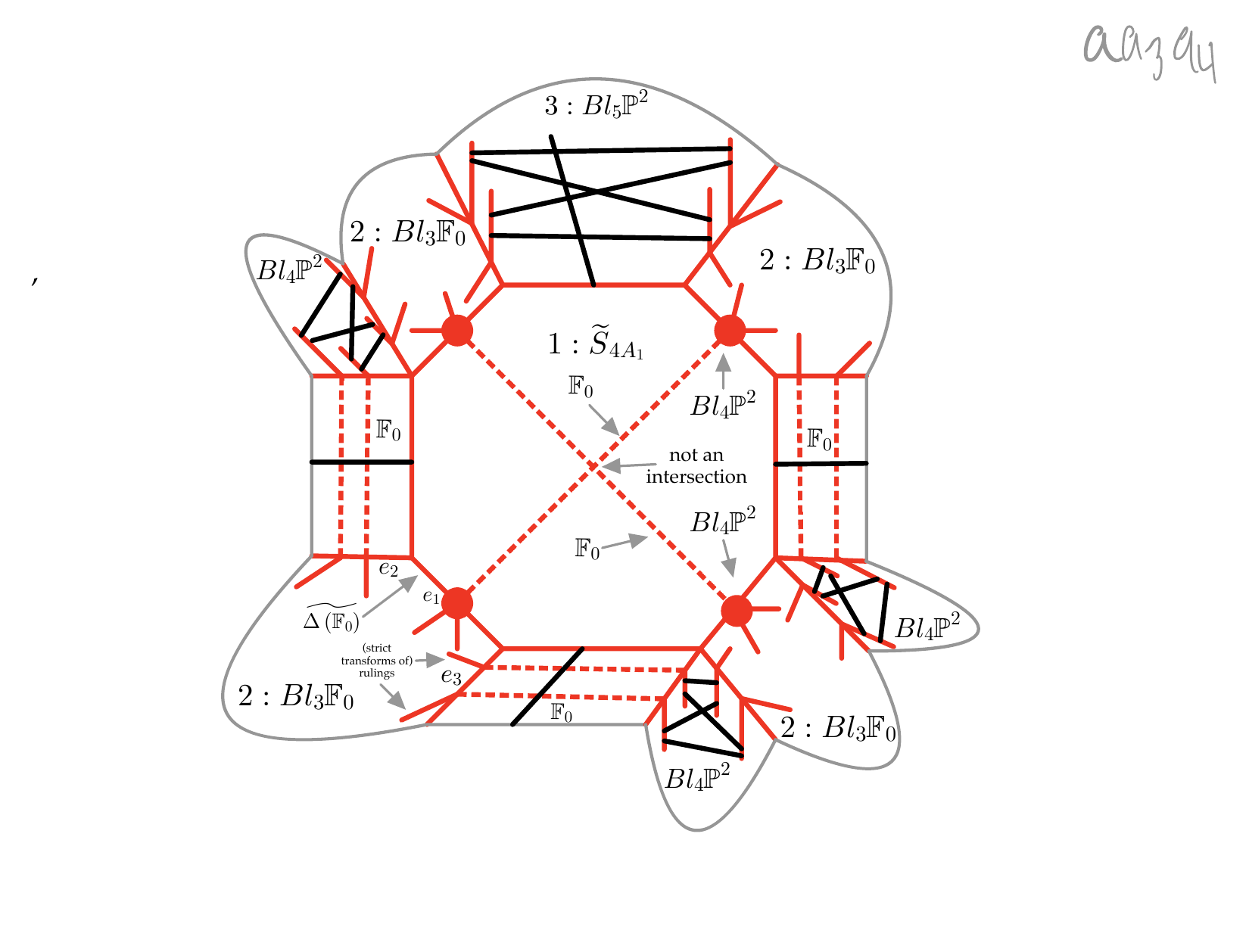}
        \caption{A type $aa_3a_4$ surface for weights $1/2 < c \leq 2/3$.}
        \label{fig:aa3a4_12-23}
    \end{subfigure}
    \begin{subfigure}[t]{0.45\textwidth}
        \centering
        \includegraphics[width=0.9\linewidth]{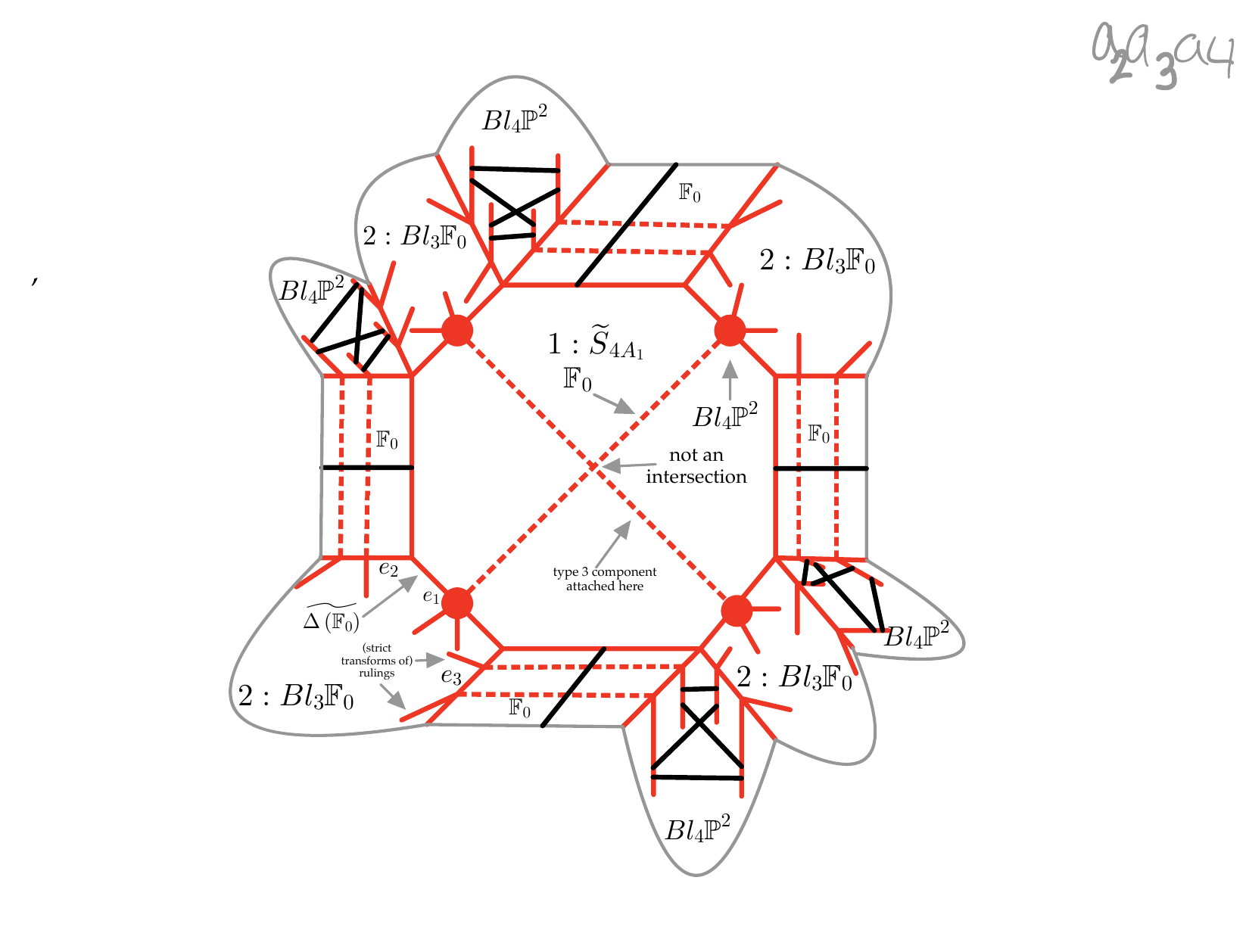}
        \caption{A type $a_2a_3a_4$ surface for weights $1/2 < c \leq 2/3$.}
        \label{fig:a2a3a4_12-23}
    \end{subfigure}
    \begin{subfigure}[t]{0.45\textwidth}
        \centering
        \includegraphics[width=0.9\linewidth]{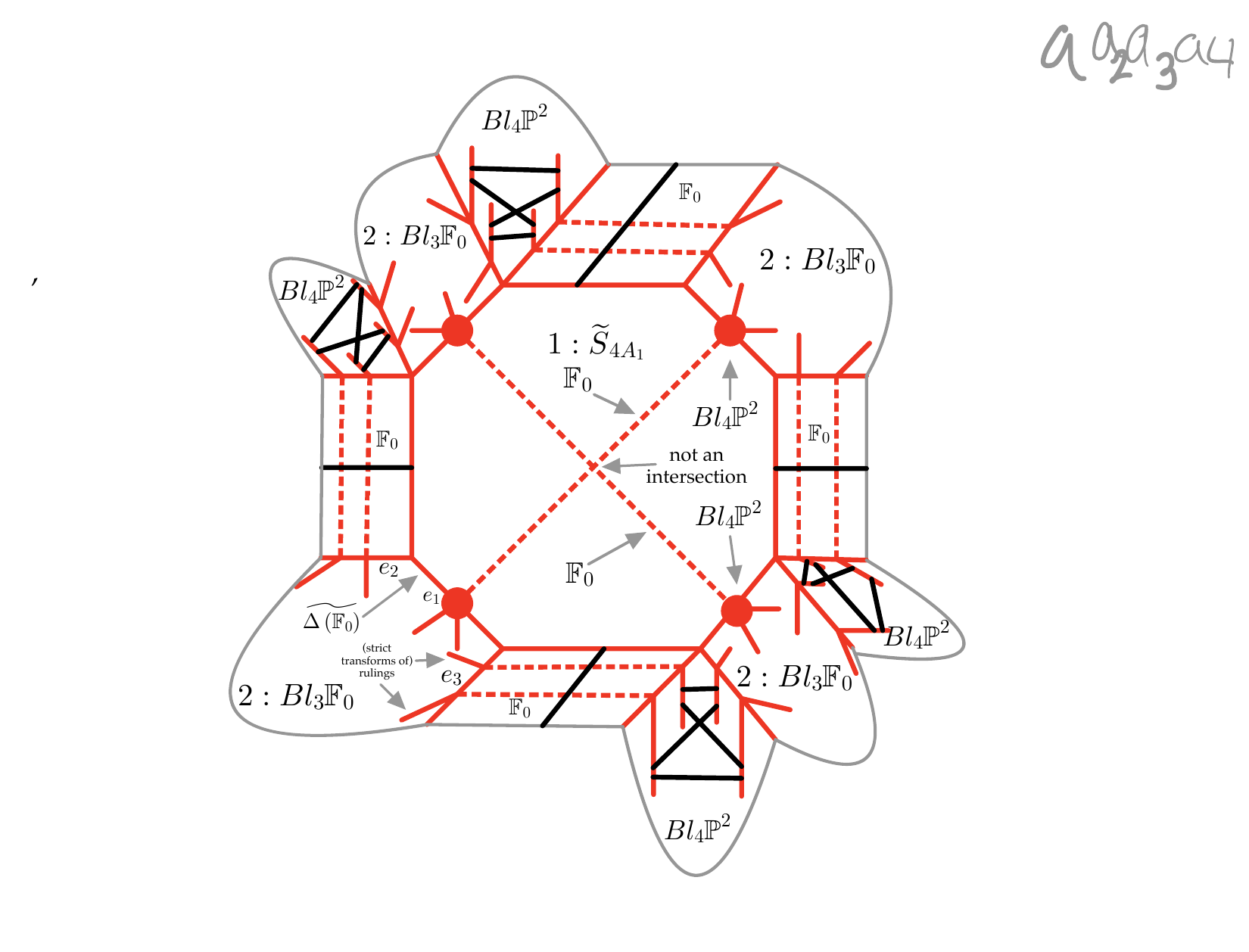}
        \caption{A type $aa_2a_3a_4$ surface for weights $1/2 < c \leq 2/3$.}
        \label{fig:aa2a3a4_12-23}
    \end{subfigure}
    \caption{(Continued) Degenerations of the type $a_4$ surface of \cref{fig:a4_12-23} into types $aa_4$, $a_2a_4$, $a_3a_4$,
    $aa_2a_4$, $aa_3a_4$, $a_2a_3a_4$, and $aa_2a_3a_4$ surfaces, for weights $1/2 < c \leq 2/3$. Each degeneration is
    isomorphic to the original $a_4$ surface of \cref{fig:a3_12-23}, away from the type 3 components which degenerate into
    one copy of $Bl_4\bP^2$ and three copies of $\bF_0$ as indicated (cf. \cref{fig:aa2_12-23,fig:a3_12-23_degens}).}%
    \label{fig:a4_12-23_degens_cont}
\end{figure}

\begin{table}[htpb]
    \centering
    \caption{The types of irreducible components of the weight $1/2 < c \leq 2/3$ surface of type $a_4$ pictured in
        \cref{fig:a4_12-23}, and the possible numbers of Eckardt points on each component. For the component of type 1,
        $\wS_{4A_1}$ refers to the minimal resolution of a cubic surface with four $A_1$ singularities. For the components of type
        2, $Bl_3\bF_0$ refers to the blowup of $\bF_0 \cong \bP^1 \times \bP^1$ at three points on the diagonal. For the
        components of type 3, $Bl_5\bP^2$ refers to the special blowup of $\bP^2$ at 5 points as in
    \cref{lem:nonflat_A32}.}
    \label{tab:a4_23-1}
    \begin{tabular}{| c | c | c | c |}
        \hline
        Label & Surface & \# & Eckardt points \\
        \hline
        \hline
        1 & $\wS_{4A_1}$ & 1 & 0 \\
        2 & $Bl_3\bF_0$ & 4 & 0 \\
        3 & $Bl_5\bP^2$ & 6 & 0 or 1 \\
        4a & $\bF_0$ & 24 & 0 \\
        4b & $\bF_0$ & 24 & 0 \\
        \hline
    \end{tabular}
\end{table}

\subsection{Type $b$}

\begin{proposition} \label{prop:b}
    For type $b$ weighted stable marked cubic surfaces $(S,cB)$, there are two walls.
    \begin{enumerate}
        \item For weights $1/9 < c \leq 1/3$, the type $b$ surfaces are described in \cref{fig:b_19-13}.
        \item For weights $1/3 < c \leq 2/3$, the type $b$ surfaces are described in \cref{fig:b_13-23}.
        \item For weights $2/3 < c \leq 1$, the type $b$ surfaces are obtained from the weight $1/3 < c \leq 2/3$ type
            $b$ surfaces by resolving Eckardt points as described in \cref{prop:resolve_eckardt}. The possible
            configurations of Eckardt points are summarized in \cref{tab:b_23-1}.
    \end{enumerate}
\end{proposition}


\begin{figure}[!htpb]
    \centering
    \includegraphics[width=0.4\linewidth]{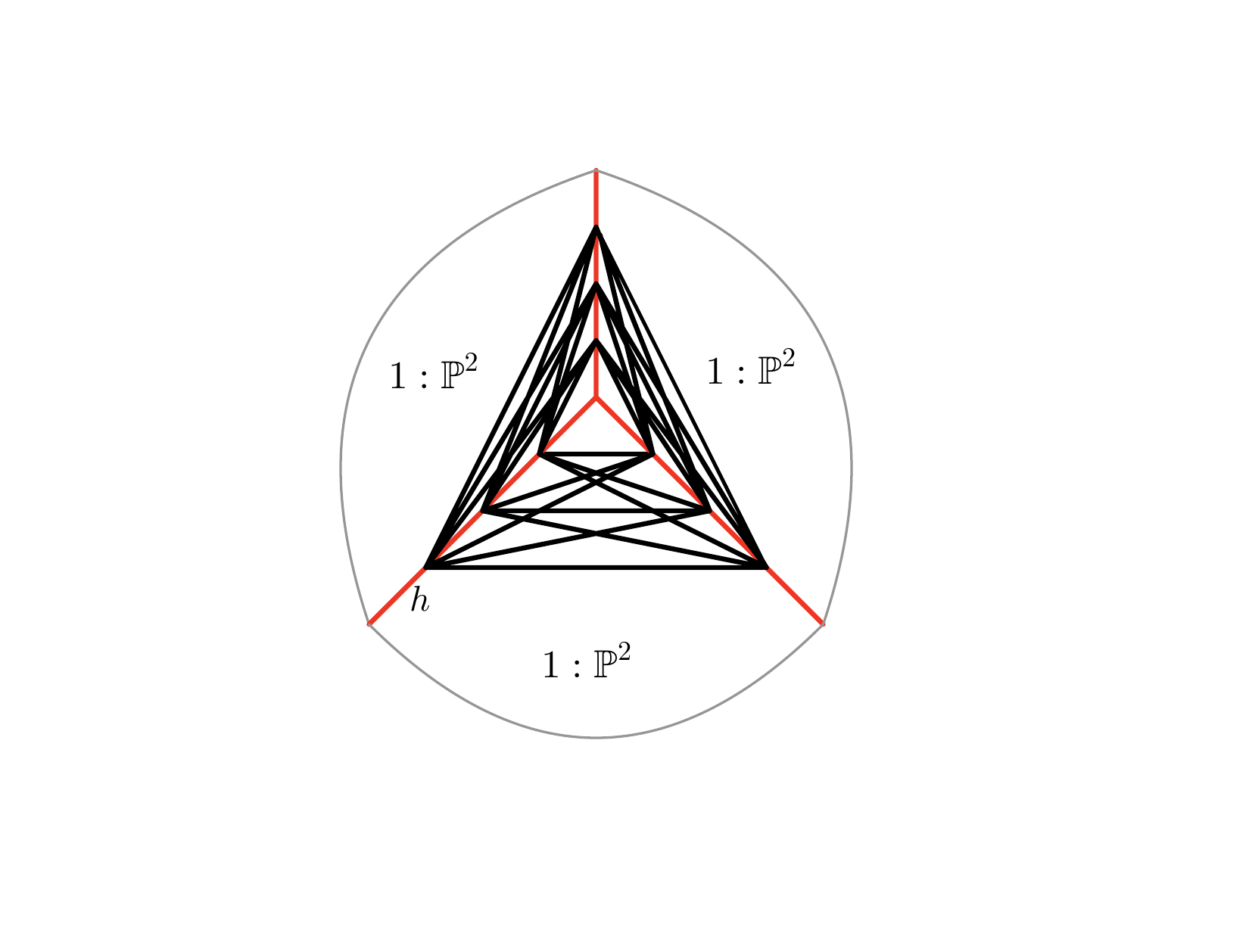}
    \caption{A type $b$ surface $(S_0,cB_0)$ for $1/9 < c \leq 1/3$, given as the union of three planes meeting
    transversally. Note that any two lines on a given component $\cong \bP^2$ intersect (possibly at infinity, not
    shown).}%
    \label{fig:b_19-13}
\end{figure}


\begin{figure}[!htpb]
    \centering
    \begin{subfigure}[t]{0.65\textwidth}
        \centering
        \includegraphics[width=0.8\linewidth]{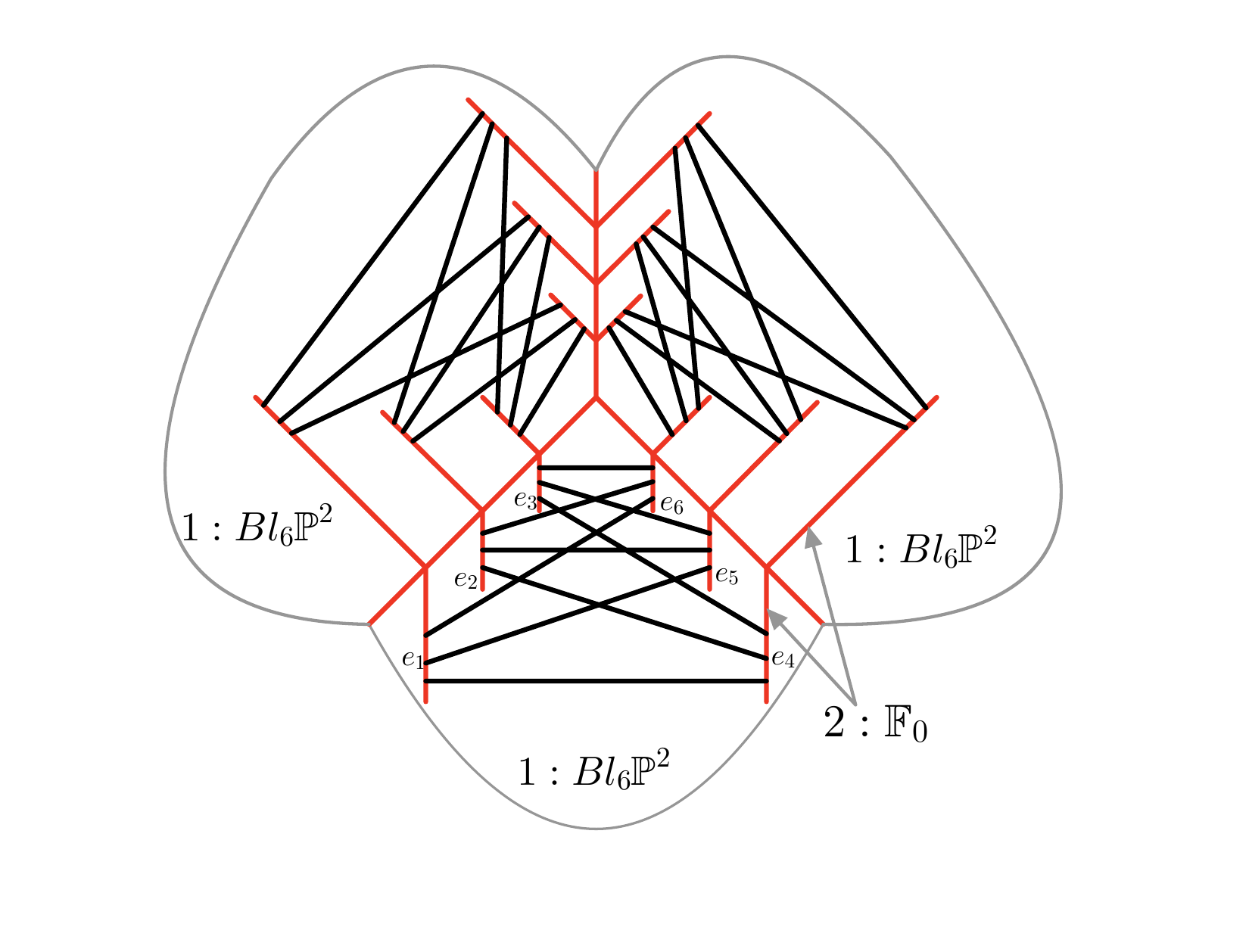}
        \caption{The type $b$ surface $(S_1,cB_1)$, obtained by blowing up 9 points on the surface $(S_0,cB_0)$ of
        \cref{fig:b_19-13}. At each exceptional divisor we attach a copy of $\bF_0$, as pictured in
    \cref{fig:b_13-23_2}.}
        \label{fig:b_13-23_1}
    \end{subfigure}
    \begin{subfigure}[t]{0.34\textwidth}
        \centering
        \includegraphics[width=0.9\linewidth]{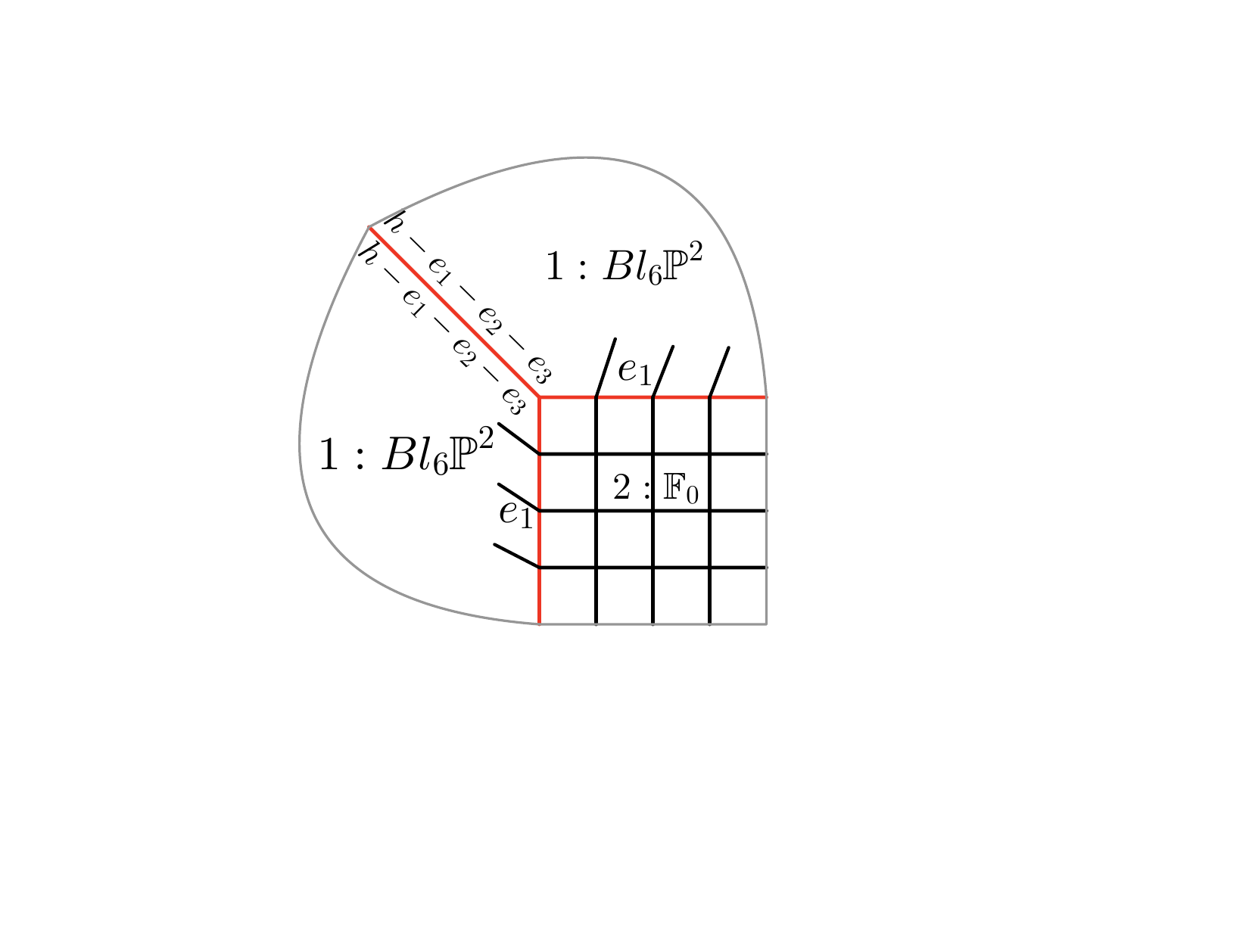}
        \caption{A view of one of the attached $\bF_0$ components in $(S_1,cB_1)$. It is glued to two different
        $Bl_6\bP^2$ components along its two rulings.}
        \label{fig:b_13-23_2}
    \end{subfigure}
    \caption{A type $b$ surface $(S_1,cB_1)$ for $1/3 < c \leq 2/3$. The surface $(S_1,cB_1)$ is obtained from the
    weight $1/9 < c \leq 1/3$ surface $(S_0,cB_0)$ of \cref{fig:b_19-13} by blowing up 9 points in the gluing locus, and
    attaching to each exceptional divisor a copy of $\bF_0$.}%
    \label{fig:b_13-23}
\end{figure}

\begin{table}[htpb]
    \centering
    \caption{The types of irreducible components of the weight $1/3 < c \leq 2/3$ surface of type $b$ pictured in
    \cref{fig:b_13-23}, and the possible numbers of Eckardt points on each component.
    For the components of type 1, $Bl_6\bP^2$ refers to the blowup of $\bP^2$ at 3 points on one line and 3 points on
    another line, as pictured in \cref{fig:b_19-13,fig:b_13-23}. Note this is the minimal resolution of a cubic surface
    with an $A_2$ singularity.}
    \label{tab:b_23-1}
    \begin{tabular}{| c | c | c | c |}
        \hline
        Label & Surface & \# & Eckardt points \\
        \hline
        \hline
        1 & $Bl_6\bP^2$ & 3 & 0, 1, 2, or 3 \\
        2 & $\bF_0$ & 9 & 0 \\
        \hline
    \end{tabular}
\end{table}

\subsection{Type $ab$}

\begin{proposition} \label{prop:ab}
    For type $ab$ weighted stable marked cubic surfaces $(S,cB)$, there are four walls.
    \begin{enumerate}
        \item For weights $1/9 < c \leq 1/6$, the type $ab$ surfaces are described in \cref{fig:ab_19-16}.
        \item For weights $1/6 < c \leq 1/3$, the type $ab$ surfaces are described in \cref{fig:ab_16-13}.
        \item For weights $1/3 < c \leq 1/2$, the type $ab$ surfaces are described in \cref{fig:ab_13-12}.
        \item For weights $1/2 < c \leq 2/3$, the type $ab$ surfaces are described in \cref{fig:ab_12-23}.
        \item For weights $2/3 < c \leq 1$, the type $ab$ surfaces are obtained from the weight $1/2 < c \leq 2/3$ type
            $ab$ surfaces by resolving Eckardt points as described in \cref{prop:resolve_eckardt}. The possible
            configurations of Eckardt points are summarized in \cref{tab:ab_23-1}.
    \end{enumerate}
\end{proposition}


\begin{figure}[!htpb]
    \centering
    \includegraphics[width=0.5\linewidth]{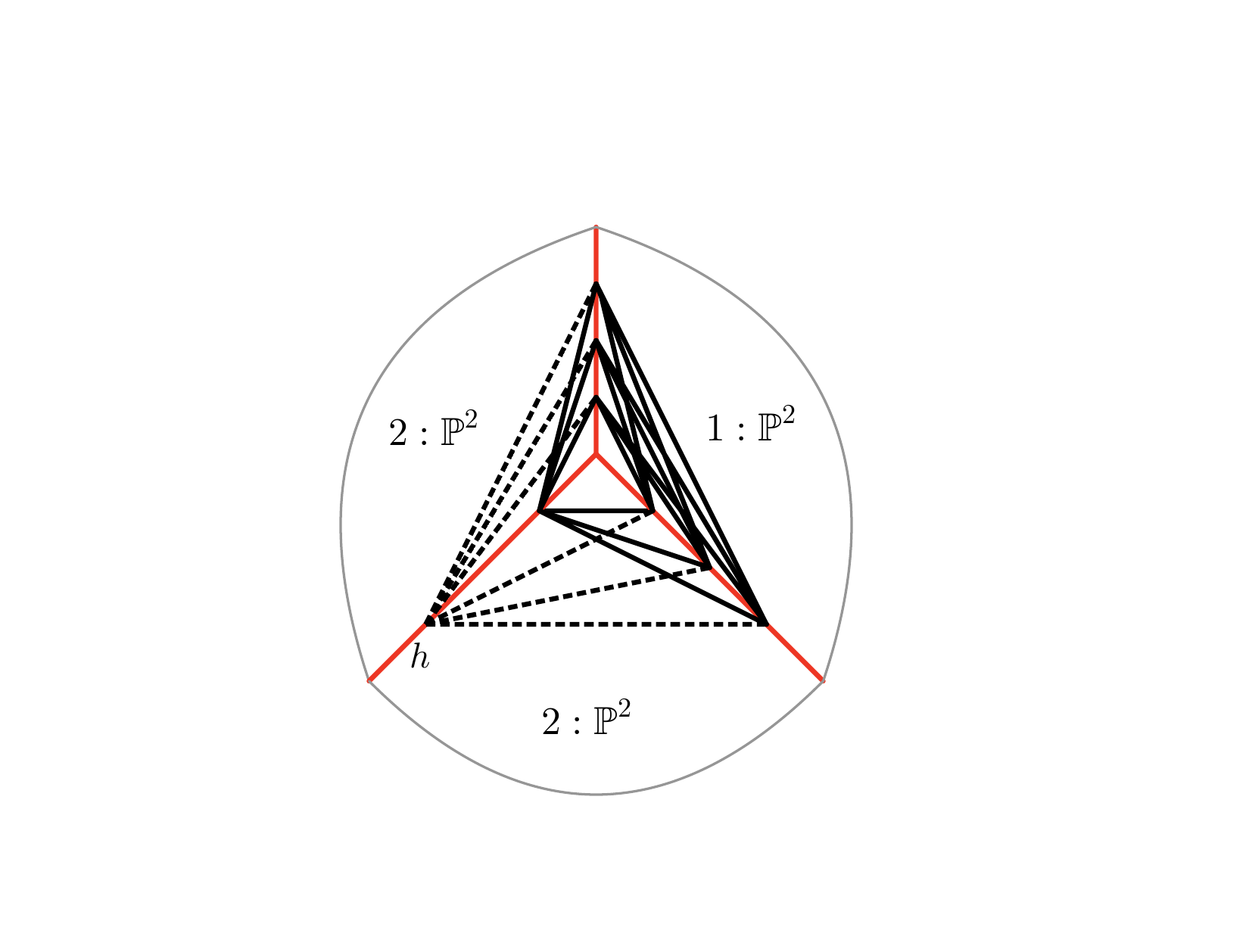}
    \caption{A type $ab$ surface $(S_0,cB_0)$ for $1/9 < c \leq 1/6$, given as the union of three planes meeting
    transversally. Note that any two lines on a given component $\cong \bP^2$ intersect (possibly at infinity, not
    shown).}%
    \label{fig:ab_19-16}
\end{figure}


\begin{figure}[!htpb]
    \centering
    \includegraphics[width=0.5\linewidth]{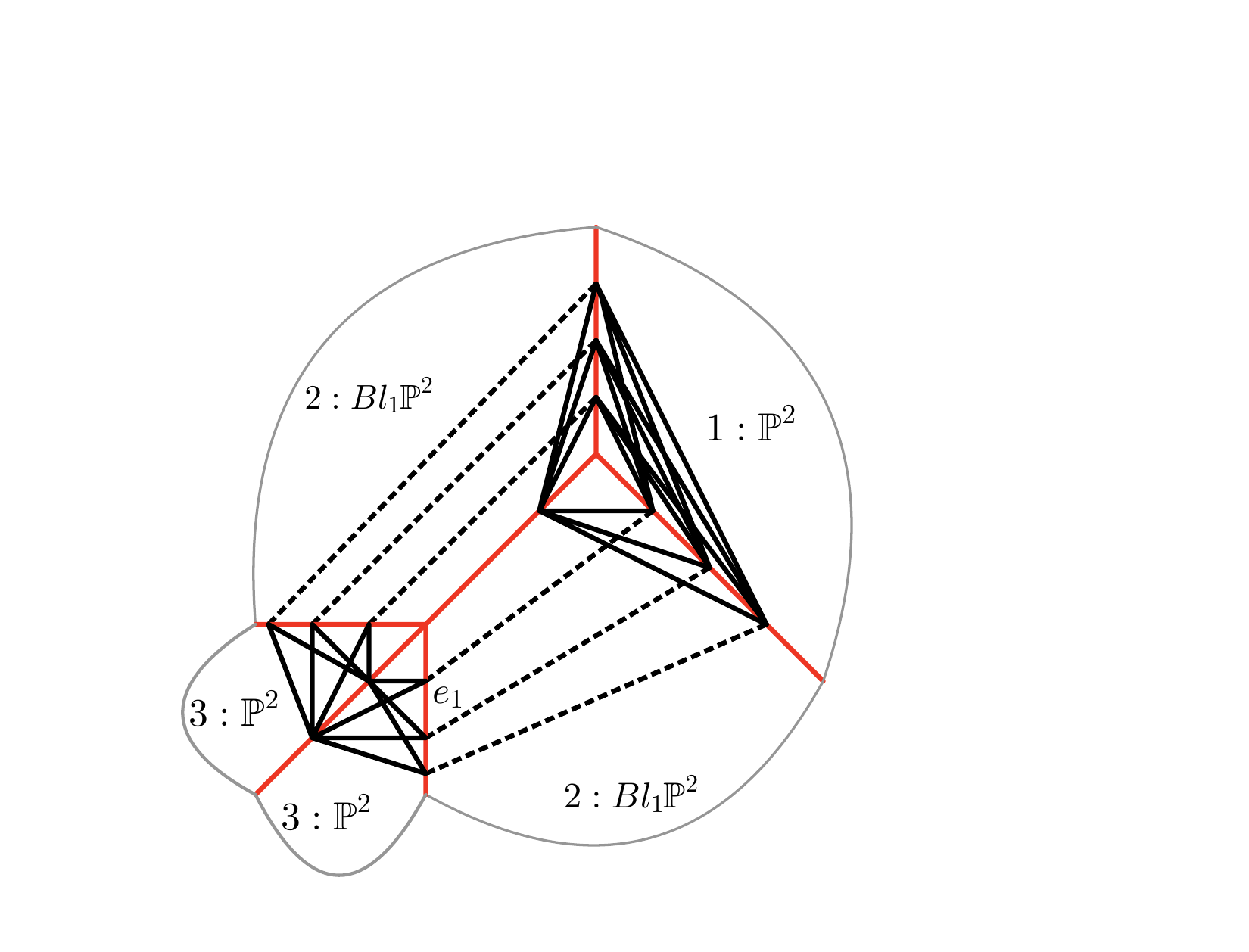}
    \caption{A type $ab$ surface $(S_1,cB_1)$ for $1/6 < c \leq 1/4$, obtained from the weight $1/9 < c \leq 1/6$
    surface $(S_0,cB_0)$ of \cref{fig:ab_19-16} by blowing up the point in the gluing locus where the lines of
    multiplicity 2 intersect, and attaching to the exceptional divisors 2 copies of $\bP^2$.}%
    \label{fig:ab_16-13}
\end{figure}


\begin{figure}[!htpb]
    \centering
    \begin{subfigure}[t]{0.8\textwidth}
        \centering
        \includegraphics[width=0.9\linewidth]{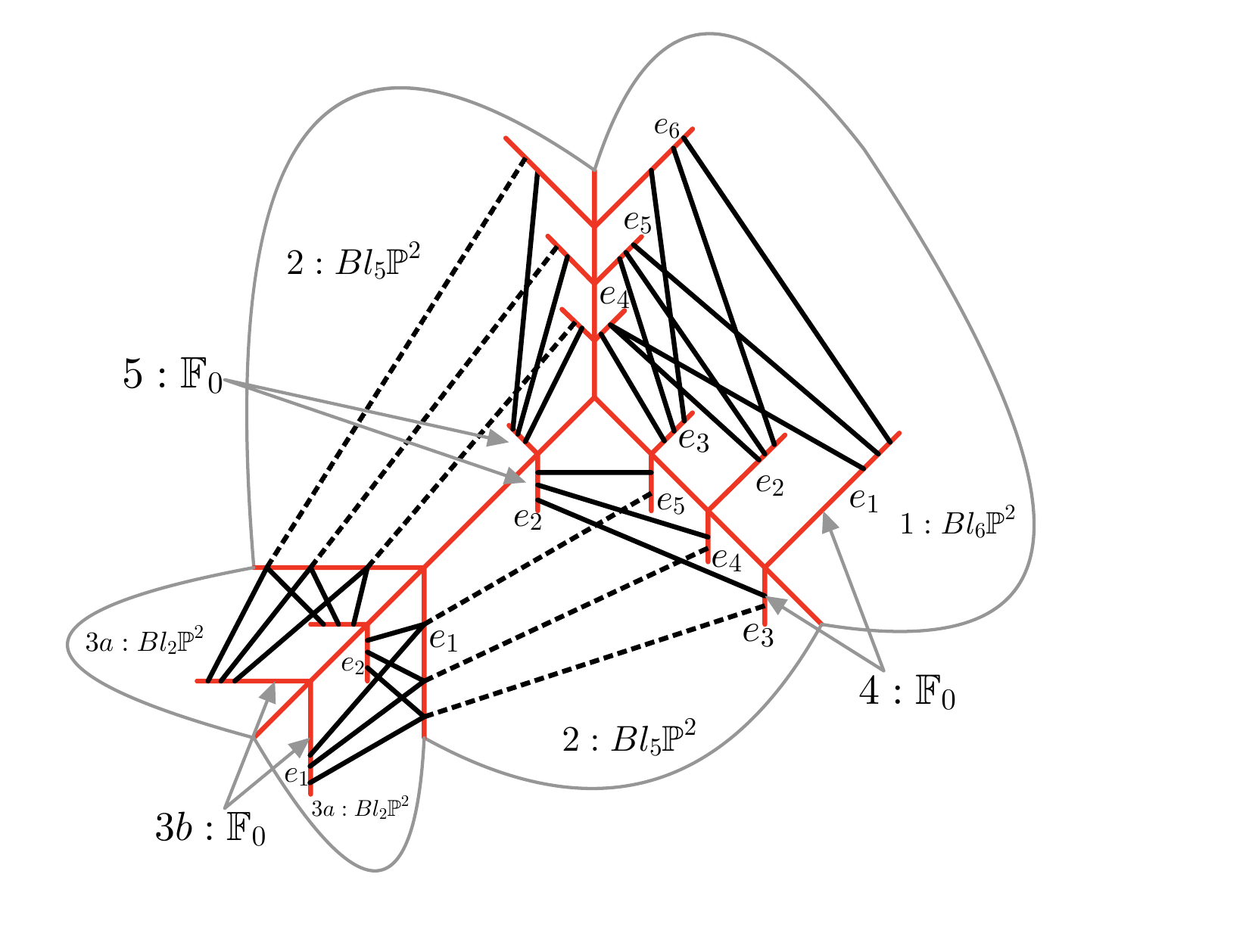}
        \caption{The type $ab$ surface $(S_2,cB_2)$ obtained as the stable replacement of \cref{fig:ab_16-13} for
        weights $1/3 < c \leq 1/2$.}
        \label{fig:ab_13-12_1}
    \end{subfigure}
    \begin{subfigure}[t]{0.3\textwidth}
        \centering
        \includegraphics[width=0.9\linewidth]{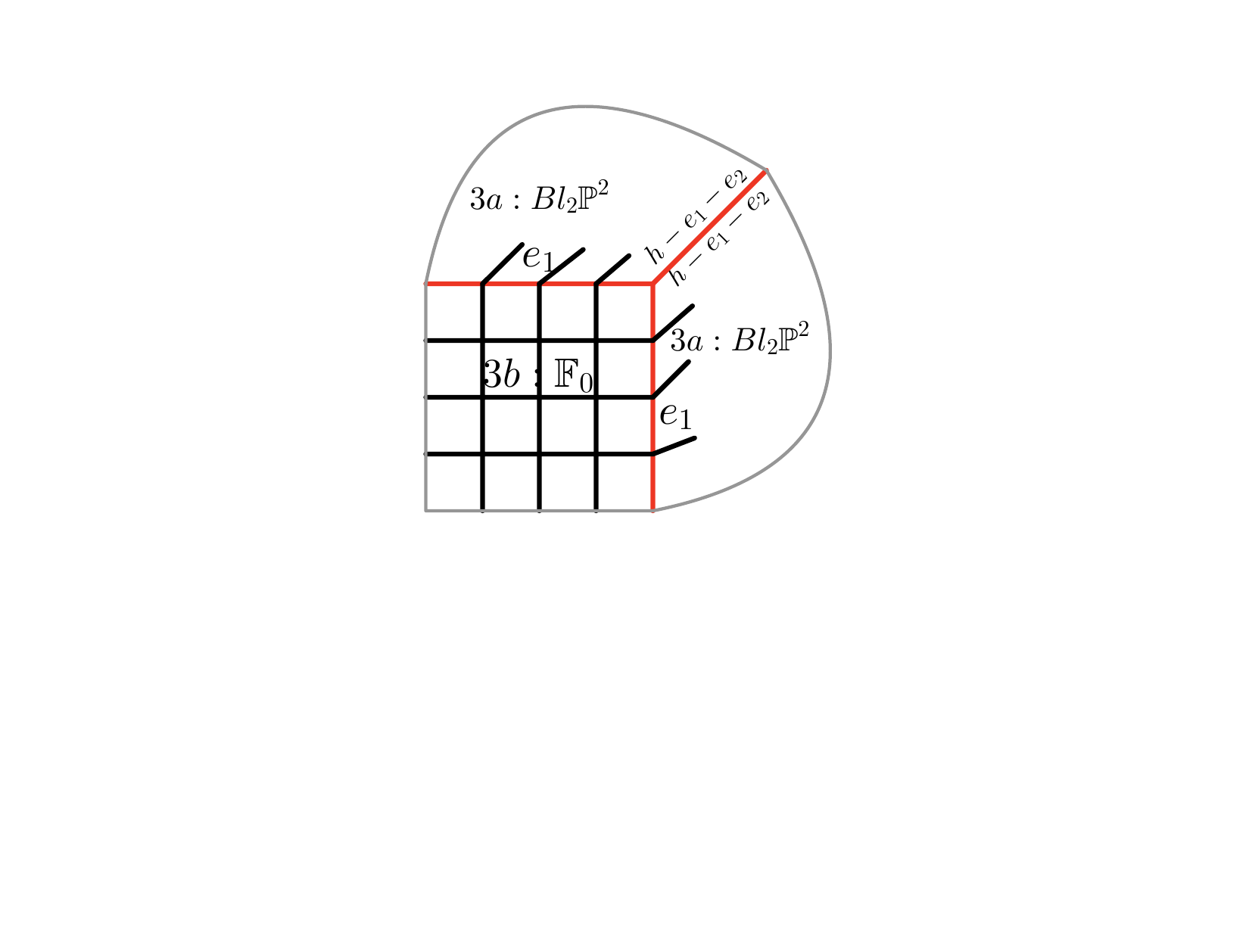}
        \caption{A view of an $\bF_0$ component of type 3b.}
        \label{fig:ab_13-12_2}
    \end{subfigure}
    \begin{subfigure}[t]{0.3\textwidth}
        \centering
        \includegraphics[width=0.9\linewidth]{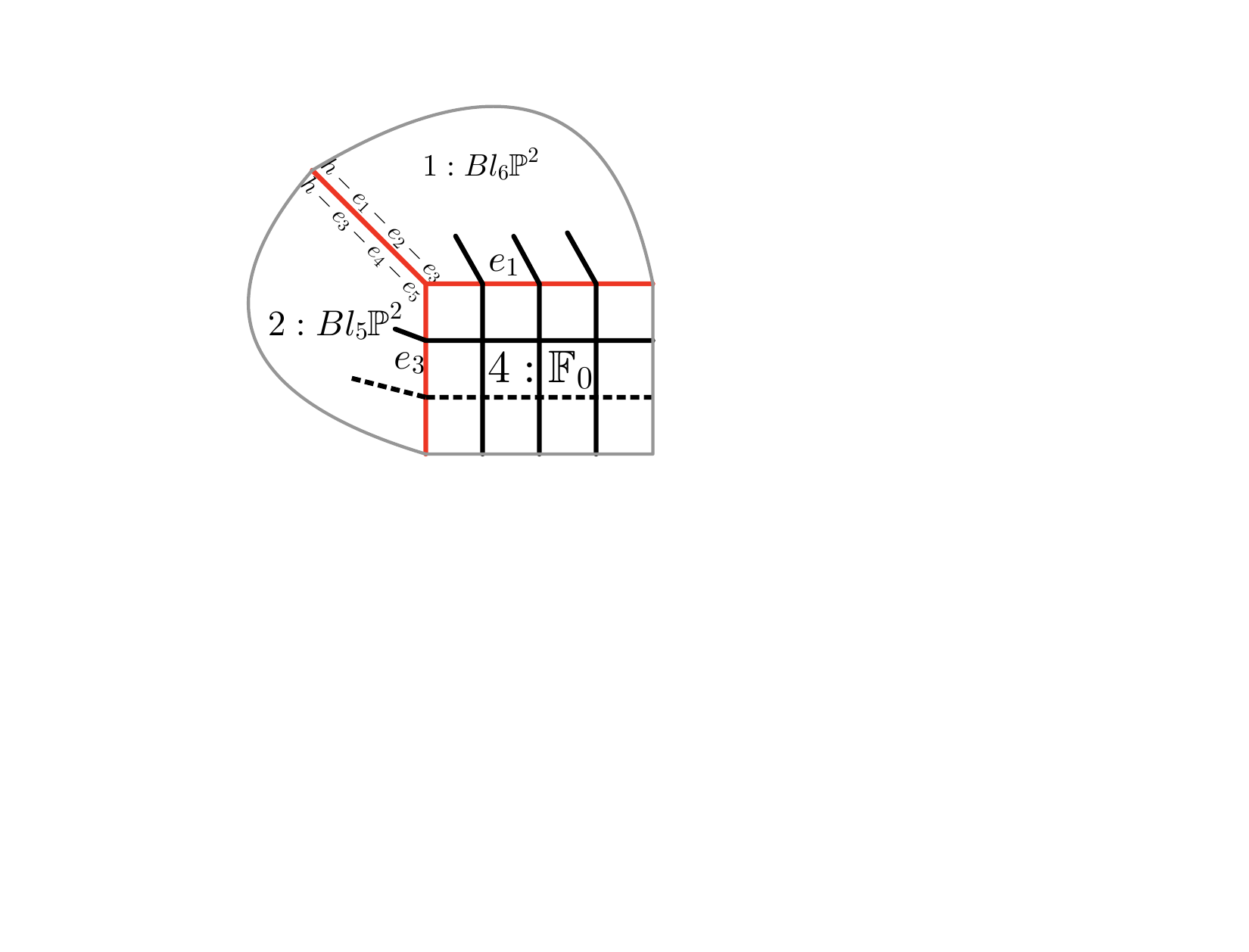}
        \caption{A view of an $\bF_0$ component of type 4.}
        \label{fig:ab_13-12_3}
    \end{subfigure}
    \begin{subfigure}[t]{0.3\textwidth}
        \centering
        \includegraphics[width=0.9\linewidth]{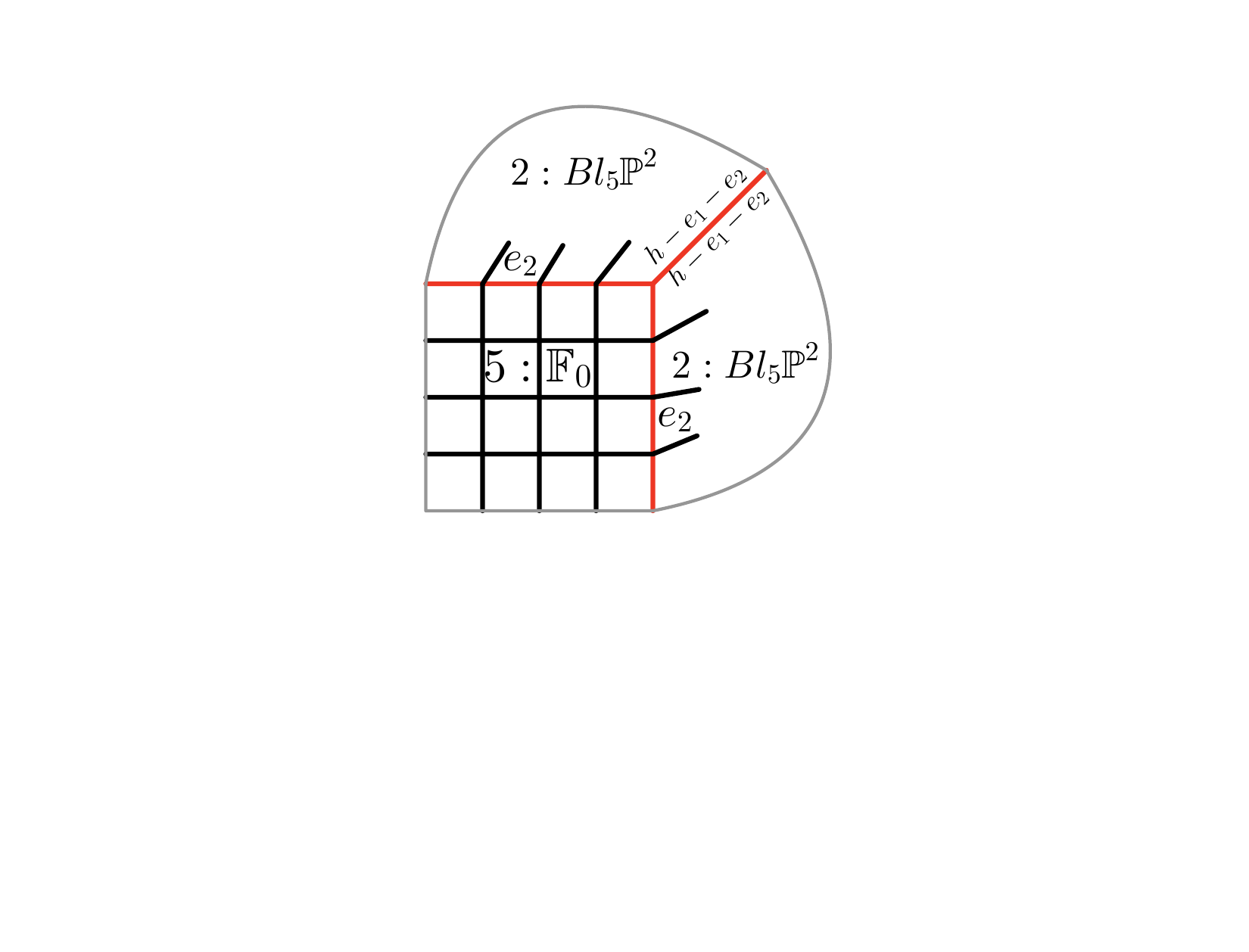}
        \caption{A view of an $\bF_0$ component of type 5.}
        \label{fig:ab_13-12_4}
    \end{subfigure}
    \caption{A type $ab$ surface $(S_2,cB_2)$ for $1/3 < c \leq 1/2$, obtained from the surface $(S_1,cB_1)$ of
    \cref{fig:ab_19-16} by blowing up a total of 9 points, and attaching to the exceptional divisors copies of $\bF_0$.}%
    \label{fig:ab_13-12}
\end{figure}


\begin{figure}[!htpb]
    \centering
    \begin{subfigure}[t]{0.8\textwidth}
        \centering
        \includegraphics[width=0.9\linewidth]{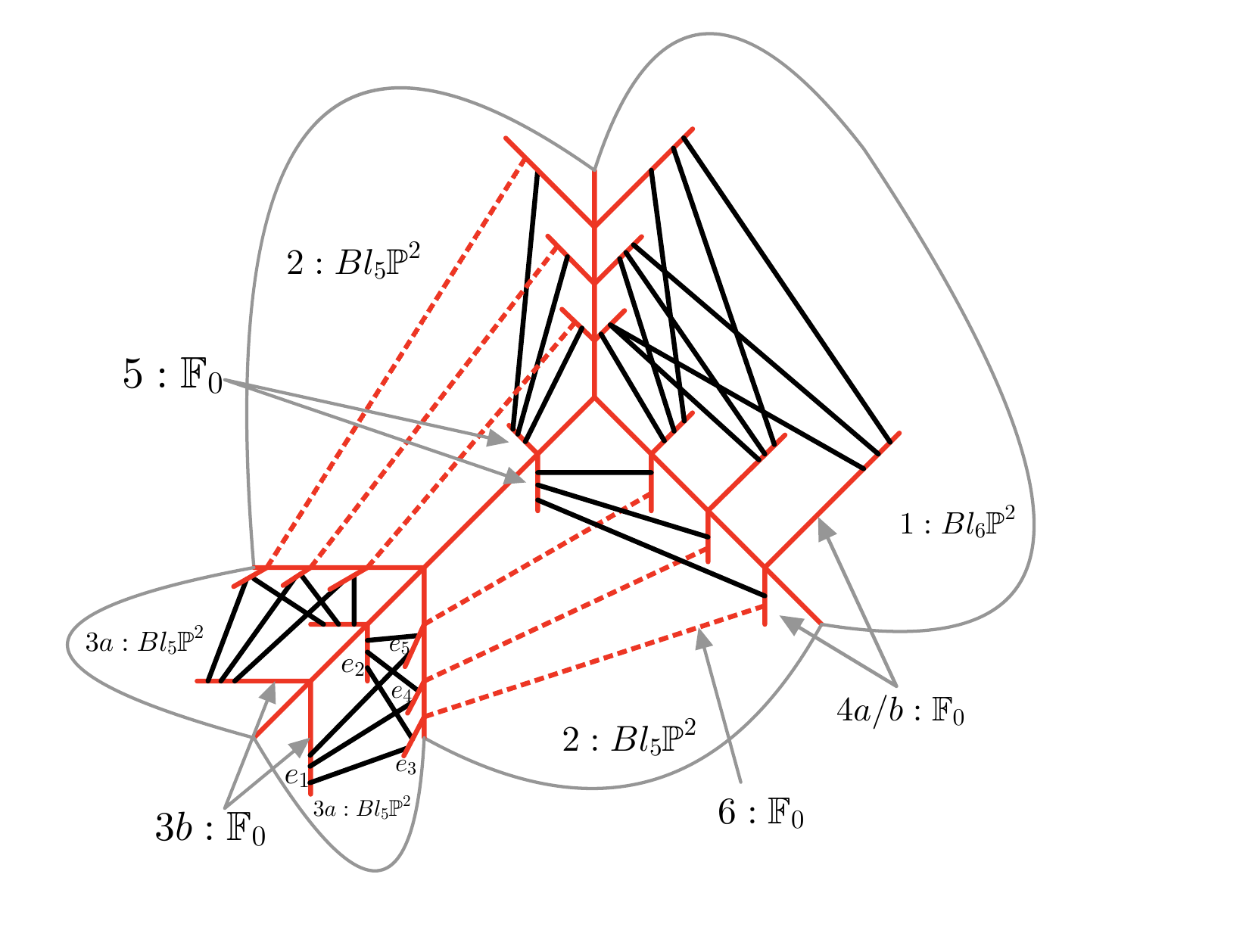}
        \caption{The type $ab$ surface $(S_3,cB_3)$ obtained as the stable replacement of \cref{fig:ab_13-12} for
        weights $1/2 < c \leq 2/3$.}
        \label{fig:ab_12-23_1}
    \end{subfigure}
    \begin{subfigure}[t]{0.25\textwidth}
        \centering
        \includegraphics[width=0.9\linewidth]{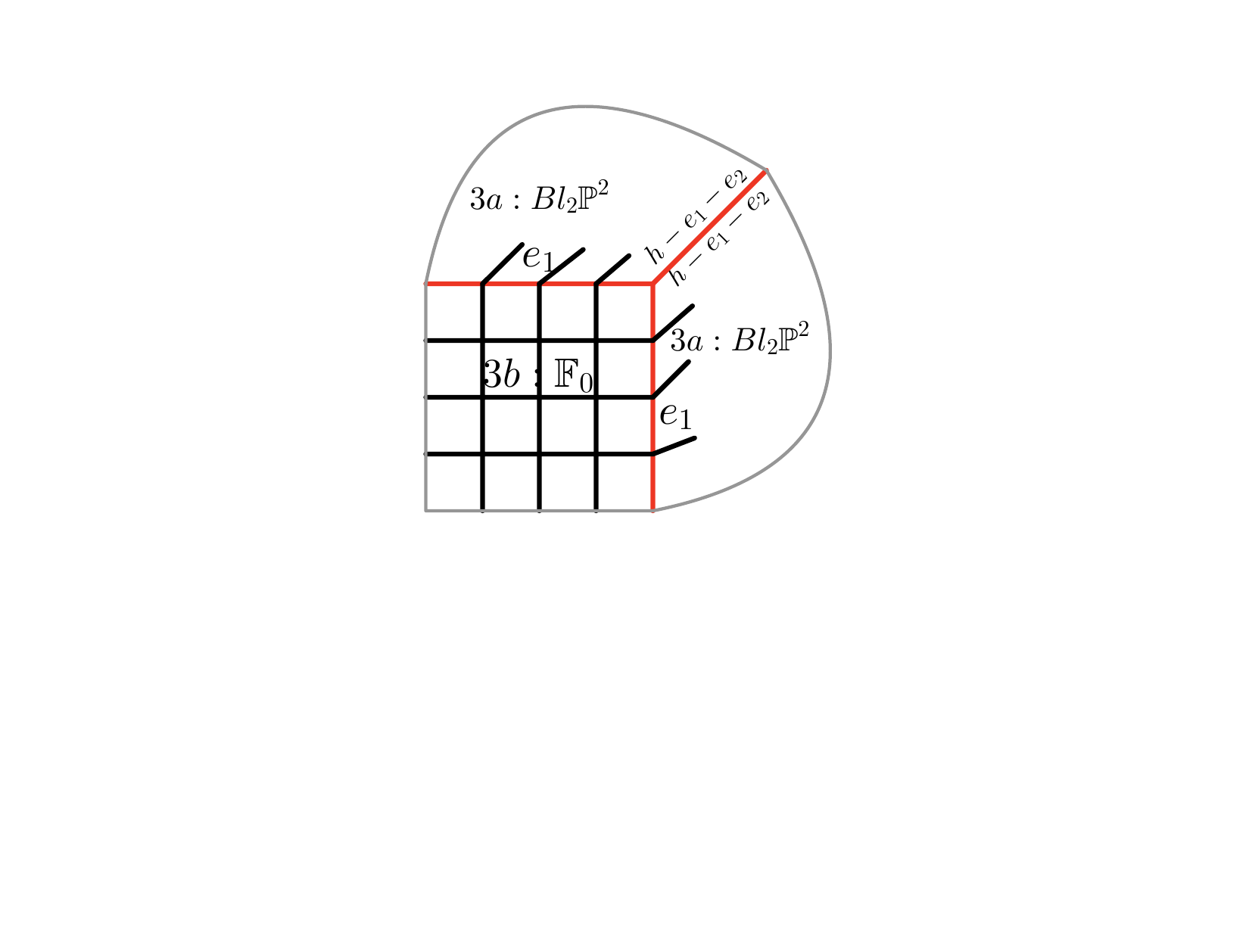}
        \caption{A view of an $\bF_0$ component of type 3b.}
        \label{fig:ab_12-23_2}
    \end{subfigure}
    \begin{subfigure}[t]{0.4\textwidth}
        \centering
        \includegraphics[width=0.9\linewidth]{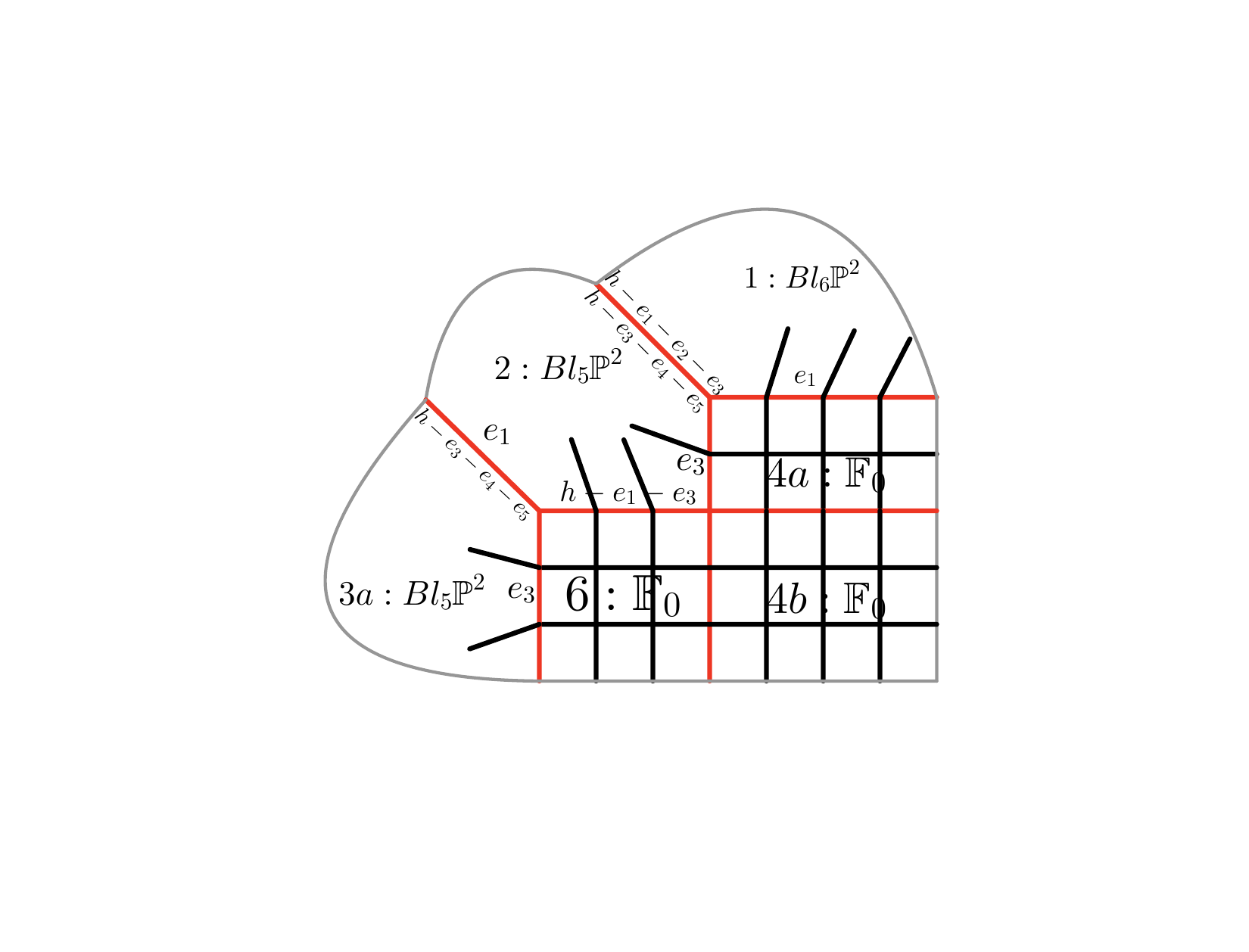}
        \caption{A view of an $\bF_0$ component of type 4, as well as an $\bF_0$ component of type 6. Note that the
        corresponding type 4 component from \cref{fig:ab_13-12_3} has broken into two copies of $\bF_0$, now labeled by
        type 4a and 4b.}
        \label{fig:ab_12-23_3}
    \end{subfigure}
    \begin{subfigure}[t]{0.25\textwidth}
        \centering
        \includegraphics[width=0.9\linewidth]{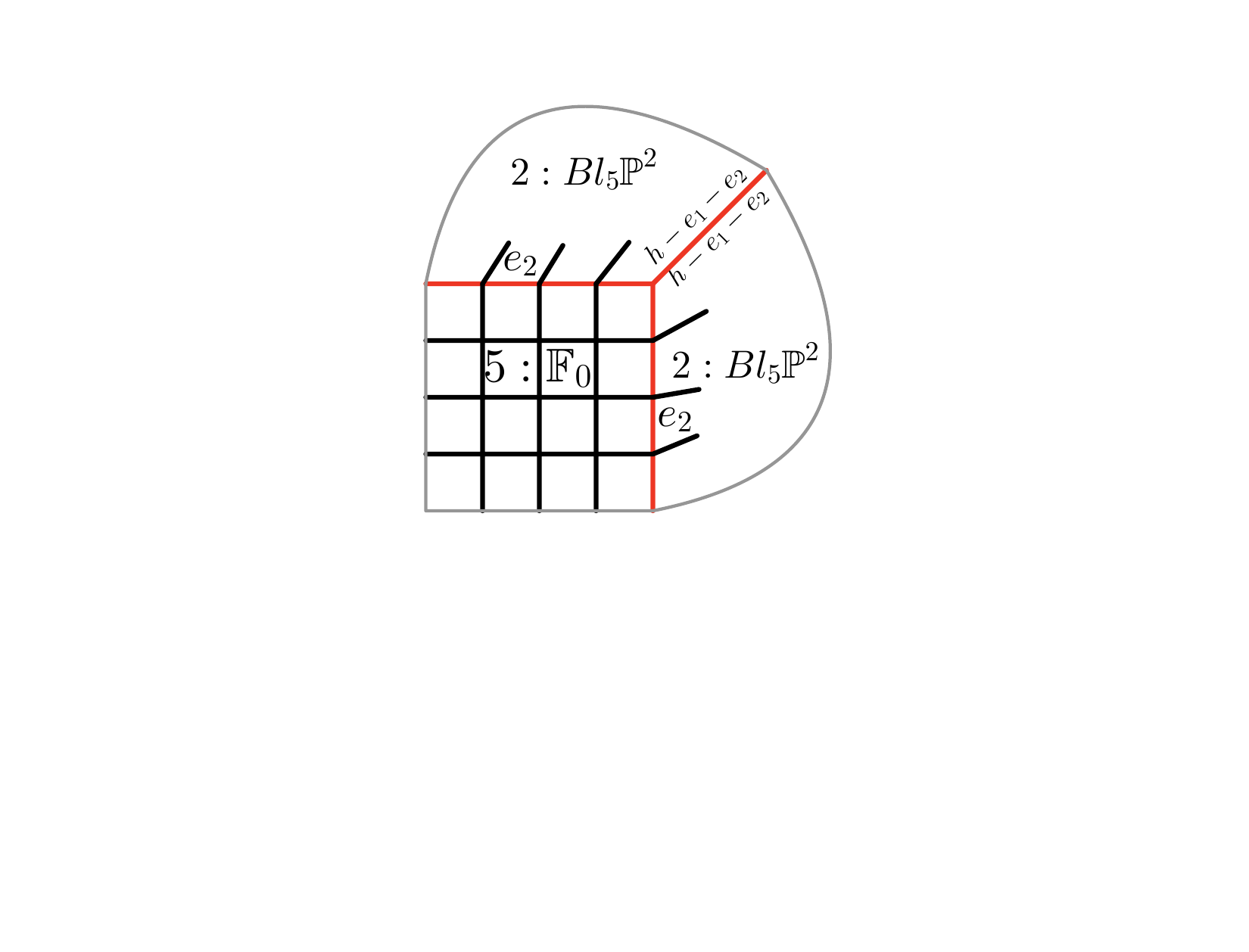}
        \caption{A view of an $\bF_0$ component of type 5.}
        \label{fig:ab_12-23_4}
    \end{subfigure}
    \caption{A type $ab$ surface $(S_3,cB_3)$ for $1/2 < c \leq 2/3$, obtained as the stable replacement of the surface
    $(S_2,cB_2)$ of \cref{fig:ab_13-12}. This is obtained by blowing up all lines of multiplicity 2 on $(S_2,cB_2)$, and
    attaching to the exceptional divisors copies of $\bF_0$.}%
    \label{fig:ab_12-23}
\end{figure}

\begin{table}[htpb]
    \centering
    \caption{The types of irreducible components of the weight $1/2 < c \leq 2/3$ surface of type $ab$ pictured in
    \cref{fig:ab_12-23}, and the possible numbers of Eckardt points on each component.
    For the component of type 1, $Bl_6\bP^2$ refers to the blowup of $\bP^2$ at 3 points on one line and 3 points on
    another line, cf. \cref{tab:b_23-1}. For the components of types 2 and 3a, $Bl_5\bP^2$ refers to the blowup of $\bP^2$
    at 3 points on one line and 2 points on another line, as pictured in \cref{fig:ab_12-23}.}
    \label{tab:ab_23-1}
    \begin{tabular}{| c | c | c | c |}
        \hline
        Label & Surface & \# & Eckardt points \\
        \hline
        \hline
        1 & $Bl_6\bP^2$ & 1 & 0, 1, 2, or 3 \\
        2 & $Bl_5\bP^2$ & 2 & 0 \\
        3a & $Bl_5\bP^2$ & 2 & 0 \\
        3b & $\bF_0$ & 2 & 0 \\
        4a & $\bF_0$ & 6 & 0 \\
        4b & $\bF_0$ & 6 & 0 \\
        5 & $\bF_0$ & 1 & 0 \\
        6 & $\bF_0$ & 6 & 0 \\
        \hline
    \end{tabular}
\end{table}

\subsection{Type $a_2b$}
\begin{proposition} \label{prop:a2b}
    For type $a_2b$ weighted stable marked cubic surfaces $(S,cB)$, there are five walls.
    \begin{enumerate}
        \item For weights $1/9 < c \leq 1/6$, the type $a_2b$ surfaces are described in \cref{fig:a2b_19-16}.
        \item For weights $1/6 < c \leq 1/4$, the type $a_2b$ surfaces are described in \cref{fig:a2b_16-14}.
        \item For weights $1/4 < c \leq 1/3$, the type $a_2b$ surfaces are described in \cref{fig:a2b_14-13}.
            Additionally, crossing the wall $c=1/4$ introduces type $aa_2b$ surfaces as degenerations of $a_2b$
            surfaces, described in \cref{fig:aa2b_14-13}.
        \item For weights $1/3 < c \leq 1/2$, the type $a_2b$ surfaces are described in \cref{fig:a2b_13-12}. The type
            $aa_2b$ surfaces are described in \cref{fig:aa2b_13-12}.
        \item For weights $1/2 < c \leq 2/3$, the type $a_2b$ surfaces are described in \cref{fig:a2b_12-23}. The type
            $aa_2b$ surfaces are described in \cref{fig:aa2b_12-23}.
        \item For weights $2/3 < c \leq 1$, the type $a_2b$ surfaces are obtained from the weight $1/2 < c \leq 2/3$ type
            $a_2b$ surfaces by resolving Eckardt points as described in \cref{prop:resolve_eckardt}. The possible
            configurations of Eckardt points are summarized in \cref{tab:a2b_23-1}. The type $aa_2b$ surfaces are
            described similarly.
    \end{enumerate}
\end{proposition}


\begin{figure}[!htpb]
    \centering
    \includegraphics[width=0.45\linewidth]{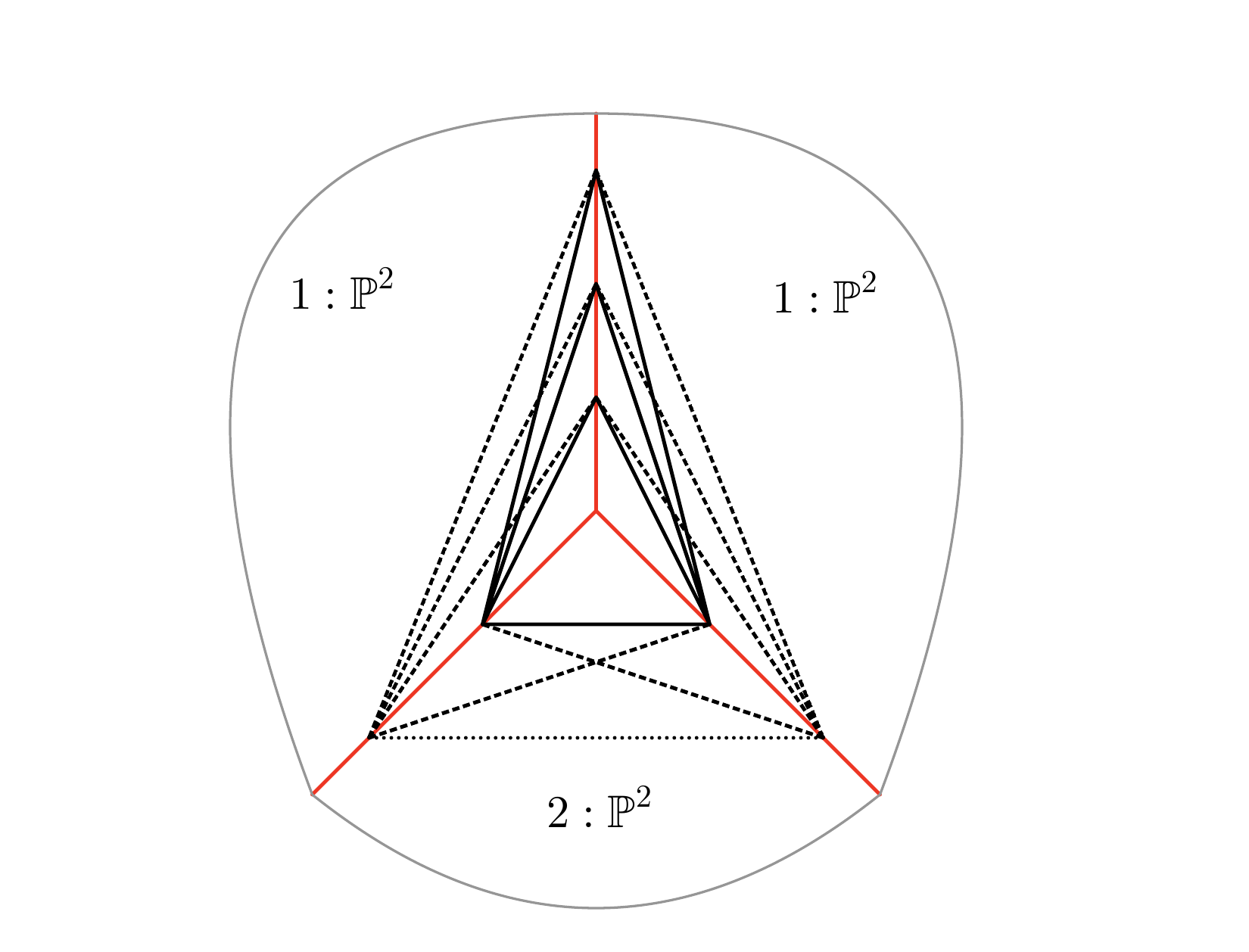}
    \caption{A type $a_2b$ surface $(S_0,cB_0)$ for $1/9 < c \leq 1/6$. Recall dotted lines have multiplicity 4 and
    dashed lines have multiplicity 2. Any two lines on a given $\bP^2$ component intersect (possibly at infinity, not
    shown).}%
    \label{fig:a2b_19-16}
\end{figure}


\begin{figure}[!htpb]
    \centering
    \includegraphics[width=0.6\linewidth]{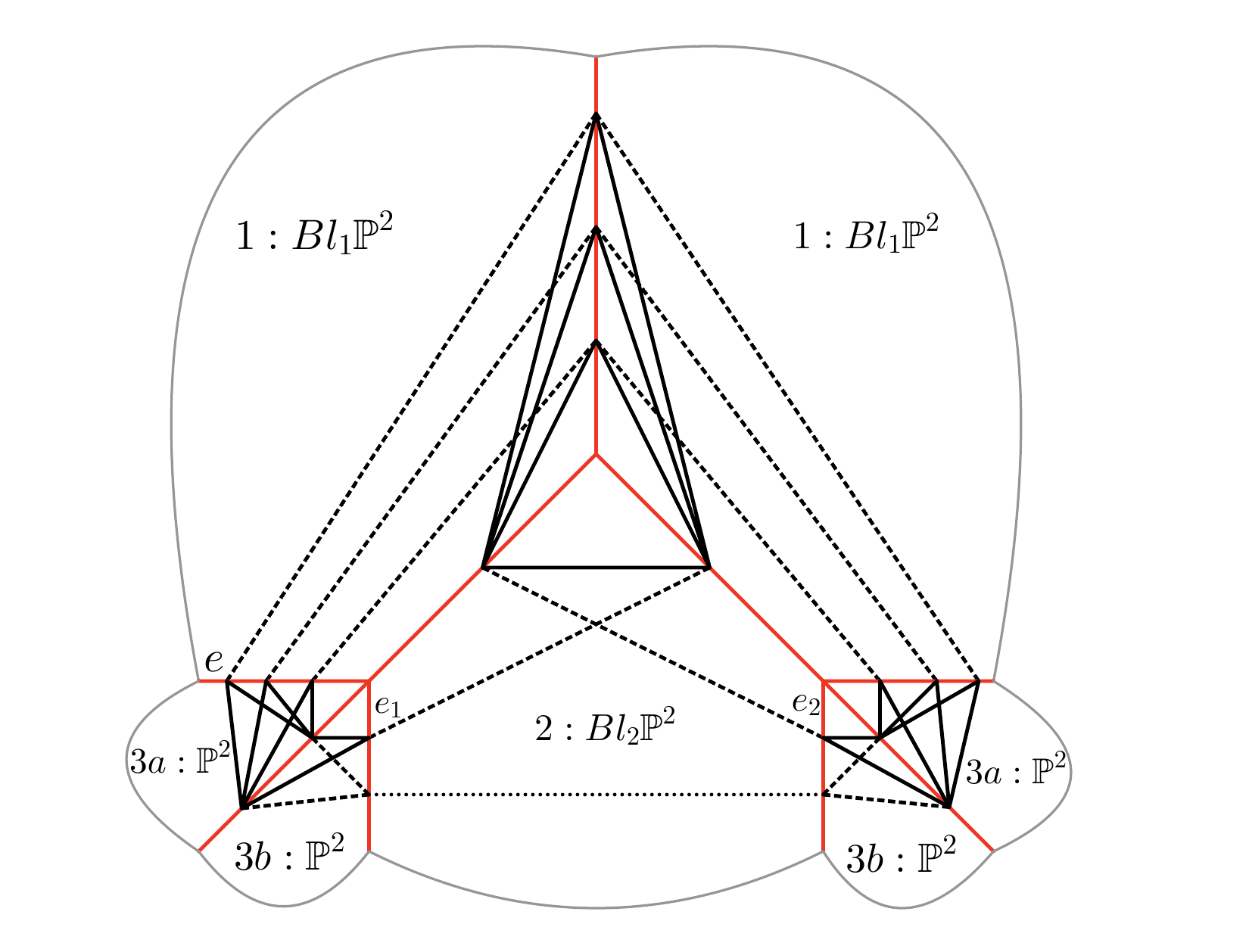}
    \caption{A type $a_2b$ surface $(S_1,cB_1)$ for $1/6 < c \leq 1/4$, obtained as the stable replacement for these
    weights of the surface of \cref{fig:a2b_19-16}. The surface $(S_1,cB_1)$ has four more components than $(S_0,cB_0)$,
    two each of types 3a and 3b.}%
    \label{fig:a2b_16-14}
\end{figure}


\begin{figure}[!htpb]
    \centering
    \includegraphics[width=0.6\linewidth]{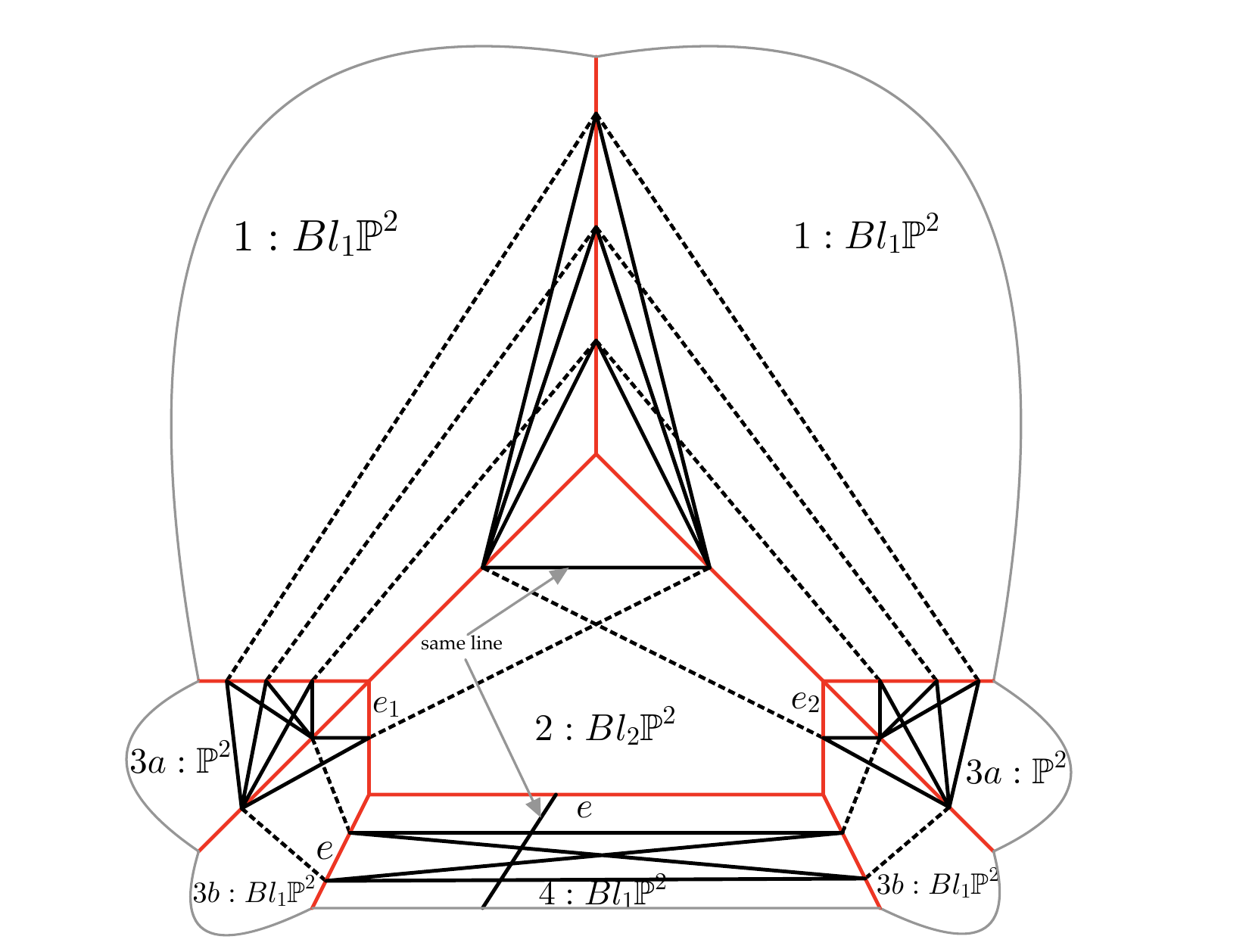}
    \caption{A type $a_2b$ surface $(S_2,cB_2)$ for $1/4 < c \leq 1/3$, obtained as the stable replacement for these
    weights of the surface of \cref{fig:a2b_16-14}. The surface $(S_2,cB_2)$ has one more component than $(S_1,cB_1)$,
    a copy of $Bl_1\bP^2$ labeled by type 4.}%
    \label{fig:a2b_14-13}
\end{figure}

\begin{figure}[!htpb]
    \centering
    \includegraphics[width=0.6\linewidth]{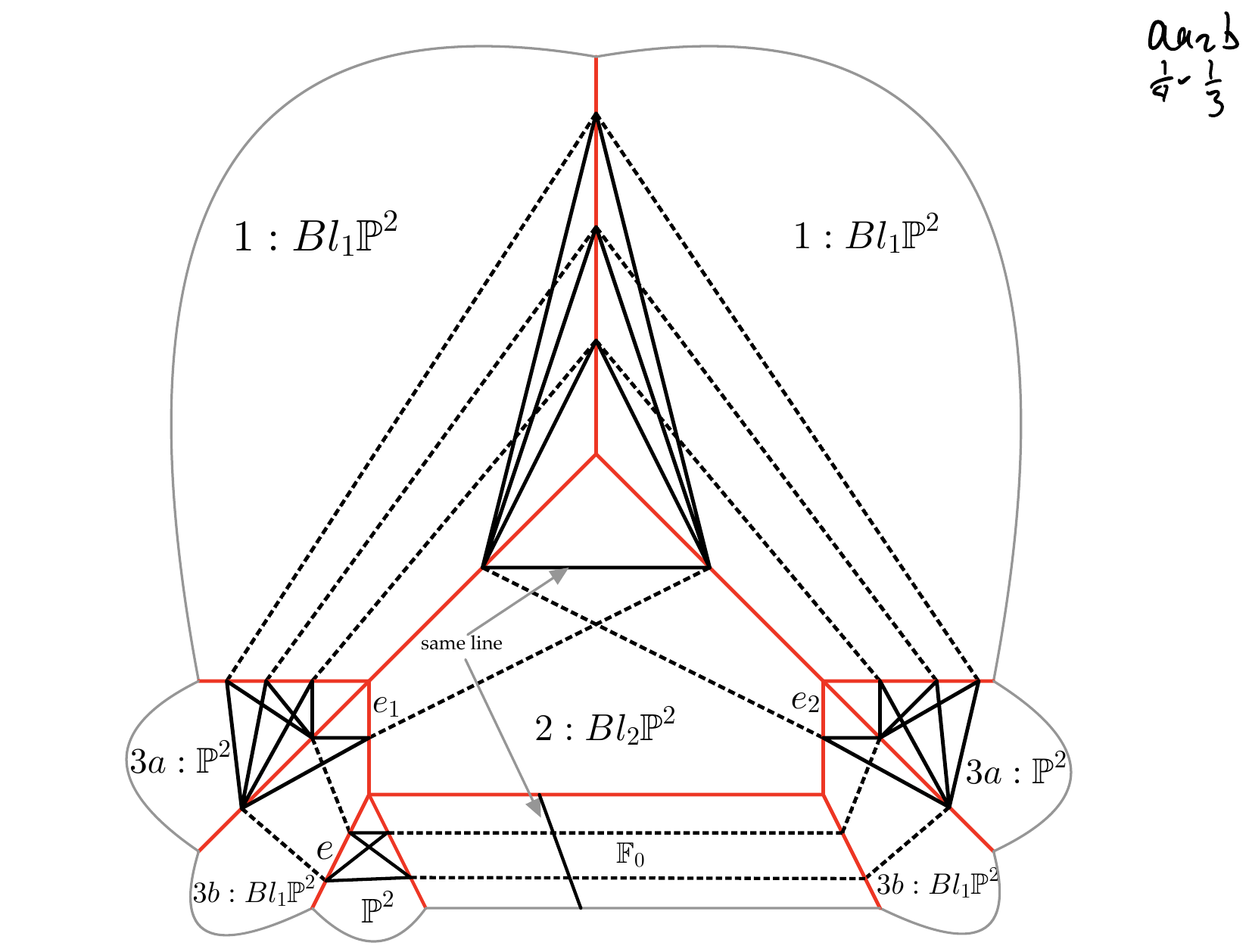}
    \caption{A type $aa_2b$ surface $(S_2',cB_2')$ for $1/4 < c \leq 1/3$, obtained as a degeneration of the type $a_2b$
    surface $(S_2,cB_2)$ of \cref{fig:a2b_14-13}, where the type 4 component $\cong Bl_1\bP^2$ splits into a $\bP^2$
    component and an $\bF_0$ component (cf. \cref{fig:aa2_14-12,fig:a3_14-12_degens,fig:a4_14-12_degens}).}%
    \label{fig:aa2b_14-13}
\end{figure}


\begin{figure}[!htpb]
    \centering
    \begin{subfigure}[t]{0.75\textwidth}
        \centering
        \includegraphics[width=0.9\linewidth]{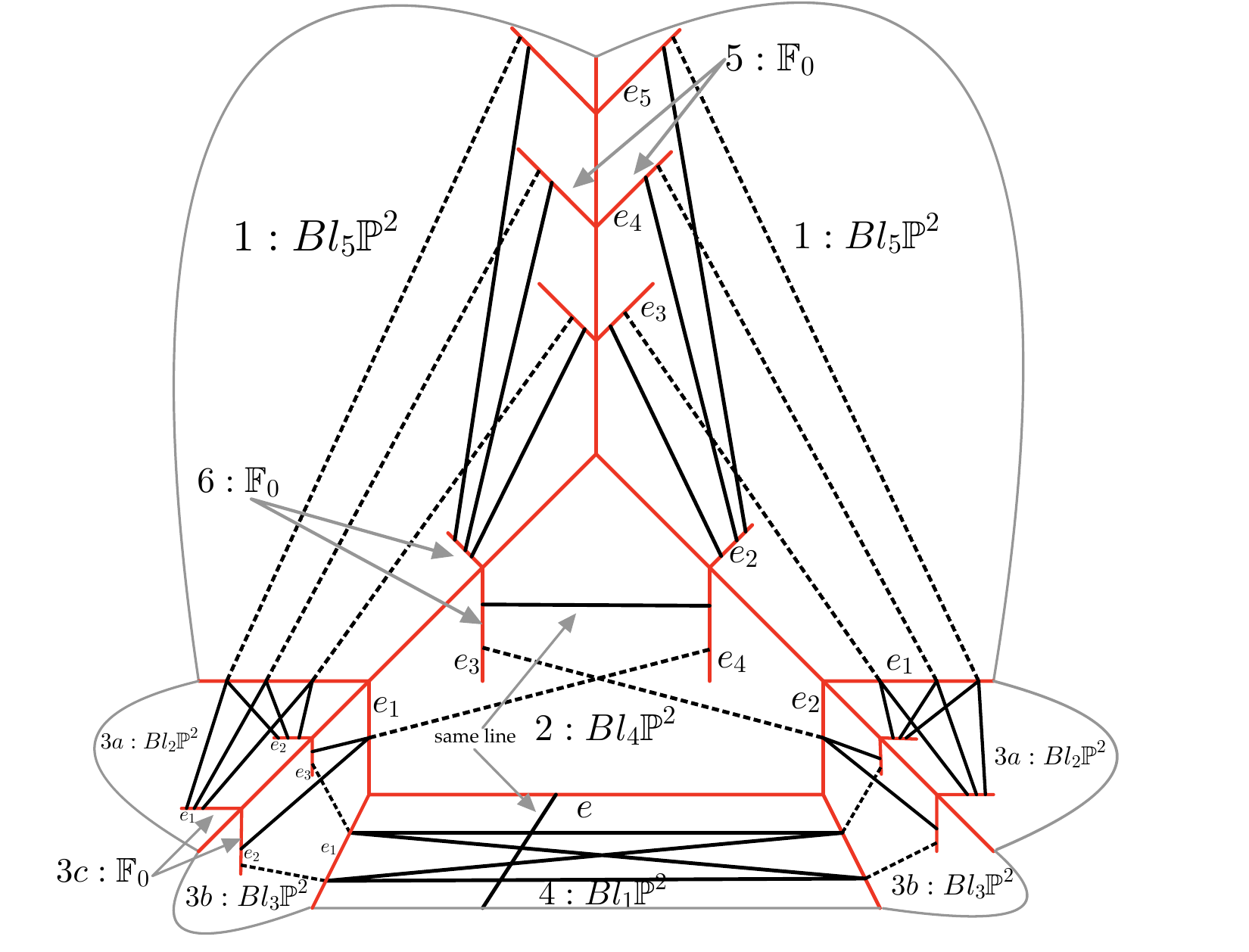}
        \caption{The stable replacement $(S_3,cB_3)$ of the surface of \cref{fig:a2b_14-13}, for weights $1/3 < c \leq
        1/2$. See the adjacent subfigures for local views of the new components of types 3c, 5, and 6.}
        \label{fig:a2b_13-12_1}
    \end{subfigure}
    \begin{subfigure}[t]{0.33\textwidth}
        \centering
        \includegraphics[width=0.9\linewidth]{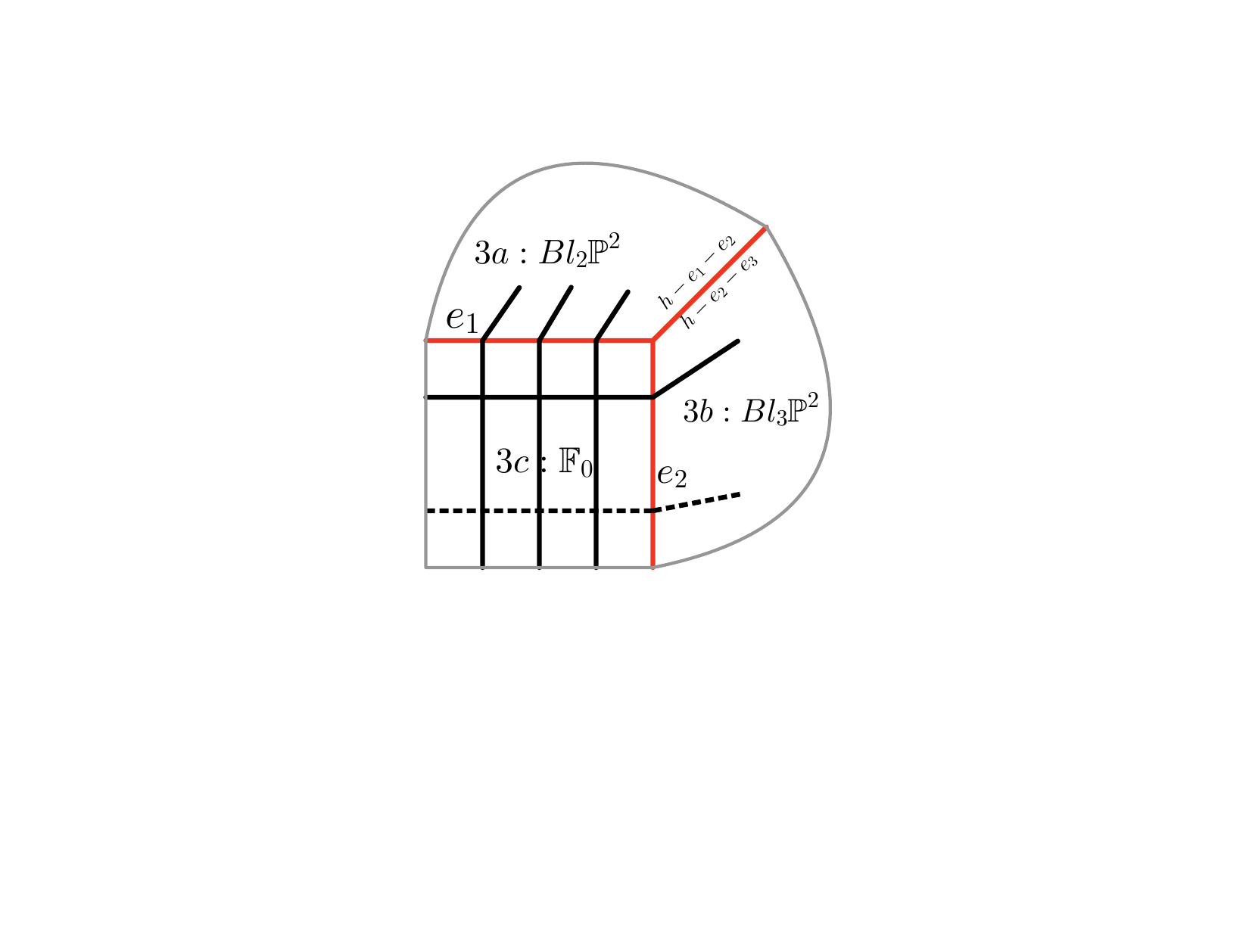}
        \caption{A view of an $\bF_0$ component of type 3c.}
        \label{fig:a2b_13-12_2}
    \end{subfigure}
    \begin{subfigure}[t]{0.33\textwidth}
        \centering
        \includegraphics[width=0.9\linewidth]{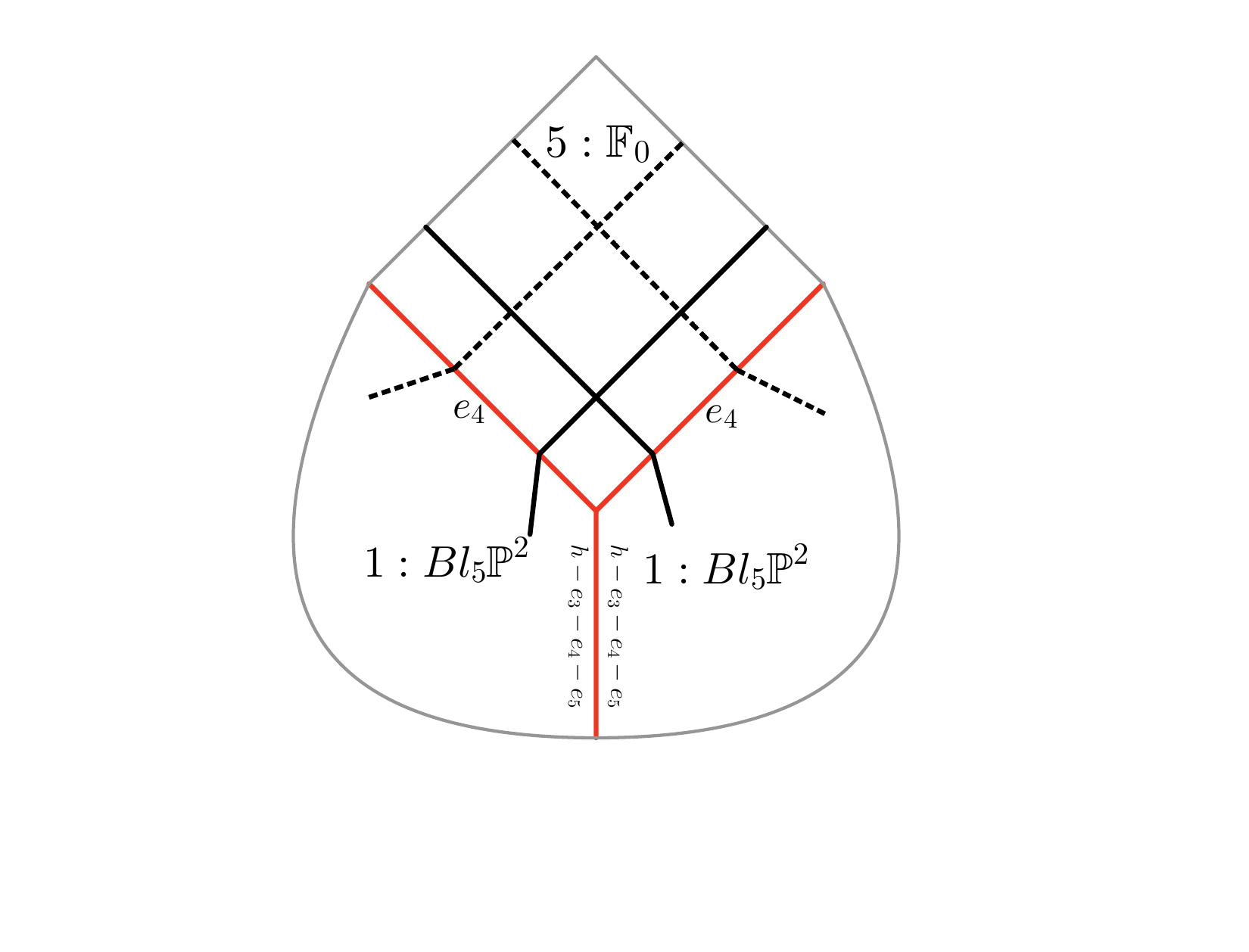}
        \caption{A view of an $\bF_0$ component of type 5.}
        \label{fig:a2b_13-12_3}
    \end{subfigure}
    \begin{subfigure}[t]{0.33\textwidth}
        \centering
        \includegraphics[width=0.9\linewidth]{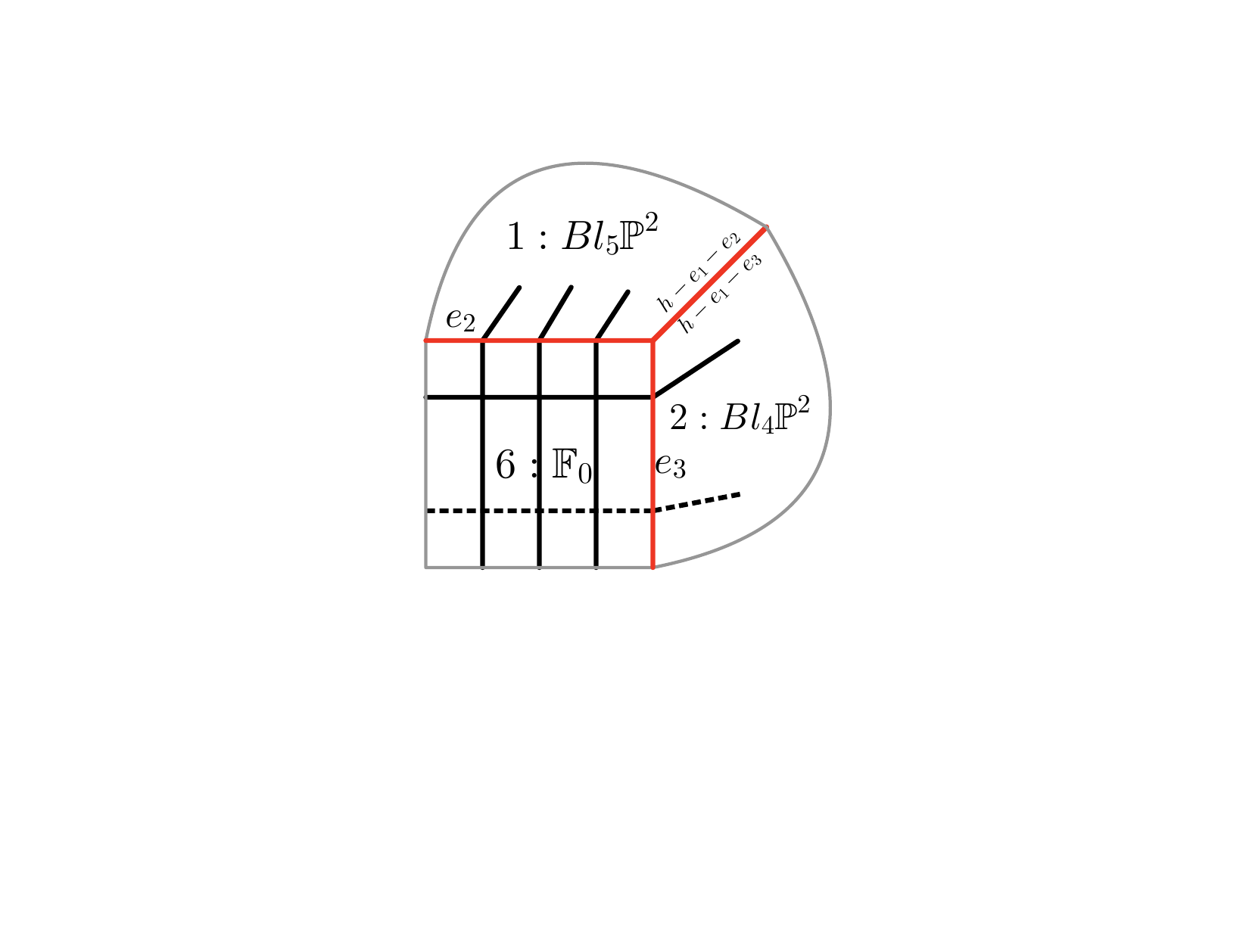}
        \caption{A view of an $\bF_0$ component of type 6.}
        \label{fig:a2b_13-12_4}
    \end{subfigure}
    \caption{A type $a_2b$ surface $(S_3,cB_3)$ for $1/3 < c \leq 1/2$, obtained as the stable replacement for these
    weights of the surface of \cref{fig:a2b_14-13}. The surface $(S_3,cB_3)$ has nine more components than $(S_2,cB_2)$:
    four of type 3c, three of type 5, and two of type 6.}%
    \label{fig:a2b_13-12}
\end{figure}

\begin{figure}[!htpb]
    \centering
    \includegraphics[width=0.6\linewidth]{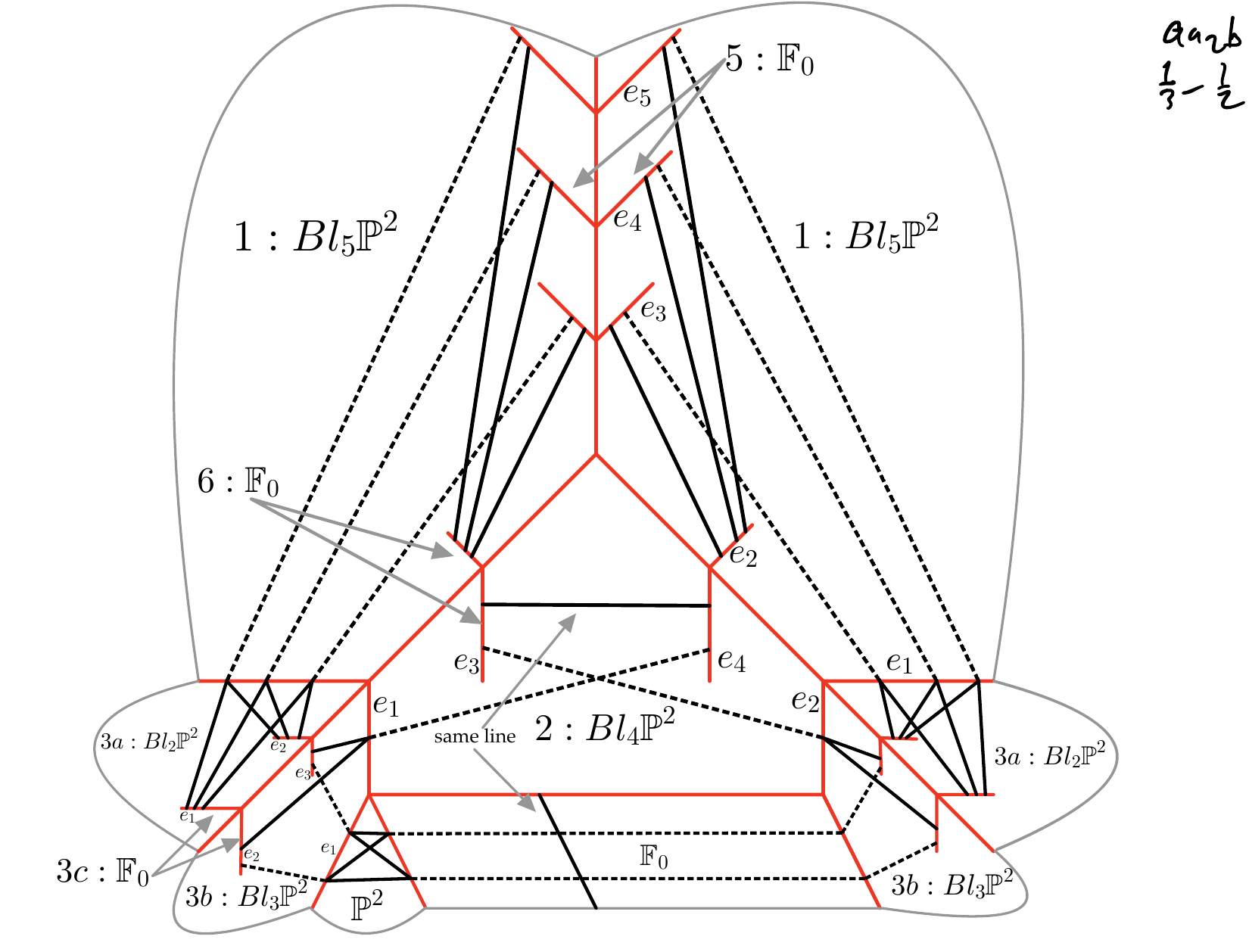}
    \caption{A type $aa_2b$ surface $(S_3',cB_3')$ for $1/3 < c \leq 1/2$, obtained as the stable replacement for these
    weights of the surface of \cref{fig:aa2b_14-13}, and as a degeneration of the type $a_2b$ surface $(S_3,cB_3)$ of
    \cref{fig:a2b_13-12}. Away from the $Bl_1\bP^2$ component of type 4 splitting into a $\bP^2$ and $\bF_0$ component as
    shown, the surface $(S_3',cB_3')$ is isomorphic to $(S_3,cB_3)$.}%
    \label{fig:aa2b_13-12}
\end{figure}


\begin{figure}[!htpb]
    \centering
    \begin{subfigure}[t]{0.75\textwidth}
        \centering
        \includegraphics[width=0.9\linewidth]{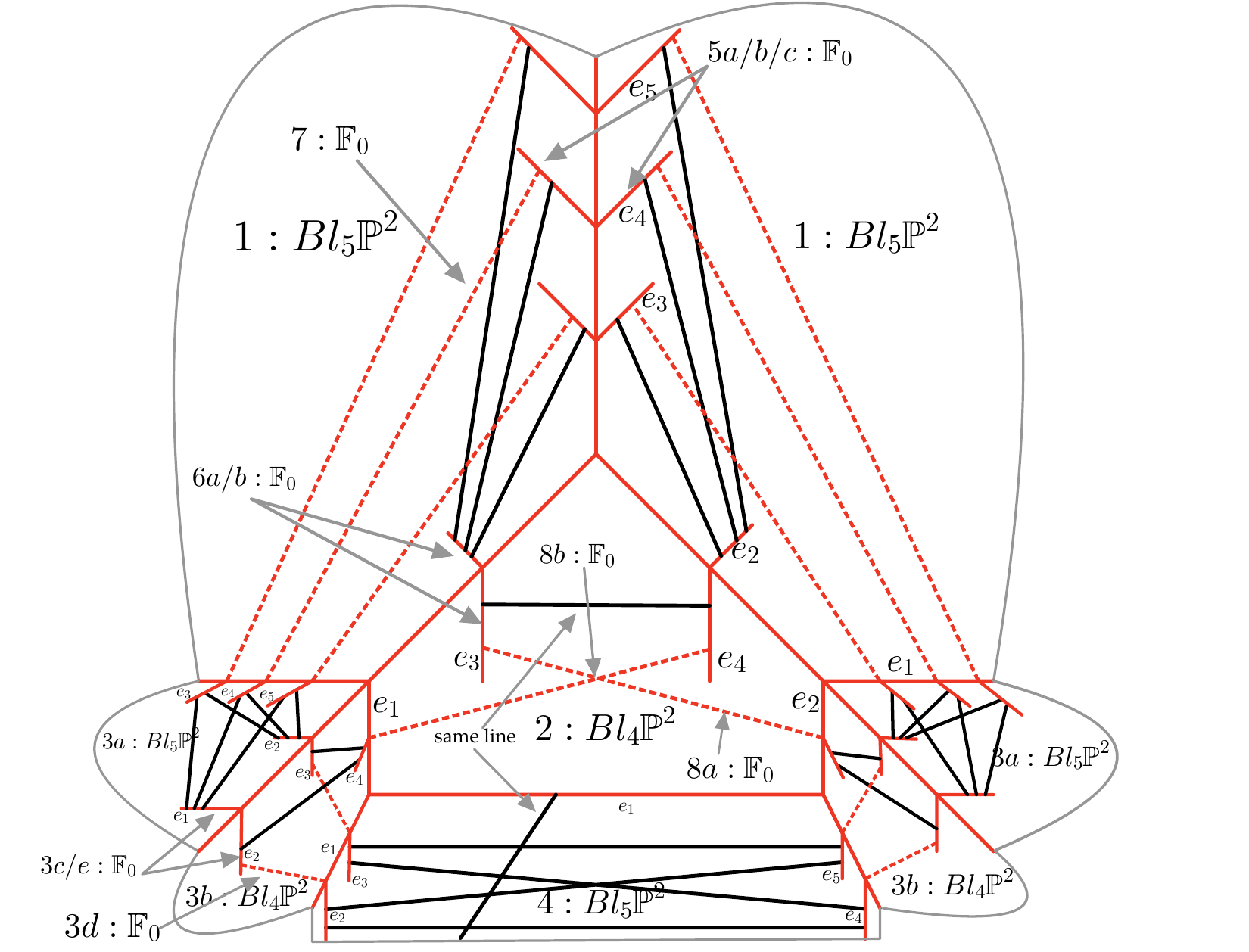}
        \caption{The stable replacement $(S_3,cB_3)$ of the surface of \cref{fig:a2b_14-13}, for weights $1/3 < c \leq
        1/2$. See the adjacent subfigures for local views of the new components of types 3c, 5, and 6.}
        \label{fig:a2b_12-23_1}
    \end{subfigure}
    \begin{subfigure}[t]{0.4\textwidth}
        \centering
        \includegraphics[width=0.9\linewidth]{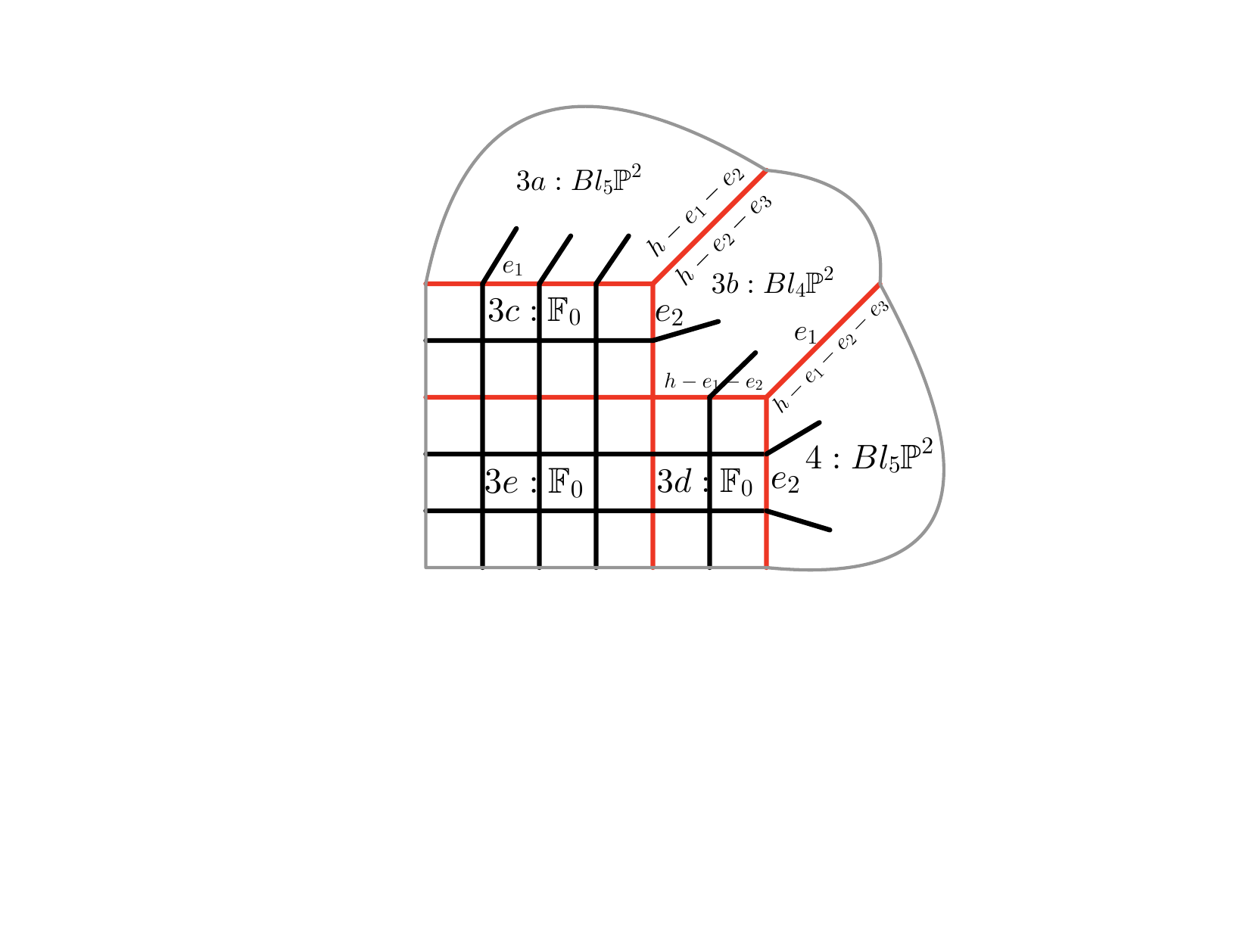}
        \caption{In $(S_4,cB_4)$, one attaches to each line of multiplicity 2 in a type 3b component a copy of $\bF_0$,
        labeled by type 3d. Likewise, one attaches to the corresponding line of multiplicity 2 in a type 3c component
        another copy of $\bF_0$, labeled by type 3e. The configuration of all 5 different types of type 3 components is
        shown.}
        \label{fig:a2b_12-23_2}
    \end{subfigure}
    \begin{subfigure}[t]{0.5\textwidth}
        \centering
        \includegraphics[width=0.9\linewidth]{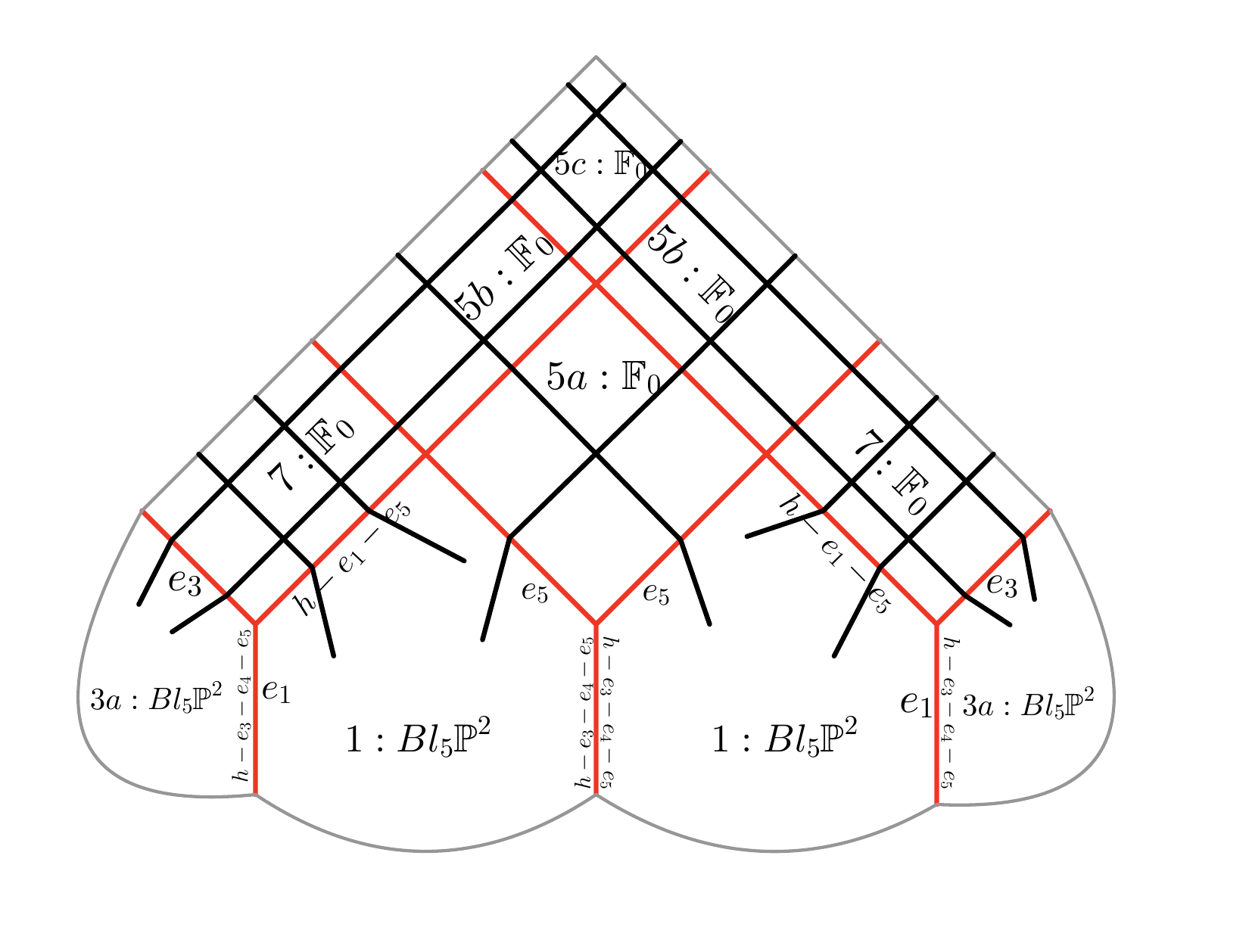}
        \caption{Each type 5 component $\bF_0$ from the surface $(S_3,cB_3)$ of \cref{fig:a2b_13-12} splits into four
        components, one of type 5a, two of type 5b, and one of type 5c. The type 5b components are also attached to the
        type 7 components, which arise from the lines of multiplicity 2 on the type 1 components of $(S_3,cB_3)$. These
        type 7 components also intersect the type 3a components as shown.}
        \label{fig:a2b_12-23_3}
    \end{subfigure}
    \caption{A type $a_2b$ surface $(S_4,cB_4)$ for $1/2 < c \leq 2/3$, obtained as the stable replacement for these
    weights of the surface of \cref{fig:a2b_13-12}. The surface $(S_3,cB_3)$ has nine more components than $(S_2,cB_2)$:
    four of type 3c, three of type 5, and two of type 6.}%
    \label{fig:a2b_12-23}
\end{figure}
\begin{figure}[!htpb]\ContinuedFloat
    \centering
    \begin{subfigure}[t]{0.4\textwidth}
        \centering
        \includegraphics[width=0.9\linewidth]{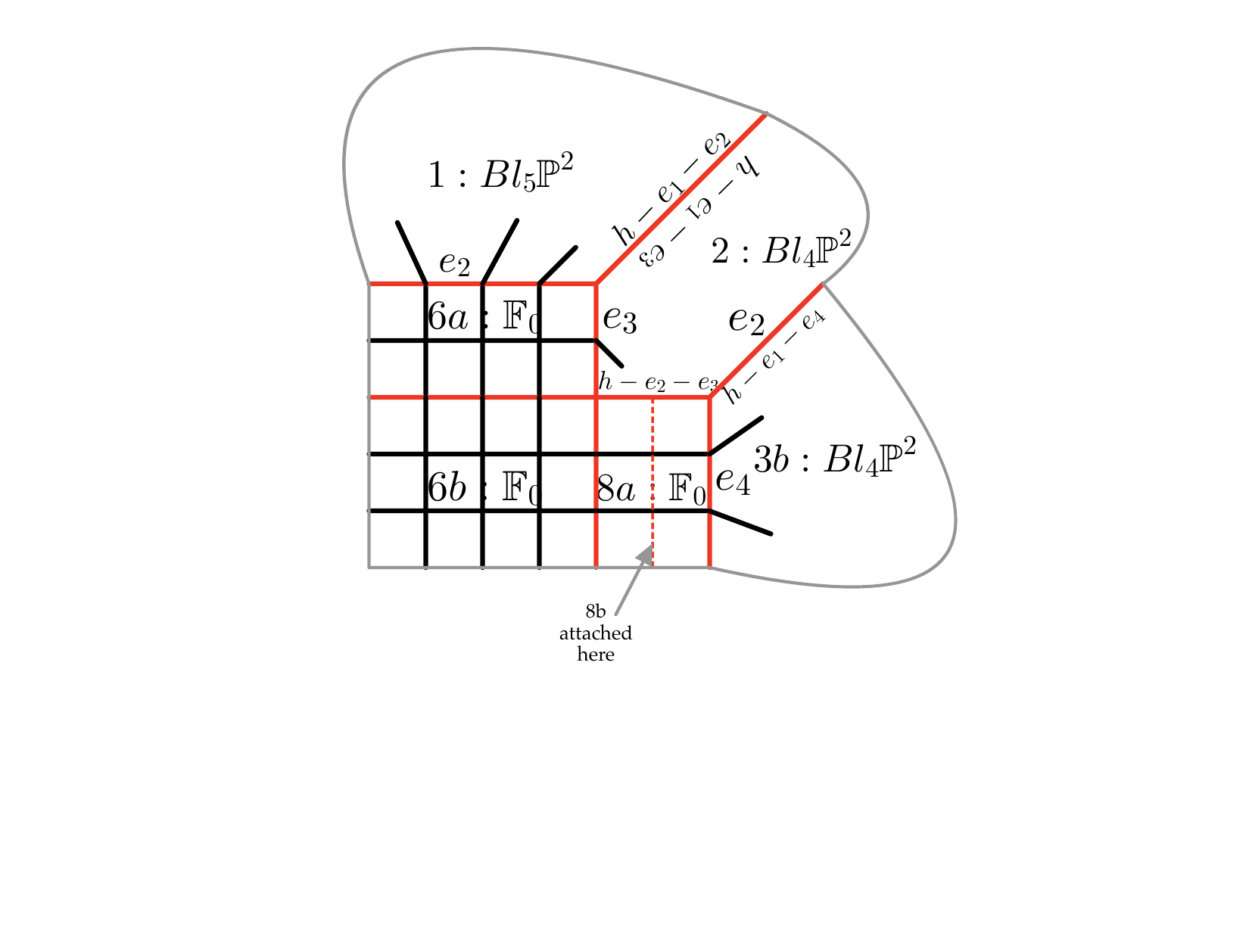}
        \caption{Each type 6 component $\bF_0$ from the surface $(S_3,cB_3)$ of \cref{fig:a2b_13-12} splits into two
        components, one of type 6a and one of type 6b. Attached to the type 6b component is a type 8a component as
        described in \cref{fig:a2b_12-23_5}.}
        \label{fig:a2b_12-23_4}
    \end{subfigure}
    \begin{subfigure}[t]{0.4\textwidth}
        \centering
        \includegraphics[width=0.9\linewidth]{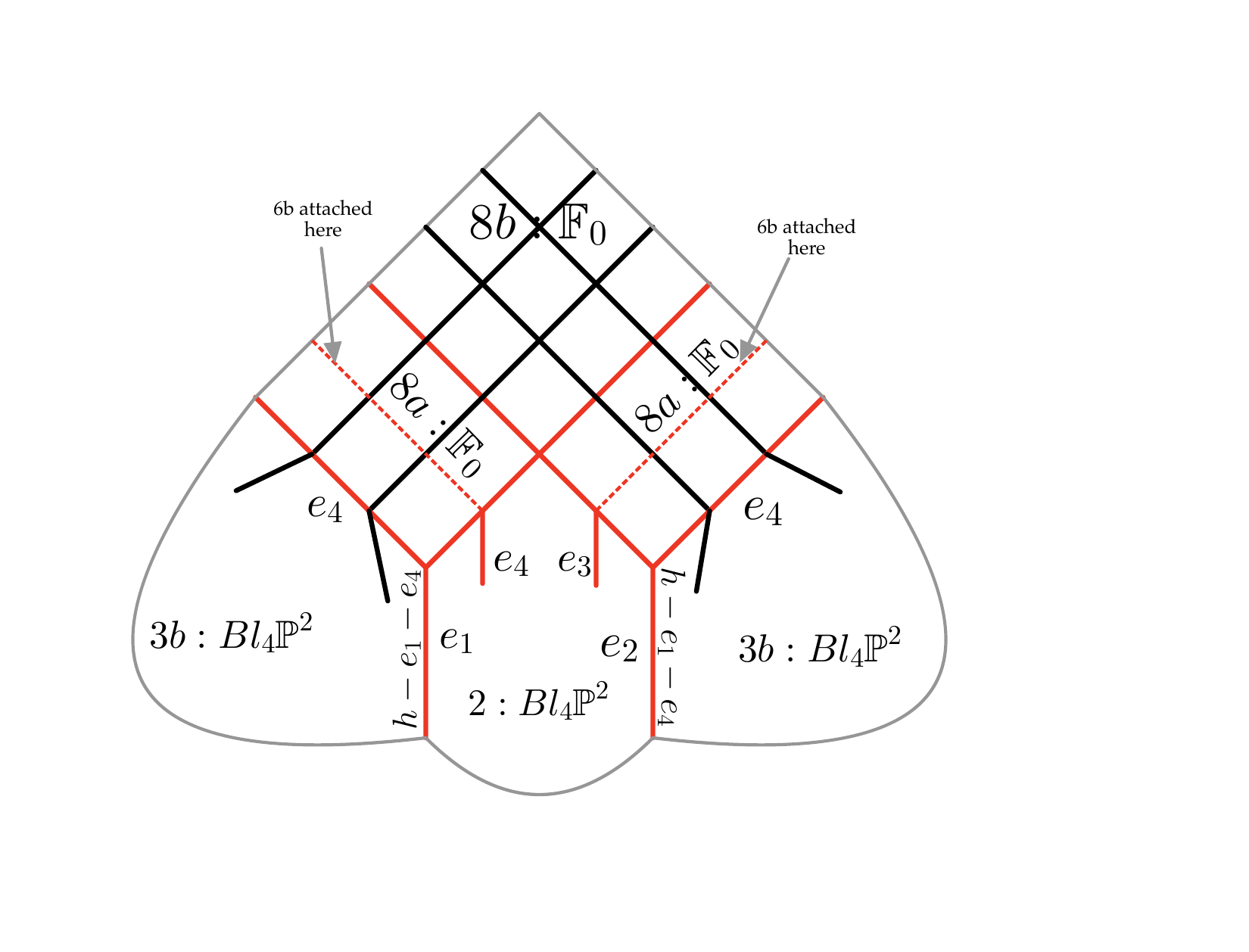}
        \caption{In $(S_4,cB_4)$, one attaches to each line of multiplicity 2 in the type 2 component of $(S_3,cB_3)$ a
        copy of $\bF_0$, labeled by type 8a. Another copy of $\bF_0$, labeled by type 8b, is attached to the
        intersection of the two 8a components.}
        \label{fig:a2b_12-23_5}
    \end{subfigure}
    \caption{(Continued) A type $a_2b$ surface $(S_4,cB_4)$ for $1/2 < c \leq 2/3$, obtained as the stable replacement for these
    weights of the surface of \cref{fig:a2b_13-12}.}%
    \label{fig:a2b_12-23_cont}
\end{figure}

\begin{table}[!htpb]
    \centering
    \caption{The types of irreducible components of the weight $1/2 < c \leq 2/3$ surface of type $a_2b$ pictured in
    \cref{fig:a2b_12-23}, and the possible numbers of Eckardt points on each component.
    For the components of types 1 and 3a, $Bl_5\bP^2$ refers to the blowup of $\bP^2$ at 3 points on one line and 2
    points on another line. For the components of types 2 and 3b, $Bl_4\bP^2$ refers to the blowup of $\bP^2$ at 2
    points on one line and 2 points on another line (i.e., 4 points in general position). For the component of type 4,
    $Bl_5\bP^2$ refers to the blowup of $\bP^2$ at 2 points on one line, 2 points on another line, and at the
    intersection point of these two lines.}
    \label{tab:a2b_23-1}
    \begin{tabular}{| c | c | c | c |}
        \hline
        Label & Surface & \# & Eckardt points \\
        \hline
        \hline
        1 & $Bl_5\bP^2$ & 2 & 0 \\
        2 & $Bl_4\bP^2$ & 1 & 0 \\
        3a & $Bl_5\bP^2$ & 2 & 0 \\
        3b & $Bl_4\bP^2$ & 2 & 0 \\
        3c & $\bF_0$ & 4 & 0 \\
        3d & $\bF_0$ & 4 & 0 \\
        3e & $\bF_0$ & 4 & 0 \\
        4 & $Bl_5\bP^2$ & 1 & 0 or 1 \\
        5a & $\bF_0$ & 3 & 0 \\
        5b & $\bF_0$ & 6 & 0 \\
        5c & $\bF_0$ & 3 & 0 \\
        6a & $\bF_0$ & 2 & 0 \\
        6b & $\bF_0$ & 2 & 0 \\
        7 & $\bF_0$ & 6 & 0 \\
        8a & $\bF_0$ & 2 & 0 \\
        8b & $\bF_0$ & 1 & 0 \\
        \hline
    \end{tabular}
\end{table}

\begin{figure}[!htpb]
    \centering
    \includegraphics[width=0.7\linewidth]{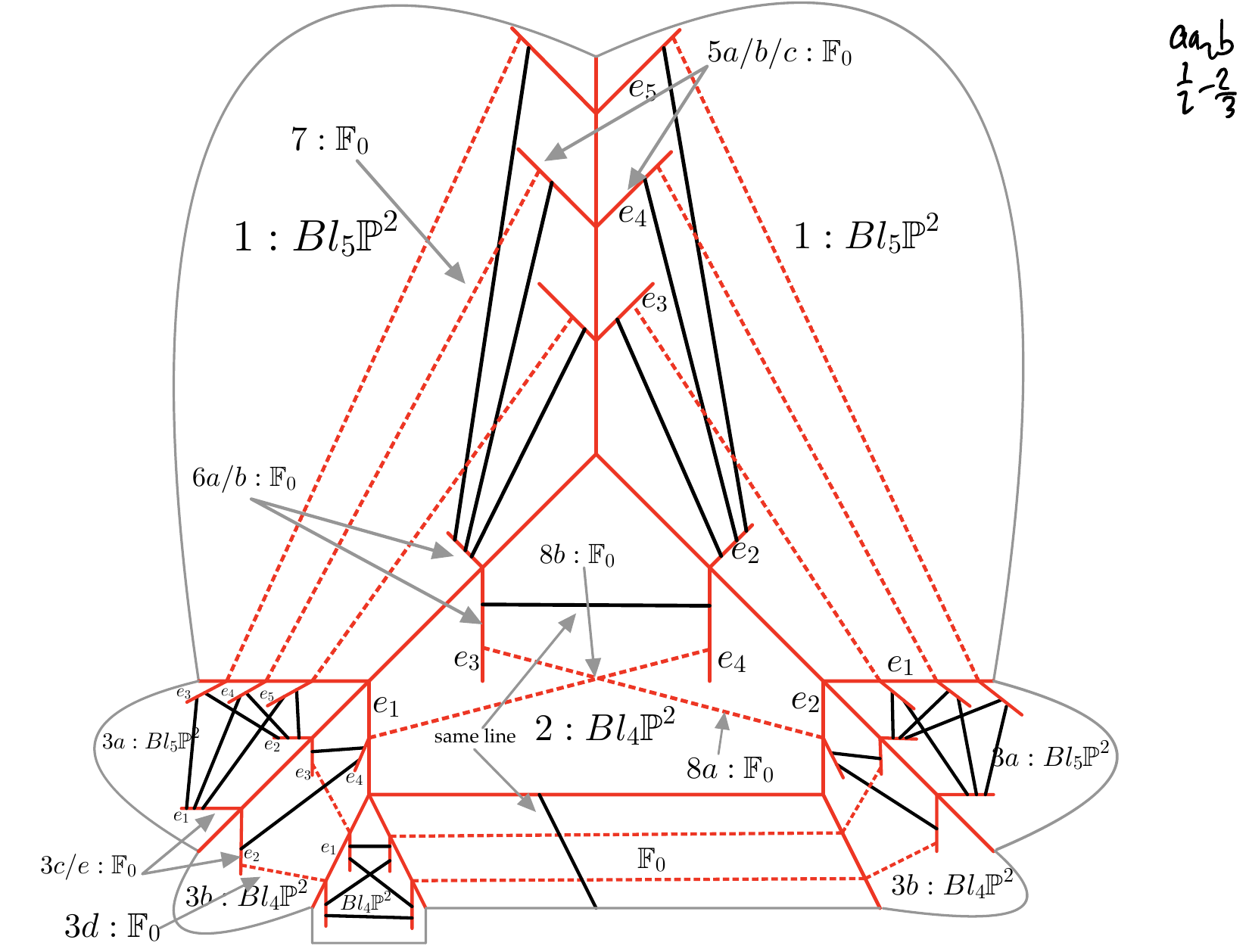}
    \caption{A type $aa_2b$ surface $(S_4',cB_4')$ for $1/2 < c \leq 2/3$, obtained as the stable replacement for these
    weights of the surface of \cref{fig:aa2b_13-12}, and as a degeneration of the type $a_2b$ surface $(S_4,cB_4)$ of
    \cref{fig:a2b_12-23}. The type 4 component $\cong Bl_5\bP^2$ of \cref{fig:a2b_12-23} splits into one $Bl_4\bP^2$
    component and three $\bF_0$ components as shown (cf. \cref{fig:aa2_12-23_2}), and otherwise
    $(S_4',cB_4')$ is isomorphic to $(S_4,cB_4)$.}
    \label{fig:aa2b_12-23}
\end{figure}

\subsection{Type $a_3b$}
\begin{proposition} \label{prop:a3b}
    For type $a_3b$ weighted stable marked cubic surfaces $(S,cB)$, there are five walls.
    \begin{enumerate}
        \item For weights $1/9 < c \leq 1/6$, the type $a_3b$ surfaces are described in \cref{fig:a3b_19-16}.
        \item For weights $1/6 < c \leq 1/4$, the type $a_3b$ surfaces are described in \cref{fig:a3b_16-14}.
        \item For weights $1/4 < c \leq 1/3$, the type $a_3b$ surfaces are described in \cref{fig:a3b_14-13}.
            Additionally, crossing the wall $c=1/4$ introduces type $aa_3b$, $a_2a_3b$, and $aa_2a_3b$ surfaces as
            degenerations of $a_3b$ surfaces, described in \cref{fig:a3b_degens_14-13}.
        \item For weights $1/3 < c \leq 1/2$, the type $a_3b$ surfaces are described in \cref{fig:a3b_13-12}. The type
            $aa_3b$, $a_2a_3b$, and $aa_2a_3b$ surfaces are described in \cref{fig:a3b_degens_13-12}.
        \item For weights $1/2 < c \leq 2/3$, the type $a_3b$ surfaces are described in \cref{fig:a3b_12-23}. The type
            $aa_3b$, $a_2a_3b$, and $aa_2a_3b$ surfaces are described in \cref{fig:a3b_degens_12-23}.
        \item For weights $2/3 < c \leq 1$, the type $a_3b$ surfaces are obtained from the weight $1/2 < c \leq 2/3$ type
            $a_3b$ surfaces by resolving Eckardt points as described in \cref{prop:resolve_eckardt}. The possible
            configurations of Eckardt points are summarized in \cref{tab:a3b_23-1}. The type $aa_3b$, $a_2a_3b$, and
            $aa_2a_3b$ surfaces are described similarly.
    \end{enumerate}
\end{proposition}


\begin{figure}[!htpb]
    \centering
    \includegraphics[width=0.4\linewidth]{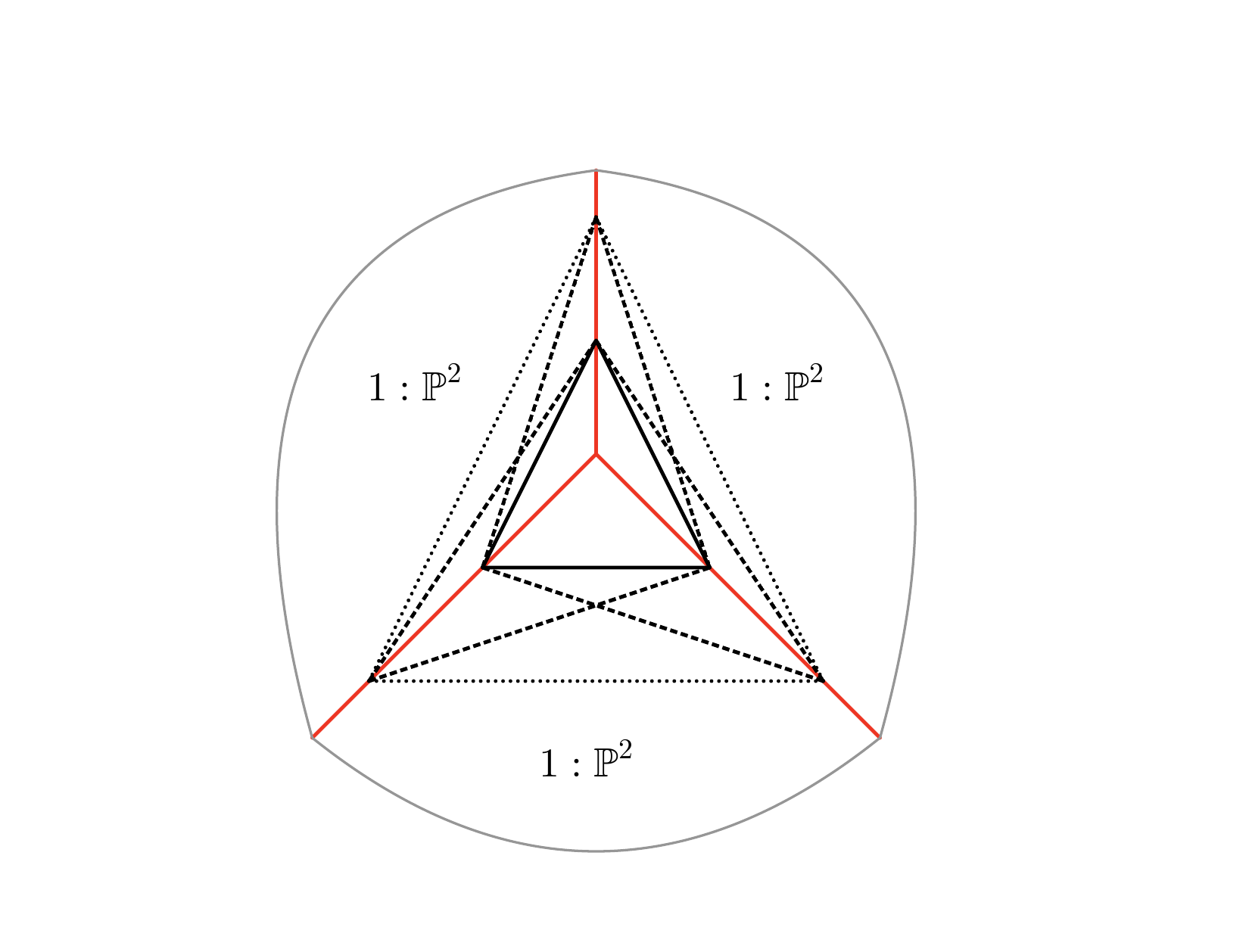}
    \caption{A type $a_3b$ surface $(S_0,cB_0)$ for $1/9 < c \leq 1/6$. Recall dotted lines have multiplicity 4 and
    dashed lines have multiplicity 2. Any two lines on a given $\bP^2$ component intersect (possibly at infinity, not
    shown).}%
    \label{fig:a3b_19-16}
\end{figure}


\begin{figure}[!htpb]
    \centering
    \includegraphics[width=0.5\linewidth]{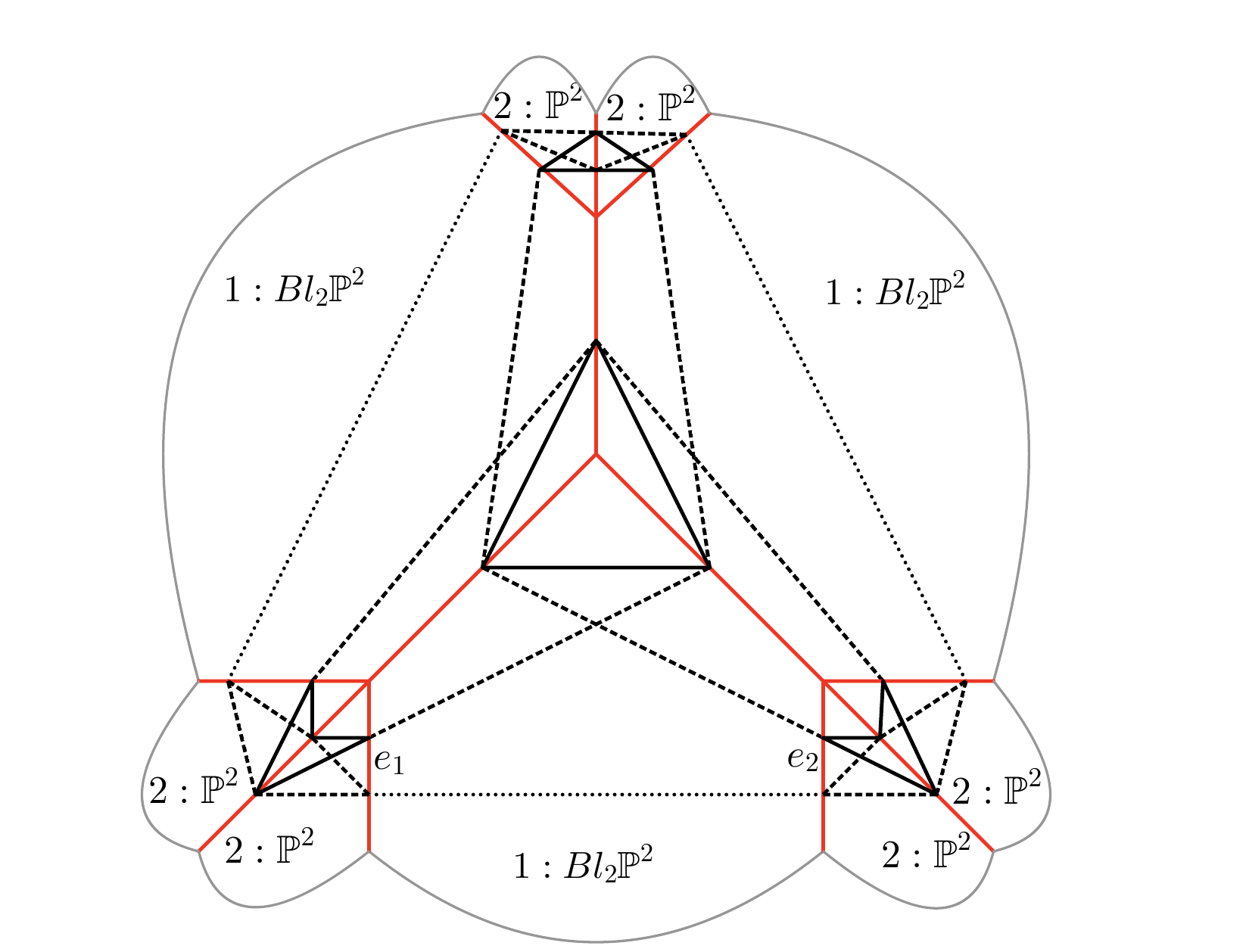}
    \caption{A type $a_3b$ surface $(S_1,cB_1)$ for $1/6 < c \leq 1/4$, obtained as the stable replacement for these
    weights of the surface of \cref{fig:a3b_19-16}. The surface $(S_1,cB_1)$ has six more components than $(S_0,cB_0)$,
    each a copy of $\bP^2$ labeled by type 2.}%
    \label{fig:a3b_16-14}
\end{figure}


\begin{figure}[!htpb]
    \centering
    \includegraphics[width=0.5\linewidth]{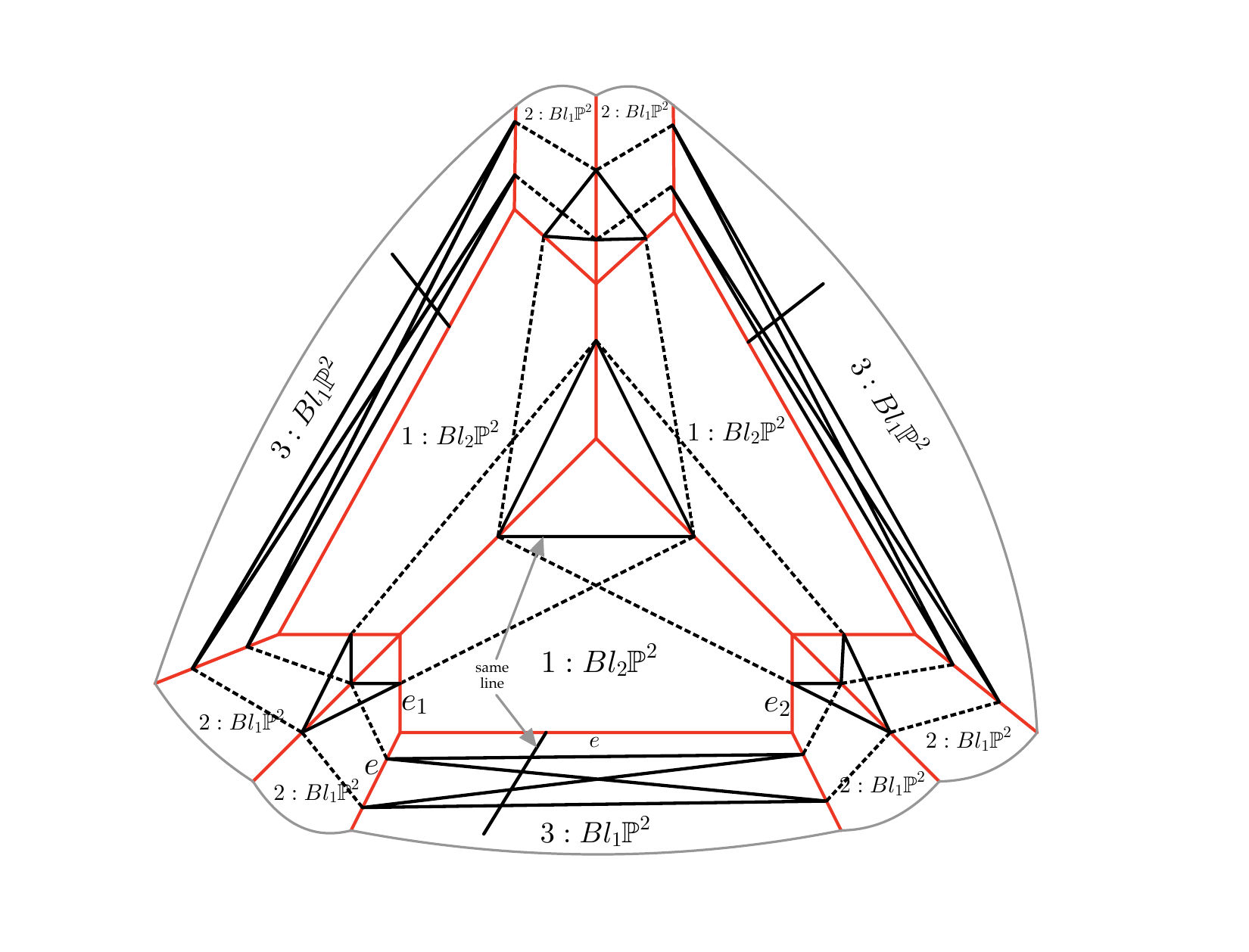}
    \caption{A type $a_3b$ surface $(S_2,cB_2)$ for $1/4 < c \leq 1/3$, obtained as the stable replacement for these
    weights of the surface of \cref{fig:a3b_16-14}. The surface $(S_2,cB_2)$ has three more components than $(S_1,cB_1)$,
    each a copy of $Bl_1\bP^2$ labeled by type 3.}%
    \label{fig:a3b_14-13}
\end{figure}

\begin{figure}[!htpb]
    \centering
    \begin{subfigure}[t]{0.61\textwidth}
        \centering
        \includegraphics[width=0.8\linewidth]{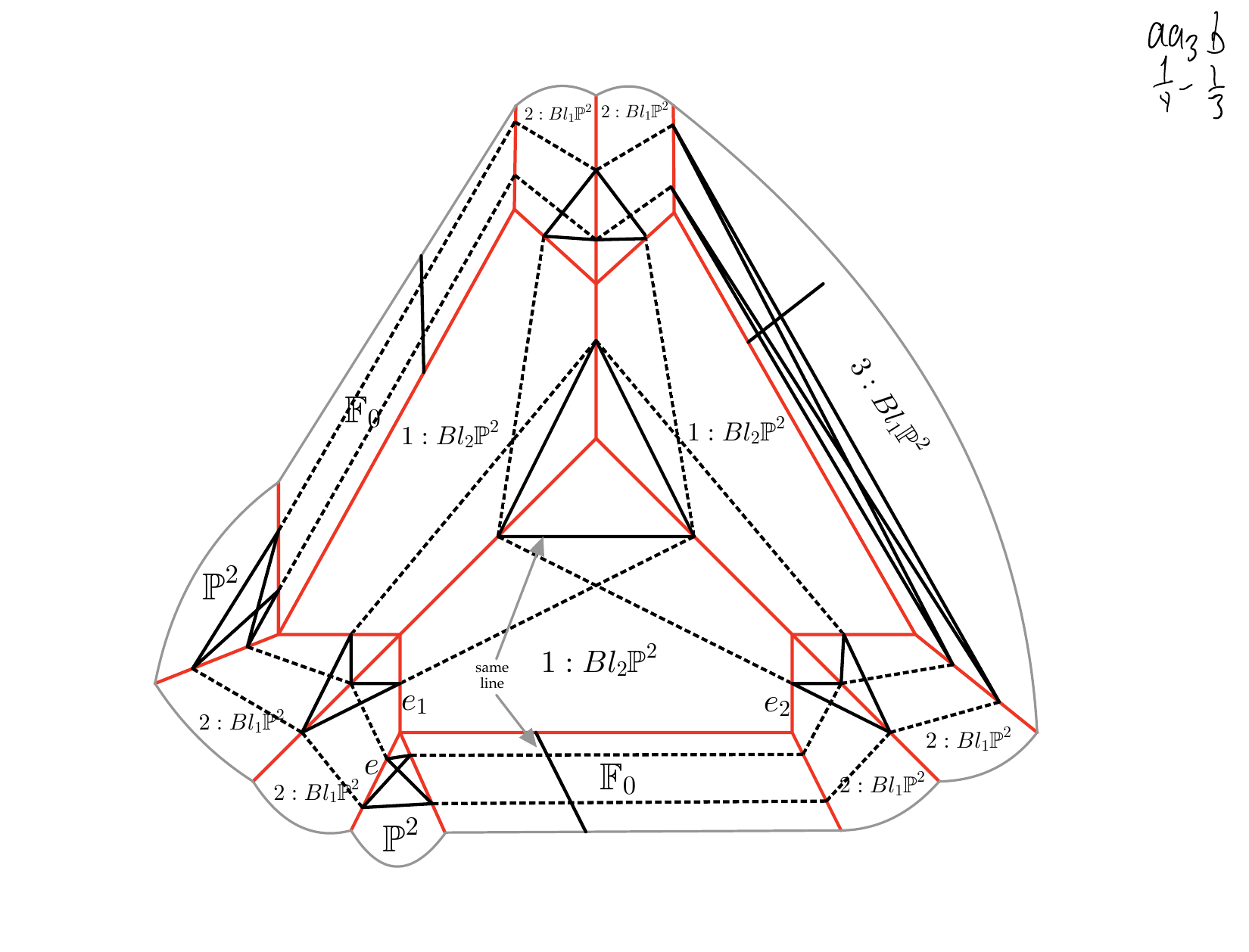}
        \caption{A type $aa_3b$ surface for $1/4 < c \leq 1/3$.}%
        \label{fig:aa3b_14-13}
    \end{subfigure}
    \begin{subfigure}[t]{0.61\textwidth}
        \centering
        \includegraphics[width=0.8\linewidth]{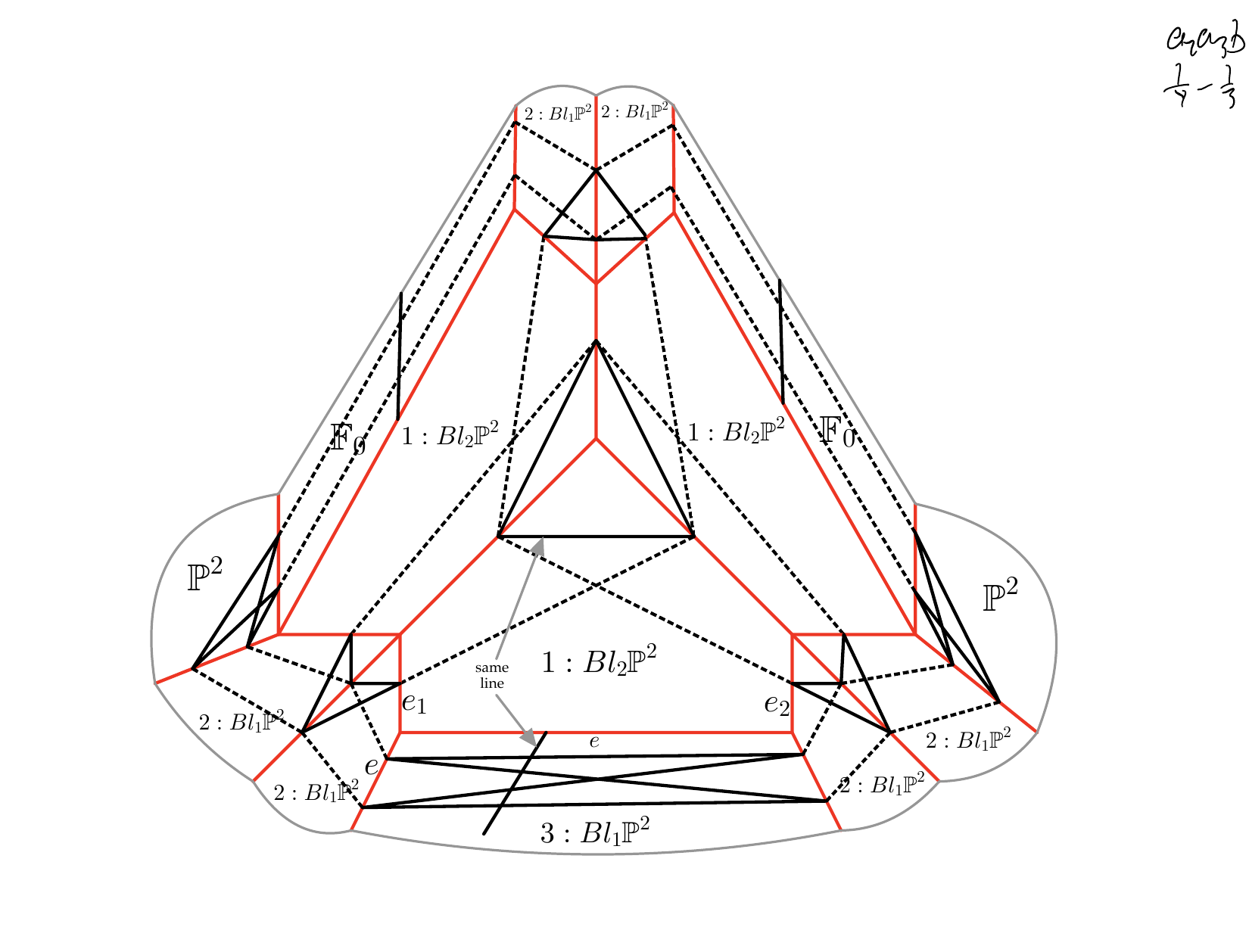}
        \caption{A type $a_2a_3b$ surface for $1/4 < c \leq 1/3$.}%
        \label{fig:a2a3b_14-13}
    \end{subfigure}
    \begin{subfigure}[t]{0.61\textwidth}
        \centering
        \includegraphics[width=0.8\linewidth]{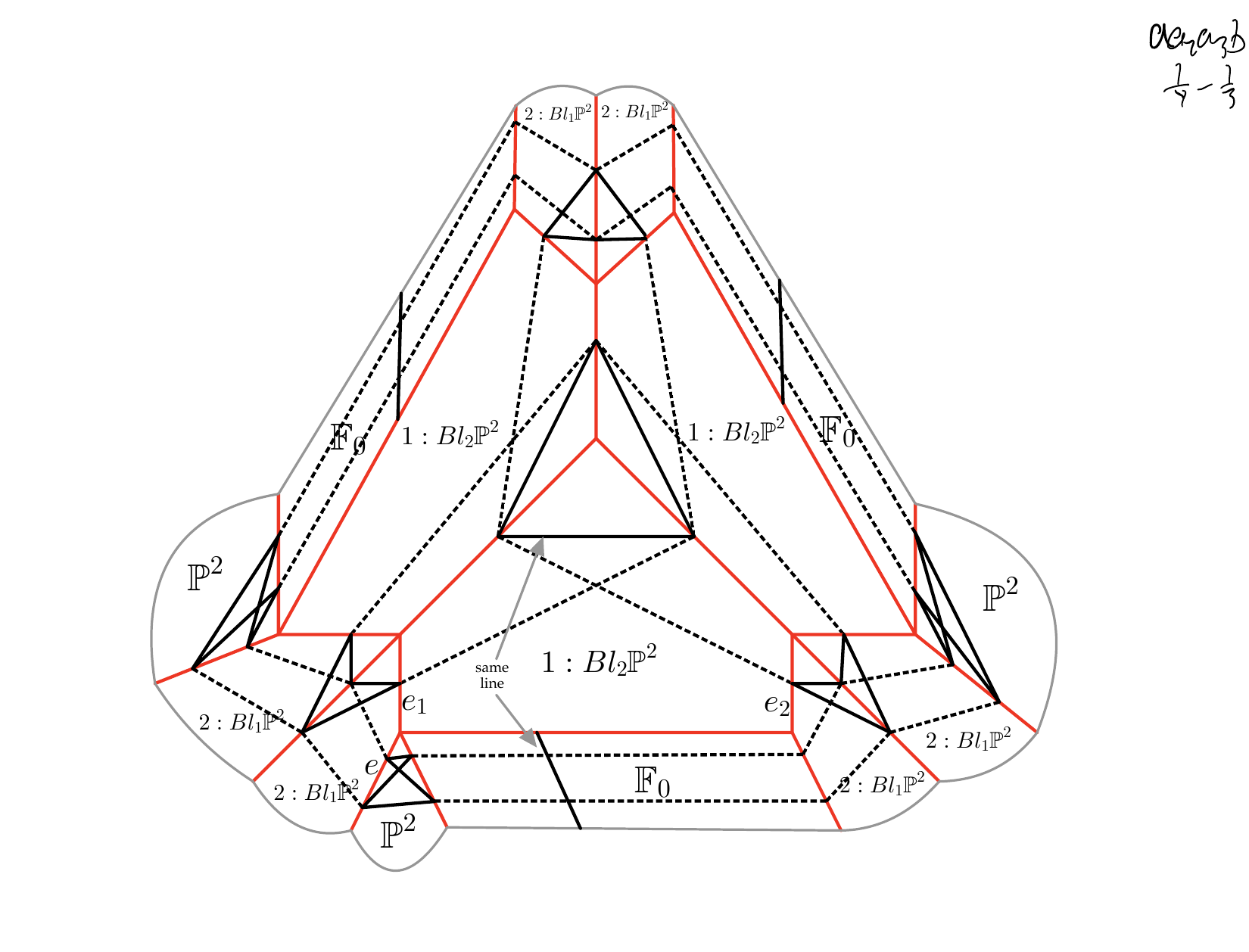}
        \caption{A type $aa_2a_3b$ surface for $1/4 < c \leq 1/3$}%
        \label{fig:aa2a3b_14-13}
    \end{subfigure}
    \caption{Degenerations of the type $a_3b$ surface of \cref{fig:a3b_14-13}, for weights $1/4 < c \leq 1/3$. These
    surfaces are isomorphic to the $a_3b$ surface away from the type 3 components which degenerate into a copy of
    $\bP^2$ and a copy of $\bF_0$ as shown.}%
    \label{fig:a3b_degens_14-13}
\end{figure}


\begin{figure}[!htpb]
    \centering
    \begin{subfigure}[t]{0.7\textwidth}
        \centering
        \includegraphics[width=0.9\linewidth]{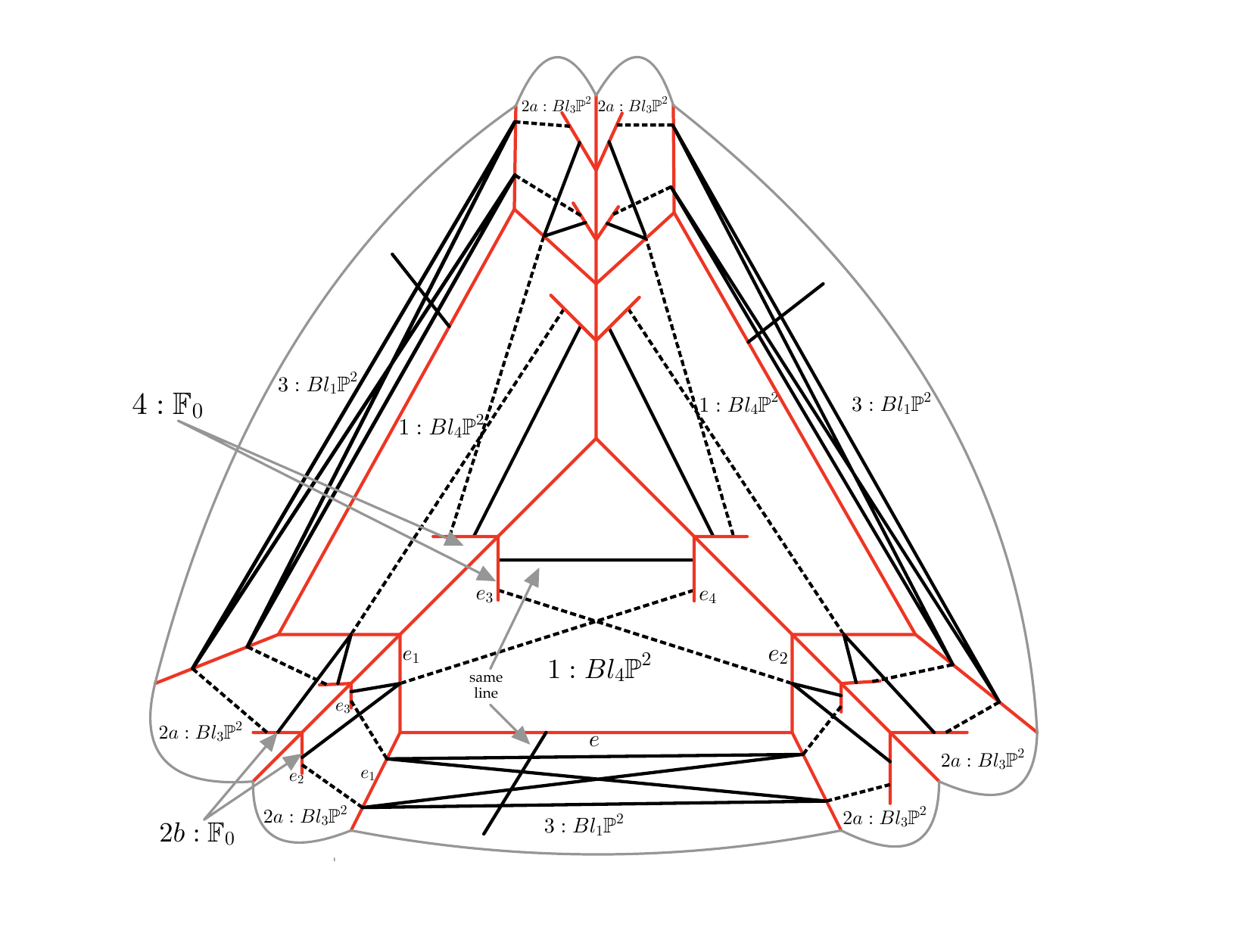}
        \caption{The stable replacement $(S_3,cB_3)$ of the surface of \cref{fig:a3b_14-13}, for weights $1/3 < c \leq
        1/2$. See the adjacent subfigures for local views of the new components of types 2b and 4.}
        \label{fig:a3b_13-12_1}
    \end{subfigure}
    \begin{subfigure}[t]{0.3\textwidth}
        \centering
        \includegraphics[width=0.9\linewidth]{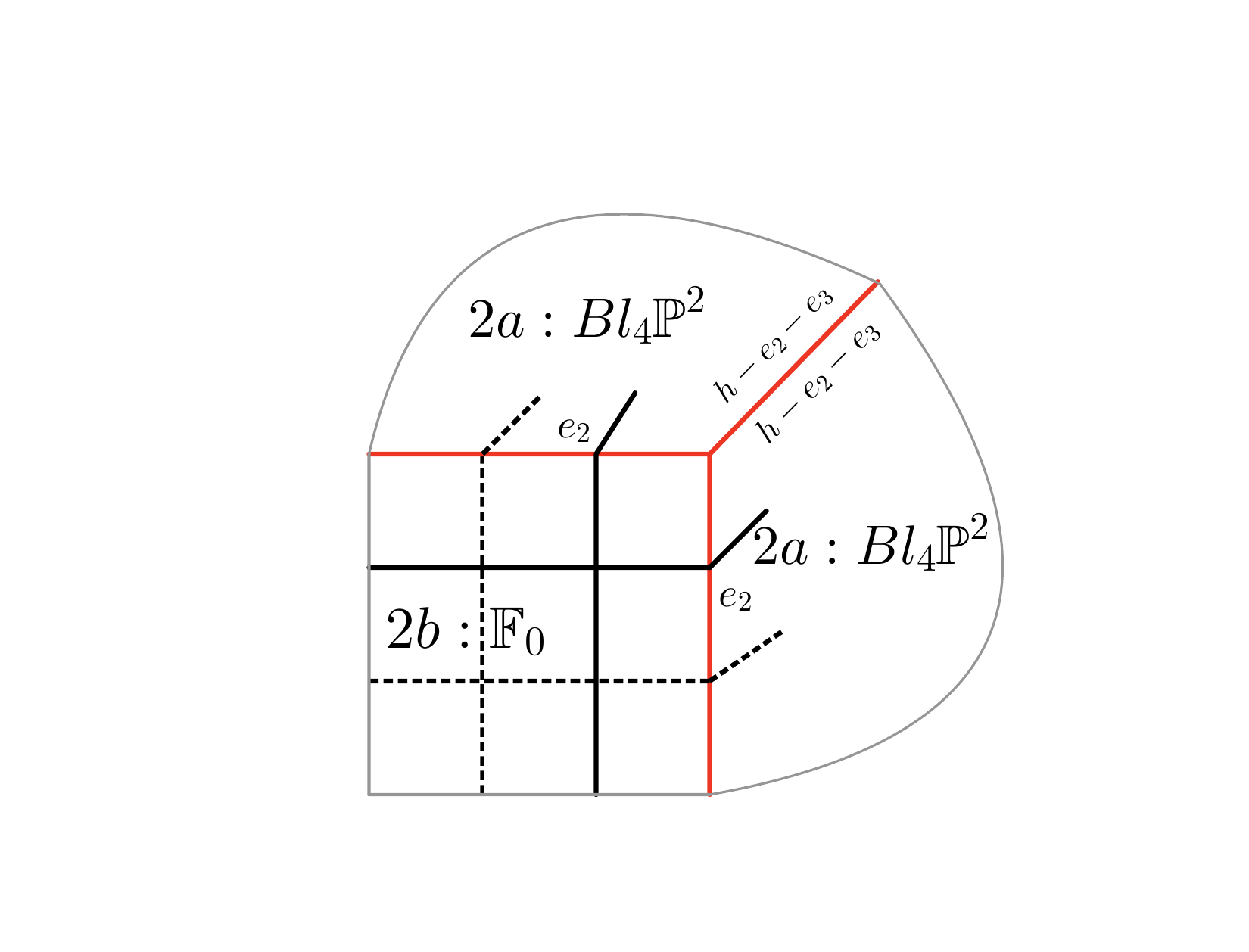}
        \caption{A view of an $\bF_0$ component of type 2b.}
        \label{fig:a3b_13-12_2}
    \end{subfigure}
    \begin{subfigure}[t]{0.3\textwidth}
        \centering
        \includegraphics[width=0.9\linewidth]{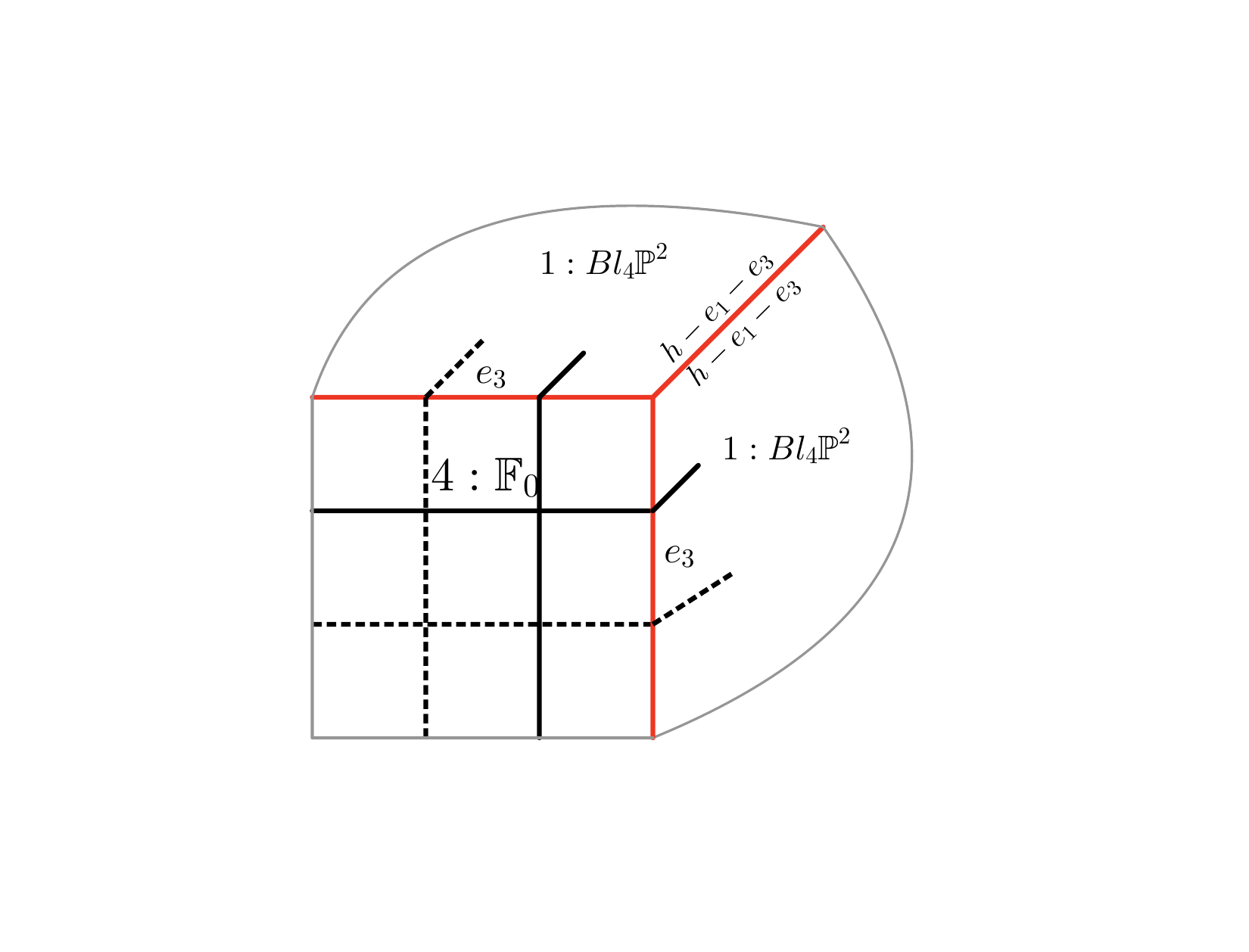}
        \caption{A view of an $\bF_0$ component of type 4.}
        \label{fig:a3b_13-12_3}
    \end{subfigure}
    \caption{A type $a_3b$ surface $(S_3,cB_3)$ for $1/3 < c \leq 1/2$, obtained as the stable replacement for these
    weights of the surface of \cref{fig:a3b_14-13}. The surface $(S_3,cB_3)$ has nine more components than $(S_2,cB_2)$:
    six of type 2b and three of type 4.}%
    \label{fig:a3b_13-12}
\end{figure}

\begin{figure}[!htpb]
    \centering
    \begin{subfigure}[t]{0.61\textwidth}
        \centering
        \includegraphics[width=0.8\linewidth]{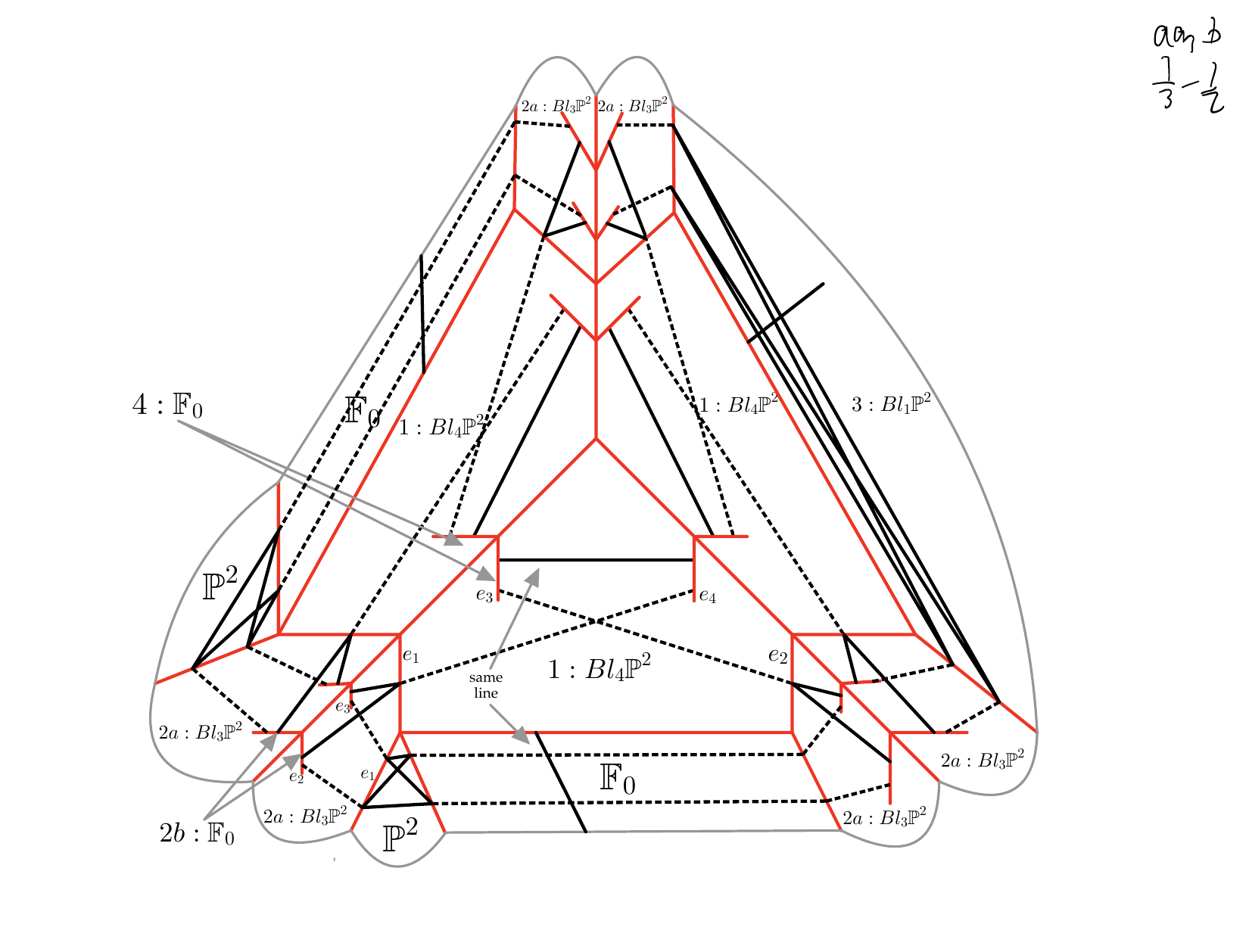}
        \caption{A type $aa_3b$ surface for $1/3 < c \leq 1/2$.}%
        \label{fig:aa3b_13-12}
    \end{subfigure}
    \begin{subfigure}[t]{0.61\textwidth}
        \centering
        \includegraphics[width=0.8\linewidth]{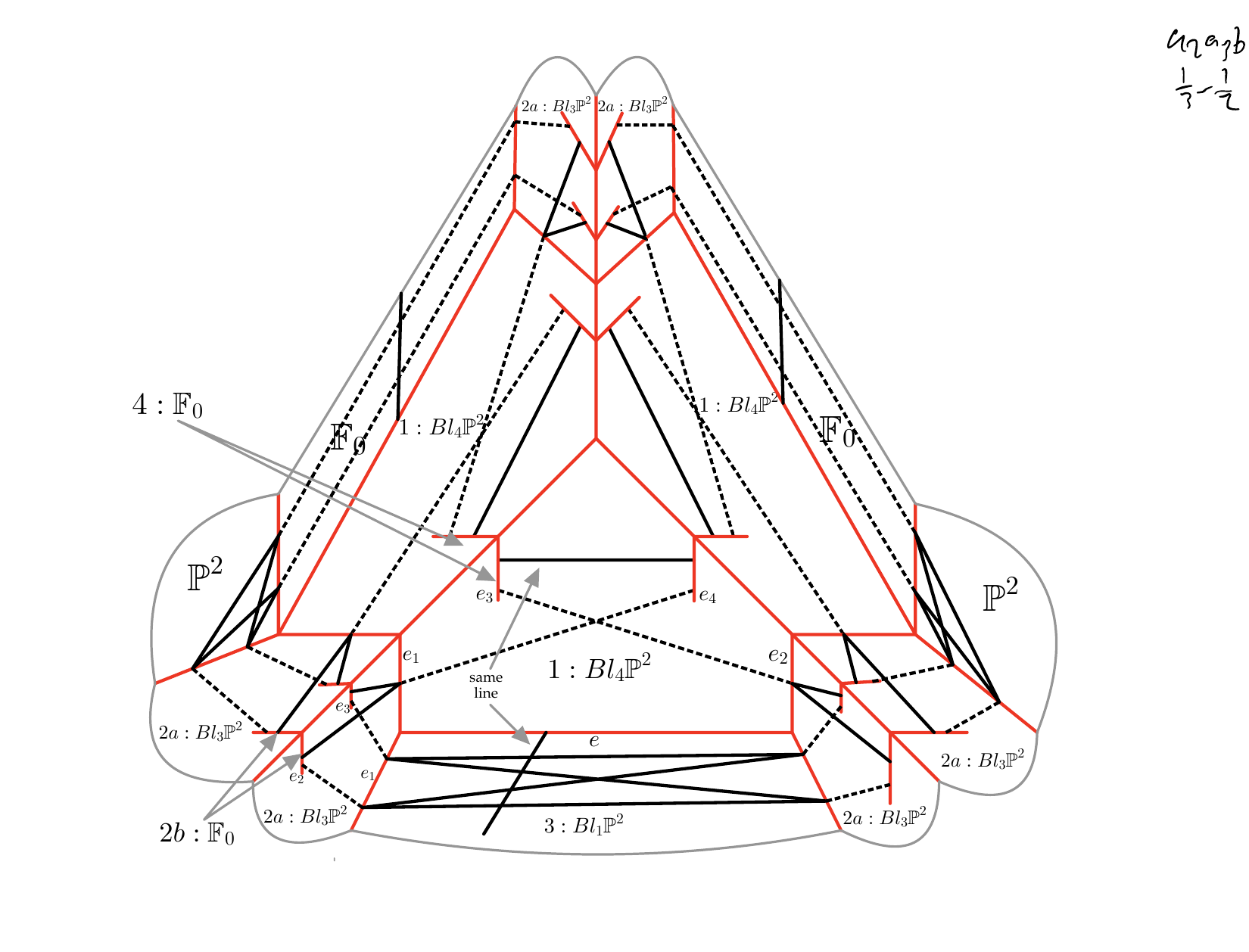}
        \caption{A type $a_2a_3b$ surface for $1/3 < c \leq 1/2$.}%
        \label{fig:a2a3b_13-12}
    \end{subfigure}
    \begin{subfigure}[t]{0.61\textwidth}
        \centering
        \includegraphics[width=0.8\linewidth]{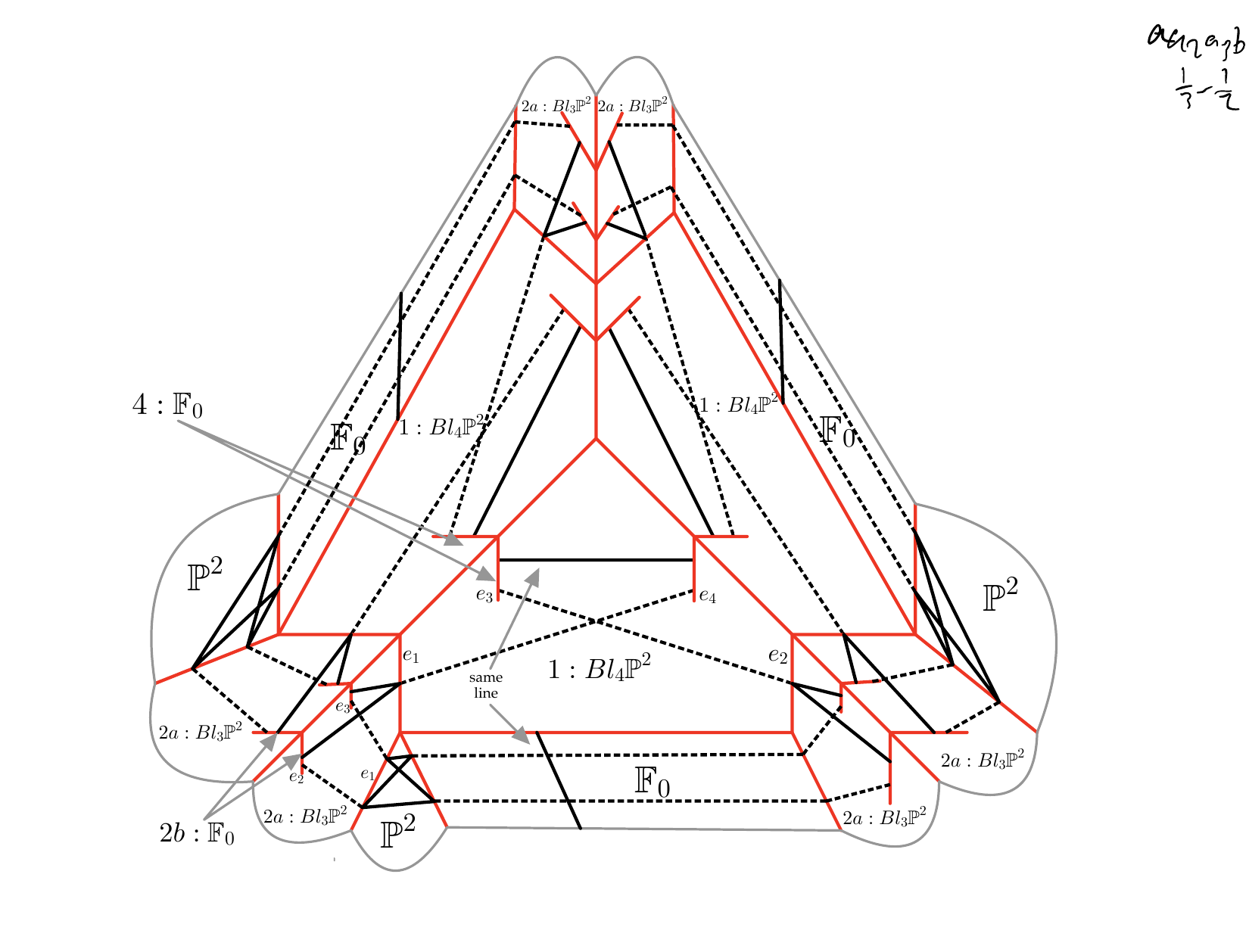}
        \caption{A type $aa_2a_3b$ surface for $1/3 < c \leq 1/2$}%
        \label{fig:aa2a3b_13-12}
    \end{subfigure}
    \caption{Degenerations of the type $a_3b$ surface of \cref{fig:a3b_13-12}, for weights $1/3 < c \leq 1/2$. These
    surfaces are isomorphic to the $a_3b$ surface away from the type 3 components which degenerate into a copy of
    $\bP^2$ and a copy of $\bF_0$ as shown.}%
    \label{fig:a3b_degens_13-12}
\end{figure}


\begin{figure}[!htpb]
    \centering
    \begin{subfigure}[t]{0.7\textwidth}
        \centering
        \includegraphics[width=0.9\linewidth]{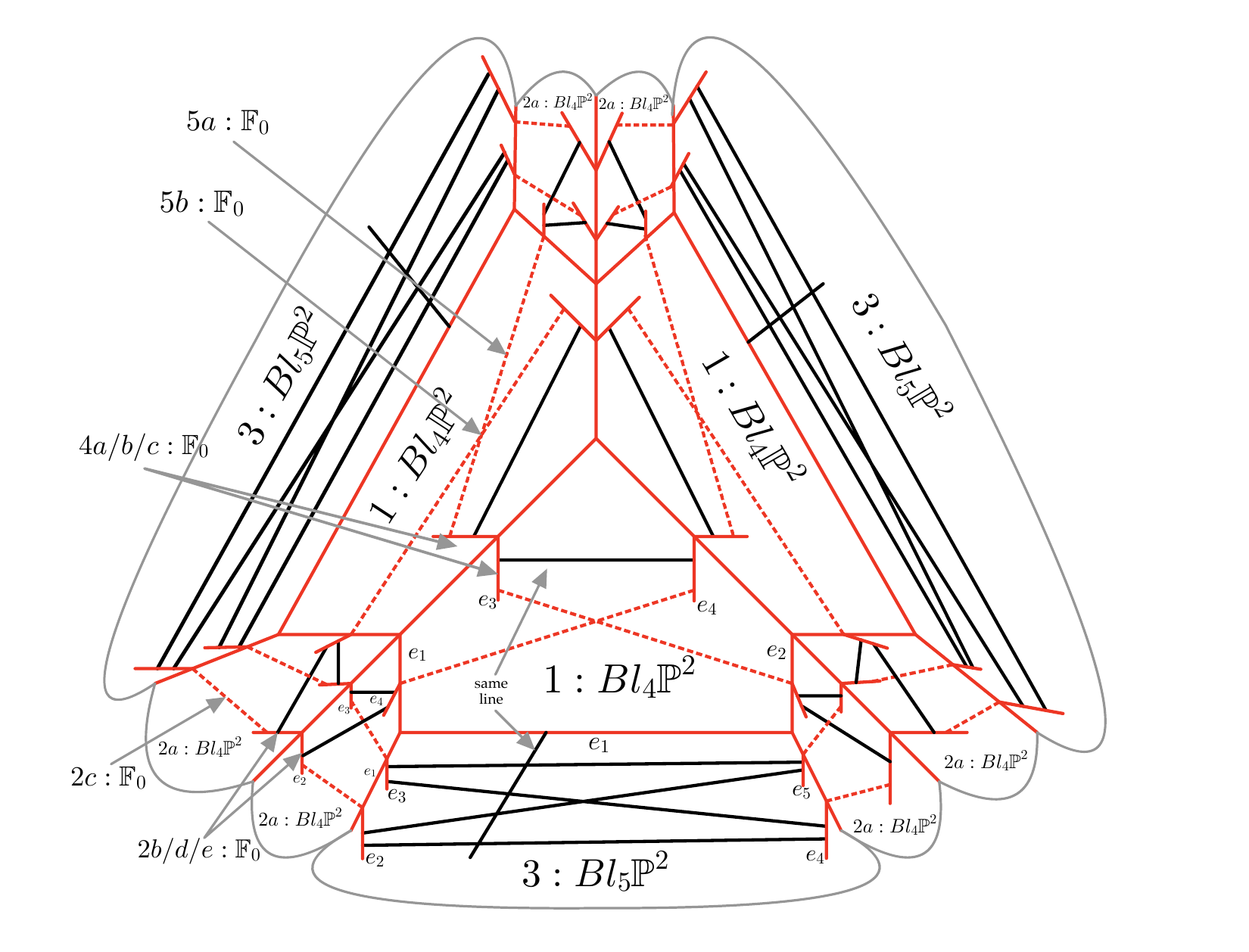}
        \caption{The stable replacement $(S_4,cB_4)$ of the surface of \cref{fig:a3b_14-13}, for weights $1/3 < c \leq
        1/2$. See the adjacent subfigures for local views of the new components of types 2c, 2d, 2e, 4a, 4b, 4c, and 5a,
        5b.}
        \label{fig:a3b_12-23_1}
    \end{subfigure}
    \begin{subfigure}[t]{0.46\textwidth}
        \centering
        \includegraphics[width=0.9\linewidth]{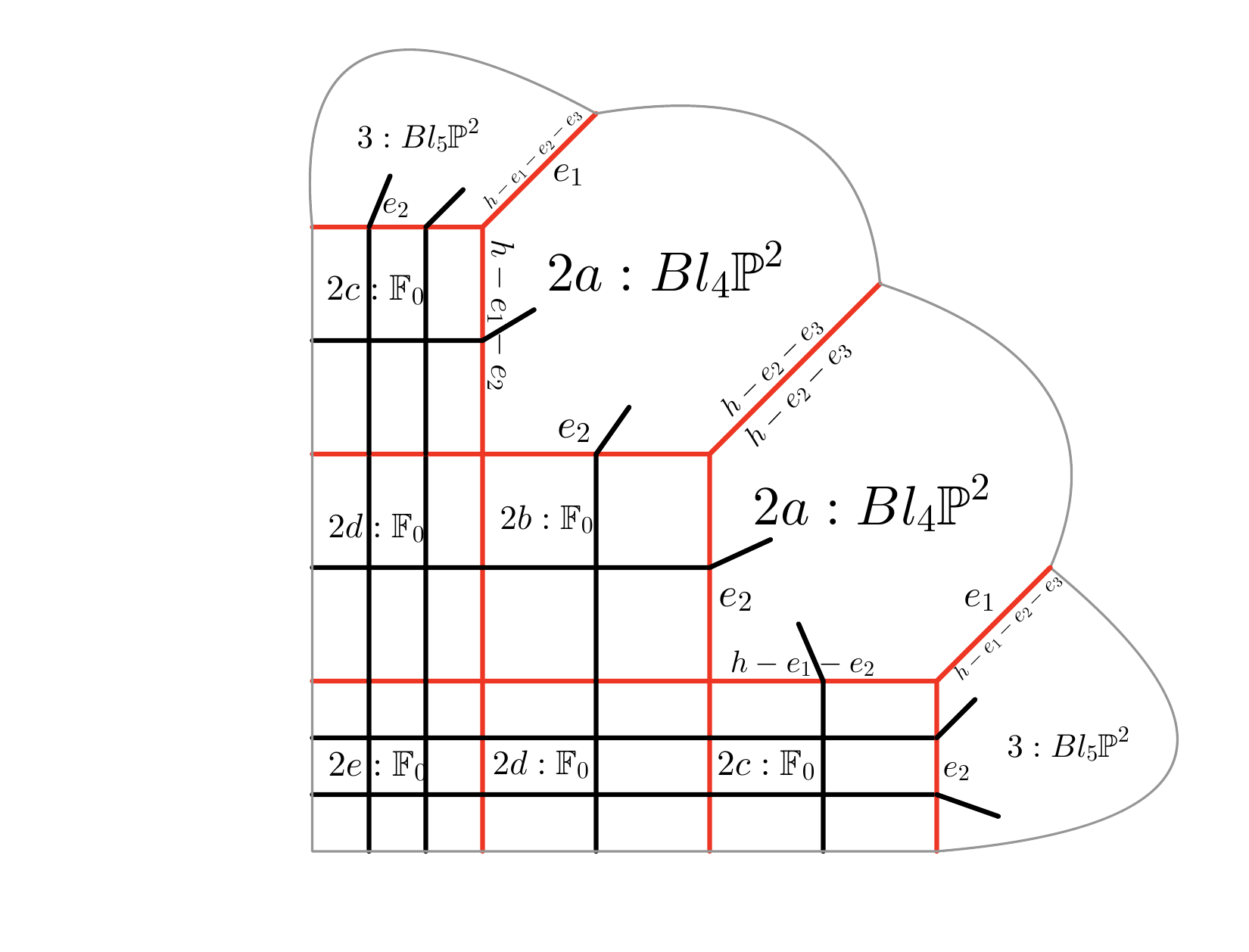}
        \caption{A view of a type 2b component $\cong \bF_0$, and the resulting new components of types 2d, 2c, and 2e,
        also isomorphic to $\bF_0$ (cf. \cref{fig:a3b_13-12_2}).}
        \label{fig:a3b_12-23_2}
    \end{subfigure}
    \begin{subfigure}[t]{0.52\textwidth}
        \centering
        \includegraphics[width=0.9\linewidth]{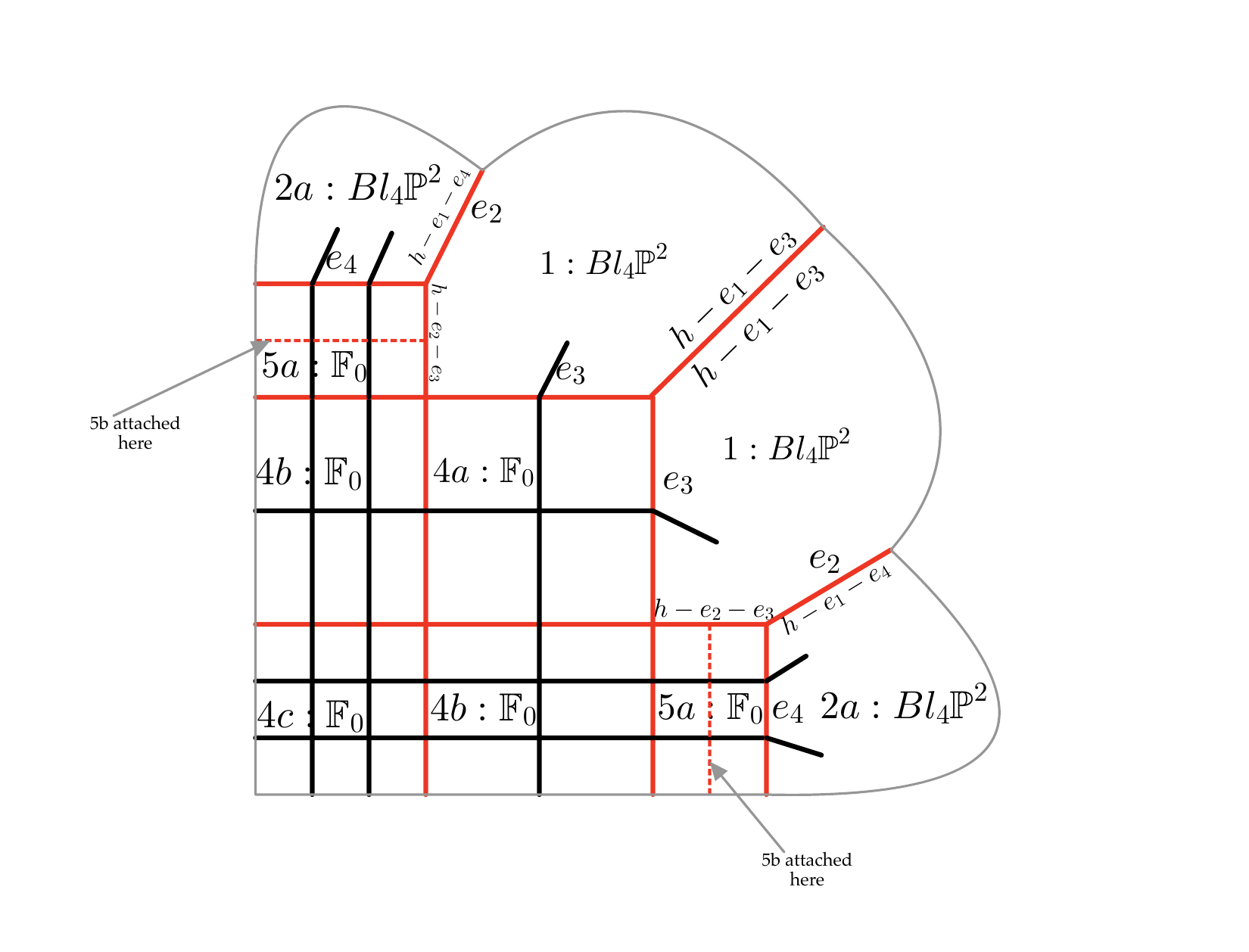}
        \caption{A view of a type 4a component $\cong \bF_0$, arising from a type 4 component in \cref{fig:a3b_13-12_3},
        and the surrounding new components of types 4b, 4c, and 5a, also isomorphic to $\bF_0$.}
        \label{fig:a3b_12-23_3}
    \end{subfigure}
    \caption{A type $a_3b$ surface $(S_4,cB_4)$ for $1/2 < c \leq 2/3$, obtained as the stable replacement for these
    weights of the surface $(S_3,cB_3)$ of \cref{fig:a3b_13-12}.}%
    \label{fig:a3b_12-23}
\end{figure}

\begin{figure}[!htpb]\ContinuedFloat
    \centering
    \begin{subfigure}[t]{0.45\textwidth}
        \centering
        \includegraphics[width=0.9\textwidth]{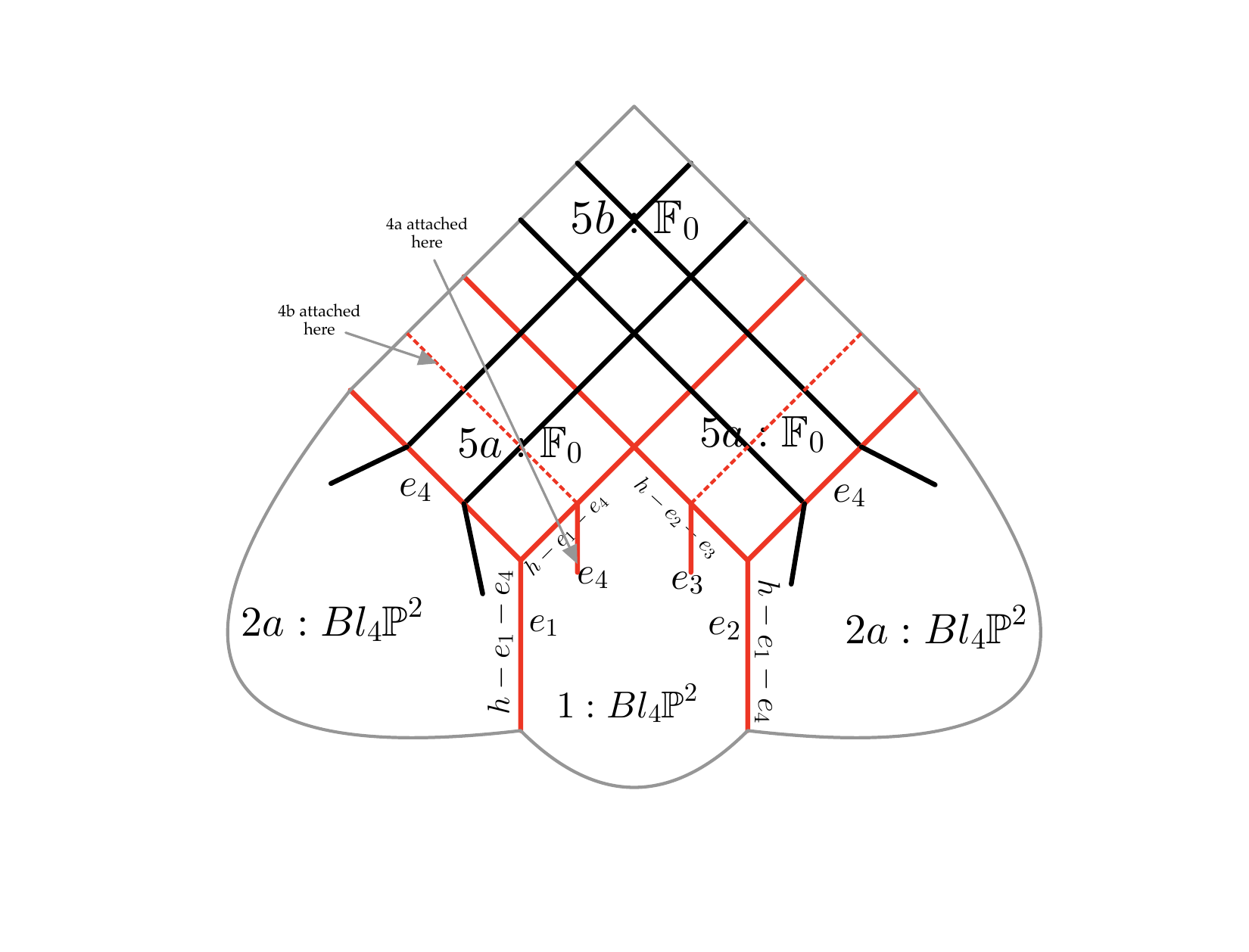}
        \caption{A view of the two type 5a components $\cong \bF_0$ intersecting a given type 1 component. At the
            intersection point of the two type 5a components, one attaches another copy of $\bF_0$, labeled by type 5b.
            The gluings of the types 4a and 4b components indicated are described in \cref{fig:a3b_12-23_3}.}
        \label{fig:a3b_12-23_4}
    \end{subfigure}
    \caption{(Continued) A type $a_3b$ surface $(S_4,cB_4)$ for $1/2 < c \leq 2/3$, obtained as the stable replacement
    for these weights of the surface $(S_3,cB_3)$ of \cref{fig:a3b_13-12}.}%
    \label{fig:a3b_12-23_cont}
\end{figure}

\begin{table}[!htpb]
    \centering
    \caption{The types of irreducible components of the weight $1/2 < c \leq 2/3$ surface of type $a_3b$ pictured in
    \cref{fig:a3b_12-23}, and the possible numbers of Eckardt points on each component.
    For the components of types 1 and 2a, $Bl_4\bP^2$ refers to the blowup of $\bP^2$ at 2 points on one line and 2
    points on another line (i.e., 4 points in general position). For the components of type 3, $Bl_5\bP^2$ refers to the
    blowup of $\bP^2$ at 2 points on one line, 2 points on another line, and at the intersection point of these two lines.}
    \label{tab:a3b_23-1}
    \begin{tabular}{| c | c | c | c |}
        \hline
        Label & Surface & \# & Eckardt points \\
        \hline
        \hline
        1 & $Bl_4\bP^2$ & 3 & 0 \\
        2a & $Bl_4\bP^2$ & 6 & 0 \\
        2b & $\bF_0$ & 6 & 0 \\
        2c & $\bF_0$ & 12 & 0 \\
        2d & $\bF_0$ & 12 & 0 \\
        2e & $\bF_0$ & 6 & 0 \\
        3 & $Bl_5\bP^2$ & 3 & 0 or 1 \\
        4a & $\bF_0$ & 3 & 0 \\
        4b & $\bF_0$ & 6 & 0 \\
        4c & $\bF_0$ & 3 & 0 \\
        5a & $\bF_0$ & 6 & 0 \\
        5b & $\bF_0$ & 3 & 0 \\
        \hline
    \end{tabular}
\end{table}

\begin{figure}[!htpb]
    \centering
    \begin{subfigure}[t]{0.65\textwidth}
        \centering
        \includegraphics[width=0.8\linewidth]{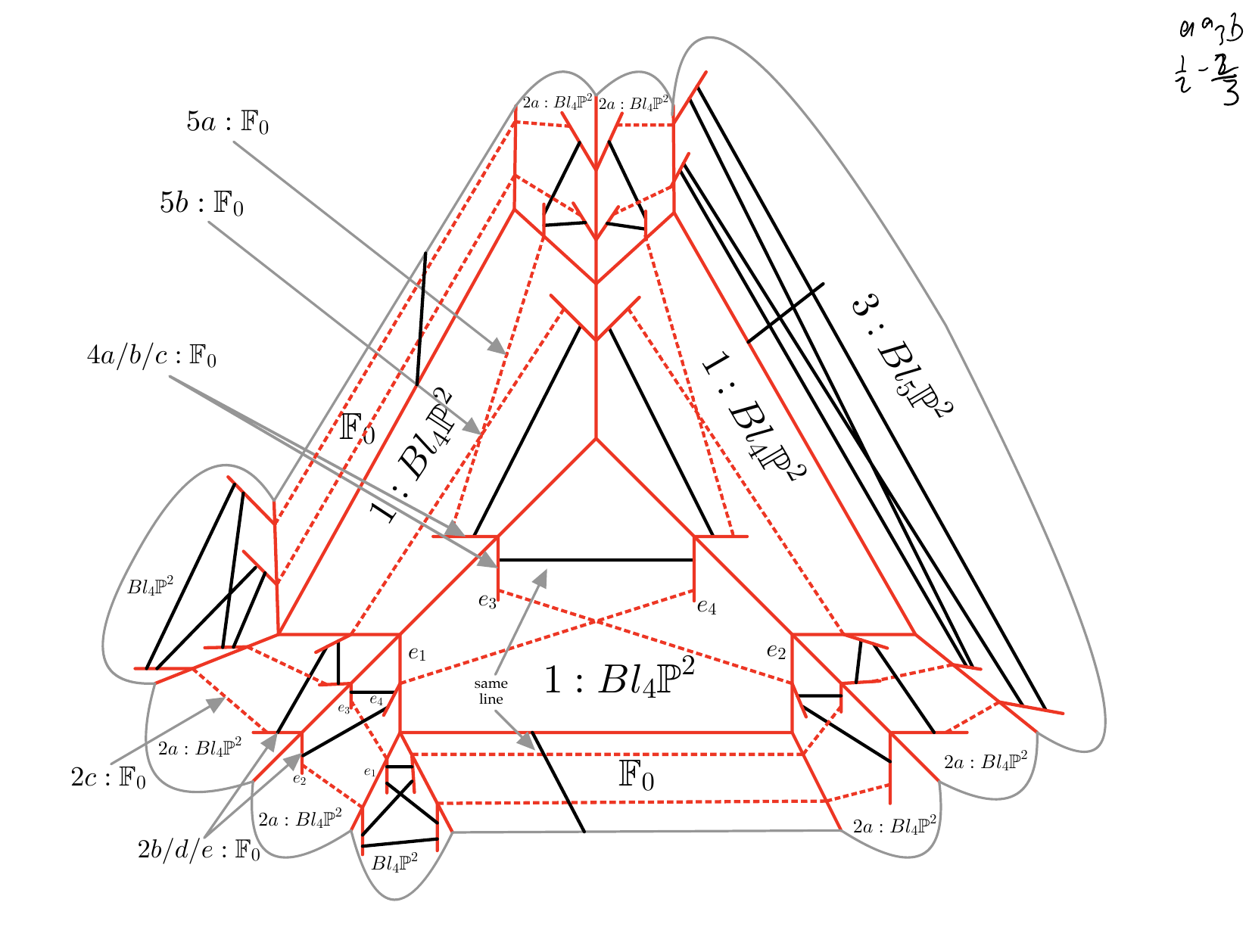}
        \caption{A type $aa_3b$ surface for $1/2 < c \leq 2/3$.}%
        \label{fig:aa3b_12-23}
    \end{subfigure}
    \begin{subfigure}[t]{0.65\textwidth}
        \centering
        \includegraphics[width=0.8\linewidth]{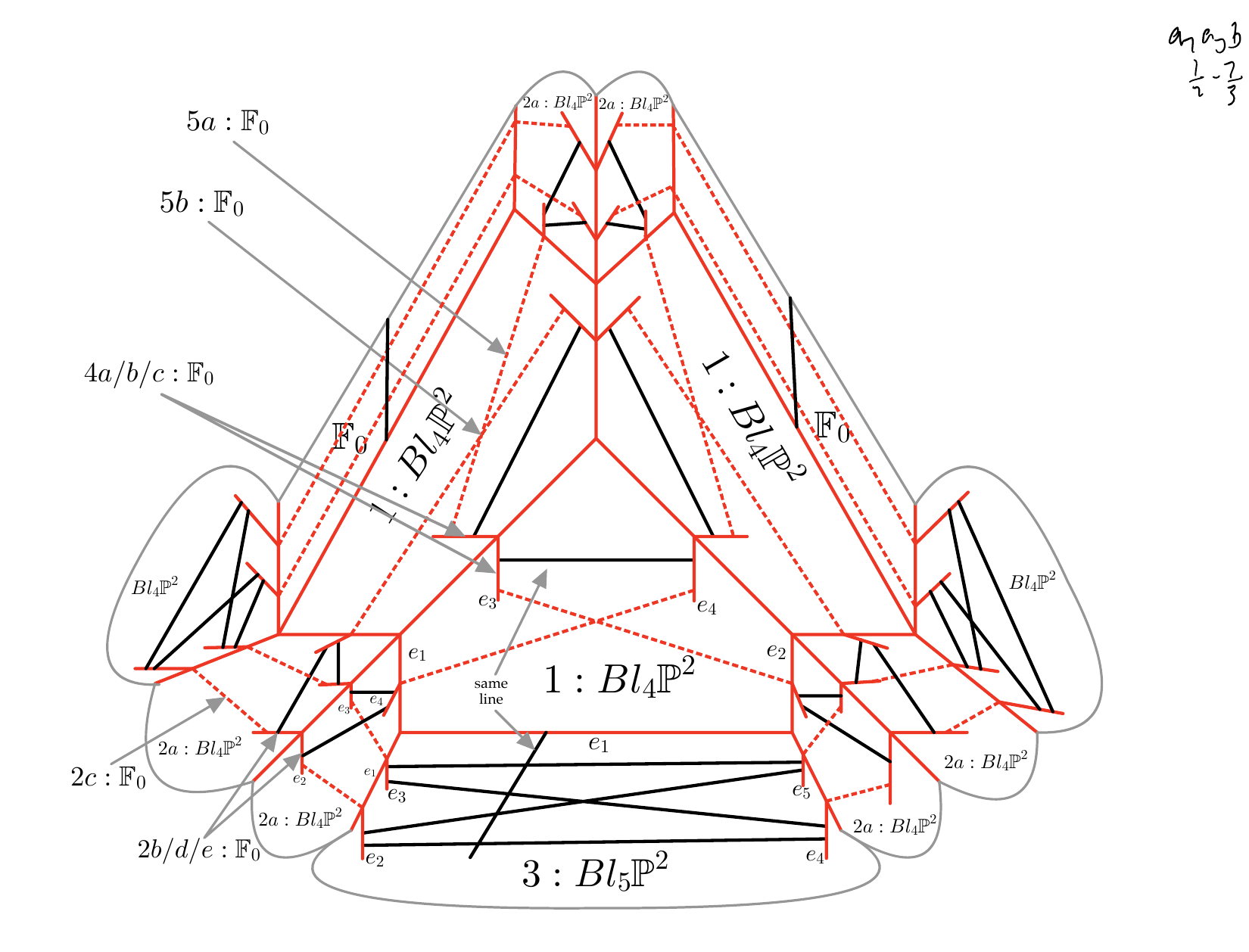}
        \caption{A type $a_2a_3b$ surface for $1/2 < c \leq 2/3$.}%
        \label{fig:a2a3b_12-23}
    \end{subfigure}
    \begin{subfigure}[t]{0.65\textwidth}
        \centering
        \includegraphics[width=0.8\linewidth]{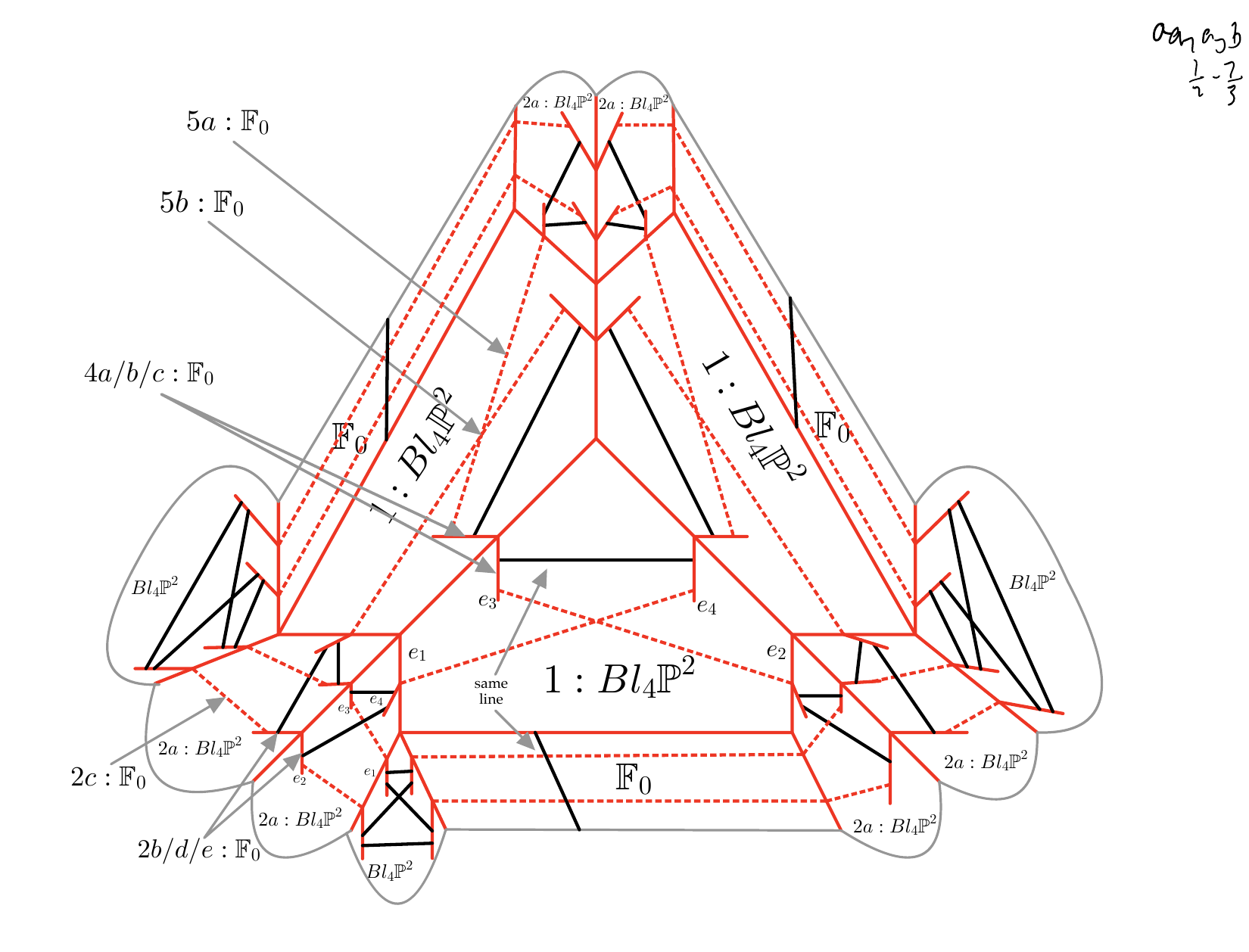}
        \caption{A type $aa_2a_3b$ surface for $1/2 < c \leq 2/3$.}%
        \label{fig:aa2a3b_12-23}
    \end{subfigure}
    \caption{Degenerations of the type $a_3b$ surface of \cref{fig:a3b_12-23}, for weights $1/2 < c \leq 2/3$. These
    surfaces are isomorphic to the $a_3b$ surface away from the type 3 components which degenerate into a copy of
    $\bP^2$ and three copies of $\bF_0$ as shown (cf. \cref{fig:aa2_12-23_2}).}%
    \label{fig:a3b_degens_12-23}
\end{figure}

\section{Proof of \cref{thm:cubics_main}} \label{sec:proof_main}

In this section we prove \cref{thm:cubics_main}. Using \cref{thm:gen_walls}, we see that the proof reduces to the
following steps.
\begin{enumerate}
    \item Compute the fibers of the family $\ddot{\pi} : (\ddot{Y}(E_7),B_{\ddot{\pi}}) \to \wY(E_6)$, which by
        \cite[Theorem 10.31]{hackingStablePairTropical2009} gives the universal family of weight 1 stable marked cubic
        surfaces.
    \item Fix a fiber $(S,B)$ of $\ddot{\pi} : (\ddot{Y}(E_7),B_{\ddot{\pi}}) \to \wY(E_6)$, and compute the sequential
        stable models of the pair $(S,cB)$ as one decreases the weight $c$ in the interval $(1/9,1]$.
\end{enumerate}
We have already completed step (1) in \cref{sec:fibers,sec:eckardts}. Namely, in \cref{sec:fibers}, we computed the
fibers of the family $\wt\pi : (\wY(E_7),B_{\wt\pi}) \to \wY(E_6)$, and saw that they give the weighted stable marked
cubic surfaces $(S,cB)$ for weight $c=2/3$ described in the previous section. Then in \cref{sec:eckardts} we explained
how to obtain the fibers of $\ddot{\pi} : (\ddot{Y}(E_7),B_{\ddot{\pi}}) \to \wY(E_6)$ by resolving Eckardt points on
the fibers of $\wt\pi$. It follows directly from our descriptions that each fiber of $\ddot{\pi}$ is slc with ample log
canonical class, hence is a stable pair. (This re-proves in a more explicit manner the first part of \cite[Theorem
10.31]{hackingStablePairTropical2009}.) Thus we reduce to carrying out step (2) beginning with the weight $c=2/3$, and
decreasing $c$ towards $1/9$.

In order to carry out step (2), we use the following strategy. Let $(S,cB)$ be a weighted stable marked cubic surface
for some fixed weight $c \in (1/9,1]$. Thus $(S,cB)$ is slc and $K_S+cB$ is ample. The restriction of $K_S+cB$ to an
irreducible component $S'$ of $S$ is given by the standard formula
\begin{equation} \label{eq:restrict_component}
    (K_S+cB)\vert_{S'} = K_{S'} + \Delta + cB',
\end{equation}
where $\Delta$ is the gluing locus of $S'$ to the other irreducible components of $S$, and $B'$ is the restriction of
$B$ to $S'$. Walls occur at values $t \in (1/9,1]$ for which $K_{S'} + \Delta + tB'$ is nef but not ample for some
irreducible components $S'$ of $S$; when this occurs one contracts the curves of $S'$ along which $K_{S'}+\Delta+tB'$ is
zero, in order to obtain the log canonical model of $(S',\Delta+tB')$. In general, one must then show that the remaining
components can still be glued to obtain the stable model for the new weight. In our case, the gluing is obvious, and we
directly verify that the resulting surfaces $(S_t,tB_t)$ are stable. We continue in this fashion, decreasing $c$ towards
$1/9$, until all walls have been crossed and $c=1/9 + \epsilon$.

We now carry out the strategy of the above paragraph explicitly for weighted stable marked cubic surfaces. We break this
into the cases described in \cref{prop:a,prop:a2,prop:a3,prop:a4,prop:b,prop:ab,prop:a2b,prop:a3b}, and in each case
begin with the weight $c=2/3$ surface constructed as a fiber of $\wt\pi : (\wY(E_7),B_{\wt\pi}) \to \wY(E_6)$ as
described above.

\begin{proof}[Proof of \cref{prop:a}]
    Let $(S,B)$ be the surface of \cref{fig:a_12-23}. Fix $c \leq 2/3$. From the explicit description of $(S,B)$, we see
    that the restriction of $K_S+cB$ to an irreducible component of $(S,B)$ of type 3, isomorphic to $\bF_0 \cong \bP^1
    \times \bP^1$, is
    \[
        (-1+5c)h_1 + (-1+2c)h_2,
    \]
    where $h_1,h_2$ are the classes of the two rulings. We find that $K_S+cB$ is ample on this component for $1/2 < c
    \leq 2/3$, and when $c=1/2$, the type 3 components are contracted by the projection $\bP^1 \times \bP^1 \to \bP^1$.
    This is compatible with the analogous contractions for the type 1 and 2 components, and we obtain for $c=1/2$ the
    surface of \cref{fig:a_16-12}. By abuse of notation we now refer to this surface as $(S,B)$, and consider $c \leq
    1/2$. The restriction of $K_S+cB$ to the irreducible component of $(S,B)$ of type 2, isomorphic to $\bF_0 \cong
    \bP^1 \times \bP^1$, is
    \[
        (-1+6c)h_1 + (-1+6c)h_2.
    \]
    We see that $K_S+cB$ is ample on this component for $1/6 < c \leq 2/3$. When $c=1/6$, we contract this entire
    component to a point. The analogous contraction on the type 1 component is the map $\wS_{A_1} \to S_{A_1}$. Thus we
    obtain the surface of \cref{fig:a_19-16}, which is stable for $1/9 < c \leq 1/6$.
\end{proof}

\begin{proof}[Proof of \cref{prop:a2}]
    Let $(S,B)$ be the surface of \cref{fig:a2_12-23}. Fix $c \leq 2/3$. There are three types of components of $(S,B)$
    that are isomorphic to $\bF_0 \cong \bP^1 \times \bP^1$, namely, the components of types 4, 5a, and 5b. We compute
    that the restrictions of $K_S+cB$ to these components are as follows.
    \begin{align*}
        \text{Type 4} &: (-1 + 4c)h_1 + (-1+2c)h_2, \\
        \text{Type 5a} &: 3ch_1 + (-1+2c)h_2, \\
        \text{Type 5b} &: (-1+2c)h_1 + (-1+2c)h_2.
    \end{align*}
    We see that $K_S+cB$ is ample on these components for $1/2 < c \leq 2/3$, and when $c=1/2$, we contract the
    components of types 4 and 5a via the projection $\bP^1 \times \bP^1 \to \bP^1$, and contract the component of type
    5b to a point. This is compatible with the analogous contractions for the remaining components of $(S,B)$, and as a
    result we obtain for $c=1/2$ the surface of \cref{fig:a2_14-12}. By abuse of notation we now refer to this surface
    as $(S,B)$ and consider $c \leq 1/2$. The restrictions of $K_S+cB$ to the components of type 2, isomorphic to
    $Bl_1\bF_0$, and type 3, isomorphic to $Bl_1\bP^2$, are as follows.
    \begin{align*}
        \text{Type 2} &: (-1+6c)h_1 + (-1+6c)h_2 + (1-4c)e, \\
        \text{Type 3} &: (-1+4c)h + c(h-e).
    \end{align*}
    Here $e$ refers to the exceptional divisor in either case, for type 2, $h_1,h_2$ are the pullbacks of the classes of
    the two rulings on $\bP^1 \times \bP^1$, and for type 3, $h$ is the class of the pullback of a line. We find that
    $K_S+cB$ is ample on these components for $1/4 < c \leq 1/2$, and when $c=1/4$, we contract the component of type 3
    vertically onto its exceptional divisor, and at the same time we contract the exceptional divisors of the type 2
    components. As a result, we obtain the surface pictured in \cref{fig:a2_16-14}. By abuse of notation, we now refer
    to this surface as $(S,B)$, and consider $c \leq 1/4$. Now we compute that the restriction of $K_S+cB$ to a
    component of type 2, isomorphic to $\bF_0 \cong \bP^1 \times \bP^1$, is
    \[
        (-1+6c)h_1 + (-1+6c)h_2.
    \]
    Thus when $c=1/6$, we contract the type 2 components to points. The analogous contraction on the type 1 component is
    the map $\wS_{2A_1} \to S_{2A_1}$. Thus we obtain the surface of \cref{fig:a2_19-16}, which is stable for $1/9 < c
    \leq 1/6$.

    We must also describe the wall crossings for the surface of type $aa_2$ pictured in \cref{fig:aa2_12-23}. So let
    $(S,B)$ be this surface, and fix $c \leq 2/3$. The surface $(S,B)$ is isomorphic to the type $a_2$ surface of
    \cref{fig:a2_12-23} away from type 3 component of the $a_2$ surface, which degenerates into four irreducible
    components as pictured in \cref{fig:aa2_12-23}. Thus we can focus our attention on just these four components. One
    component is isomorphic to $Bl_4\bP^2$, and the restriction of $K_S+cB$ to this component is
    \[
        (-1+5c)h + (1-2c)(e_1+e_2+e_3+e_4),
    \]
    where $h$ is the class of the pullback of a line, and $e_1,\ldots,e_4$ are the classes of the exceptional divisors.
    The remaining three components are isomorphic to $\bF_0$, but the lines and gluings differ. For the central $\bF_0$
    in \cref{fig:aa2_12-23_2}, the restriction of $K_S+cB$ is
    \[
        h_1 + ch_2.
    \]
    For the other two $\bF_0$'s, pictured in the top and bottom in \cref{fig:aa2_12-23_2}, the restriction of $K_S+cB$
    is
    \[
        (-1+2c)h_1 + ch_2.
    \]
    Thus we see that when $c=1/2$, we contract these last two $\bF_0$'s via the projection $\bF_0 \cong \bP^1 \times
    \bP^1 \to \bP^1$, and at the same time we contract the four exceptional divisors on the $Bl_4\bP^2$ component. We
    obtain the picture of \cref{fig:aa2_14-12}, where now we have just two components of interest, one isomorphic to
    $\bP^2$, and one isomorphic to $\bF_0$. Again we let $(S,B)$ denote the surface of \cref{fig:aa2_14-12}, and fix $c
    \leq 1/2$. We compute that the restriction of $K_S+cB$ to the $\bP^2$ component is
    \[
        (-1+4c)h,
    \]
    and the restriction of $K_S+cB$ to the $\bF_0$ component is
    \[
        (-1 + 4c)h_1 + ch_2.
    \]
    We find that when $c=1/4$, we contract the $\bP^2$ component to a point, and contract the $\bF_0$ component via the
    projection $\bF_0 \cong \bP^1 \times \bP^1 \to \bP^1$. As a result, we obtain the weight $1/6 < c \leq 1/4$ type
    $a_2$ surface of \cref{fig:a2_16-14}. This explains the additional moduli appearing in \cref{thm:cubics_main} for
    weights greater than $1/4$.
\end{proof}

\begin{proof}[Proofs of \cref{prop:a3,prop:a4}]
    In order to save space, we omit the details of the wall crossings for surfaces of types $a_3$ and $a_4$ and their
    degenerations as described in \cref{prop:a3,prop:a4}. The computations are exactly analogous to the type $a$ and
    $a_2$ cases, and the reader should have no trouble adapting these computations to prove \cref{prop:a3,prop:a4}.
\end{proof}

\begin{proof}[Proof of \cref{prop:b}]
    Let $(S,B)$ be the surface of \cref{fig:b_13-23}, and fix $c \leq 2/3$. The restriction of $K_S+cB$ to a component
    of type 2, isomorphic to $\bF_0$, is
    \[
        (-1+3c)h_1 + (-1+3c)h_2.
    \]
    The restriction of $K_S+cB$ to a component of type 1, isomorphic to the blowup of $\bP^2$ at 6 points, 3 on one line
    and 3 on another, is
    \[
        (-1+9c)h + (1-3c)(e_1+ \cdots + e_6),
    \]
    where $h$ is the class of the pullback of a line, and $e_1,\ldots,e_6$ are the classes of the exceptional divisors.
    Thus when $c=1/3$, we contract the type 2 components to points, and obtain the surface of \cref{fig:b_19-13}, which
    is stable for all $1/9 < c \leq 1/3$.
\end{proof}

\begin{proof}[Proof of \cref{prop:ab}]
    Let $(S,B)$ be the surface of \cref{fig:ab_12-23}, and fix $c \leq 2/3$. It is straightforward to observe, using the
    same sorts of computations as in the previous proofs, that $(S,cB)$ is stable for $1/2 < c \leq 2/3$, and when
    $c=1/2$ one contracts the components of types 4b and 6 via the projections $\bP^1 \times \bP^1 \to \bP^1$, yielding
    the surface of \cref{fig:ab_13-12}. Let $(S,B)$ now denote this surface and fix $c \leq 1/2$. We compute from
    \cref{fig:ab_13-12} that the restriction of $K_S+cB$ to any of the components of types 3b, 4, or 5, isomorphic to
    $\bF_0$, is
    \[
        (-1+3c)h_1 + (-1+3c)h_2.
    \]
    Thus when $c=1/3$ we contract these components to points. This is compatible with the contractions on the remaining
    components of $(S,B)$, and as a result we obtain the surface of \cref{fig:ab_16-13}. Again we let $(S,B)$ now denote
    this surface, and fix $c \leq 1/3$. Then the restriction of $K_S+cB$ to a component of type 3, isomorphic to
    $\bP^2$, is
    \[
        (-1+6c)h.
    \]
    Thus when $c=1/6$, we contract the type 3 components to points, and obtain the surface of \cref{fig:ab_19-16}, which
    is stable for weights $1/9 < c \leq 1/6$.
\end{proof}

\begin{proof}[Proof of \cref{prop:a2b,prop:a3b}]
    In order to save space, we also omit the details of the wall crossings for the surfaces of types $a_2b$ and $a_3b$
    and their degenerations as described in \cref{prop:a2b,prop:a3b}. The computations are exactly analogous to those
    carried out in the previous proofs, and indeed many components of these surfaces are isomorphic to components of
    previously described surfaces, with the same lines and gluing loci.
\end{proof}

\begin{proof}[Proof of \cref{thm:cubics_main}]
    By the discussion above, we have now fully described the walls for the moduli space $\oY^6_c$ of $c$-weighted stable
    marked cubic surfaces, as well as all of the surfaces parameterized by the moduli spaces in each chamber. All that
    remains is to prove the descriptions of the moduli spaces. This is straightforward---we already knew (by
    construction) that $\wY(E_6)$ was the moduli space of weighted stable marked cubic surfaces for weight $1$, and our
    descriptions of the stable surfaces show that it contains to be the moduli space until weight $c=1/4$, at which
    point one contracts components of weighted stable surfaces resulting in identifications of surfaces of different
    types. Namely, for any surface whose type involves $a_i$, $i=2,3,4$, we contract the components arising from the
    non-flat $A_3^2$ of $\pi : \oY(E_7) \to \oY(E_6)$ as in \cref{sec:nonflat_A32}. This results in the blowdown map
    $\oY^6_{1/4+\epsilon} \cong \wY(E_6) \to \oY(E_6) \cong \oY^6_{1/4}$, which geometrically is explained by the stable
    models of the surfaces parameterized higher codimension strata involving $a_i$ becoming isomorphic to the stable
    models of the surfaces parameterized by the $a_i$ divisors themselves.
\end{proof}

\begin{remark}
    In particular, our results recover an alternative proof of \cite[Theorem
    1.5]{gallardoGeometricInterpretationToroidal2021}, which asserts that $\oY(E_6)$ is the moduli space of weighted
    stable marked cubic surfaces for weight $c=1/9+\epsilon$.
\end{remark}

\appendix
\section{Bottom-up wall crossing} \label{sec:bottom_up}

As our proof of \cref{thm:cubics_main} in \cref{sec:proof_main} hopefully indicates, it is easy to compute the stable
replacements and wall crossings for weighted stable marked cubic surfaces if one starts with the weight $c=1$ and
decreases $c$. It is much more difficult to work in the opposite direction, beginning with the smallest weight
$c=1/9+\epsilon$ and moving upwards. On the other hand, the latter approach better clarifies the structure of weighted
stable marked cubic surfaces, showing how they appear as stable replacements of simpler surfaces. In this section, we
describe some examples of computing the stable replacements in this ``bottom-up'' fashion, explaining how one can more
directly obtain the pictures of \cref{sec:weighted_cubics}.

\begin{example} \label{ex:bottom_up_a}
    In this example we describe how to obtain the wall crossings for type $a$ surfaces as described in \cref{prop:a}.

    Begin with a trivial one-parameter family $\bP^2 \times T \to T$ with six sections $B_1,\ldots,B_6$ cutting out six
    points in general position on a general fiber, but such that $B_5$ and $B_6$ intersect at a point in the special
    fiber. The blowup of the scheme-theoretic union of $B_1,\ldots,B_6$ in $\bP^2 \times T$ results in a family $\cS$ of
    surfaces over $T$ whose general fiber is a smooth marked cubic surface, but whose special fiber is a marked cubic
    surface with an $A_1$ singularity, as in \cref{fig:a_19-16}. Working locally near the intersection point of $B_5$
    and $B_6$, we can write $B_5 = V(x,y)$, $B_6 = V(x,y-t)$ in $\bA^2_{x,y} \times \bA^1_t$. Then a direct computation
    shows that the $A_1$ singularity of the special fiber of $\cS \to T$ is also an $A_1$ singularity of the ambient
    family $\cS$. Then another local computation shows that blowing up this point in $\cS$ results in a family $\cwS$ of
    surfaces over $T$ whose general fiber is a smooth marked cubic surface, but whose special fiber is a surface with
    two irreducible components---one isomorphic to the minimal resolution $\wS_{A_1}$ of a cubic surface with an $A_1$
    singularity, and the other isomorphic to $\bF_0 \cong \bP^1 \times \bP^1$, glued along its diagonal to the
    exceptional fiber of $\wS_{A_1}$ over the $A_1$ singularity. In other words, the central fiber of $\cwS$ is the
    weight $1/6 < c \leq 1/2$ stable marked surface $(S,cB)$ of \cref{fig:a_16-12}. When $c > 1/2$, this surface is no
    longer stable, because it is no longer slc along the 6 lines of multiplicity 2. We therefore blowup the reduced
    subscheme supported on these 6 lines, and attach to each of the 6 exceptional divisors a copy of $\bF_0$, obtaining
    the surface of \cref{fig:a_12-23} as desired.

    We explain in some more detail the computation of the last step above. Write $X_1 \cong \wS_{A_1}$ and $X_2 \cong
    \bF_0$ for the two components of the special fiber of $\cwS \to T$. Then since $X_1 \cup X_2$ is a fiber of $\cwS
    \to T$, it has trivial normal bundle, so have $(X_1 + X_2)\vert_{X_1} = 0$. On the other hand, since $X_2$ is
    attached to $X_1$ along the exceptional fiber $e$ of $X_1 \cong \wS_{A_1} \to S_{A_1}$, we have $X_2\vert_{X_1} =
    e$. We conclude that $N_{X_1/\cwS} = \cO(-e)$. Now, if $\ell$ is the support of a multiplicity 2 line in $X_1$, then
    $N_{\ell/X_1} = \cO(-1)$. The exact sequence
    \[
        0 \to N_{\ell/X_1} \to N_{\ell/\cwS} \to N_{X_1/\cwS}\vert_{\ell} \to 0
    \]
    splits, and we find that $N_{\ell/\cwS} \cong \cO(-1) \oplus \cO(-1)$. Thus the blowup of $\cwS$ along $\ell$ has
    exceptional divisor $\bP(N_{\ell/\cwS}) \cong \bP^1 \times \bP^1$, as desired.
\end{example}

\begin{example} \label{ex:bottom_up_a2}
    In this example we describe how to obtain the wall crossings for type $a_2$ surfaces as described in
    \cref{prop:a2}.

    Begin with a trivial one-parameter family $\bP^2 \times T \to T$ with six sections $B_1,\ldots,B_6$ cutting out six
    points in general position on a general fiber, but such that $B_5$ and $B_6$ intersect at a point in the special
    fiber, and $B_3$ and $B_4$ intersect at another point in the special fiber. The same local computation as in
    \cref{ex:bottom_up_a} shows that the blowup of $\bP^2 \times T$ along the scheme-theoretic union of $B_1,\ldots,B_6$
    results in a family $\cS$ of surfaces over $T$ whose general fiber is a smooth marked cubic surface, but whose
    special fiber is a marked cubic surface with two $A_1$ singularities, as in \cref{fig:a2_19-16}; furthermore, the two
    $A_1$ singularities of the special fiber are also $A_1$ singularities of the ambient family $\cS$. Again a local
    computation as in \cref{ex:bottom_up_a} shows that blowing up these two singular points results in a family $\cwS
    \to T$ with central fiber consisting of three irreducible components---one isomorphic to the minimal resolution
    $\wS_{2A_1}$ of a cubic surface with two $A_1$ singularities, and two components isomorphic to $\bF_0$, each glued
    along its diagonal to one of the exceptional fibers of $\wS_{2A_1}$, giving the picture of \cref{fig:a2_16-14}.

    Now the central fiber of $\cwS \to T$ is stable for weights $1/6 < c \leq 1/4$, but is no longer stable for $c >
    1/4$, because it is no longer slc along the line of multiplicity 4, appearing on $\wS_{2A_1}$ as the strict
    transform of the line connecting the two $A_1$ singularities of $S_{2A_1}$. A similar computation to the last
    paragraph of \cref{ex:bottom_up_a} shows that $N_{\wS_{2A_1}/\cwS} = \cO(-e_1-e_2)$, where $e_1,e_2$ are the two
    exceptional fibers of $\wS_{2A_1} \to S_{2A_1}$, and if $\ell$ is the support of the line of multiplicity 4, then
    $N_{\ell/\wS_{2A_1}} = \cO(-1)$, and the exact sequence
    \[
        0 \to N_{\ell/\cwS_{2A_1}} \to N_{\ell/\cwS} \to N_{\wS_{2A_1}/\cwS} \to 0
    \]
    splits, showing that
    \[
        N_{\ell/\cwS} \cong \cO(-1) \oplus \cO(-2).
    \]
    Thus the blowup of $\cwS$ along $\ell$ has exceptional divisor $\bP(N_{\ell/\cwS}) \cong \bF_1 \cong Bl_1\bP^2$, and
    we obtain as the central fiber the surface of \cref{fig:a2_14-12}, stable for weights $1/4 < c \leq 1/2$.

    As a final step, when $c > 1/2$, we must resolve the lines of multiplicity 2 in the special fiber, as done in
    \cref{ex:bottom_up_a}. This step is somewhat more involved, because some of the multiplicity 2 lines intersect, and
    over the intersection points we should obtain additional components (see \cref{fig:a2_12-23}). One way to perform
    this step is as follows. Let $L$ and $L'$ be two lines of multiplicity two meeting in a point in the central fiber.
    First blowup the intersection point. This is a smooth point in the family, and the exceptional divisor is a copy of
    $\bP^2$, glued along a general line to the exceptional line in the central fiber. Now the lines $L$ and $L'$ split
    into four lines on $\bP^2$, with two of the lines intersecting the gluing locus at one point, and the other two
    lines intersecting the gluing locus at another point, as pictured in \cref{fig:bottom_up_a2_1}.

    Next we blowup the reduced support of the strict transforms of $L$ and $L'$; this is just the reduced support of the
    intersections of $L$ and $L'$ with the strict transform of the $\wS_{2A_1}$ component. We obtain two copies of
    $\bF_0$ over the supports of these two lines, and the strict transform of the $\bP^2$ component is isomorphic to
    $Bl_2\bP^2$, the blowup at the intersection points of $L$ and $L'$ with $\bP^2$, as pictured in
    \cref{fig:bottom_up_a2_2}.  This component is not stable---the intersection of its log canonical class to the strict
    transform of the line through the two blown up points is zero---and we contract the gluing line to a point, yielding
    the contraction $Bl_2\bP^2 \to \bP^1 \times \bP^1$. This results in the desired picture of \cref{fig:a2_12-23_3}.
    \begin{figure}[!htpb]
        \centering
        \begin{subfigure}[t]{0.3\textwidth}
            \centering
            \includegraphics[width=0.9\linewidth]{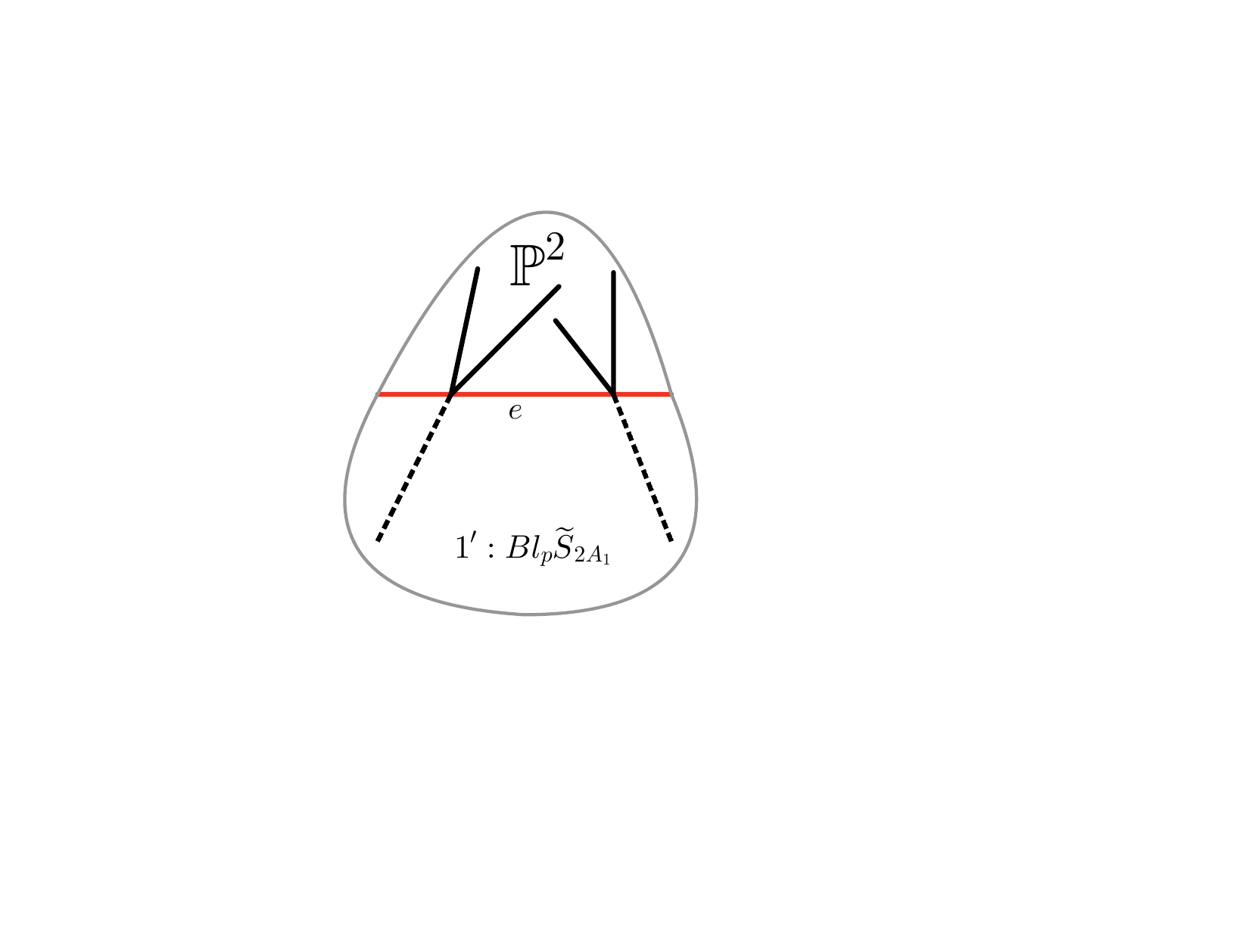}
            \caption{A local picture of the blowup of the intersection point of two lines of multiplicity 2 on
            $\wS_{2A_1}$. We attach a copy of $\bP^2$ glued along a general line to the exceptional divisor of the
            blowup, and the two lines of multiplicity 2 are distributed on this $\bP^2$ as shown.}
            \label{fig:bottom_up_a2_1}
        \end{subfigure}
        \begin{subfigure}[t]{0.6\textwidth}
            \centering
            \includegraphics[width=0.9\linewidth]{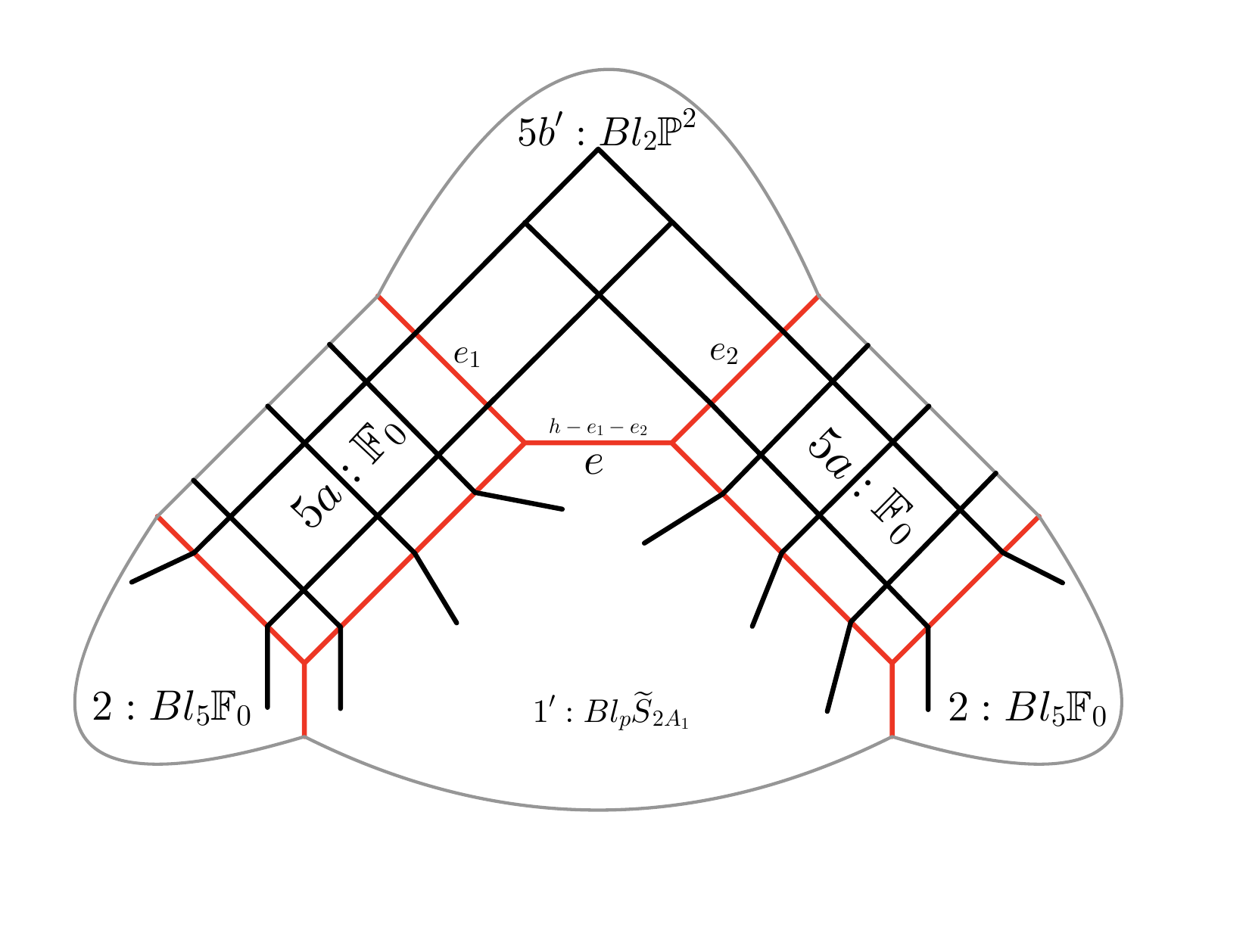}
            \caption{The blowup of the two lines of multiplicity 2 in \cref{fig:bottom_up_a2_1}. We draw more details in
                this picture outside of the more zoomed in picture of \cref{fig:bottom_up_a2_1}, in order to draw the
                connection to \cref{fig:a2_12-23_3}.  The $\bP^2$ component in \cref{fig:bottom_up_a2_1} becomes the
                component isomorphic to $Bl_2\bP^2$, labeled 5b'. Contracting the line connecting the two exceptional
            divisors on this component, we obtain the picture of \cref{fig:a2_12-23_3}.}
            \label{fig:bottom_up_a2_2}
        \end{subfigure}
        \caption{The steps to resolve two lines of multiplicity 2 intersecting in a point.}%
        \label{fig:bottom_up_a2}
    \end{figure}
\end{example}

In \cite[Theorem 1.5]{gallardoGeometricInterpretationToroidal2021}, Gallardo, Kerr, and Schaffler describe all of the
weighted stable marked cubic surfaces for the minimal weight $c=1/9+\epsilon$. Carrying out the analogous computations
to the above examples for the remaining surfaces in this moduli space, one may give an alternative proof of
\cref{thm:cubics_main} in a bottom-up fashion. In particular, this approach also partially recovers the result of
\cite[Theorem 10.31]{hackingStablePairTropical2009}, describing the moduli space of weight 1 stable marked cubic
surfaces, while at the same time going a step further by explicitly constructing such surfaces via explicit wall
crossings.

\begin{remark} \label{rmk:Kmoduli}
    For weights $0 < c < 1/9$, one may study the K-moduli space of \emph{K-semistable} weighted marked cubic surfaces
    $(S,cB)$. In \cite{zhaoCompactificationsModuliPezzo2023} shows that there are no walls on the K-moduli side. The
    K-moduli space $\oY_{(0,1/9)}^6$ of weighted marked cubic surfaces is the natural marked cover of the usual GIT
    moduli space of cubic surfaces, cf. \cite{odakaCompactModuliSpaces2016}; said differently, it is obtained from the
    moduli space $\oY_{1/9+\epsilon}^6 \cong \oY(E_6)$ by contracting the 40 $A_2^3$ divisors $\cong (\bP^1)^3$ to 40
    singular points, each locally isomorphic to the cone point of the cone over the Segre embedding of $(\bP^1)^3$ (see
    \cite{narukiCrossRatioVariety1982}, where Naruki explicitly constructs this contraction). These singular points
    parameterize the different markings of the unique cubic surface $\{w^3=xyz\} \subset \bP^3$ with three $A_2$
    singularities. In principle it should be possible to run a log minimal model program as in the above examples to
    replace this cubic surface with the union of three planes, as parameterized by the $A_2^3$ divisors in
    $\oY_{1/9+\epsilon}^6$; however, carrying this out in detail seems to be challenging. It is also interesting to
    consider the pairs for the log Calabi-Yau boundary $c=1/9$---steps towards studying moduli of log Calabi-Yau pairs
    have been taken in \cite{ascherModuliBoundaryPolarized2023}, and this intermediate moduli space should capture
    common degenerations of surfaces appearing on both the weight $0 < c < 1/9$ and the weight $1/9 < c \leq 1$ sides.
\end{remark}

\section{Weighted stable marked del Pezzo surfaces of degree 4} \label{sec:deg4}

In this appendix we describe the wall crossings for moduli of weighted stable marked del Pezzo surfaces of degree 4.
This is a much simpler version of the case of weighted stable marked cubic surfaces described in the main body of this
article.

Recall from \cref{sec:comps,thm:comps} that $Y(E_5)$ denotes the moduli space of marked del Pezzo surfaces of degree 4,
and $\oY(E_5) \cong \oM_{0,5}$ is the log canonical compactification of $Y(E_5)$. (Indeed, $Y(E_5)$ is isomorphic to the
moduli space of 5 points in general position in $\bP^2$, which by Gale duality is isomorphic to the moduli space
$M_{0,5}$ of 5 points in general position in $\bP^1$.) There are 10 boundary divisors on $\oY(E_5)$, identified with
$D_2 = A_1 \times A_1$ root subsystems of $E_5$; each boundary divisor is isomorphic to $\oM_{0,4} \cong \bP^1$, and two
boundary divisors intersect (in a point) if the corresponding $D_2$ subsystems of $E_5$ are orthogonal.

\subsection{The morphism $\pi : \oY(E_6) \to \oY(E_5)$}

The following theorem due to Hacking, Keel, and Tevelev \cite[Proposition 10.9]{hackingStablePairTropical2009} shows
that the natural morphism $Y(E_6) \to Y(E_5)$ obtained by contracting a $(-1)$-curve extends to an explicitly described
morphism of compactifications $\oY(E_6) \to \oY(E_5)$. This is the analogue for degree 4 del Pezzo surfaces of
\cref{thm:pi}, cf. \cref{rmk:deg4_map}. In contrast to the case of cubic surfaces, the morphism $\pi : \oY(E_6) \to
\oY(E_5)$ is already flat, and gives the universal family of (unweighted) stable marked del Pezzo surfaces of degree 4
\cite[Theorem 10.19]{hackingStablePairTropical2009}.

\begin{theorem}[{\cite[Proposition 10.9]{hackingStablePairTropical2009}}] \label{thm:pi_deg4}
    The morphism $Y(E_6) \to Y(E_5)$ obtained by contracting a $(-1)$-curve on a del Pezzo surface of degree 3 extends
    to a morphism of compactifications $\pi : \oY(E_6) \to \oY(E_5)$, which is flat with reduced fibers. The restriction
    of $\pi$ to each boundary divisor of $\oY(E_6)$ is described in \cref{tab:pi_deg4}, where each morphism $\pi_I :
    \oM_{0,n} \to \oM_{0,m}$ denotes the canonical fibration dropping points not labeled by $I$.
    \begin{table}[htpb]
        \centering
        \caption{The morphism $\pi : \oY(E_6) \to \oY(E_5)$}
        \label{tab:pi_deg4}
        \begin{tabular}{| c | c | c | c |}
            \hline
            $D(\Theta) \subset \oY(E_6)$ & $\pi(D(\Theta)) \subset \oY(E_5)$ & Condition & $\pi\vert_{D(\Theta)}$ \\
            \hline
            \hline
            \multirow{2}{*}{$D(A_1)$} & \multirow{2}{*}{$D(D_2)$} & $A_1 \subset E_5$ & \multirow{2}{*}{$\oM_{0,6} \to \oM_{0,4}$} \\
                     & & $A_1 \times A_1^{\perp} = D_2 \times A_3 \subset E_5$ & \\
            $D(A_1)$ & $\oY(E_5)$ & $A_1 \not\subset E_5$ & $\oM_{0,6} \to \oY(E_5)$ \\
            \hline
            $D(A_2^3)$ & $D(D_2)$ & $A_2^3 \cap E_5 = A_2 \times D_2$ & $(\oM_{0,4})^3 \to \oM_{0,4}$ \\
            \hline
        \end{tabular}
    \end{table}
\end{theorem}

\subsection{Fibers of $\pi : \oY(E_6) \to \oY(E_5)$} \label{sec:fibers_deg4}

Let $B_{\pi} \subset \oY(E_6)$ be the sum of the horizontal $A_1$ divisors of $\pi : \oY(E_6) \to \oY(E_5)$, i.e., the
$A_1$ divisors of $\oY(E_6)$ that surject onto $\oY(E_5)$ (cf. \cref{not:hor_divs}). As mentioned above and shown in
\cite[Theorem 10.19]{hackingStablePairTropical2009}, the morphism $\pi : (\oY(E_6),B_{\pi}) \to \oY(E_5)$ gives the
universal family of (unweighted) stable marked del Pezzo surfaces of degree 4.  In the following theorem, we explicitly
compute the fibers of this family. This is a much simpler version of \cref{sec:fibers}. We remark that the fibers of
$\pi : (\oY(E_6),B_{\pi}) \to \oY(E_5)$ were also previously computed by Hassett, Kresch, and Tschinkel, who describe
them (without proof) in \cite[Section 7]{hassettModuliDegreePezzo2014}.

\begin{theorem} \label{thm:fibers_deg4}
    \begin{enumerate}
        \item The fiber of $\oY(E_6) \to \oY(E_5)$ over a general point in the interior $Y(E_5)$ is a smooth marked del
            Pezzo surface $(S,B)$ of degree $4$.
        \item The fiber of $\oY(E_6) \to \oY(E_5)$ over a general point of a boundary divisor is a surface $(S,B)$ with
            6 irreducible components as pictured in \cref{fig:deg4_1}.
            \begin{enumerate}
                \item 2 components isomorphic to $Bl_4\bF_0$, the blowup of $\bF_0$ along 4 points on its diagonal.
                \item 4 components isomorphic to $\bF_0$.
            \end{enumerate}
        \item The fiber of $\oY(E_6) \to \oY(E_5)$ over a codimension 2 boundary stratum is a surface $(S,B)$ with 12
            irreducible components as pictured in \cref{fig:deg4_2}.
            \begin{enumerate}
                \item 4 components isomorphic to $\oM_{0,5} \cong Bl_4\bP^2$, the blowup of $\bP^2$ at 4 points in
                    general position.
                \item 8 components isomorphic to $\bF_0$.
            \end{enumerate}
    \end{enumerate}
\end{theorem}

\begin{figure}[htpb]
    \centering
    \includegraphics[width=0.55\linewidth]{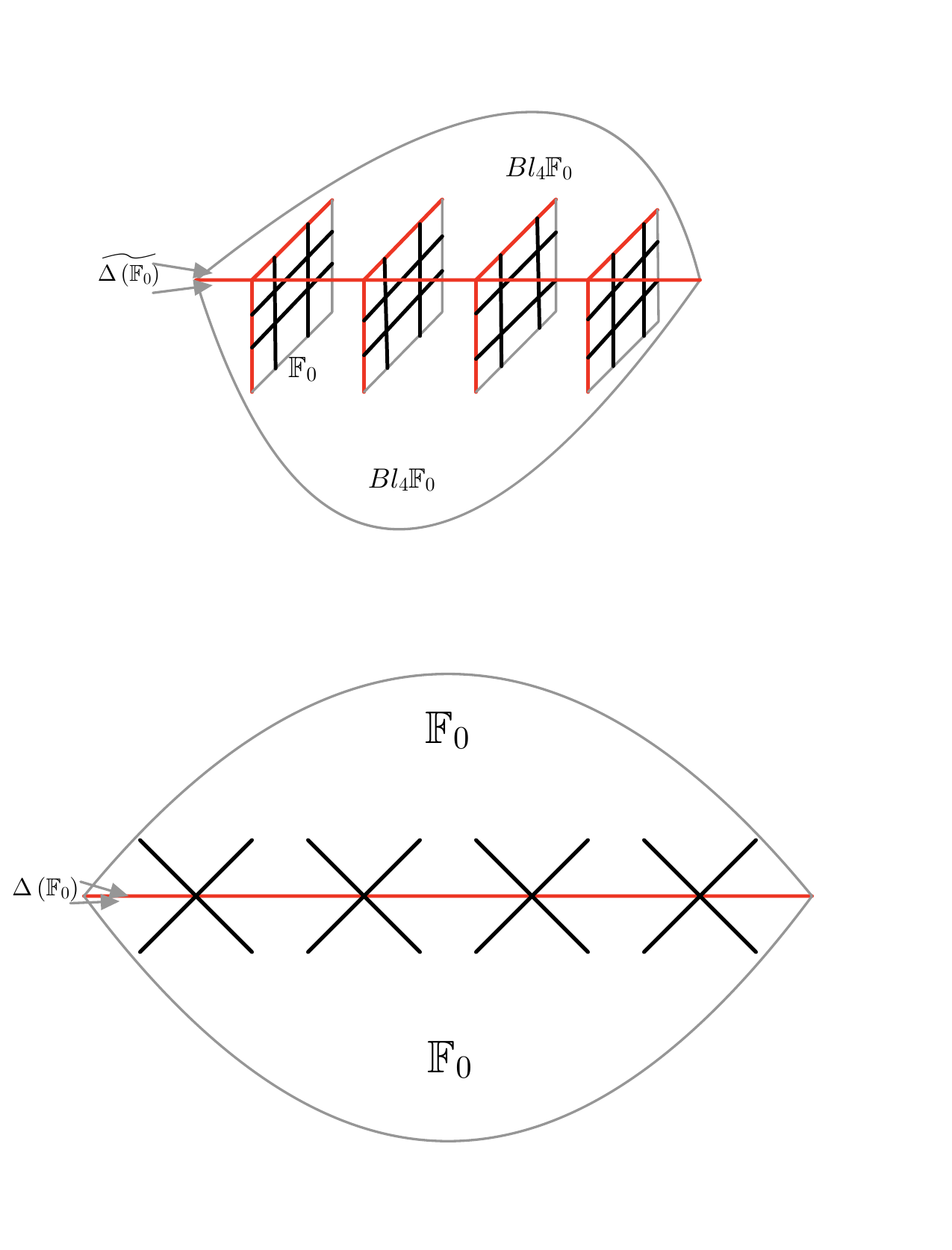}
    \caption{A weighted stable marked del Pezzo surface of degree 4 for weights $1/2 < c \leq 1$, parameterized by a
    general point of a boundary divisor of $\oY^5_{(1/4,1/2]}$. This surface has 8 irreducible components, 2 isomorphic
    to $Bl_4\bF_0$ and 4 isomorphic to $\bF_0$ (compare with \cref{fig:a_12-23}). The lines on the $\bF_0$ components
    split into the strict transforms of lines in the two rulings of $\bF_0$ on each $Bl_4\bF_0$ component.}
    \label{fig:deg4_1}
\end{figure}

\begin{figure}[!htpb]
    \centering
    \begin{subfigure}[t]{0.5\textwidth}
        \centering
        \includegraphics[width=0.9\linewidth]{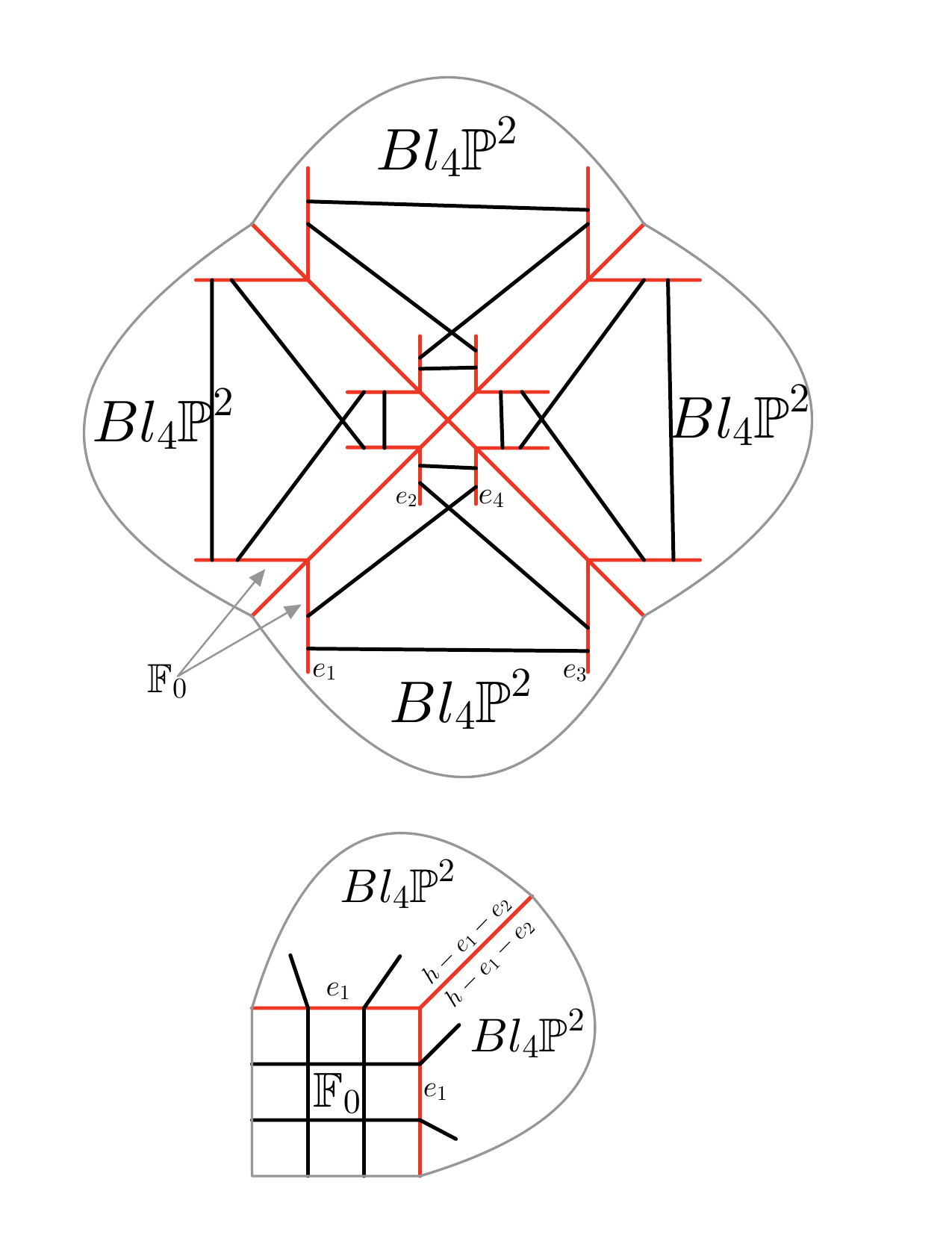}
        \caption{A surface parameterized by a codimension 2 boundary stratum of $\oY^5_{(1/4,1/2]}$.}
        \label{fig:deg4_2_1}
    \end{subfigure}
    \begin{subfigure}[t]{0.3\textwidth}
        \centering
        \includegraphics[width=0.9\linewidth]{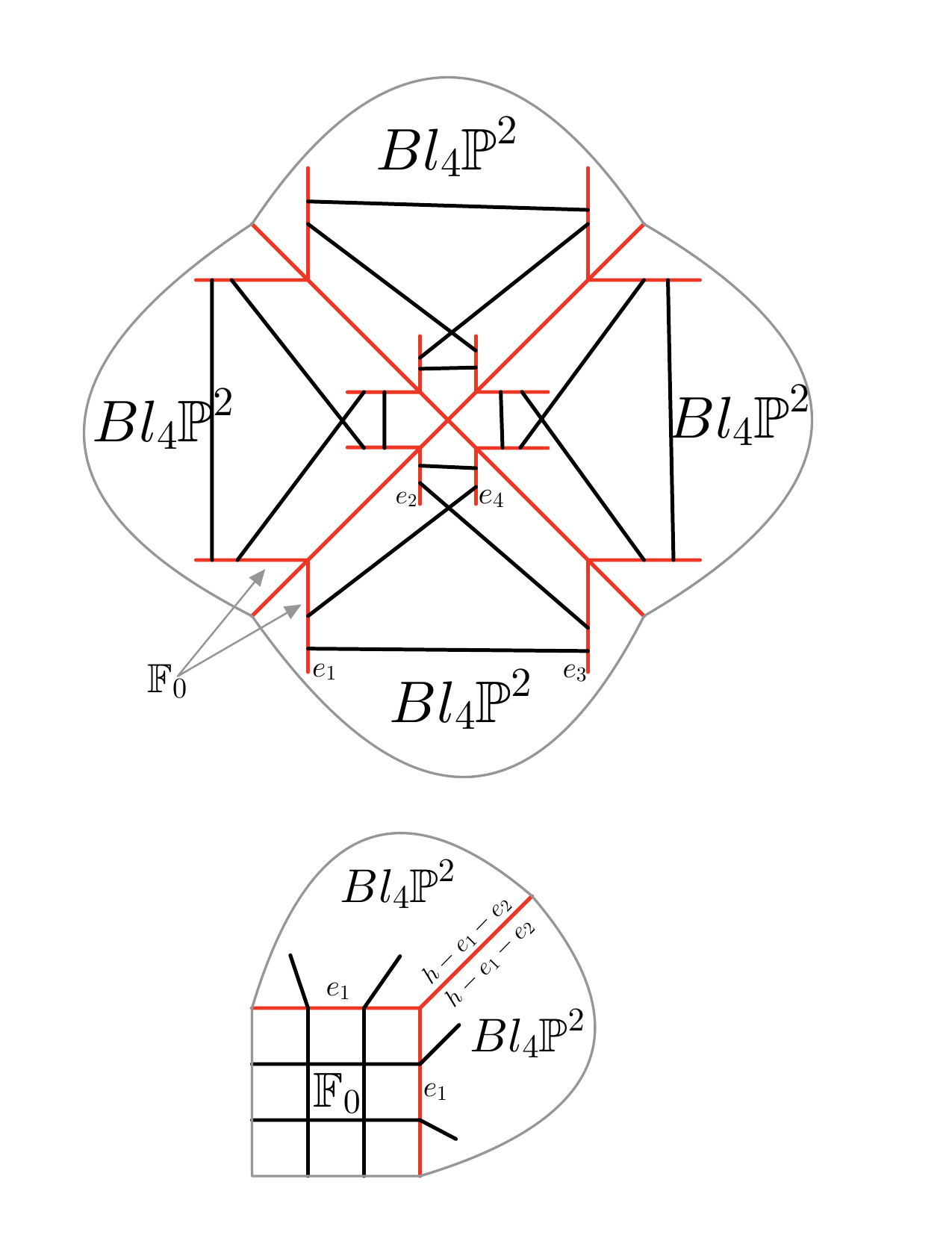}
        \caption{A closer view of an $\bF_0$ component.}
        \label{fig:deg4_2_2}
    \end{subfigure}
    \caption{A weighted stable marked del Pezzo surface of degree 4 for weights $1/4 < c \leq 1/2$, parameterized by a
    codimension 2 boundary stratum of $\oY^5_{(1/4,1/2]}$. This surface has 12 irreducible components, 4 isomorphic to
    $Bl_4\bP^2$ and 8 isomorphic to $\bF_0$.}%
    \label{fig:deg4_2}
\end{figure}

\begin{proof}
    Part (1) is immediate.

    All boundary divisors are permuted equivariantly by $W(E_5)$, likewise for codimension 2 boundary strata, so for
    parts (2) and (3) it suffices to take a particular example.

    Consider the boundary divisor $D=D(45,123)$ of $\oY(E_5)$.  The divisors of $\oY(E_6)$ mapping onto $D$ are
    $D(45)$, $D(123)$, and $D(A_2^3)$ for the following four choices of $A_2^3$.
    \begin{align*}
        (123,456,7) \times (12,13,23) \times (45,46,56), \\
        (45,146,156) \times (16,123,236) \times (23,245,345) \\
        (45,246,256) \times (26,123,136) \times (13,145,345) \\
        (45,346,356) \times (36,123,126) \times (12,145,245).
    \end{align*}
    The restriction of $\pi$ to a $D(A_1)$ divisor induces a forgetful map $\oM_{0,6} \to \oM_{0,4}$ by
    \cref{tab:pi_deg4}, whose general fiber is the blowup of $\bF_0$ at 4 points on the diagonal by
    \cref{lem:fiber_m0n}. The intersection of this with the other $D(A_1)$ fiber is the strict transform of the diagonal
    of $\bF_0$, and the intersections with the $A_2^3$ divisors are the 4 exceptional divisors. This describes the
    double locus on this component. The lines on this component are given by the intersections with the horizontal $A_1$
    divisors. For instance, the $D(45)$ component intersects the horizontal $A_1$ divisors $D(16), D(26), D(36), D(7)$
    and $D(126), D(136), D(236), D(456)$ in the strict transforms of 4 horizontal and 4 vertical rulings on $\bF_0$.

    Similarly, the restriction of $\pi$ to a $D(A_2^3)$ divisor induces a map $(\oM_{0,4})^3 \to \oM_{0,4}$ whose
    general fiber is $\bF_0 \cong \oM_{0,4} \times \oM_{0,4}$. Any two $A_2^3$ divisors are disjoint, while each $A_2^3$
    divisor meets the two $A_1$ divisors, in the two rulings on $\bF_0$. Furthermore, $D(A_2^3)$ for, e.g., the first
    $A_2^3$ above intersects the horizontal $A_1$ divisors $D(456)$, $D(7)$ and $D(46), D(56)$ in 2 vertical and 2
    horizontal rulings. The description of the $D(A_2^3)$ components follows, and we get the complete description of the
    fiber over a general point of $D(45,123)$ by gluing the above descriptions.

    Now consider the codimension 2 boundary stratum $z = D(45,123) \cap D(12,345) \subset \oY(E_5)$. There are 12
    codimension 2 boundary strata of $\oY(E_6)$ which map onto $z$---4 of the form $D(A_1) \cap D(A_1)$, and 8 of the
    form $D(A_1) \cap D(A_2^3)$. The induced maps on these boundary strata are $\oM_{0,5} \to z$ and $\oM_{0,4} \times
    \oM_{0,4} \to z$, respectively. A similar calculation to the above gives the description of the lines and double
    loci of these components.
\end{proof}

\subsection{Moduli of weighted stable marked del Pezzo surfaces of degree 4}

For $1/4 < c \leq 1$, let $\oY^5_c$ denote the moduli space of weighted stable marked del Pezzo surfaces of degree 4,
with constant weight $c$, as in \cref{sec:weighted_cubics,thm:gen_walls}. In this section we describe the wall crossings
for $\oY^5_c$ as one varies $c$ in the interval $(1/4,1]$. As in \cref{sec:weighted_cubics}, if $(t_{i-1},t_i]$ is a
chamber, we write $\oY_{(t_{i-1},t_i]}^5 \cong \oY^5_c$ for any $c \in (t_{i-1},t_i]$. The following is the analogue for
degree 4 del Pezzo surfaces of \cref{thm:cubics_main}.

\begin{theorem} \label{thm:deg4_main}
    The weight domain $\left(\frac{1}{4},1\right]$ for the moduli space weighted stable marked del Pezzo surfaces of
    degree 4, $(S,cB)$, admits precisely one wall at $c=1/2$, inducing an isomorphism of moduli spaces
    \[
        \oY_{(1/4,1/2]}^5 \xleftarrow{\sim} \oY_{(1/2,1]}^5 \cong \oY(E_5) \cong \oM_{0,5}.
    \]
    For weights $1/4 < c \leq 1/2$, the stable model of the surface parameterized by a general point of a boundary
    divisor of $\oY_{(1/2,1]}^5$ as in \cref{fig:deg4_1} is a surface with two irreducible components, both isomorphic
    to $\bF_0$, glued along their diagonals, as pictured in \cref{fig:deg4_1_14-12}. Likewise, the stable model for
    weights $1/4 < c \leq 1/2$ of the surface parameterized by a codimension two boundary stratum of $\oY_{(1/2,1]}^5$
    as in \cref{fig:deg4_2} is a surface with four irreducible components, each isomorphic to $\bP^2$, as pictured in
    \cref{fig:deg4_2_14-12}.
\end{theorem}

\begin{figure}[htpb]
    \centering
    \includegraphics[width=0.5\linewidth]{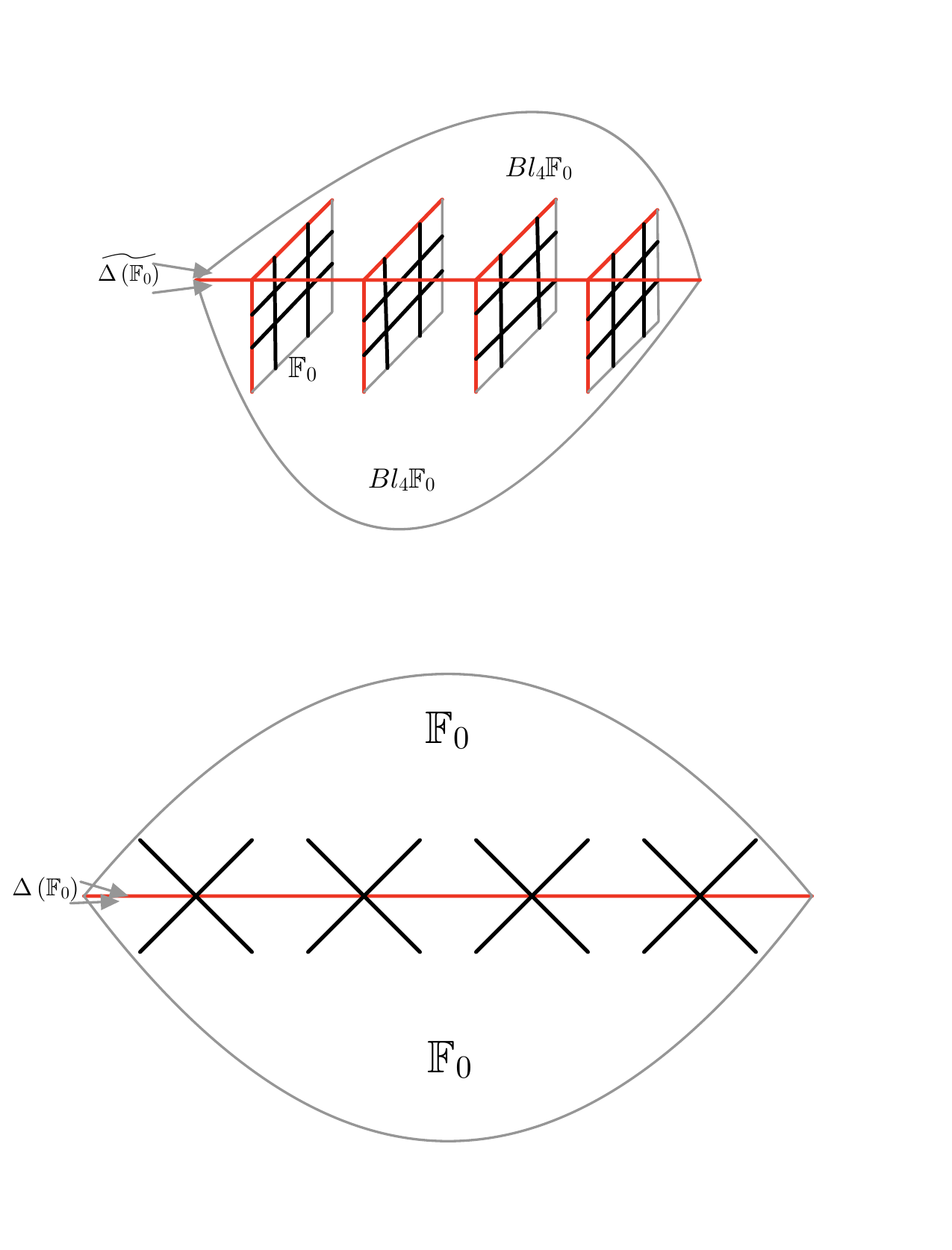}
    \caption{The stable model for weights $1/4 < c \leq 1/2$ of the surface of \cref{fig:deg4_1}. This is obtained from
    the latter surface by contracting the 6 components isomorphic to $\bF_0$. We obtain 2 irreducible components, each
    isomorphic to $\bF_0$, glued to each other along their diagonals. There are 8 lines on each component, 4 in each
    ruling.}
    \label{fig:deg4_1_14-12}
\end{figure}

\begin{figure}[htpb]
    \centering
    \includegraphics[width=0.4\linewidth]{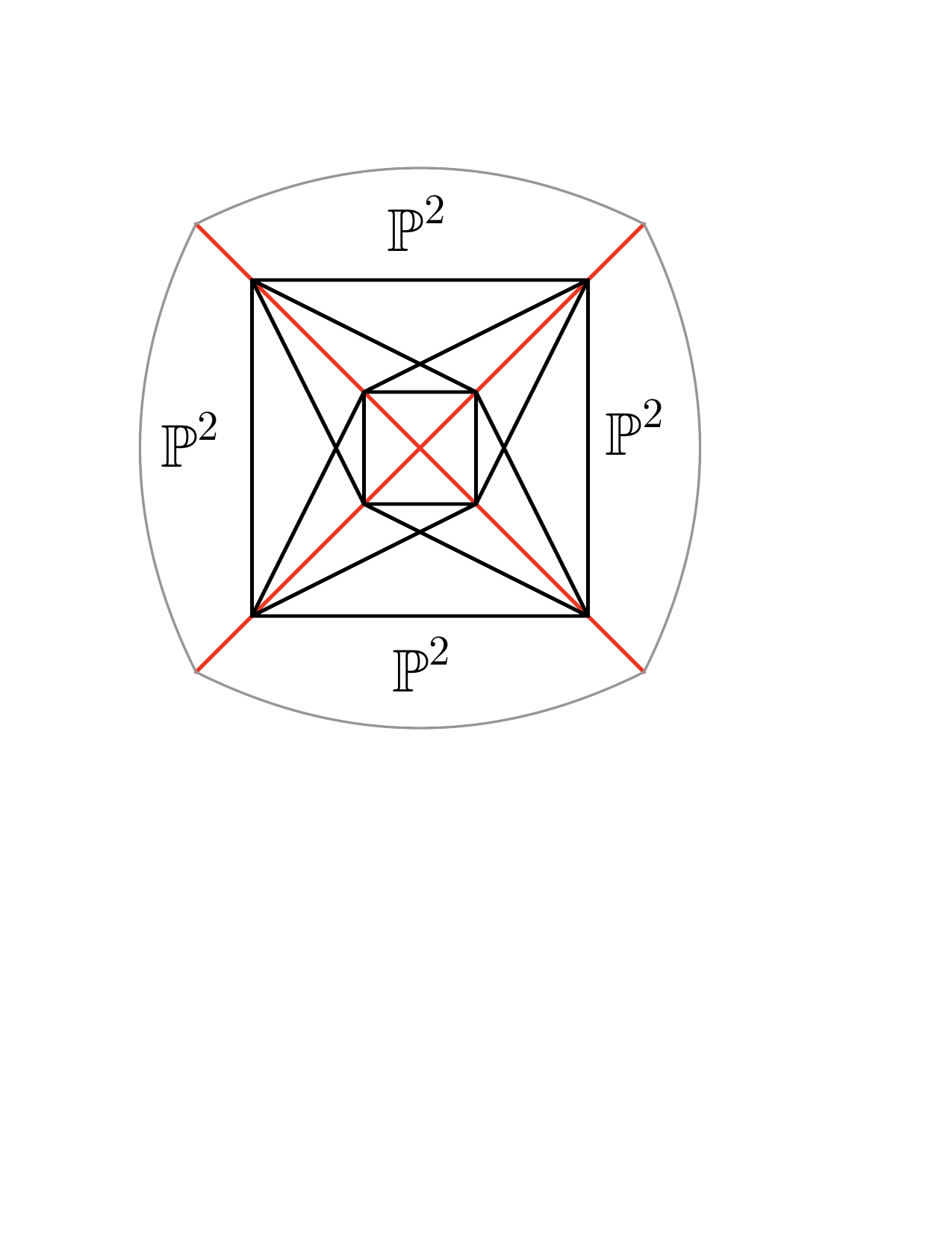}
    \caption{The stable model for weights $1/4 < c \leq 1/2$ of the surface of \cref{fig:deg4_2}. This is obtained from
    the latter surface by contracting the 8 components isomorphic to $\bF_0$. We obtain 4 irreducible components, each
    isomorphic to $\bP^2$, and each glued to two other components along a general line in each component. All four
    components intersect in a point, the intersection point of the two gluing lines on a given component.}
    \label{fig:deg4_2_14-12}
\end{figure}

\begin{proof}
    As described previously, we know that $\pi : (\oY(E_6),B_{\pi}) \to \oY(E_5)$ is the universal family of weight 1
    stable marked del Pezzo surfaces of degree 4. Since we described the fibers of this family in
    \cref{thm:fibers_deg4}, it is straightforward to now describe the wall crossings by calculating the stable models as
    one decreases the weight $c$ from $1$ towards $1/4$, as in \cref{sec:proof_main}.

    Namely, let $(S,B)$ be the fiber of $\pi$ over a general point of a boundary divisor of $\oY(E_5)$. Then $(S,B)$ is
    a reducible surface with six components as pictured in \cref{fig:deg4_1}. Fix $c \leq 1$. There are two types of
    irreducible components of $(S,B)$. The first type is isomorphic to the blowup $Bl_4\bF_0$ of $\bF_0 \cong \bP^1
    \times \bP^1$ at 4 points on its diagonal. Using Equation \eqref{eq:restrict_component}, we compute that the
    restriction of $K_S+cB$ to this component is
    \[
        (-1+4c)(h_1+h_2) + (1-2c)(e_1 + e_2 + e_3 + e_4),
    \]
    where $h_1,h_2$ are the pullbacks of the classes of the two rulings, and $e_1,\ldots,e_4$ are the classes of the
    four exceptional divisors. The second type of irreducible component of $(S,B)$ is isomorphic to $\bF_0$, and again
    using Equation \eqref{eq:restrict_component}, we compute that the restriction of $K_S+cB$ to this component is
    \[
        (-1+2c)h_1 + (-1+2c)h_2.
    \]
    We find that $(S,cB)$ is stable for $1/2 < c \leq 1$, but when $c=1/2$ we contract the four components isomorphic to
    $\bF_0$ to a point each, and correspondingly contract the four exceptional divisors on each of the two $Bl_4\bF_0$
    components. We obtain a reducible surface $(S',cB')$ with two irreducible components, each isomorphic to $\bF_0$,
    glued to each other along their diagonals, as pictured in \cref{fig:deg4_1_14-12}. We directly see that the
    resulting surface $(S',cB')$ is slc for all $1/4 < c \leq 1/2$, and the restriction of $K_{S'}+cB'$ to an
    irreducible component is
    \[
        (-1+4c)h_1 + (-1+4c)h_2,
    \]
    where $h_1,h_2$ are the classes of the two rulings on $\bF_0 \cong \bP^1 \times \bP^1$. Thus $(S',cB')$ is stable
    for all $1/4 < c \leq 1/2$.

    Now let $(S,B)$ be a fiber of $\pi$ over a codimension two boundary stratum of $\oY(E_5)$. Then $(S,B)$ is a
    reducible surface with 12 components as pictured in \cref{fig:deg4_2}. Fix $c \leq 1$ There are two types of
    irreducible components of $(S,B)$. The first type is isomorphic to $Bl_4\bP^2$, the blowup of $\bP^2$ at four points
    in general position, and using Equation \eqref{eq:restrict_component} we compute that the restriction of $K_S+cB$ to
    this component is
    \[
        (-1+4c)h + (1-2c)(e_1+e_2+e_3+e_4),
    \]
    where $h$ is the pullback of the class of a line, and $e_1,\ldots,e_4$ are the classes of the four exceptional
    divisors. The second type of irreducible component of $(S,B)$ is isomorphic to $\bF_0$, and exactly as above we
    compute that the restriction of $K_S+cB$ to this component is
    \[
        (-1+2c)h_1 + (-1+2c)h_2.
    \]
    Thus we find that $(S,cB)$ is stable for $1/2 < c \leq 1$, and when $c=1/2$ we contract the eight irreducible
    components isomorphic to $\bF_0$ to a point each, and correspondingly contract the four exceptional divisors on each
    of the four copies of $Bl_4\bP^2$. We obtain a reducible surface $(S',cB')$ with four irreducible components, each
    isomorphic to $\bP^2$, glued to each other along general lines as pictured in \cref{fig:deg4_2_14-12}. We directly
    see that the resulting surface $(S',cB')$ is slc for all $1/4 < c \leq 1/2$, and the restriction of $K_{S'}+cB'$ to
    an irreducible component is
    \[
        (-1+4c)h,
    \]
    hence $(S',cB')$ is stable for all $1/4 < c \leq 1/2$.
\end{proof}

\begin{remark}
    As in \cref{sec:bottom_up}, one could also prove \cref{thm:deg4_main} using a bottom up approach. In contrast to the
    case of cubic surfaces, it is less clear where the minimal weight surfaces come from for degree 4 del Pezzo
    surfaces. This can be answered by comparing with the surfaces on the K-moduli side, i.e., with weights $(0,1/4)$, as
    in \cref{rmk:Kmoduli}. Namely, the K-moduli space of degree 4 weighted marked del Pezzo surfaces is isomorphic to
    the marked $W(E_5)$ cover of the GIT moduli space of degree 4 del Pezzo surfaces \cite{odakaCompactModuliSpaces2016,
    zhaoCompactificationsModuliPezzo2023}. The surface of \cref{fig:deg4_1_14-12} arises from a del Pezzo surfaces of
    degree 4 with two $A_1$ singularities by resolving (in a one-parameter family) the two singularities, attaching to
    each exceptional line a copy of $\bF_0$ glued along its diagonal (by the same arguments as in
    \cref{ex:bottom_up_a,ex:bottom_up_a2}), and then contracting the central component isomorphic to the minimal
    resolution of a del Pezzo surface of degree 4 with two $A_1$ singularities.
\end{remark}

\section{Root combinatorics of the universal family $\wY(E_7) \to \wY(E_6)$} \label{sec:root_combs}

In this appendix we summarize the combinatorics of root subsystems of $E_6$ and $E_7$ yielding the fibers of the
universal family $\wY(E_7) \to \wY(E_6)$ of weight $1/2 < c \leq 2/3$ stable marked cubic surfaces. We choose a
$W(E_6)$-representative $Z$ of each boundary stratum of $\wY(E_6)$, and describe in terms of root subsystems of $E_7$
all of the boundary strata of $\wY(E_7)$ mapping onto $Z$. These correspond the irreducible components of the corresponding
stable surfaces described in \cref{sec:weighted_cubics}, as explained in \cref{sec:fibers}. The pictures of weight $1/2
< c \leq 2/3$ stable marked cubic surfaces given in \cref{sec:weighted_cubics} are constructed from the tables of the
present section using the methodology of \cref{sec:fibers}, cf. \cref{ex:a_fiber,ex:a2_fiber,ex:aa2_fiber,ex:b_fiber}.

The reader is advised to view this appendix at their own peril.

\begin{table}[htpb]
    \centering
    \caption{Non-flat $A_3^2$s which appear in our chosen representatives of boundary strata.}
    \label{tab:nonflat_A32}
    \begin{tabular}{| c | c |}
        \hline
        Label & Roots \\
        \hline
        \hline
        $A_3^2(7,56)$ &  $(5,6,7,56,57,67) \times (12,13,14,23,24,34)$ \\
        $A_3^2(7,34)$ &  $(3,4,7,34,37,47) \times (12,15,16,25,26,56)$ \\
        $A_3^2(7,12)$ &  $(1,2,7,12,17,27) \times (34,35,36,45,46,56)$ \\
        $A_3^2(56,34)$ &  $(34,56,357,367,457,467) \times (7,12,156,256,134,234)$ \\
        $A_3^2(56,12)$ &  $(12,56,157,167,257,267) \times (7,34,356,456,125,126)$ \\
        $A_3^2(34,12)$ &  $(12,34,137,147,237,247) \times (7,56,125,126,345,346)$ \\
        \hline
    \end{tabular}
\end{table}

\begin{table}[htpb]
    \centering
    \caption{Flat $A_3^2$s yielding components of the type $b$ surface of \cref{tab:b}, and its degenerations.}
    \label{tab:flat_A32}
    \begin{tabular}{| c | c |}
        \hline
        Label & Roots \\
        \hline
        \hline
         $W_1$ & $(1,7,17,123,456,237) \times (45,46,56,147,157,167)$ \\
         $W_2$ & $(2,7,27,123,456,137) \times (45,46,56,247,257,267)$ \\
         $W_3$ & $(3,7,37,123,456,127) \times (45,46,56,347,357,367)$ \\
         $W_4$ & $(4,7,47,123,456,567) \times (12,13,23,147,247,347)$ \\
         $W_5$ & $(5,7,57,123,456,467) \times (12,13,23,157,257,357)$ \\
         $W_6$ & $(6,7,67,123,456,457) \times (12,13,23,167,267,367)$ \\
         $W_7$ & $(12,13,23,1,2,3) \times (45,46,47,56,57,67)$ \\
         $W_8$ & $(45,46,56,4,5,6) \times (12,13,17,23,27,37)$ \\
         $W_9$ & $(12,13,23,127,137,237) \times (45,46,56,457,467,567)$ \\
        \hline
    \end{tabular}
\end{table}

\begin{table}[htpb]
    \centering
    \caption{$A_7$s which appear in our chosen representatives of boundary strata.}
    \label{tab:A7s}
    \begin{tabular}{| c | c |}
        \hline
        Label & Roots \\
        \hline
        \hline
         $X_0$ & $12,13,14,15,16,17,23,24,25,26,27,34,35,$ \\
               & $36,37,45,46,47,56,57,67,1,2,3,4,5,6,7$ \\
        \hline
         $X_{567}$ & $56,7,12,13,14,23,24,34,123,124,134,234,156,256,$ \\
                   & $356,456, 157,257,357,457, 167,267,367,467, 5,6,57,67$ \\
        \hline
         $X_{347}$ & $34,7,12,15,16,25,26,56, 125,126,156,256,134,234,$\\
                   & $345,346, 137,237,357,367, 147,247,457,467, 3,4,37,47$ \\
        \hline
         $X_{127}$ & $12,7,34,35,36,45,46,56, 345,346,356,456,123,124,$ \\
                   & $125,126, 137,147,157,167, 237,247,257,267, 1,2,17,27$ \\
        \hline
    \end{tabular}
\end{table}

\begin{table}[htpb]
    \centering
    \caption{The type $a$ surface parameterized by a general point of the boundary divisor $D(7) \subset \wY(E_6)$.
    The surface has a total of 8 irreducible components (see \cref{fig:a_12-23}).}
    \label{tab:a}
    \begin{tabular}{| c | c | c | c |}
        \hline
        Label & Surface & Count & Root Systems \\
        \hline\hline
        1 & $\wS_{A_1}$ & 1 & $A_1$ \\
        \hline
          & & & $(7)$ \\
        \hline
        \hline
        2 & $Bl_6\bF_0$ & 1 & $A_7$ \\
        \hline
          & & & $X_0$ \\
        \hline
        \hline
        3 & $\bF_0$ & 6 & $A_2$ \\
        \hline
          & & & $(i,7,i7)$, $i=1,\ldots,6$ \\
        \hline
    \end{tabular}
\end{table}

\begin{table}[htpb]
    \centering
    \caption{The type $a_2$ surface parameterized by a general point of the boundary divisor $D(7 \perp 56) \subset
    \wY(E_6)$. The surface has a total of 20 irreducible components (see \cref{fig:a2_12-23}).}
    \label{tab:a2}
    \begin{tabular}{| c | c | c | c |}
        \hline
        Label & Surface & Count & Root Systems  \\
        \hline
        \hline
        1 & $\wS_{2A_1}$ & 1 & $A_1 \perp A_1$ \\
        \hline
         &  &  & $(7) \perp (56)$ \\
        \hline
        \hline
        2 & $Bl_5\bF_0$ & 2 & $A_1 \subset A_7$ \\
        \hline
         &  &  & $(7) \subset X_{567}$ \\
         &  &  & $(56) \subset X_{0}$  \\
        \hline
        \hline
        3 & $Bl_5\bP^2$ & 1 & $A_3^2$ \\
        \hline
         &  &  & $A_3^2(7,56)$  \\
        \hline
        \hline
        4 & $\bF_0$ & 4 & $A_2 \subset A_7$ \\
        \hline
         &  &  & $(5,7,57) \subset X_{567}$ \\
         &  &  & $(6,7,67) \subset X_{567}$ \\
         &  &  & $(5,6,56) \subset X_0$ \\
         &  &  & $(56,57,67) \subset X_0$ \\
        \hline
        \hline
        5a & $\bF_0$ & 8 & $A_1 \perp A_2$ \\
        \hline
         &  &  & $(7) \perp (56,i57,i67)$, $i=1,\ldots,4$ \\
         &  &  & $(56) \perp (i,7,i7)$, $i=1,\ldots,4$ \\
        \hline
        \hline
        5b & $\bF_0$ & 4 & $A_2 \perp A_2$ \\
        \hline
         &  &  & $(i,7,i7) \perp (56,i57,i67)$, $i=1,\ldots,4$ \\
         \hline
    \end{tabular}
\end{table}

\begin{table}[htpb]
    \centering
    \caption{Degenerations of the type 3 component of the $a_2$ surface of \cref{tab:a2} into four irreducible
        components, yielding a type $aa_2$ surface parameterized by a general point of the codimension two boundary
        stratum $D(7) \cap D(7 \perp 56)$ (see \cref{fig:aa2_12-23}). (The plus in the table separates the root systems
        coming from the type $a$ combinatorics and the type $a_2$ combinatorics.) Outside of the degeneration of this
        type 3 component, the $aa_2$ surface is isomorphic to the $a_2$ surface, and the root combinatorics of these
    other irreducible components stay the same (see \cref{ex:aa2_fiber}).}
    \label{tab:aa2}

\end{table}

\bibliographystyle{amsalpha} \bibliography{cubics}
\end{document}